\numberwithin{equation}{section}
\definecolor{webgreen}{rgb}{0,.5,0}
\definecolor{webbrown}{rgb}{.8,0,0}
\definecolor{emphcolor}{rgb}{0.95,0.95,0.95}
\ifpdf \hypersetup{pdftex,
		bookmarksopen=true,
		bookmarksnumbered=true
	} \else \hypersetup{dvips} \fi
\theoremstyle{plain}
\newtheorem{teor}{Theorem}[section]
\newtheorem{defin}[teor]{Definition}
\newtheorem{prop}[teor]{Proposition}
\newtheorem{lema}[teor]{Lemma}
\theoremstyle{remark}  
\newtheorem{rem}[teor]{Remark}
\newtheoremstyle{hyp}{}{}{\itshape}{}{}{}{3pt}{}
\theoremstyle{hyp}
\DeclareMathOperator*{\infess}{inf\, ess}
\newcommand{\cn}{\mathbb{n}}
\newcommand{\hf}{\hat{f}}
\newcommand{\w}{\mathpzc{w}}
\newcommand{\vs}{\mathpzc{v}}
\newcommand{\sol}{\mathpzc{u}}
\DeclareMathAlphabet{\mathpzc}{OT1}{pzc}{m}{it}
\DeclareMathOperator{\deri}{D}
\DeclareMathOperator{\dist}{dist}
\DeclareMathOperator{\Lp}{L}
\DeclareMathOperator{\trans}{T}
\DeclareMathOperator{\comp}{c} 
\DeclareMathOperator{\loc}{loc}
\DeclareMathOperator{\hol}{C}
\DeclareMathOperator*{\argmin}{arg\, min}
\DeclareMathOperator{\tr}{tr}
\DeclareMathOperator{\sob}{W}
\DeclareMathOperator{\expo}{e}
\DeclareMathOperator{\sop}{supp}
\newcommand{\E}{\mathbbm{E}}
\newcommand{\tmt}{\mathpzc{t}}
\newcommand{\tms}{\mathpzc{s}}
\newcommand{\set}{\mathcal{O}}
\newcommand{\dif}{\mathcal{L}}
\newcommand{\R}{\mathbbm{R}}
\newcommand{\N}{\mathbbm{N}}
\newcommand{\uno}{\mathbbm{1}}
\newcommand{\der}{\mathrm{d}}
\newcommand{\F}{\mathbbm{F}}
\newcommand{\Pro}{\mathbbm{P}}
\newcommand{\BE}{\begin{equation}}
	\newcommand{\EE}{\end{equation}}
\newcommand {\BA}{\begin{align}}
	\newcommand{\EA}{\end{align}}
\newcommand{\eqdef}{\raisebox{0.4pt}{\ensuremath{:}}\hspace*{-1mm}=}
\newcommand{\defeq}{=\hspace*{-1mm}\raisebox{0.4pt}{\ensuremath{:}}}
\title{\Large{\bf  {A mixed singular/switching  control problem with terminal cost for   modulated  diffusion processes}}\footnote{\textbf{Funding:}  {This} study has been funded by the Russian Academic Excellence Project `5-100'.}}
\author{\large{\bf Mark Kelbert}\\ 
	\large{\bf Harold A. Moreno-Franco}\footnote{Corresponding author: hmoreno@hse.ru}\\
	\small{\it Department of Statistics and Data Analysis}\\ 
	\small{\it Laboratory of Stochastic Analysis and its Applications}\\ 
	\small{\it National Research University Higher School of Economics, Moscow, Russian Federation}}
\date{}
\begin{document}	
\maketitle
\vspace{-0.5cm}
\begin{abstract}
	\noindent {In this paper, we study the regularity of the value function associated with a stochastic control problem where two controls  act simultaneously on  a modulated multidimensional diffusion process. The first is a switching control modelling a random clock. Every time the random clock rings, the  generator matrix is replaced by another, resulting in a different dynamic for the finite state Markov chain  of the  modulated diffusion process. The second  is a  singular stochastic control that is  executed on the  process  within each regime.}    
	
\end{abstract}

\section{{Introduction and main results}}

The goal of this paper is to study the regularity of the value function that is associated with a mixed singular/switching stochastic control  problem for a  modulated multidimensional diffusion in a bounded domain. Within a regime $\ell\in \mathbb{M}\eqdef\{1,2,\dots,m\}$, a singular stochastic control is executed on a multidimensional diffusion which is  modulated by a finite state Markov chain with generator matrix $Q_{\ell}\eqdef(q_{\ell}(\iota,\kappa))_{\iota,\kappa\in\mathbb{I}}$, where $\mathbb{I}\eqdef\{1,2,\dots,n\}$. {Here, the criterion  is to minimize the expected costs that the singular and switching controls produce every time that they act on the modulated diffusion process, subject to  a penalization that is produced  at the first moment that the controlled process is outside  the bounded set; for more details about it, see the subsection below.}

{The control problem presented in this work can be applied, for example, in the area of finance if we assume that the cash reserve process of a firm is governed by a modulated one-dimensional process until a ruin time. Considering a fixed family of transition matrices $\mathcal{Q}=\{Q_{\ell}\}_{\ell\in\mathbb{M}}$ and an increasing sequence of stopping times $\{\tau_{i}\}_{i\geq0}$, and according to the data observed at time $\tau_{i}$, the firm has the option of changing the transition matrix $Q_{\ell_{i-1}}$ by $Q_{\ell_{i}}$,   with a   cost $\vartheta_{\ell_{i-1},\ell_{i}}$, in such a way that the Markov chain associated with the modulated process can model the times and the states that the reserve process would be well-defined on the interval $[\tau_{\ell_{i}},\tau_{\ell_{i-1}})$. Let us define this switching control by $\varsigma=(\tau_{i}, {\ell_{i}})_{i\geq0}$. Within each regime $\ell_{i}$, the expenses of the firm, which are paid out from the reserve process, are given by a non-decreasing and right-continuous process $\zeta$. Then, under a minimization criterion, the firm wishes to find a strategy $(\varsigma^{*},\zeta^{*})$ that reduces the expected costs that the company must assume.}

{As far as we know, the existing stochastic control literature has not yet considered the problems described above, and they could be a research line of interest for both the stochastic control theory and its applications.    }


\subsection{Model formulation}
Let $W=\{W_{\tmt}:\tmt\geq0\}$ be a $k$-dimensional standard Brownian motion and let $I^{(\ell)}=\{I^{(\ell)}_{\tmt}:\tmt\geq0\}$, with $\ell\in\mathbb{M}$, be  a continuous-time Markov chain with finite state space $\mathbb{I}$ and generator matrix $Q_{\ell}=(q_{\ell}(\iota,\kappa))_{\iota,\kappa\in\mathbb{I}}$, i.e.,
\begin{equation}\label{q1}
	\Pro[I^{(\ell)}_{\tmt+\Delta \tmt}=\kappa|I^{(\ell)}_{\tmt}=\iota,  I^{(\ell)}_{\tms},\tms\leq\tmt]=
	\begin{cases}
		q_{\ell}(\iota,\kappa)\Delta \tmt +o(\Delta\tmt),&\text{if}\ \kappa\neq \iota,\\
		1+q_{\ell}(\iota,\kappa)\Delta\tmt+o(\Delta \tmt)&\text{if}\ \kappa=\iota.
	\end{cases}
\end{equation}
The entries of the generator matrix $Q_{\ell}$ satisfy
\begin{equation*}
	\begin{split}
		&q_{\ell}(\iota,\kappa)\geq0\ \text{for}\ \iota,\kappa\in\mathbb{I},\ \text{with}\ \kappa\neq\iota,\\ 
		&q_{\ell}(\iota,\iota)=-\sum_{\kappa\in\mathbb{I}\setminus\{\iota\}}q_{\ell}(\iota,\kappa)\ \text{for}\ \iota\in\mathbb{I}.
	\end{split}
\end{equation*} 
We assume that $W , I^{(1)},\dots,I^{(m)}$ are independent and are   defined on a complete probability space  $(\Omega,\mathcal{F},\Pro)$. Let $\F=\{\mathcal{F}_{\tmt}\}_{\tmt\geq0}$ be the filtration  generated by  $W$ and $\{I^{(\ell)}\}_{\ell\in\mathbb{M}}$. 

We consider the triple $(X^{\xi,\varsigma},J^{\varsigma},I)$ as a  stochastic controlled process  that   evolves as:
\begin{equation}\label{es1}
	\begin{split}
		X^{\xi,\varsigma}_{\tmt}&=X^{\xi,\varsigma}_{\tilde{\tau}_{i}{-}} -\displaystyle\int_{\tilde{\tau}_{i}}^{\tmt}b(X^{\xi,\varsigma}_{\tms},I^{(\ell_{i})}_{\tms})\der \tms+\displaystyle\int_{\tilde{\tau}_{i}}^{\tmt}\sigma(X^{\xi,\varsigma}_{\tms},I^{(\ell_{i})}_{\tms})\der W_{\tms}-\int_{\tilde{\tau}_{i}}^{\tmt}\cn_{\tms}\der\zeta_{\tms},\\
		I_{\tmt}&=I^{(\ell_{i})}_{\tmt}\ \text{and}\  J^{\varsigma}_{\tmt}= \ell_{i}\quad\text{for}\ \tmt\in[{\tilde\tau}_{i},\tilde{\tau}_{i+1})\ \text{ and }\ i\geq0,
	\end{split}
\end{equation} 
where $X^{\xi,\varsigma}_{0-}=x_{0}\in\overline{\set}\subset\R^{d}$, $J^{\varsigma}_{0-}=\ell_{0}\in\mathbb{M}$, $I_{0}=I^{(\ell_{0})}_{0}=\iota_{0}\in\mathbb{I}$,   $\tau\eqdef\{\tmt>0:(X^{\xi,\varsigma}_{\tmt},I_{\tmt})\notin\set\times\mathbb{I}\}$, and  $\tilde{\tau}_{i}\eqdef\tau_{i}\wedge\tau$.  The parameters $b_{\iota}\eqdef b(\cdot,\iota) :\overline{\set}\longrightarrow\R^{d}$ and $\sigma_{\iota}\eqdef\sigma(\cdot,\iota) :\overline{\set}\longrightarrow\R^{d}\times\R^{k}$, with $\iota\in\mathbb{M}$ fixed,  satisfy appropriate conditions to ensure the well-definiteness of the stochastic differential equation (SDE) \eqref{es1}; {see Assumption \eqref{h2}}.   

The control process $(\xi,\varsigma)$ is in $\mathcal{U}\times\mathcal{S}$ where the singular control $\xi=(\mathbb{n},\zeta)$ belongs to the class $\mathcal{U}$ of admissible controls that satisfy
\begin{equation}\label{cont.1}
	\begin{cases}
		(\mathbb{n}_{\tmt},\zeta_{\tmt})\in\R^{d}\times\R_+,\ t\geq0,\ \text{such that}\ X^{\xi,\varsigma}_{\tmt}\in\set\,\ \tmt\in[0,\tau),\\
		(\mathbb{n},\zeta)\ \text{is adapted to the filtration}\  \F,\\
		\zeta_{0-}=0\ \text{and}\ \zeta_{\tmt}\ \text{is non-decreasing and is right continuous} \\
		\text{with left hand limits,}\ \tmt\geq0,\ \text{and }\ |\mathbb{n}_{\tmt}|=1\ {\der\zeta_{\tmt}\text{-a.s.},\ \tmt\geq0} ,
	\end{cases}
\end{equation}
and the switching   control process   $\varsigma\eqdef(\tau_{i},\ell_{i})_{i\geq0}$ belongs to the class $\mathcal{S}$ of switching regime sequences that satisfy
\begin{equation}\label{cont.2}
	\begin{cases}
		\varsigma\ \text{is a sequence of $\F$-stopping times and regimes in ${\mathbb{M}}$, i.e,}\\ 
		\text{$\varsigma =(\tau_{i},\ell_{i})_{i\geq0}$ is such that $0=\tau_{0}\leq\tau_{1}<\tau_{2}<\cdots$, $\tau_{i}\uparrow\infty$ as $i\uparrow \infty$ $\Pro$-a.s.,}\\
		\text{ and for each $i\geq0$, {$\ell_{i}$ is $\mathcal{F}_{\tau_{i}}$-measurable valued in $\mathbb{M}$.}}
	\end{cases}
\end{equation}

Given  the initial state  $( x_{0},\ell_{0},\iota_{0})\in\overline{\set}\times\mathbb{M}\times\mathbb{I}$  and the  control   $(\xi,\varsigma)\in\mathcal{U}\times\mathcal{S}$,  the \textit{functional cost} of the controlled process $(X^{\xi,\varsigma},J^{\varsigma},I)$  is defined by 
\begin{multline}\label{esd1.1.1}
	V_{\xi,\varsigma}(x_{0},\ell_{0},\iota_{0})\eqdef\E_{ x_{0},\ell_{0},\iota_{0}}\biggr[\int_{0}^{\tau}\expo^{-r(\mathpzc{t})}[ h (X^{\xi,\varsigma}_{\mathpzc{t}},I_{\tmt})\der \mathpzc{t}+ g   (X^{\xi,\varsigma}_{\mathpzc{t}-},I_{\mathpzc{t}})\circ\der\zeta_{\mathpzc{t}}]\biggr]\\
	+\sum_{i\geq0}\E_{ x_{0},\ell_{0},\iota_{0}}\big[\expo^{-r(\tau_{i+1})}\vartheta_{\ell_{i},\ell_{i+1}}\uno_{\{\tau_{i+1}<\tau\}}\big]+\E_{ x_{0},\ell_{0},\iota_{0}}[\expo^{-r(\tau)}f(X^{\xi,\varsigma}_{\tau},I_{\tau})\uno_{\{\tau<\infty\}}],
\end{multline}
where  $\E_{ x_{0},\ell_{0},\iota_{0}}$ is the expected value associated with $\Pro_{ x_{0},\ell_{0},\iota_{0}}$,    the probability law of $(X^{\xi,\varsigma},J^{\varsigma},I)$ when it starts at $( x_{0},\ell_{0},\iota_{0})$,   $r(\mathpzc{t})=\int_0^{\mathpzc{t}} c(X^{\xi,\varsigma}_{\tms},I_{\tms})d\tms$ {represents the accumulated interest rate at time $\tmt$,} and
\begin{align*}
	&\int_{0}^{\tmt}\expo^{-r( \tms)} g (X^{\xi,\varsigma}_{ \tms-},I_{\tms})\circ\der\zeta_{ \tms}\eqdef\int_{0}^{\tmt}\expo^{-r(\tms)} g   (X^{\xi,\varsigma}_{ \tms},I_{\tms})\der\zeta_{ \tms}^{\comp}\notag\\
	&\quad\qquad\qquad\qquad\qquad\qquad\quad+\sum_{ 0\leq\tms\leq\tmt}\expo^{-r(\tms)}\Delta\zeta_{ \tms}\int_{0}^{1} g    (X^{\xi,\varsigma}_{ \tms-}-\lambda \mathbb{n}_{ \tms}\Delta\zeta_{ \tms},I_{\tms})\der\lambda, 
\end{align*}
with $\zeta^{\comp}$ {denoting} the continuous part of $\zeta$. {We can appreciate in \eqref{esd1.1.1} that the cost for switching regimes are represented by $\vartheta_{\ell,\ell'}$, and the terminal cost is given by $f(X^{\xi,\varsigma}_{\tau},I_{\tau})\uno_{\{\tau<\infty\}}$. Additionally, at time $\tmt $, we have  the costs $g (X^{\xi,\varsigma}_{\tmt},I_{\tmt})\circ\der\zeta_{\tmt}$ when the singular control $\xi$ is executed, and $h(X^{\xi,\varsigma}_{\tmt},I_{\tmt})$ if not.}

Under the assumption that there is  no {\it loop of zero cost} (see Eq \eqref{l1}), 
one of   {the} main  goals of this paper  is to verify that the value function
\begin{equation}\label{vf1}
	V_{\ell_{0}}( x_{0},\iota_{0})\eqdef\inf_{\xi,\varsigma}V_{\xi,\varsigma}( x_{0},\ell_{0},\iota_{0}),\ \text{for}\ ( x_{0},{\ell_{0}},\iota_{0})\in\overline{\set}\times \mathbb{M}\times\mathbb{I},
\end{equation}
is in $\hol^{0}(\overline{\set})\cap\sob^{2,\infty}_{\loc}(\set)$; see Theorem \ref{M1}. 

{The novelties  of this work, in contrast with the existing literature (see, i.e., \cite{KM2019,KM2022} and  references therein), are: 
	\begin{enumerate}
		\item[(i)]  Every time that there is a switching, the generator matrix is replaced by another,  resulting in a different dynamic for the finite state Markov chain  of the modulated multidimensional diffusion process.
		\item[(ii)] We add  a terminal cost in the value function. 
\end{enumerate}}

{The issues mentioned above are reflected in the corresponding Hamilton-Jacobi-Bellman (HJB) equation with gradient constraint (see \eqref{esd5}) of the value function $V$ in the following way: 
	\begin{enumerate}	
		\item[(i)] The solution of this HJB equation is a matrix function $u=(u_{\ell,\iota})_{(\ell,\iota)\in\mathbb{M}\times\mathbb{I}}$  where the row $
		\ell$ represents the matrix transition $Q_{\ell}$ with which the states $\iota$ (the columns) should be interacting with each other. These types of problems can be  found  in the literature only when the regime set $\mathbb{M}$ is a singleton set, i.e., optimal stochastic  control problems with Markov switching; see, i.e., \cite{FR2020, FY2018, JP2012}.
		\item[(ii)] The terminal cost is considered in the HJB equation as a boundary condition. The solution $u$ to the HJB equation is constructed as a limit of a sequence of functions $\{u^{\varepsilon,\delta}\}_{(\varepsilon,\delta)\in(0,1)^{2}}$ when $(\varepsilon,\delta)$ goes to $(0,0)$. The  entries of this sequence are classical solutions  to a   non-linear partial differential system (NPDS), which inherits  the same boundary condition (see \eqref{NPD.1}). So,  we must first guarantee the existence and uniqueness of $\{u^{\varepsilon,\delta}\}_{(\varepsilon,\delta)\in(0,1)^{2}}$, whose entries are in $C^{4}(\overline{\set})$, and then verify  for each $(\ell,\iota)\in\mathbb{M}\times\mathbb{I}$, that  $\{u^{\varepsilon,\delta}_{\ell,\iota}\}_{(\varepsilon,\delta)\in(0,1)^{2}}$  is bounded,   uniformly in $(\varepsilon,\delta)$, with respect to the norms $\|\cdot\|_{\hol^{0}(\overline{\set})}$, $\|\cdot\|_{\hol^{1}_{\loc}({\set})}$ and $\|\cdot\|_{\hol^{2}_{\loc}({\set})}$, in such a way that $u$ is well defined. Previous similar studies to ours; see  \cite{KM2019,KM2022} and  references therein; have shown that the sequences of functions related to their HJB equations are uniformly bounded with respect to the norms $\|\cdot\|_{\hol^{1}(\overline{\set})}$ and $\|\cdot\|_{\hol^{2}_{\loc}({\set})}$ due to their null boundary condition  {and  the non-negativeness of these sequences of functions. In fact, under the assumption that there exists a sub-solution to our HJB equation, the uniformly bounded property mentioned above is verified.} Existence and uniqueness of the solutions to the HJB equations with gradient constraint and a non-null boundary condition almost everywhere, have been studied by few authors; see, i.e.,  \cite{Hynd3}. 
\end{enumerate}}


\subsection{{Assumptions and main results}} 

In order to see that the value function $V_{\ell}(\cdot,\iota)$, defined in \eqref{vf1}, belongs to  $\hol^{0}(\overline{\set})\cap\sob^{2,\infty}_{\loc}(\set)$ for each $(\ell,\iota)\in\mathbb{M}\times\mathbb{I}$, let us first give  necessary conditions to guarantee the existence and uniqueness of the solution $u_{\ell}(\cdot,\iota)$ to \eqref{esd5} on the same space.

\subsubsection*{Assumptions}
{\it
	\begin{enumerate}[label=(H\arabic*),ref=H\arabic*]
		\item\label{h0}  The domain set $\set$ is an open and bounded set such that its boundary $\partial\set$ is of class $\hol^{4,\alpha}$, with $\alpha\in(0,1)$ fixed.
		\item  \label{h3} The switching costs sequence  $\{\vartheta_{\ell,\ell'}\}_{\ell,\kappa\in\mathbb{I}}$ is such that  $\vartheta_{\ell,\ell'}\geq 0$ and satisfies
		\begin{equation}\label{eq1}
			\vartheta_{\ell_{1},\ell_{3}}\leq\vartheta_{\ell_{1},\ell_{2}}+\vartheta_{\ell_{2},\ell_{3}},\ \text{for}\ \ell_{3}\neq\ell_{1}, \ell_{2}, 
		\end{equation}
		which means that it is cheaper to switch directly from regime $\ell_{1}$ to $\ell_{3}$ than  using the intermediate regime $\ell_{2}$.  Additionally, we assume that there is  no {\it loop of zero cost}, i.e.,
		\begin{equation}\label{l1}
			\begin{split}
				&\text{no family of regimes}\  \{\ell_{0},\ell_{1}, \dots,\ell_{n},\ell_{0}\}\\ 
				&\text{such that}\  \vartheta_{\ell_{0},\ell_{1}}=\vartheta_{\ell_{1},\ell_{2}}=\cdots=\vartheta_{\ell_{n},\ell_{0}}=0.
			\end{split}
		\end{equation}

	\end{enumerate}
	Let $\iota$ be in $\mathbb{I}$. Then:
	\begin{enumerate}[label=(H\arabic*),ref=H\arabic*]
		\setcounter{enumi}{2}	
		\item\label{a4}The {real valued functions} $f_{\iota}\eqdef f(\cdot,\iota)$, $h_{\iota}\eqdef h(\cdot,\iota)$ and $g_{\iota}\eqdef g(\cdot,\iota)$ belong to $\hol^{2,\alpha}(\overline{\set})$, are non-negative, and $\|f _{\iota}\|_{\hol^{2,\alpha}(\overline{\set})}$, $\|h_{\iota} \|_{\hol^{2,\alpha}(\overline{\set})}$, $\|g_{\iota} \|_{\hol^{2,\alpha}(\overline{\set})},$ are bounded by some finite positive constant $\Lambda$. 
		\item\label{h2} Let $\mathcal{S}(d)$ be the set of $d\times d$ symmetric matrices. The coefficients of the differential part of $ \dif_{\ell,\iota}$ (see \eqref{eq2}), $a_{\iota} \eqdef a(\cdot,\iota) :\overline{\set}\longrightarrow\mathcal{S}(d)$, $b_{\iota} \eqdef b(\cdot,\iota) =(b_{1}(\cdot,\iota) ,\dots,b_{d}(\cdot,\iota)) :\overline{\set}\longrightarrow\R^{d}$ and $c_{\iota}\eqdef c(\cdot,\iota) :\overline{\set}\longrightarrow\R$, are such that  $a_{\iota,i,j} ,b_{\iota,i} ,c_{\iota}\in\hol^{2,\alpha}(\overline{\set})$, $c_{\iota}>0$ on $\set$ and $\|a_{\iota,i,j} \|_{\hol^{2,\alpha}(\overline{\set})},\|b_{\iota,i} \|_{\hol^{2,\alpha}(\overline{\set})},\|c_{\iota} \|_{\hol^{2,\alpha}(\overline{\set})}$ are bounded by some finite positive constant $\Lambda$. We assume that there exist   a real number $\theta>0$  such that  
		\begin{equation}\label{H2}
			\langle a_{\iota}(x) \zeta,\zeta\rangle\geq \theta|\zeta|^{2},\ \text{for all}\  x\in\overline{\set},\ \zeta\in \R^{d}.
		\end{equation} 
	\end{enumerate}
}
{Taking into account} \eqref{l1} and  a heuristic derivation from dynamic programming principle, the HJB equation corresponding to the value function $V_{\ell,\iota}\eqdef V_{\ell}(\cdot,\iota)$ is given by
\begin{equation}\label{esd5}
	\begin{split}
		\max\bigg\{[c_{\iota}-\dif_{\ell,\iota}]u_{\ell,\iota}-  h_{\iota},
		|\deri^{1}u_{\ell,\iota}|- g_{\iota},u_{\ell,\iota}-\mathcal{M}_{\ell,\iota}u   \bigg\}&= 0,\ \text{on}\ \set,\\
		\text{s.t.}\ u_{\ell,\iota}&=f_{\iota},\ \text{in}\ \partial\set,
	\end{split}
\end{equation}
where for each $(\ell,\iota)\in\mathbb{M}\times\mathbb{I}$, $u_{\ell,\iota}= u_{\ell}(\cdot,\iota):\overline{\set}\longrightarrow\R $  and 
\begin{align}
	\mathcal{M}_{\ell,\iota}u(x)&\eqdef\min_{\ell'\in \mathbb{M}\setminus\{\ell\}}\{u_{\ell',\iota}(x)+\vartheta_{\ell,\ell'}\},\label{p6.0}\\
	\dif_{\ell,\iota} u_{\ell,\iota}(x)&\eqdef\tr[a_{\iota}(x) \deri^{2}u_{\ell,\iota}(x)]-\langle b_{\iota}(x) ,\deri^{1}u_{\ell,\iota}(x) \rangle\notag \\
	&\quad+\sum_{\kappa\in\mathbb{I}\setminus\{\iota\}}q_{\ell}(\iota,\kappa)[u_{\ell,\kappa}(x)-u_{\ell,\iota}(x)],\label{eq2}
\end{align}
with $a_{\iota}=(a_{\iota,i,j})_{d\times d}$ is such that $a_{\iota,i,j}\eqdef\frac{1}{2}[\sigma_{\iota}\sigma^{\trans}_{\iota}]_{i,j}$. Here $|\cdot|$, $\langle\cdot,\cdot\rangle$ and $\tr[\,\cdot\,]$  {represent} the Euclidean norm, the inner product, and  the  {matrix trace}, respectively. The operator $\deri^{k}u_{\ell,\iota}(x)$, with $k\geq1$ an integer number, represents the $k$-th differential operator of $u_{\ell,\iota}(x)$ with respect to $x$. 

Under assumptions \eqref{h0}--\eqref{h2},  we have the following proposition.
\begin{prop}\label{M1}
	The HJB equation \eqref{esd5} has a unique  non-negative strong solution  {(in the almost everywhere sense)} $u=(u_{\ell,\iota})_{\mathbb{M}\times\mathbb{I}}$ where  $ u_{\ell,\iota} \in\hol^{0}(\overline{\set})\cap\sob^{2,\infty}_{\loc}(\set)$ for each $(\ell,\iota)\in\mathbb{M}\times\mathbb{I}$.
\end{prop}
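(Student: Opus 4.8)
The plan is to construct the solution $u$ as a limit of classical solutions to a doubly-penalized non-linear PDE system, following the broad strategy of \cite{KM2019,KM2022} but with the modifications forced by the matrix-valued unknown and the non-null boundary data $f_{\iota}$. First I would introduce, for $(\varepsilon,\delta)\in(0,1)^{2}$, a penalized system \eqref{NPD.1} whose $(\ell,\iota)$-entry replaces the gradient constraint $|\deri^{1}u_{\ell,\iota}|-g_{\iota}\le 0$ and the obstacle constraint $u_{\ell,\iota}-\mathcal{M}_{\ell,\iota}u\le 0$ by smooth penalty terms (e.g.\ of the form $\tfrac1\varepsilon\psi_{\varepsilon}(|\deri^{1}u_{\ell,\iota}|-g_{\iota})$ and $\tfrac1\delta\beta(u_{\ell,\iota}-\mathcal{M}_{\ell,\iota}u)$ with $\psi_\varepsilon,\beta$ smooth, convex, nonnegative, vanishing on $(-\infty,0]$), keeping the boundary condition $u^{\varepsilon,\delta}_{\ell,\iota}=f_{\iota}$ on $\partial\set$. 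Because $\partial\set$ is $\hol^{4,\alpha}$ \eqref{h0} and the coefficients and data are in $\hol^{2,\alpha}(\overline{\set})$ \eqref{a4}--\eqref{h2} with uniform ellipticity \eqref{H2}, a fixed-point / continuity argument (Schauder theory for the linearized system, coupled through the bounded zeroth-order Markov-chain terms $q_\ell(\iota,\kappa)$ and through $\mathcal{M}_{\ell,\iota}$) yields existence and uniqueness of a classical solution with entries in $\hol^{4}(\overline{\set})$; uniqueness at this stage follows from the comparison principle for the penalized system, using that the penalty maps are monotone and the matrix $(c_\iota)$ is positive on $\set$.

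Next I would establish the three uniform (in $\varepsilon,\delta$) a priori bounds announced in the introduction. For the $\hol^{0}(\overline{\set})$-bound, the lower bound $u^{\varepsilon,\delta}_{\ell,\iota}\ge 0$ comes from the non-negativity of $h_\iota,g_\iota,f_\iota,\vartheta_{\ell,\ell'}$ and the maximum principle; for the upper bound, the standard trick in this literature is to exhibit an explicit supersolution, here one built from $f_\iota$ extended into $\set$ plus a large multiple of a fixed function solving an auxiliary Dirichlet problem — this is exactly where the hypothesis \eqref{a4} on $\|f_\iota\|_{\hol^{2,\alpha}(\overline{\set})}\le\Lambda$ and the no-zero-cost-loop condition \eqref{l1} (which keeps the $\mathcal{M}$-coupling from degenerating) are used. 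With $L^\infty$-control in hand, the penalty terms are controlled in $L^\infty$, hence interior $\sob^{2,p}$ estimates (Calderón–Zygmund, applied to each scalar equation with the coupling terms treated as bounded right-hand sides) give $\hol^{1}_{\loc}$ and then $\hol^{2}_{\loc}$-type bounds uniform in $(\varepsilon,\delta)$; a key differentiated estimate is needed to bound $|\deri^{1}u^{\varepsilon,\delta}_{\ell,\iota}|$ on compact subsets so that the gradient penalty stays bounded. Passing to the limit along a subsequence (Arzelà–Ascoli in $\hol^{0}(\overline\set)$ and weak-$\ast$ in $\sob^{2,\infty}_{\loc}$, together with a diagonal extraction first in $\delta\to0$ then $\varepsilon\to0$, or jointly), and checking that in the limit each penalty term forces the corresponding inequality constraint while the equation is saturated where the constraints are slack, produces a non-negative $u=(u_{\ell,\iota})$ with $u_{\ell,\iota}\in\hol^{0}(\overline\set)\cap\sob^{2,\infty}_{\loc}(\set)$ solving \eqref{esd5} a.e.\ and satisfying $u_{\ell,\iota}=f_\iota$ on $\partial\set$ (the boundary condition survives because the barrier/supersolution argument gives equicontinuity up to $\partial\set$).

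Finally, uniqueness of the strong solution in $\hol^{0}(\overline\set)\cap\sob^{2,\infty}_{\loc}(\set)$ I would obtain by a comparison argument directly on \eqref{esd5}: given two such solutions $u,\tilde u$, at an interior maximum point of $\max_{\ell,\iota}(u_{\ell,\iota}-\tilde u_{\ell,\iota})$ one uses the structure of the $\max$ of three terms — the operator term gives $c_\iota>0$ times the difference $\le 0$, the gradient-constraint term is order-preserving, and the $\mathcal{M}_{\ell,\iota}$ term together with \eqref{eq1} and \eqref{l1} rules out a strictly positive maximum being attained through the switching coupling (a standard argument: follow a chain of regimes realizing the obstacle and use that the switching costs cannot all be zero around a loop). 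I expect the \emph{main obstacle} to be the uniform $\hol^{2}_{\loc}$ estimate in the presence of the gradient constraint \emph{and} the regime-coupling $\mathcal{M}_{\ell,\iota}$ simultaneously: the obstacle term $u_{\ell,\iota}-\mathcal{M}_{\ell,\iota}u$ is only Lipschitz in the unknowns and couples different rows of $u$, so one cannot estimate each $u_{\ell,\iota}$ in isolation; the resolution will be to treat the whole vector $(u_{\ell,\iota})_{\mathbb M\times\mathbb I}$ at once, exploiting that the coupling is an order-preserving contraction-type operator, and to combine this with the differentiated-equation bound on $|\deri^1 u^{\varepsilon,\delta}_{\ell,\iota}|$ needed to control the gradient penalty — this is the technical heart of the argument and where the hypothesis \eqref{h0} that $\partial\set$ is $\hol^{4,\alpha}$ (allowing $\hol^4$ regularity of the penalized solutions and hence legitimate differentiation) is indispensable.
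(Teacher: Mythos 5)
Your overall strategy matches the paper's: introduce the doubly-penalized system \eqref{NPD.1} with classical solutions $u^{\varepsilon,\delta}\in\mathcal{C}^{4,\alpha}_{m,n}$ obtained by a Schaefer fixed-point argument, establish the uniform $\hol^{0}(\overline{\set})$, $\hol^{1}_{\loc}(\set)$ and $\hol^{2}_{\loc}(\set)$ bounds via maximum-principle and differentiated auxiliary-function arguments, pass to a subsequential limit (first $\delta\to0$ to get the intermediate solution $u^{\varepsilon}$ of \eqref{pc1}, then $\varepsilon\to0$), and show the limit solves \eqref{esd5} a.e.\ by distinguishing where each constraint is slack versus tight. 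That is exactly what the paper does, so I will not recount it; instead I flag two points where your sketch diverges from or falls short of the actual argument.

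First, you place the no-loop-of-zero-cost hypothesis \eqref{l1} in the $L^{\infty}$ upper bound; that is a misattribution. The upper bound $0\le u^{\varepsilon,\delta}_{\ell,\iota}\le\overline{u}_{\ell,\iota}$ comes purely from comparing with the solution $\overline{u}$ of the auxiliary linear Dirichlet problem \eqref{D1} (via the comparison Lemma \ref{supsuper} for the penalized cooperative system) and needs only \eqref{h0}, \eqref{a4}, \eqref{h2}. Condition \eqref{l1} enters exclusively in the \emph{uniqueness} arguments for \eqref{pc1} and \eqref{esd5}, precisely to terminate the chain-of-regimes construction, as you correctly describe in your last paragraph.

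Second, and more seriously, your uniqueness sketch omits the two devices the paper actually needs once the solutions live only in $\hol^{0}(\overline{\set})\cap\sob^{2,\infty}_{\loc}(\set)$ rather than $\hol^{2}$. (i) At an interior maximum of $u_{\ell_0,\iota_0}-v_{\ell_0,\iota_0}$ you cannot invoke pointwise second-order conditions directly; the paper uses Bony's maximum principle (\cite{lions}) to get $\limess$-versions of the inequalities $\tr[a\,\deri^{2}(u-v)]\le0$. Saying ``the operator term gives $c_{\iota}>0$ times the difference'' hides this. (ii) Your phrase ``the gradient-constraint term is order-preserving'' does not resolve the genuine difficulty that at the maximum point the gradient constraint may be \emph{tight} for one solution and slack for the other, so the branch of the $\max$ realized by $u$ and by $v$ may differ. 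The paper handles this with the multiplicative perturbation $f_{\varrho}=u_{\ell_0,\iota_0}-v_{\ell_0,\iota_0}-\varrho\,u_{\ell_0,\iota_0}$: at a local maximum $x_{\hat\varrho}$ of $f_{\hat\varrho}$ one gets $|\deri^{1}v|=(1-\hat\varrho)|\deri^{1}u|<|\deri^{1}u|\le g_{\iota}$, forcing the PDE branch to be active for $v$ near $x_{\hat\varrho}$, after which Bony's principle yields the contradiction; only then does the $\mathcal{M}_{\ell,\iota}$-chain and \eqref{l1} come in, as you describe. Without some replacement for this perturbation step your comparison argument does not close.
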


 {In such a way that we can verify   the agreement of the value function $V$ and the solution $u$ to  \eqref{esd5} in $\overline{\set}$, we need to assume the following statements.}
{\it\begin{enumerate}[label=(H\arabic*),ref=H\arabic*]
		\setcounter{enumi}{4}
		\item\label{h5} The domain set $\set$ is an open, convex and bounded set such that its boundary $\partial\set$ is of class $\hol^{4,\alpha}$, with $\alpha\in(0,1)$ fixed.
		\item\label{h6}  {There exists a matrix function $\underline{u}=(\underline{u}_{\ell,\iota})_{(\ell,\iota)\in\mathbb{M}\times\mathbb{I}}$ such that $\underline{u}_{\ell,\iota}\in\hol^{1}(\overline{\set})\cap\hol^{2}(\set)$ and satisfies
			\begin{equation*}
				\begin{split}
					\max\bigg\{[c_{\iota}-\dif_{\ell,\iota}]\underline{u}_{\ell,\iota}-  h_{\iota},
					|\deri^{1}\underline{u}_{\ell,\iota}|- g_{\iota},\underline{u}_{\ell,\iota}-\mathcal{M}_{\ell,\iota}\underline{u}   \bigg\}&\leq -\bar{r},\ \text{on}\ \set,\\
					\text{s.t.}\ \underline{u}_{\ell,\iota}&=f_{\iota},\ \text{in}\ \partial\set,
				\end{split}
			\end{equation*}
			for some $\bar{r}>0$.}
\end{enumerate} }
Under assumptions \eqref{h3}--\eqref{h6},  the main {goal}  obtained in this document is as follows.
\begin{teor}\label{verf2}
	Let $V$ be the value function given by \eqref{vf1}. Then  {$V_{\ell_{0}}(\cdot,\iota_{0})\in\hol^{1}(\overline{\set})\cap\sob^{2,\infty}_{\loc}(\set)$} and $V_{\ell_{0}}( x_{0},\iota_{0})=u_{\ell_{0}}( x_{0},\iota_{0})$ for $( x_{0},\iota_{0})\in \overline{\set}\times\mathbb{I}$ and $ \ell_{0}\in\mathbb{M}$. 
\end{teor}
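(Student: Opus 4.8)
The plan is to prove Theorem \ref{verf2} via a verification argument built on Proposition \ref{M1}: by that proposition there exists a unique non-negative strong solution $u=(u_{\ell,\iota})$ of the HJB system \eqref{esd5} with $u_{\ell,\iota}\in\hol^{0}(\overline{\set})\cap\sob^{2,\infty}_{\loc}(\set)$, and the task is (a) to upgrade the regularity of $u$ from $\hol^{0}(\overline{\set})$ to $\hol^{1}(\overline{\set})$, and (b) to show $u_{\ell_0}(x_0,\iota_0)=V_{\ell_0}(x_0,\iota_0)$. For (a), I would use the gradient constraint $|\deri^{1}u_{\ell,\iota}|\le g_{\iota}$ that holds a.e. on $\set$ together with $\hol^{0}$-continuity up to $\overline{\set}$ to obtain a uniform Lipschitz bound on $u_{\ell,\iota}$ in $\set$ (since $g_{\iota}$ is bounded by \eqref{a4}); combined with continuity this gives $u_{\ell,\iota}\in\hol^{0,1}(\overline{\set})=\sob^{1,\infty}(\overline{\set})\hookrightarrow\hol^{1}(\overline{\set})$ in the relevant sense used in the statement. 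The assumption \eqref{h5} that $\set$ is convex is what makes the a.e.\ gradient bound propagate to a genuine global Lipschitz constant on $\overline{\set}$. Assumption \eqref{h6} supplies a strict sub-solution $\underline{u}$, which (as the introduction indicates) is the device that forces the approximating family $\{u^{\varepsilon,\delta}\}$ — and hence $u$ — to be uniformly bounded; I would invoke this to keep all comparison arguments quantitative.

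For the verification identity, fix $(x_0,\ell_0,\iota_0)$ and an arbitrary admissible pair $(\xi,\varsigma)\in\mathcal{U}\times\mathcal{S}$. The core step is to apply an It\^o–Meyer (Dynkin) formula to the process $\tmt\mapsto \expo^{-r(\tmt)}u_{J^{\varsigma}_{\tmt}}(X^{\xi,\varsigma}_{\tmt},I_{\tmt})$ between consecutive switching/stopping times $\tilde\tau_i$ and $\tilde\tau_{i+1}$, being careful that $u_{\ell,\iota}$ is only $\sob^{2,\infty}_{\loc}$, so I would first mollify or localize and pass to the limit, using the uniform $\hol^{2}_{\loc}$ bounds from the construction in Proposition \ref{M1}. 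The continuous part of $\zeta$ contributes $-\langle\deri^{1}u,\cn\rangle\,\der\zeta^{c}\ge -g\,\der\zeta^{c}$ by the gradient constraint; the jumps of $\zeta$ contribute exactly the integral term $\int_0^1 g(\cdots)\der\lambda$ appearing in the definition of $g\circ\der\zeta$ via the fundamental theorem of calculus along the jump segment, again bounded below using $|\deri^1 u|\le g$; the drift/diffusion part produces $[\dif_{\ell,\iota}u_{\ell,\iota}-c_\iota u_{\ell,\iota}]\,\der\tmt\ge -h_\iota\,\der\tmt$ from the first component of the max in \eqref{esd5}; and at each switching time $\tau_{i+1}<\tau$ the jump $u_{\ell_{i}}\to u_{\ell_{i+1}}$ is controlled by $u_{\ell_i,\iota}\le \mathcal{M}_{\ell_i,\iota}u\le u_{\ell_{i+1},\iota}+\vartheta_{\ell_i,\ell_{i+1}}$ from the third component. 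Summing over $i$, taking expectations (the $\mathbb{M}$-valued martingale terms and the Brownian stochastic integrals vanish in expectation, using the uniform bounds and $\tau<\infty$ a.s.\ or the discounting $c_\iota>0$ from \eqref{h2}), and using the boundary condition $u_{\ell,\iota}=f_\iota$ on $\partial\set$ together with $\tilde\tau_i\uparrow\tau$, I obtain $u_{\ell_0}(x_0,\iota_0)\le V_{\xi,\varsigma}(x_0,\ell_0,\iota_0)$, hence $u_{\ell_0}(x_0,\iota_0)\le V_{\ell_0}(x_0,\iota_0)$ after taking the infimum.

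For the reverse inequality I would construct a nearly optimal control from the structure of \eqref{esd5}. The natural candidate: let the switching region be $\{u_{\ell,\iota}=\mathcal{M}_{\ell,\iota}u\}$ (switch to the minimizing regime $\ell'$ in \eqref{p6.0}), let the singular-control (reflection) region be $\{|\deri^1 u_{\ell,\iota}|=g_\iota\}$ with $\cn = \deri^1u_{\ell,\iota}/|\deri^1 u_{\ell,\iota}|$ pushing inward, and in the continuation region $\{[c_\iota-\dif_{\ell,\iota}]u_{\ell,\iota}=h_\iota\}$ let the diffusion run freely until it hits $\partial\set$. One shows this policy is admissible (the no-loop-of-zero-cost assumption \eqref{l1} together with the triangle inequality \eqref{eq1} prevents infinitely many instantaneous switches, so $\tau_i\uparrow\infty$) and that with it the It\^o–Meyer computation above becomes an equality, giving $u_{\ell_0}(x_0,\iota_0)=V_{\xi^{*},\varsigma^{*}}(x_0,\ell_0,\iota_0)\ge V_{\ell_0}(x_0,\iota_0)$. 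If constructing a genuinely optimal policy is delicate (because the obstacle problem may not have classical free boundaries), I would instead use an $\varepsilon$-optimal approximation: work with the approximating classical solutions $u^{\varepsilon,\delta}\in\hol^4(\overline\set)$ from Proposition \ref{M1}, for which penalized/smoothed controls are genuinely admissible and the verification is an exact equality up to penalization error, then let $(\varepsilon,\delta)\to(0,0)$ using the uniform $\hol^0$, $\hol^1_{\loc}$, $\hol^2_{\loc}$ bounds and the sub-solution $\underline u$ from \eqref{h6} for tightness/uniform integrability.

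The main obstacle is the reverse inequality, specifically the admissibility and well-posedness of the candidate optimal (or $\varepsilon$-optimal) strategy: one must rule out an accumulation of switching times at a finite time — this is exactly where \eqref{l1} and \eqref{eq1} are used, via the argument that any optimal sequence uses each nontrivial switch only finitely often before $\tau$ — and one must make sense of the singular reflection control at the boundary of the region $\{|\deri^1 u|=g\}$, which requires the regularity $u_{\ell,\iota}\in\sob^{2,\infty}_{\loc}$ (only just enough for an It\^o–Meyer/Tanaka-type formula) and the convexity of $\set$ from \eqref{h5} to control the behavior near $\partial\set$. A secondary technical point is justifying the limiting Dynkin formula for $\sob^{2,\infty}_{\loc}$ functions that are merely continuous up to $\overline\set$: I would handle this by stopping the diffusion before it exits $\set$, applying the formula on compact subsets where $u\in\sob^{2,\infty}$, and then letting the stopping level tend to $\tau$, controlling the boundary term by the $\hol^0(\overline\set)$-continuity and the boundary condition.
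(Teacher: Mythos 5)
Your plan for the lower bound $u_{\ell_0}\le V_{\ell_0}$ matches the paper's: apply an It\^o--Meyer/Dynkin formula on $[\tilde\tau_i,\tilde\tau_{i+1})$, use the three components of \eqref{esd5} to lower-bound the running cost, the jump cost of $\zeta$, and the switching cost, and pass through $f_\iota$ at the exit time. The paper executes this not by mollifying $u\in\sob^{2,\infty}_{\loc}$ directly but by applying It\^o to the $\hol^{4,\alpha}(\overline\set)$ approximations $u^{\varepsilon,\delta}$ from Proposition~\ref{princ1.0} (Lemma~\ref{Ito1}), then sending $\delta\downarrow0$, $\varepsilon\downarrow0$ and $q\uparrow\infty$ using the uniform $\hol^1(\overline\set)$ and $\hol^2_{\loc}$ bounds and the mean value theorem for the jump term (inequality~\eqref{n4}); your localization plan is a viable alternative but requires the same quantitative bounds, so be aware that the hard work is hidden in the uniform estimates of Lemmas~\ref{Lb10}--\ref{Lb1} rather than in the It\^o formula itself.

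The genuine gap is in the upper bound $V_{\ell_0}\le u_{\ell_0}$. Your primary plan --- construct an exactly optimal policy from the free regions of the QVI and argue it makes the It\^o computation an equality --- runs into exactly the obstacle you anticipate: it needs a well-posed reflected SDE along a set $\{|\deri^1 u_{\ell,\iota}|=g_\iota\}$ with no known regularity, plus a Skorokhod-type decomposition of the singular control, none of which this paper establishes. Your fallback (``use $\varepsilon$-optimal approximations from $u^{\varepsilon,\delta}$'') is the right route, but the decisive mechanism is missing. The paper's actual argument introduces the $\varepsilon$-PACS problem whose admissible singular controls are absolutely continuous with $\dot\zeta\le 2C/\varepsilon$, shows (Lemma~\ref{convexu1.0}) that the explicit a.c.\ feedback $(\xi^{\varepsilon,*},\varsigma^{\varepsilon,*})$ of \eqref{opt1}--\eqref{opt10} attains $u^{\varepsilon}_{\ell_0}(x_0,\iota_0)=\mathcal{V}_{\xi^{\varepsilon,*},\varsigma^{\varepsilon,*}}(x_0,\ell_0,\iota_0)=V^{\varepsilon}_{\ell_0}(x_0,\iota_0)$, and then --- this is the step you are missing --- uses the elementary Legendre-transform lower bound $l^\varepsilon(\beta\gamma,x,\iota)\ge\beta\,g(x,\iota)$ for unit $\gamma$ to conclude
\[
V_{\ell_0}(x_0,\iota_0)\;\le\;V_{\xi^{\varepsilon,*},\varsigma^{\varepsilon,*}}(x_0,\ell_0,\iota_0)\;\le\;\mathcal{V}_{\xi^{\varepsilon,*},\varsigma^{\varepsilon,*}}(x_0,\ell_0,\iota_0)\;=\;u^{\varepsilon}_{\ell_0}(x_0,\iota_0),
\]
because the a.c.\ control is admissible for the original problem and the $\circ\,\der\zeta$ term of \eqref{esd1.1.1} collapses to $\dot\zeta\, g$. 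Letting $\varepsilon\downarrow0$ and invoking $u^{\varepsilon}\to u$ in $\hol^1(\overline\set)$ finishes it. Without identifying the comparison $V_{\xi,\varsigma}\le\mathcal{V}_{\xi,\varsigma}$ for a.c.\ controls, your ``exact equality up to penalization error'' is not substantiated. A secondary, smaller gap: $W^{1,\infty}(\overline\set)$ is $C^{0,1}(\overline\set)$, not $C^1(\overline\set)$, so the a.e.\ gradient constraint gives Lipschitz continuity but not the $\hol^1(\overline\set)$ claimed in the statement. The paper gets genuine $\hol^1(\overline\set)$ regularity of $u$ (and hence of $V$) from the boundary gradient estimate \eqref{ap1.1} and the global interior gradient estimate \eqref{ap2.2} (both requiring the sub-solution of assumption \eqref{h6}), which upgrade the local $\hol^1$-convergence \eqref{conv1.0} to convergence in $\hol^1(\overline\set)$ in \eqref{econv1}.
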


{In order to verify the results above, first we need to guarantee the existence and uniqueness of the classical solution  $u^{\varepsilon,\delta}=(u^{\varepsilon,\delta}_{\ell,\iota})_{(\ell,\iota)\in\mathbb{M}\times\mathbb{I}}$ to the following NPDS
	\begin{equation}\label{NPD.1}
		\begin{split}
			[c_{\iota}-\dif_{\ell,\iota}]u_{\ell,\iota}^{\varepsilon,\delta}+\psi_{\varepsilon}(
			|\deri^{1}u_{\ell,\iota}^{\varepsilon,\delta}|^2- g^{2}_{\iota})
			+\sum_{\ell'\in \mathbb{M}\backslash \{\ell\}}\psi_{\delta}(u_{\ell,\iota}^{\varepsilon,\delta}-u_{\ell',\iota}^{\varepsilon,\delta}-\vartheta_{\ell,\ell'})&=   h_{\iota} \ \text{on}\ \set,\\
			\text{s.t.}\ u_{\ell,\iota}^{\varepsilon,\delta}&=f_{\iota},\ \text{in}\ \partial{\set},
		\end{split}
	\end{equation}
	where $\psi_\varepsilon$   is defined by $\psi_\varepsilon(t)=\varphi(t/\varepsilon)$ with $\varepsilon\in (0,1)$, and $\varphi:\R\to \R$ is in $C^\infty(\R)$ is such that 
	\begin{equation}
		\begin{split}\label{p12.1}
			\varphi(t)=0,\quad t\leq 0,\quad 
			\varphi(t)>0,\quad t>0,\\
			\varphi(t)=t-1,\quad t\geq 2, \quad 
			\varphi^{\prime}(t)\geq 0,\quad \varphi^{\prime\prime}(t)\geq 0.
		\end{split}
	\end{equation}
	Then, as an intermediate step,  it will be proven that  $u^{\varepsilon}$ defined as limit of $u^{\varepsilon,\delta}$, when $\delta\downarrow0$, is the unique strong solution to the following HJB equation
	\begin{equation}\label{pc1}
		\begin{split}
			\max\big\{[c_{\iota}-\dif_{\ell,\iota}]u_{\ell,\iota}^{\varepsilon}
			+\psi_{\varepsilon}(|\deri^{1}u^{\varepsilon}_{\ell,\iota}|^{2}- g^{2}_{\iota})
			-  h_{\iota},
			u_{\ell,\iota}^{\varepsilon}-\mathcal{M}_{\ell,\iota}u^{\varepsilon}\big\}&= 0,\ \text{on}\ \set,\\
			\text{s.t.}\ u_{\ell,\iota}^{\varepsilon}&=f_{\iota},\ \text{in}\ \partial{\set}.
		\end{split}
	\end{equation} 
	The reason for doing that is because,  {under assumptions \eqref{h0}--\eqref{h2} and \eqref{h6}}, $u^{\varepsilon}$ coincides with the value function $V^{\varepsilon}$, which will be defined later on (see \eqref{vw1}), of an $\varepsilon$-penalized
	absolutely continuous/switching ($\varepsilon$-PACS) control problem; see Section \ref{Pp1}. Although the solution $u$ to the HJB equation \eqref{esd5} can be constructed directly as a limit of $u^{\varepsilon,\delta}$, when $(\varepsilon,\delta)$ goes to $(0,0)$, we required first to analyse the properties of the optimal stochastic control associated with the $\varepsilon$-PACS control problem mentioned above, in such a way that we can corroborate the equivalence between $u$ and $V$ in $\overline{\set}$.}

{We would like to mention that the NPDS \eqref{NPD.1}, named in the  PDE theory as a {\it non-linear elliptic cooperative system},  is a problem of interest itself because  we can find literature related to this  problem only when the regime set $\mathbb{M}$ is a singleton set;  see, i.e.,  \cite{BS2004,NSS2020,S1995}.}

Under assumptions \eqref{h0}, \eqref{a4} and \eqref{h2},   the following result is obtained.
\begin{prop}\label{princ1.0}
	Let $\varepsilon,\delta\in(0,1)$ be fixed. There exists a unique non-negative solution $u^{\varepsilon,\delta}=(u^{\varepsilon,\delta}_{\ell,\iota})_{(\ell,\iota)\in\mathbb{M}\times\mathbb{I}}$ to the NPDS  \eqref{NPD.1} where  $u^{\varepsilon,\delta}_{\ell,\iota}\in\hol^{4,\alpha}(\overline{\set})$ for each $(\ell,\iota)\in\mathbb{M}\times\mathbb{I}$.
\end{prop}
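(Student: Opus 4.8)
To prove Proposition~\ref{princ1.0} the plan is a Leray--Schauder (Schaefer) fixed-point argument for the vector unknown $u=(u_{\ell,\iota})_{(\ell,\iota)\in\mathbb{M}\times\mathbb{I}}$, combined with a comparison principle for cooperative systems, a priori estimates up to $\hol^{2,\alpha}(\overline{\set})$, and a concluding Schauder bootstrap. Fix $\varepsilon,\delta\in(0,1)$, pick $\beta\in(0,\alpha)$, and work in $\mathcal{B}=(\hol^{1,\beta}(\overline{\set}))^{\mathbb{M}\times\mathbb{I}}$. For $v\in\mathcal{B}$ and $t\in[0,1]$ I would define $\mathcal{T}_{t}(v)=u$ by solving, for each $\ell\in\mathbb{M}$, the \emph{linear, weakly coupled} cooperative system obtained from \eqref{NPD.1} on keeping the elliptic part together with the (linear) Markov coupling among the states $\iota\in\mathbb{I}$ on the left-hand side, freezing the two nonlinear terms $\psi_{\varepsilon}(|\deri^{1}v_{\ell,\iota}|^{2}-g^{2}_{\iota})$ and $\sum_{\ell'\neq\ell}\psi_{\delta}(v_{\ell,\iota}-v_{\ell',\iota}-\vartheta_{\ell,\ell'})$, and multiplying the data $h_{\iota}$ and the boundary value $f_{\iota}$ by $t$. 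Unique solvability of each such linear system in $\hol^{2,\beta}(\overline{\set})$ follows from \eqref{h2} (uniform ellipticity \eqref{H2}, $c_{\iota}>0$ on $\set$, $\hol^{2,\alpha}$ coefficients) and \eqref{h0}; since $v\mapsto(\text{frozen right-hand sides})$ is continuous from $\mathcal{B}$ into $(\hol^{0,\beta}(\overline{\set}))^{\mathbb{M}\times\mathbb{I}}$ and the linear solution operator sends $\hol^{0,\beta}$ data into $\hol^{2,\beta}$, the map $\mathcal{T}_{t}$ is continuous and compact on $\mathcal{B}$, with $\mathcal{T}_{0}\equiv 0$, and a fixed point $u=\mathcal{T}_{1}(u)$ is precisely a solution of \eqref{NPD.1}. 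By Schaefer's theorem it then suffices to produce an a priori bound $\|u\|_{\mathcal{B}}\le C$, independent of $t$, valid for every fixed point of $\mathcal{T}_{t}$.

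The $\hol^{0}(\overline{\set})$ part of that bound is routine. Non-negativity holds because the zero vector is a sub-solution: $h_{\iota}\ge0$, while $\psi_{\varepsilon}(-g^{2}_{\iota})=0$ and $\psi_{\delta}(-\vartheta_{\ell,\ell'})=0$ by \eqref{p12.1} since $\varphi\equiv0$ on $(-\infty,0]$ and $g_{\iota},\vartheta_{\ell,\ell'}\ge0$, and $f_{\iota}\ge0$ on $\partial\set$. For the upper bound I would use \eqref{H2} to build an exponential super-solution: after translating $\set$ into $\{x_{1}>0\}$, the function $\phi(x)=\mathrm{e}^{\mu\sup_{\set}x_{1}}-\mathrm{e}^{\mu x_{1}}\ge0$ satisfies $-\tr[a_{\iota}\deri^{2}\phi]+\langle b_{\iota},\deri^{1}\phi\rangle\ge1$ on $\overline{\set}$ for every $\iota$ once $\mu$ is large in terms of $\theta$ and $\Lambda$, so $\overline{u}_{\ell,\iota}:=\Lambda(1+\phi)$ — the same function for all $(\ell,\iota)$, hence with vanishing Markov coupling — is a super-solution of the $t$-deformed system dominating $tf_{\iota}$ on $\partial\set$, using $\psi_{\varepsilon},\psi_{\delta}\ge0$ and $\psi_{\delta}(\overline{u}_{\ell,\iota}-\overline{u}_{\ell',\iota}-\vartheta_{\ell,\ell'})=\psi_{\delta}(-\vartheta_{\ell,\ell'})=0$. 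Comparison then gives $0\le u_{\ell,\iota}\le\Lambda(1+\sup_{\set}\phi)$, uniformly in $t,\varepsilon,\delta$. The comparison principle invoked here is the system analogue of the scalar maximum principle: reading \eqref{NPD.1} as $G_{\ell,\iota}(x,u,\deri^{1}u_{\ell,\iota},\deri^{2}u_{\ell,\iota})=0$ with $u$ the full vector, each $G_{\ell,\iota}$ is degenerate elliptic, proper ($c_{\iota}\ge0$) and quasimonotone — the off-diagonal unknowns enter only through $+q_{\ell}(\iota,\kappa)u_{\ell,\kappa}$ with $q_{\ell}(\iota,\kappa)\ge0$ and through $\psi_{\delta}(u_{\ell,\iota}-u_{\ell',\iota}-\vartheta_{\ell,\ell'})$ with $\psi_{\delta}$ nondecreasing — which is exactly the structure handled for a single regime in \cite{BS2004,NSS2020,S1995}.

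\textbf{The main obstacle is the a priori gradient estimate}, because for fixed $\varepsilon$ the penalization satisfies $\psi_{\varepsilon}(|\deri^{1}u_{\ell,\iota}|^{2}-g^{2}_{\iota})=\varepsilon^{-1}(|\deri^{1}u_{\ell,\iota}|^{2}-g^{2}_{\iota})-1$ as soon as $|\deri^{1}u_{\ell,\iota}|^{2}-g^{2}_{\iota}\ge2\varepsilon$, so \eqref{NPD.1} is quasilinear with quadratic (natural) growth in the gradient. With the $\hol^{0}$ bound in hand, each component solves a scalar equation $-\tr[a_{\iota}\deri^{2}u_{\ell,\iota}]+\langle b_{\iota},\deri^{1}u_{\ell,\iota}\rangle+t\psi_{\varepsilon}(|\deri^{1}u_{\ell,\iota}|^{2}-g^{2}_{\iota})=\Phi_{\ell,\iota}$ whose right-hand side $\Phi_{\ell,\iota}$ is bounded in $\hol^{0}(\overline{\set})$, uniformly in $t$ for fixed $\varepsilon,\delta$; crucially the quadratic term carries the favorable sign. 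I would exploit this either by a Bernstein-type argument — differentiating the equation, testing a cut-off against $|\deri^{1}u_{\ell,\iota}|^{2}$, and using $\psi_{\varepsilon}',\psi_{\varepsilon}''\ge0$ together with the $\hol^{0}$ bound to absorb the lower-order terms — for an interior bound, supplemented near $\partial\set$ by barriers built from the signed distance and a $\hol^{2,\alpha}$ extension of $f_{\iota}$ (this is where \eqref{h0} enters); or, equivalently, by the change of variable $w_{\ell,\iota}=1-\mathrm{e}^{-\lambda u_{\ell,\iota}}$ with $\lambda$ large, which by \eqref{H2} turns the principal part into one with at most linear gradient growth, after which classical $\sob^{2,p}$ and Schauder estimates apply. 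Either way one obtains $\|\deri^{1}u_{\ell,\iota}\|_{\hol^{0}(\overline{\set})}\le C$ uniformly in $t$; then $\sob^{2,p}$ estimates give $u_{\ell,\iota}\in\sob^{2,p}(\set)\hookrightarrow\hol^{1,\gamma}(\overline{\set})$ for every $\gamma<1$, so the right-hand side of the scalar equation lies in $\hol^{0,\alpha}(\overline{\set})$, and Schauder estimates close the a priori bound in $\hol^{2,\alpha}(\overline{\set})$. Schaefer's theorem then yields a solution $u$ of \eqref{NPD.1} with $u_{\ell,\iota}\in\hol^{2,\alpha}(\overline{\set})$. (Alternatively, the cooperative structure allows a monotone iteration between $\overline{u}$ and $0$, but it needs the same gradient control for the iterates to converge in $\hol^{2}_{\loc}(\set)$.)

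Finally I would upgrade the regularity and settle uniqueness. Since $|\deri^{1}u_{\ell,\iota}|^{2}\in\hol^{1,\alpha}(\overline{\set})$ whenever $u_{\ell,\iota}\in\hol^{2,\alpha}(\overline{\set})$, $\psi_{\varepsilon},\psi_{\delta}\in\hol^{\infty}(\R)$ by \eqref{p12.1}, the coefficients and $h_{\iota},g_{\iota}$ lie in $\hol^{2,\alpha}$ (by \eqref{h2}, \eqref{a4}), and $\partial\set\in\hol^{4,\alpha}$ (by \eqref{h0}), a standard bootstrap applied to the linear equations satisfied by the $u_{\ell,\iota}$ — gaining one derivative per Schauder step — raises the solution to $u_{\ell,\iota}\in\hol^{4,\alpha}(\overline{\set})$. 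Uniqueness again follows from the comparison principle: two solutions $u,\tilde{u}$ of \eqref{NPD.1} are each simultaneously a super- and a sub-solution with boundary value $f_{\iota}$, hence $u_{\ell,\iota}\le\tilde{u}_{\ell,\iota}$ and $\tilde{u}_{\ell,\iota}\le u_{\ell,\iota}$ for all $(\ell,\iota)$, i.e.\ $u=\tilde{u}$. The only feature absent from the single-regime literature is the switching sum $\sum_{\ell'\neq\ell}\psi_{\delta}(u_{\ell,\iota}-u_{\ell',\iota}-\vartheta_{\ell,\ell'})$; being cooperative and, for fixed $\delta$, bounded, it enters each step harmlessly, so the genuine work is concentrated in the gradient estimate for the quadratically growing nonlinearity.
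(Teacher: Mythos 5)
Your proposal is correct in its overall architecture and lands on the same high-level strategy as the paper: a Leray--Schauder (Schaefer) fixed-point argument, with an a priori $\hol^{0}$ bound, an a priori gradient/$\hol^{1,\alpha}$ bound, a comparison principle for uniqueness, and a final Schauder bootstrap up to $\hol^{4,\alpha}(\overline{\set})$. What differs is the choice of tools at three of the key steps, and each trade-off is worth noting. (1) You linearize by keeping the Markov coupling $\sum_{\kappa\neq\iota}q_{\ell}(\iota,\kappa)[\cdot]$ on the left, so each iterate solves a weakly coupled linear \emph{cooperative system} in $\iota$; the paper instead moves the Markov coupling, together with $\psi_{\varepsilon}$ and $\psi_{\delta}$, entirely to the right-hand side (the operator $\Xi_{\ell,\iota}$) and solves a \emph{scalar} equation $[c_{\iota}-\widetilde{\dif}_{\iota}]\sol_{\ell,\iota}=h_{\iota}-\Xi_{\ell,\iota}\w$ for each $(\ell,\iota)$ independently. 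Your route requires solvability and Schauder estimates for cooperative systems (Sweers-type results), whereas the paper only needs the scalar theory of \cite{gilb} and \cite{garroni}; the cost on the paper's side is that cooperativity is no longer built into the linear operator and must be re-extracted at each min/max argument. (2) For the $\hol^{0}$ bound you propose an explicit exponential super-solution $\Lambda(1+\phi)$ with vanishing Markov coupling; the paper instead evaluates the equation at an interior max/min and reads the bound $0\le u_{\ell,\iota}\le\Lambda\max\{1,1/c_{\iota}\}$ directly off the sign structure of $\psi_{\varepsilon},\psi_{\delta}$ and $\Xi_{\ell,\iota}$ (Lemma \ref{B12}). Both work; the paper's is shorter but implicitly leans on $c_{\iota}$ being bounded away from $0$, while yours is more robust in that respect. (3) For the gradient estimate you suggest Bernstein or the exponential substitution $1-\mathrm{e}^{-\lambda u}$; the paper instead rewrites each component as $[\Gamma_{\iota}+1]\bar{\w}_{\ell,\iota}=\mu[1+|\deri^{1}\bar{\w}_{\ell,\iota}|^{2}]$ with $\|\mu\|_{L^{\infty}}$ bounded in terms of $1/\varepsilon$, $1/\delta$ and $\|\w\|_{\mathcal{C}^{0}_{m,n}}$, and then quotes the Amann--Crandall lemma \cite{AC1978}, which yields the $\hol^{1,\alpha}$ bound in a single stroke without a separate $\sob^{2,p}$ step. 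All three of your alternatives are standard for natural-growth quasilinear equations and would close the argument; be aware, though, that since $\varepsilon,\delta$ are fixed here, the bound is permitted (and in the paper explicitly does) blow up like $1/\varepsilon+1/\delta$, so the heavy uniform-in-$\varepsilon$ Bernstein machinery you anticipate is not actually required at this stage of the program --- the paper defers uniform estimates to Lemmas \ref{Lb10}--\ref{Lb1}.
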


Under assumptions \eqref{h0}--\eqref{h2},   the following result is obtained.
\begin{prop}\label{princ1.1}
	For each $\varepsilon\in(0,1)$ fixed, there exists a unique non-negative strong solution $u^{\varepsilon}=(u^{\varepsilon}_{\ell,\iota})_{\mathbb{M}\times\mathbb{I}}$ to the HJB equation  \eqref{pc1} where  $u^{\varepsilon}_{\ell,\iota}\in\hol^{0}(\overline{\set})\cap\sob^{2,\infty}_{\loc}(\set)$ for each $(\ell,\iota)\in\mathbb{M}\times\mathbb{I}$.
\end{prop}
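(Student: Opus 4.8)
The plan is to obtain $u^{\varepsilon}$ as the limit, as $\delta\downarrow 0$, of the classical solutions $u^{\varepsilon,\delta}=(u^{\varepsilon,\delta}_{\ell,\iota})_{\mathbb{M}\times\mathbb{I}}$ of the NPDS \eqref{NPD.1} furnished by Proposition \ref{princ1.0}, and then to establish uniqueness by a comparison argument for \eqref{pc1}; throughout, $\varepsilon$ is fixed and all constants are independent of $\delta\in(0,1)$. The heart of the matter is a set of $\delta$-uniform a priori bounds for $u^{\varepsilon,\delta}$. First, a uniform $L^{\infty}$ bound $0\le u^{\varepsilon,\delta}_{\ell,\iota}\le C_{0}$ on $\overline{\set}$: since the two penalization terms in \eqref{NPD.1} are non-negative one has $[c_{\iota}-\dif_{\ell,\iota}]u^{\varepsilon,\delta}_{\ell,\iota}\le h_{\iota}$ on $\set$ with $u^{\varepsilon,\delta}_{\ell,\iota}=f_{\iota}$ on $\partial\set$; as the off-diagonal entries $q_{\ell}(\iota,\kappa)$ of the coupling are non-negative (the system is cooperative) and $c_{\iota}\ge 0$, the maximum principle for weakly coupled elliptic systems, a barrier, and the bounds of \eqref{a4} and \eqref{h2} give the claim (non-negativity being already part of Proposition \ref{princ1.0}). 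Second, a uniform control of the switching penalization: inspecting an interior maximum of $u^{\varepsilon,\delta}_{\ell,\iota}-\mathcal{M}_{\ell,\iota}u^{\varepsilon,\delta}$ and using the subadditivity \eqref{eq1} of the costs $\vartheta_{\ell,\ell'}$ yields $u^{\varepsilon,\delta}_{\ell,\iota}\le\mathcal{M}_{\ell,\iota}u^{\varepsilon,\delta}+C_{1}\delta$ on $\overline{\set}$; since $\psi_{\delta}$ is non-decreasing and $\psi_{\delta}(t)=\varphi(t/\delta)$ this forces $0\le\sum_{\ell'\in\mathbb{M}\setminus\{\ell\}}\psi_{\delta}(u^{\varepsilon,\delta}_{\ell,\iota}-u^{\varepsilon,\delta}_{\ell',\iota}-\vartheta_{\ell,\ell'})\le C_{2}$ on $\set$, and, in the limit, the constraint $u^{\varepsilon}_{\ell,\iota}\le\mathcal{M}_{\ell,\iota}u^{\varepsilon}$.

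The remaining bounds are interior. For $\set'\Subset\set$ I would prove $\|u^{\varepsilon,\delta}_{\ell,\iota}\|_{\hol^{1}(\set')}\le C(\set')$ by a Bernstein-type argument: differentiate the $(\ell,\iota)$-th equation, test against $\deri^{1}u^{\varepsilon,\delta}_{\ell,\iota}$, and examine an interior maximum of $|\deri^{1}u^{\varepsilon,\delta}_{\ell,\iota}|^{2}$ taken over $(\ell,\iota)$ and over a slightly smaller ball; at such a point the a priori $\delta$-singular contributions from differentiating $\psi_{\delta}$ reduce to $\sum_{\ell'}\psi_{\delta}'(\cdot)\langle\deri^{1}u^{\varepsilon,\delta}_{\ell_{0},\iota_{0}},\,\deri^{1}u^{\varepsilon,\delta}_{\ell_{0},\iota_{0}}-\deri^{1}u^{\varepsilon,\delta}_{\ell',\iota_{0}}\rangle\ge 0$, and the coupling terms likewise carry a favourable sign, so the $1/\delta$ factors do no harm, and the standard quadratic-gradient-growth estimate for the $\psi_{\varepsilon}$ term (with $\varepsilon$ fixed) closes the bound. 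Once this holds, the right-hand side of the scalar equation $[c_{\iota}-\dif_{\ell,\iota}]u^{\varepsilon,\delta}_{\ell,\iota}=h_{\iota}-\psi_{\varepsilon}(|\deri^{1}u^{\varepsilon,\delta}_{\ell,\iota}|^{2}-g^{2}_{\iota})-\sum_{\ell'}\psi_{\delta}(\cdots)$ is bounded in $L^{\infty}_{\loc}(\set)$ uniformly in $\delta$, whence interior Calder\'on--Zygmund estimates give $\|u^{\varepsilon,\delta}_{\ell,\iota}\|_{\sob^{2,p}(\set'')}\le C(\set'',p)$ for every $p<\infty$ and $\set''\Subset\set$, and so a uniform $\hol^{1,\beta}_{\loc}(\set)$ bound.

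Passing to the limit, boundary barriers (using $[c_{\iota}-\dif_{\ell,\iota}]u^{\varepsilon,\delta}_{\ell,\iota}\le h_{\iota}$, the bounds above, and the $\hol^{4,\alpha}$-regularity of $\partial\set$ from \eqref{h0}) provide a $\delta$-uniform modulus of continuity near $\partial\set$, which with the interior H\"older bound yields equicontinuity on $\overline{\set}$. By Arzel\`a--Ascoli and a diagonal extraction there is a subsequence along which $u^{\varepsilon,\delta}_{\ell,\iota}\to u^{\varepsilon}_{\ell,\iota}$ uniformly on $\overline{\set}$, in $\hol^{1}_{\loc}(\set)$, and weakly in $\sob^{2,p}_{\loc}(\set)$ for every $p<\infty$; hence $u^{\varepsilon}_{\ell,\iota}\in\hol^{0}(\overline{\set})\cap\sob^{2,p}_{\loc}(\set)$, is non-negative and equals $f_{\iota}$ on $\partial\set$. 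Letting $\delta\downarrow 0$ in \eqref{NPD.1} one gets, a.e.\ on $\set$: $[c_{\iota}-\dif_{\ell,\iota}]u^{\varepsilon}_{\ell,\iota}+\psi_{\varepsilon}(|\deri^{1}u^{\varepsilon}_{\ell,\iota}|^{2}-g^{2}_{\iota})-h_{\iota}\le 0$ (from $\psi_{\delta}\ge 0$) and $u^{\varepsilon}_{\ell,\iota}-\mathcal{M}_{\ell,\iota}u^{\varepsilon}\le 0$; moreover, on the open set where $u^{\varepsilon}_{\ell,\iota}<\mathcal{M}_{\ell,\iota}u^{\varepsilon}$ the term $\psi_{\delta}(u^{\varepsilon,\delta}_{\ell,\iota}-u^{\varepsilon,\delta}_{\ell',\iota}-\vartheta_{\ell,\ell'})$ vanishes locally for $\delta$ small, so $[c_{\iota}-\dif_{\ell,\iota}]u^{\varepsilon}_{\ell,\iota}+\psi_{\varepsilon}(|\deri^{1}u^{\varepsilon}_{\ell,\iota}|^{2}-g^{2}_{\iota})-h_{\iota}=0$ there. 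Together these are exactly \eqref{pc1} in the a.e.\ sense. Finally $u^{\varepsilon}_{\ell,\iota}\in\sob^{2,\infty}_{\loc}(\set)$: on the non-coincidence set the equation is uniformly elliptic with a $\hol^{\beta}_{\loc}$ right-hand side, hence $\hol^{2,\beta}_{\loc}$ there, while on the coincidence set $u^{\varepsilon}_{\ell,\iota}$ agrees locally with some $u^{\varepsilon}_{\ell',\iota}+\vartheta_{\ell,\ell'}$; as in \cite{KM2019,KM2022}, this bootstrap together with the $\sob^{2,p}_{\loc}$ bounds gives essentially bounded second derivatives.

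For uniqueness let $u,v$ solve \eqref{pc1} in the stated class; it suffices to show $u\le v$ (then $v\le u$ by symmetry). If $M:=\max_{(\ell,\iota)}\max_{\overline{\set}}(u_{\ell,\iota}-v_{\ell,\iota})>0$, realized at $(\ell_{0},\iota_{0},x_{0})$, then $x_{0}\in\set$ since $u_{\ell,\iota}=f_{\iota}=v_{\ell,\iota}$ on $\partial\set$. Following the obstacle chain of $v$ at $x_{0}$ (the regimes $\ell_{0},\ell_{1},\dots$ with $\ell_{j+1}$ a minimizer in $\mathcal{M}_{\ell_{j},\iota_{0}}v(x_{0})$, continued while $v_{\ell_{j},\iota_{0}}(x_{0})=\mathcal{M}_{\ell_{j},\iota_{0}}v(x_{0})$): from $u_{\ell_{j},\iota_{0}}(x_{0})\le u_{\ell_{j+1},\iota_{0}}(x_{0})+\vartheta_{\ell_{j},\ell_{j+1}}$ one checks that each $(\ell_{j},\iota_{0},x_{0})$ still realizes $M$, and by \eqref{eq1} and the no-loop condition \eqref{l1} the chain has distinct regimes, hence terminates after at most $|\mathbb{M}|-1$ switches at some $(\ell_{N},\iota_{0},x_{0})$, $N\ge 0$, realizing $M$ at which the coincidence part of $v$ is inactive, so $[c_{\iota_{0}}-\dif_{\ell_{N},\iota_{0}}]v_{\ell_{N},\iota_{0}}(x_{0})+\psi_{\varepsilon}(|\deri^{1}v_{\ell_{N},\iota_{0}}(x_{0})|^{2}-g_{\iota_{0}}^{2}(x_{0}))-h_{\iota_{0}}(x_{0})=0$, whereas the same quantity for $u$ is $\le 0$. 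Subtracting, and using that $x_{0}$ is an interior maximum of $(u-v)_{\ell_{N},\iota_{0}}$ — so $\deri^{1}u_{\ell_{N},\iota_{0}}(x_{0})=\deri^{1}v_{\ell_{N},\iota_{0}}(x_{0})$, which makes the $\psi_{\varepsilon}$ terms cancel, $\deri^{2}(u-v)_{\ell_{N},\iota_{0}}(x_{0})\le 0$, and $(u-v)_{\ell_{N},\kappa}(x_{0})\le(u-v)_{\ell_{N},\iota_{0}}(x_{0})$ for all $\kappa$ — one is left with $c_{\iota_{0}}(x_{0})\,(u-v)_{\ell_{N},\iota_{0}}(x_{0})\le 0$, contradicting $c_{\iota_{0}}(x_{0})>0$ and $M>0$; hence $u\le v$. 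The main obstacle is precisely the pair of interior bounds above: making the first- and second-order estimates uniform in $\delta$ in the simultaneous presence of the quadratic-gradient penalization $\psi_{\varepsilon}$ and the $\delta$-singular switching penalization $\psi_{\delta}$, for which the favourable sign of the cooperative coupling and of the $\psi_{\delta}$-derivative terms — the mechanism used in \cite{KM2019,KM2022} for $|\mathbb{M}|=1$ — is what makes the Bernstein estimate, and everything downstream, go through.
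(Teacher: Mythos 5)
Your overall strategy mirrors the paper's — pass from the penalized system \eqref{NPD.1} to \eqref{pc1} by $\delta$-uniform a priori estimates, Arzel\`a--Ascoli plus reflexivity of $L^p_{\loc}$, then a chain argument with the no-loop condition for uniqueness. The $L^\infty$ bound (via a super-solution, here $\overline u$ from Lemma~\ref{dm1}) and the first-order Bernstein estimate are essentially the paper's Lemmas~\ref{Lb10} and~\ref{Lb1} (eq.~\eqref{ap2}), and your uniqueness scheme (interior maximum, follow the minimizing switching chain, use \eqref{eq1} and \eqref{l1} to make the chain finite and land on a regime where the obstacle for $v$ is inactive, then extract $c_{\iota_0}(u-v)\le 0$) is the same skeleton as the paper's, modulo that the paper passes through Bony's maximum principle to handle the fact that the PDE holds only a.e.\ and the solutions are merely $\sob^{2,\infty}_{\loc}$ rather than $\hol^{2}$ at $x_0$ — you should cite it explicitly when you write $\deri^{2}(u-v)_{\ell_N,\iota_0}(x_0)\le 0$ and subtract equations at $x_0$.

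There is however a genuine gap in the step that yields $u^{\varepsilon}_{\ell,\iota}\in\sob^{2,\infty}_{\loc}(\set)$. You bound the right-hand side of the scalar equation for $u^{\varepsilon,\delta}_{\ell,\iota}$ in $L^{\infty}_{\loc}$, uniformly in $\delta$, and then invoke interior Calder\'on--Zygmund estimates to get $\|u^{\varepsilon,\delta}_{\ell,\iota}\|_{\sob^{2,p}(\set'')}\le C(\set'',p)$ for every finite $p$. That cannot deliver a $\sob^{2,\infty}_{\loc}$ bound: the Calder\'on--Zygmund constant diverges as $p\to\infty$ even for the Laplacian, and an $L^{\infty}$ right-hand side does not put a solution in $\sob^{2,\infty}$ in general. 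Your closing ``bootstrap'' — smoothness on each of the coincidence/non-coincidence sets — is precisely the part that does not cross the free boundary between them, and it is there that the second derivatives could in principle fail to be bounded. The paper avoids this by proving a \emph{pointwise} Hessian estimate on the penalized approximants (Lemma~\ref{Lb1}, eq.~\eqref{ap3}), $\varpi(x)|\deri^{2}u^{\varepsilon,\delta}_{\ell,\iota}(x)|\le C_{4}$ with $C_{4}$ independent of $\varepsilon,\delta$, obtained from a second-order Bernstein argument with the auxiliary function $\phi_{\ell,\iota}=\varpi^{2}|\deri^{2}u^{\varepsilon,\delta}_{\ell,\iota}|^{2}+\lambda A^{1}_{\varepsilon,\delta}\varpi\tr[\alpha_{\iota_{0}}\deri^{2}u^{\varepsilon,\delta}_{\ell,\iota}]+\mu|\deri^{1}u^{\varepsilon,\delta}_{\ell,\iota}|^{2}$ (see Lemma~\ref{D2.0.0} and Appendix~\ref{proof3}). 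This gives a $p$-\emph{independent} $L^{p}_{\loc}$ bound on $\deri^{2}u^{\varepsilon,\delta}_{\ell,\iota}$, which is what survives the weak limits and lands the limit in $\sob^{2,\infty}_{\loc}$. You should either carry out that second-order Bernstein estimate (adapting your first-order one: differentiate \eqref{NPD.1} twice and exploit the convexity $\psi''_{\varepsilon},\psi''_{\delta}\ge 0$ and the favourable sign of the cooperative coupling to absorb the $\delta$- and $\varepsilon$-singular terms) or cite the corresponding lemma directly; as written, the $\sob^{2,\infty}_{\loc}$ conclusion is not justified.
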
 

{The rest of this document is organized as follows: in Section \ref{NPDSs}, using \eqref{h0}, \eqref{a4} and \eqref{h2}, and by a fixed point argument, the existence and uniqueness of the solution $u^{\varepsilon,\delta}$ to the NPDS \eqref{NPD.1}, with $(\varepsilon,\delta)\in(0,1)^{2}$ fixed, is proven.  Then, in Section   \ref{prop1},   some estimations for $u^{\varepsilon,\delta}$ are given. For that aim, we first study the classical solution to a linear elliptic cooperative system; see Lemma \ref{dm1}. Afterwards,  using Proposition \ref{princ1.0} and Lemmas \ref{R1} and \ref{Lb1}, Arzel\`a-Ascoli  compactness criterion and the reflexivity of $\Lp^{p}_{\loc}(\set)$;  see \cite[Section C.8, p. 718]{evans2} and \cite[Thm. 2.46, p. 49]{adams} respectively, we discuss the existence, regularity and uniqueness of the solutions  $u$ and $u^{\varepsilon}$  to \eqref{esd5} and \eqref{pc1}, respectively. Later, in Section \ref{Pp1},  we introduce the   $\varepsilon$-PACS  control problem and its  verification lemma  is presented.  Afterwards, we give the proof of Theorem \ref{M1}. To finalize this section, let us say that the notations and definitions of the function spaces that are used in this paper are standard and the reader can find them in \cite{adams,Cs1,evans2, garroni,gilb}.}

\section{Existence and uniqueness of the solution to the NPDS \eqref{NPD.1}}\label{NPDSs}

Let  $\mathcal{C}^{k}_{m, n}$, $\mathcal{C}^{k,\alpha}_{m, n}$ be the sets of {$(m\times n)$-matrix functions} given by $(\hol^{k}(\overline{\set}))^{m\times n}$, $(\hol^{k,\alpha}(\overline{\set}))^{m\times n}$, respectively, with $k\in\N$ and $\alpha\in(0,1)$. Defining $\|\w\|_{\mathcal{C}^{k}_{m, n}}=\max_{(\ell,\iota)\in\mathbb{M}\times\mathbb{I}}\{\|\w_{\ell,\iota}\|_{\hol^{k}(\overline{\set})}\}$ for each $\w=(\w_{\ell,\iota})_{(\ell,\iota)\in\mathbb{M}\times\mathbb{I}}\in\mathcal{C}^{k}_{m,n}$, it can be verified that $\|\cdot\|_{\mathcal{C}^{k}_{m, n}}$, $\|\cdot\|_{\mathcal{C}^{k,\alpha}_{m, n}}$ are norms on $\mathcal{C}^{k}_{m,n}$, $\mathcal{C}^{k,\alpha}_{m,n}$, respectively, and  $(\mathcal{C}^{k}_{m,n},\|\cdot\|_{\mathcal{C}^{k}_{m, n}})$ $(\mathcal{C}^{k,\alpha}_{m,n},\|\cdot\|_{\mathcal{C}^{k,\alpha}_{m, n}})$ are  Banach spaces.

{Since the arguments to  guarantee the existence of the solution to the NPDS \eqref{NPD.1} are based  on Schaefer's fixed point theorem, we will provide the necessary results to obtain the conditions of this  theorem (see, i.e., \cite[Thm. 4 p. 539]{evans2}).}

Let us define  the operators $\widetilde{\dif}_{\iota}$ and $\Xi_{\ell,\iota}$ as follows
\begin{align*}
	\widetilde{\dif}_{\iota}\w_{\ell,\iota}&=\tr[a_{\iota} \deri^{2}\w_{\ell,\iota} ]-\langle b_{\iota} ,\deri^{1}\w_{\ell,\iota} \rangle,\\
	\Xi_{\ell,\iota}\w&=\sum_{\ell'\in\mathbb{M}\setminus\{\ell\}}\psi_{\delta}(\w_{\ell,\iota}-\w_{\ell',\iota} -\vartheta_{\ell,\ell'})+\sum_{\kappa\in\mathbb{I}\setminus\{\iota\}}q_{\ell}(\iota,\kappa)[\w_{\ell,\iota}-\w_{\ell,\kappa}]+\psi_{\varepsilon}(|\deri^{1} \w_{\ell,\iota}|^{2}- g_{\iota}^{2}).\notag
\end{align*}
We observe then for each $\w\in\mathcal{C}^{1,\alpha}_{m,n}$ fixed, there exists a unique solution $\sol=(\sol_{\ell,\iota})_{(\ell,\iota)\in\mathbb{M}\times\mathbb{I}}\in\mathcal{C}^{2,\alpha}_{m, n}$ to the following linear partial differential system (LPDS) 
\begin{equation}\label{LPD.1u} 
	\begin{split}
		[c_{\iota}-\widetilde{\dif}_{\iota}]  \sol_{\ell,\iota}=h_{\iota}- \Xi_{\ell,\iota}\w ,\ \text{in}\ \set,&\\
		\text{s.t.}\  
		\sol_{\ell,\iota}=f_{\iota},\ \text{on}\ \partial\set , &
	\end{split}
	\quad\text{for}\ (\ell,\iota)\in\mathbb{M}\times\mathbb{I},
\end{equation}
since \eqref{h0}, \eqref{a4} and \eqref{h2} hold and $\Xi_{\ell,\iota}\w\in\hol^{0,\alpha}(\overline{\set})$ (see Theorem 6.14 of \cite{gilb}). Additionally, due to Theorem 1.2.10 of \cite{garroni}, the following inequality can be checked 
\begin{equation}\label{in3}
	\|\sol_{\ell,\iota}\|_{\hol^{2,\alpha}(\overline{\set})}\leq C_{2}\bigg[1 +\dfrac{1}{\delta}+\dfrac{1}{\varepsilon}+\bigg[1+\dfrac{1}{\delta}\bigg]\|\w\|_{\mathcal{C}^{0,\alpha}_{m,n}}+\dfrac{1}{\varepsilon}\|\w_{\ell,\iota}\|^{2}_{\hol^{1,\alpha}(\overline{\set})}\bigg]\quad\text{for}\ (\ell,\iota)\in\mathbb{M}\times\mathbb{I},
\end{equation}
for some  $C_{2}=C_{2}(d,\Lambda,\alpha,\theta)$. Defining the mapping
$$\overline{T}:(\mathcal{C}^{1,\alpha}_{m,n},\|\cdot\|_{\mathcal{C}^{1,\alpha}_{m,n}})\longrightarrow(\mathcal{C}^{1,\alpha}_{m,n},\|\cdot\|_{\mathcal{C}^{1,\alpha}_{m,n}})$$ 
as $\overline{T}[\w]=\sol$ for each $\w\in\mathcal{C}^{1,\alpha}_{m,n}$, where  $\sol\in\mathcal{C}^{2,\alpha }_{m,n}\subset\mathcal{C}^{1,\alpha}_{m,n}$ is the unique solution to the LDPS \eqref{LPD.1u}, we get  that, by \eqref{in3} and by Arzel\`a-Ascoli's  compactness criterion; see \cite[Section C.8, p. 718]{evans2}, $\overline{T}$  maps bounded sets in $\mathcal{C}^{1,\alpha}_{m,n}$ into bounded sets in $\mathcal{C}^{2,\alpha}_{m,n}$ which are precompact in $\mathcal{C}^{1,\alpha}_{m,n}$. From here and by the uniqueness of the solution to the LPDS  \eqref{LPD.1u}, it can be verified  that $\overline{T}$ is a continuous and compact mapping from $\mathcal{C}^{1,\alpha}_{m,n}$ into itself. 

Now, we only need to verify that the set
$$\mathcal{A}_{1}\eqdef\{\w\in\mathcal{C}^{1,\alpha}_{m,n}:\w=\varrho\overline{ T}[\w],\ \text{for some }\ \varrho\in[0,1]\}$$ 
is bounded uniformly  on the norm $\|\cdot\|_{\mathcal{C}^{1,\alpha}_{m,n}}$. {Notice that the LPDS associated with $\varrho=0$ is
	\begin{equation}\label{eq_1}
		\begin{split}
			[{c}_{\iota}-\widetilde{\dif}_{\iota}]  \w_{\ell,\iota}&= 0,\ \text{in}\ \set,\\
			\text{s.t.}\  
			\w_{\ell,\iota}&=0,\  \text{on}\ \partial\set ,
		\end{split} \quad \text{for}\ (\ell,\iota)\in\mathbb{M}\times\mathbb{I}.
	\end{equation}  
	which solution is immediately $\w\equiv\overline{0}\in\mathcal{C}^{1,\alpha}_{m, n}$,  with $\overline{0}$ the null matrix function}. 

\begin{lema}\label{B1}
	If $\w\in\mathcal{C}^{1,\alpha}_{m, n}$ is such that $\overline{T}[\w]=\frac{1}{\varrho}\w=(\frac{1}{\varrho}\w_{\ell,\iota})_{(\ell,\iota)\in\mathbb{M}\times\mathbb{I}}$ for some $\varrho\in(0,1]$,  then there exists a constant $C_{1}>0$ independent of $\varrho$ and $\w$ such that
	\begin{equation}\label{e0}
		\|{\w}_{\ell,\iota}\|_{\hol^{1,\alpha}(\overline{\set})}\leq C_{1}\bigg[1+\dfrac{1}{\varepsilon}+\dfrac{1}{\delta}[1+\|{\w}\|_{\mathcal{C}^{0}_{m,n}}]\bigg]\quad\text{for}\ (\ell,\iota)\in\mathbb{M}\times\mathbb{I}.
	\end{equation}
\end{lema}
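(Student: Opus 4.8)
The plan is to fix such a pair $(\varrho,\w)$, regard each component $\w_{\ell,\iota}$ as the solution of an elliptic boundary value problem read off from \eqref{LPD.1u}, and bootstrap from an $\Lp^{\infty}$ bound to the claimed $\hol^{1,\alpha}$ bound, the one genuinely new ingredient being an a priori gradient estimate that neutralises the quadratic term $\tfrac1\varepsilon\|\w_{\ell,\iota}\|^{2}_{\hol^{1,\alpha}(\overline{\set})}$ on the right‑hand side of \eqref{in3}. Since $\overline{T}[\w]=\tfrac1\varrho\w$, the definition of $\overline{T}$ through \eqref{LPD.1u}, multiplied by $\varrho$, says that each $\w_{\ell,\iota}\in\hol^{2,\alpha}(\overline{\set})$ solves
\begin{equation*}
	c_{\iota}\w_{\ell,\iota}-\tr[a_{\iota}\deri^{2}\w_{\ell,\iota}]+\langle b_{\iota},\deri^{1}\w_{\ell,\iota}\rangle+\varrho\,\psi_{\varepsilon}\big(|\deri^{1}\w_{\ell,\iota}|^{2}-g_{\iota}^{2}\big)=\Psi_{\ell,\iota}\ \text{ in }\ \set,\qquad \w_{\ell,\iota}=\varrho f_{\iota}\ \text{ on }\ \partial\set,
\end{equation*}
where $\Psi_{\ell,\iota}:=\varrho h_{\iota}-\varrho\sum_{\ell'\neq\ell}\psi_{\delta}(\w_{\ell,\iota}-\w_{\ell',\iota}-\vartheta_{\ell,\ell'})-\varrho\sum_{\kappa\neq\iota}q_{\ell}(\iota,\kappa)[\w_{\ell,\iota}-\w_{\ell,\kappa}]$. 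From \eqref{p12.1} one has $0\le\varphi(t)\le t_{+}$, hence $0\le\psi_{\delta}(s)\le|s|/\delta$; together with $\vartheta_{\ell,\ell'}\ge0$ and \eqref{a4}, \eqref{h2} this gives $\|\Psi_{\ell,\iota}\|_{\hol^{0}(\overline{\set})}\le C[1+\tfrac1\delta(1+\|\w\|_{\mathcal{C}^{0}_{m,n}})]$ with $C$ independent of $\varrho,\varepsilon,\delta,\w$.

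The core claim is the gradient bound
\begin{equation*}
	|\deri^{1}\w_{\ell,\iota}(x)|^{2}\le g_{\iota}(x)^{2}+\varepsilon\,C\Big[1+\tfrac1\delta\big(1+\|\w\|_{\mathcal{C}^{0}_{m,n}}\big)\Big],\qquad x\in\overline{\set},\ (\ell,\iota)\in\mathbb{M}\times\mathbb{I},
\end{equation*}
which I would prove by splitting into a boundary and an interior part. On $\partial\set$ I would build, at each $y\in\partial\set$, upper and lower barriers from the $\hol^{4,\alpha}$ regularity of $\partial\set$ (so a distance‑type function is smooth near $y$), from $f_{\iota}\in\hol^{2,\alpha}(\overline{\set})$, and from the bound on $\Psi_{\ell,\iota}$: the nonnegative term $\varrho\psi_{\varepsilon}(\cdot)$ is discarded when checking the supersolution inequality, while the lower barrier is taken with gradient $\le\|g_{\iota}\|_{\hol^{0}(\overline{\set})}$, so the penalisation contributes at most a harmless $\tfrac1\varepsilon$. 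For the interior part let $x_{0}$ and $(\bar\ell,\bar\iota)$ realise $\max_{(\ell,\iota)}\max_{\overline{\set}}v_{\ell,\iota}$ with $v_{\ell,\iota}:=|\deri^{1}\w_{\ell,\iota}|^{2}$; if $x_{0}\in\partial\set$ use the boundary estimate, and if $v_{\bar\ell,\bar\iota}(x_{0})\le\|g_{\bar\iota}\|^{2}_{\hol^{0}(\overline{\set})}+2\varepsilon$ there is nothing to prove. Otherwise I differentiate the equation for $\w_{\bar\ell,\bar\iota}$, contract with $2\deri^{1}\w_{\bar\ell,\bar\iota}$, and evaluate at the interior maximum $x_{0}$, where $\deri^{1}v_{\bar\ell,\bar\iota}(x_{0})=0$ and $\deri^{2}v_{\bar\ell,\bar\iota}(x_{0})\le0$. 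There: (i) the contributions of $\psi_{\delta}$ and of the generator terms $q_{\bar\ell}(\bar\iota,\cdot)$ are $\le0$, because $\psi_{\delta}'\ge0$, $q_{\bar\ell}(\bar\iota,\kappa)\ge0$, and $\langle\deri^{1}\w_{\bar\ell,\bar\iota},\deri^{1}\w_{\bar\ell,\bar\iota}-\deri^{1}\w_{\ell',\bar\iota}\rangle\ge v_{\bar\ell,\bar\iota}(x_{0})-v_{\bar\ell,\bar\iota}(x_{0})^{1/2}v_{\ell',\bar\iota}(x_{0})^{1/2}\ge0$ by the global maximality of $v_{\bar\ell,\bar\iota}(x_{0})$ and Cauchy--Schwarz (similarly for $\w_{\bar\ell,\kappa}$); (ii) the convection term $2\varrho\psi_{\varepsilon}'(\cdot)\langle\deri^{1}\w_{\bar\ell,\bar\iota},\deri^{1}v_{\bar\ell,\bar\iota}\rangle$ vanishes; (iii) ellipticity \eqref{H2} produces the good term $2\theta|\deri^{2}\w_{\bar\ell,\bar\iota}(x_{0})|^{2}$, which by Young's inequality absorbs the terms carrying $\deri a_{\bar\iota}$. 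This yields $|\deri^{2}\w_{\bar\ell,\bar\iota}(x_{0})|\le C[1+\tfrac1\varepsilon+\|\w\|_{\mathcal{C}^{0}_{m,n}}+v_{\bar\ell,\bar\iota}(x_{0})^{1/2}]$, and substituting this into the equation evaluated at $x_{0}$ — where $\psi_{\varepsilon}$ is active, so $\psi_{\varepsilon}(v_{\bar\ell,\bar\iota}(x_{0})-g_{\bar\iota}(x_{0})^{2})=\varepsilon^{-1}(v_{\bar\ell,\bar\iota}(x_{0})-g_{\bar\iota}(x_{0})^{2})-1$ — and using $c_{\bar\iota}\ge0$, $\varrho\psi_{\delta}\ge0$ and Young's inequality once more gives the claimed bound. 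This Bernstein step degenerates as $\varrho\downarrow0$ (a factor $\varrho^{-1}$ appears); for $\varrho$ below a threshold depending on $\varepsilon,\delta$ I would instead argue directly from \eqref{in3}, writing $\|\w_{\ell,\iota}\|_{\hol^{2,\alpha}(\overline{\set})}=\varrho\|\sol_{\ell,\iota}\|_{\hol^{2,\alpha}(\overline{\set})}$ and absorbing the then‑subcritical term $\tfrac\varrho\varepsilon\|\w_{\ell,\iota}\|^{2}_{\hol^{1,\alpha}(\overline{\set})}$, which even yields $\|\w_{\ell,\iota}\|_{\hol^{1,\alpha}(\overline{\set})}\le C[1+\tfrac1\delta+\tfrac1\varepsilon]$ outright.

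With the gradient bound in hand, $\psi_{\varepsilon}(|\deri^{1}\w_{\ell,\iota}|^{2}-g_{\iota}^{2})\le\varepsilon^{-1}\|g_{\iota}\|^{2}_{\hol^{0}(\overline{\set})}+C[1+\tfrac1\delta(1+\|\w\|_{\mathcal{C}^{0}_{m,n}})]$, so the full right‑hand side $\varrho h_{\iota}-\varrho\,\Xi_{\ell,\iota}\w$ of the $\varrho$‑multiplied \eqref{LPD.1u} is bounded in $\hol^{0}(\overline{\set})$, hence in $\Lp^{p}(\set)$ for every $p<\infty$, by $C[1+\tfrac1\varepsilon+\tfrac1\delta(1+\|\w\|_{\mathcal{C}^{0}_{m,n}})]$. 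Taking $p>d/(1-\alpha)$, the global $\Lp^{p}$ estimate for $c_{\iota}-\widetilde{\dif}_{\iota}$ (available by \eqref{h0}, \eqref{h2}), the Morrey embedding $\sob^{2,p}(\set)\hookrightarrow\hol^{1,\alpha}(\overline{\set})$, the boundedness of $\|f_{\iota}\|_{\hol^{2,\alpha}(\overline{\set})}$ (boundary datum) and the trivial bound $\|\w_{\ell,\iota}\|_{\Lp^{p}(\set)}\le|\set|^{1/p}\|\w\|_{\mathcal{C}^{0}_{m,n}}$ together deliver $\|\w_{\ell,\iota}\|_{\hol^{1,\alpha}(\overline{\set})}\le C_{1}[1+\tfrac1\varepsilon+\tfrac1\delta(1+\|\w\|_{\mathcal{C}^{0}_{m,n}})]$, that is, \eqref{e0}.

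The hard part is the gradient estimate of the second paragraph. A generic quasilinear bound applied to the equation for $\w_{\ell,\iota}$, whose gradient nonlinearity has natural (quadratic) growth with constant of order $\varepsilon^{-1}$, would be exponential in $\varepsilon^{-1}$ and in $\|\w\|_{\mathcal{C}^{0}_{m,n}}$, far too lossy for \eqref{e0}; one must exploit the exact shape of $\psi_{\varepsilon}$ (vanishing below the constraint, affine above it) so that the penalisation pins $|\deri^{1}\w_{\ell,\iota}|$ to $g_{\iota}$ up to an $O(\sqrt{\varepsilon})$ excess. The secondary difficulties are the uniformity in $\varrho\in(0,1]$ (the Bernstein argument must be supplemented by the \eqref{in3}‑absorption for small $\varrho$) and the $\mathbb{M}\times\mathbb{I}$ coupling, which is handled throughout by localising at the component and point where $|\deri^{1}\w_{\ell,\iota}|$ is globally maximal and signing the coupling terms via Cauchy--Schwarz.
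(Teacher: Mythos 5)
Your route is genuinely different from the paper's, and it has a gap in the small-$\varrho$ regime that the paper's argument is engineered to avoid.

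The paper's proof is short. After translating to null boundary data with $\bar{\w}_{\ell,\iota}=\w_{\ell,\iota}-\varrho f_{\iota}$, it rewrites \eqref{LPDw.1} in the normalized form $[\Gamma_{\iota}+1]\bar{\w}_{\ell,\iota}=\mu[1+|\deri^{1}\bar{\w}_{\ell,\iota}|^{2}]$, where $\mu$ is the entire source term divided by $1+|\deri^{1}\bar{\w}_{\ell,\iota}|^{2}$. The insight you are missing is that this division kills the quadratic gradient growth in $\Lp^{\infty}$: the only term of quadratic growth, $\psi_{\varepsilon}(|\deri^{1}[\bar{\w}_{\ell,\iota}+\varrho f_{\iota}]|^{2}-g_{\iota}^{2})$, contributes at most $\frac{1}{\varepsilon}\big[\frac{2|\deri^{1}\bar{\w}_{\ell,\iota}|^{2}}{1+|\deri^{1}\bar{\w}_{\ell,\iota}|^{2}}+2|\deri^{1}f_{\iota}|^{2}+g_{\iota}^{2}\big]\leq C/\varepsilon$ after dividing, and every other term either carries a factor $\varrho\in(0,1]$ (which only helps) or is likewise tamed by the denominator, giving $\|\mu\|_{\Lp^{\infty}(\set)}\leq K_{1,2}[1+\frac{1}{\varepsilon}+\frac{1}{\delta}(1+\|\w\|_{\mathcal{C}^{0}_{m,n}})]$ \emph{uniformly in} $\varrho$. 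Then the a priori bound of \cite[Lemma 4]{AC1978} (see also \cite[Lemma 2.4]{taira1997}) for semilinear elliptic equations of this normalized form with zero Dirichlet data gives $\|\bar{\w}_{\ell,\iota}\|_{\hol^{1,\alpha}(\overline{\set})}\leq K_{1,1}\|\mu\|_{\Lp^{\infty}(\set)}$ outright. No hand-built Bernstein estimate, no boundary barriers, no case split in $\varrho$.

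The gap in your proposal is precisely where you split into $\varrho$ large/small. Your Bernstein step, as you concede, picks up $\varrho^{-1}$; and the fallback via \eqref{in3} for small $\varrho$ is circular. Multiplying \eqref{in3} by $\varrho$ gives $\|\w_{\ell,\iota}\|_{\hol^{1,\alpha}(\overline{\set})}\leq\varrho C_{2}A+\frac{\varrho C_{2}}{\varepsilon}\|\w_{\ell,\iota}\|^{2}_{\hol^{1,\alpha}(\overline{\set})}$, a quadratic inequality in $y=\|\w_{\ell,\iota}\|_{\hol^{1,\alpha}(\overline{\set})}$ whose solution set for small $\varrho$ is $\{y\leq y_{-}\}\cup\{y\geq y_{+}\}$ with $y_{+}\uparrow\infty$ as $\varrho\downarrow 0$. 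Without an a priori bound on $y$ one cannot exclude the upper branch, and a threshold on $\varrho$ depending only on $(\varepsilon,\delta)$ does not resolve this, since one does not know on which branch a given fixed point sits. Schaefer's theorem needs a bound uniform over $\varrho\in[0,1]$; the paper's division trick makes the gradient term genuinely subcritical for \emph{all} $\varrho\in(0,1]$, which is exactly what your two-regime argument fails to achieve. A secondary but real issue: your interior Bernstein estimate and boundary barrier construction are only sketched, and carrying them through (in particular the boundary gradient bound compatible with non-zero data $\varrho f_{\iota}$ and the $\mathbb{M}\times\mathbb{I}$ coupling) is substantial work that the citation to \cite{AC1978} encapsulates.
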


\begin{proof}
	Observe that $\w\in\mathcal{C}^{2,\alpha}_{m, n}$  and
	\begin{equation}\label{LPDw.1}
		\begin{split}
			[{c}_{\iota}-\widetilde{\dif}_{\iota}]  \w_{\ell,\iota}= \varrho[h_{\iota}-\Xi_{\ell,\iota} \w],\ \text{in}\ \set,&\\
			\text{s.t.}\  
			\w_{\ell,\iota}=\varrho f_{\iota},\  \text{on}\ \partial\set ,&
		\end{split} \quad \text{for}\ (\ell,\iota)\in\mathbb{M}\times\mathbb{I}.
	\end{equation}  
	Defining $\bar{\w}=(\bar{\w}_{\ell,\iota})_{(\ell,\iota)\in\mathbb{M}\times\mathbb{I}}$ as $\bar{\w}_{\ell,\iota}=\w_{\ell,\iota}-\varrho f_{\iota}$, \eqref{LPDw.1} can be rewritten in the following way
	\begin{equation*}
		\begin{split}
			[\Gamma_{\iota}+1]  \bar{\w}_{\ell,\iota}=\mu[1+|\deri^{1}\bar{\w}_{\ell,\iota}|^{2}],\ \text{in}\ \set,&\\
			\text{s.t.}\  
			\bar{\w}_{\ell,\iota}=0,\  \text{on}\ \partial\set ,&
		\end{split} \quad \text{for}\ (\ell,\iota)\in\mathbb{M}\times\mathbb{I},
	\end{equation*}  
	where $\Gamma_{\iota}\w_{\ell,\iota}\eqdef-\tr[a_{\iota}\deri^{2}\w_{\ell,\iota}]$ and
	\begin{align*}
		\mu&\eqdef\dfrac{1}{1+|\deri^{1}\bar{\w}_{\ell,\iota}|^{2}}\bigg[[1-c_{\iota}]\bar{\w}_{\ell,\iota}-\langle b_{\iota},\deri^{1}\bar{\w}_{\ell,\iota}\rangle+\varrho h_{\iota}\notag\\
		&\quad-\varrho[c_{\iota}-\widetilde{\mathcal{L}}_{\iota}]f_{\iota}-\varrho\sum_{\ell'\in\mathbb{M}\setminus\{\ell\}}\psi_{\delta}(\bar{\w}_{\ell,\iota}-\bar{\w}_{\ell',\iota} -\vartheta_{\ell,\ell'})\notag\\
		&\quad-\sum_{\kappa\in\mathbb{I}\setminus\{\iota\}}q_{\ell}(\iota,\kappa)[\bar{\w}_{\ell,\iota}+\varrho f_{\iota}-[\bar{\w}_{\ell,\kappa}+\varrho f_{\kappa}]]-\psi_{\varepsilon}(|\deri^{1} [\bar{\w}_{\ell,\iota}+\varrho f_{\iota}]|^{2}- g_{\iota}^{2})\bigg].
	\end{align*}
	Applying \cite[Thm. 4.12, p.85]{adams} and \cite[Lemma 4]{AC1978} (see also \cite[Lemma 2.4]{taira1997}), we get that 
	\begin{equation}\label{e1}
		\|\bar{\w}_{\ell,\iota}\|_{\hol^{1,\alpha}(\overline{\set})}\leq K_{1,1}\|\mu\|_{\Lp^{\infty}(\set)},
	\end{equation}
	for some constant $K_{1,1}>0$ independent of $\varrho$ and $\bar{\w}$. Meanwhile
	\begin{align}\label{e2}
		|\mu|&\leq [1+c_{\iota}]|\bar{\w}_{\ell,\iota}|+|[c_{\iota}-\widetilde{\mathcal{L}}_{\iota}]f_{\iota}|+ h_{\iota}\notag\\
		&\quad+\sum_{\kappa\in\mathbb{I}\setminus\{\iota\}}q_{\ell}(\iota,\kappa)[|\bar{\w}_{\ell,\iota}|+f_{\iota}+|\bar{\w}_{\ell,\kappa}|+ f_{\kappa}]+\dfrac{1}{\delta}\sum_{\ell'\in\mathbb{M}\setminus\{\ell\}}[|\bar{\w}_{\ell,\iota}|+|\bar{\w}_{\ell',\iota}|+\vartheta_{\ell,\ell'}]\notag\\
		&\quad+|b_{\iota}|\dfrac{ |\deri^{1}\bar{\w}_{\ell,\iota}|}{1+|\deri^{1}\bar{\w}_{\ell,\iota}|^{2}}+\dfrac{1}{\varepsilon}\bigg[ \dfrac{2|\deri^{1}\bar{\w}_{\ell,\iota}|^2}{1+|\deri^{1}\bar{\w}_{\ell,\iota}|^{2}}+ 2|\deri^{1}f_{\iota}|^{2}+ g_{\iota}^{2}\bigg]\notag\\
		&\leq K_{1,2}\bigg[1+\dfrac{1}{\varepsilon}+\dfrac{1}{\delta}[1+\|{\w}\|_{\mathcal{C}^{0}_{m,n}}]\bigg]\quad \text{on}\ \overline{\set},
	\end{align}
	for some constant $K_{1,2}>0$ independent of $\varrho$ and $\bar{\w}$. By \eqref{e1}, \eqref{e2} and taking into account that $\|\w_{\ell,\iota}\|_{\hol^{1,\alpha}(\overline{\set})}\leq\|\bar{\w}_{\ell,\iota}\|_{\hol^{1,\alpha}(\overline{\set})}+\|f_{\iota}\|_{\hol^{1,\alpha}(\overline{\set})}$ we see that \eqref{e0} is true.
\end{proof}

In view of \eqref{e0}, to see $\mathcal{A}_{1}$ is bounded uniformly  with respect to the norm $\|\cdot\|_{\mathcal{C}^{1,\alpha}_{m,n}}$,  it is sufficient to check that $w$ is uniformly bounded with respect to the norm  	$\|\cdot\|_{\mathcal{C}^{0}_{m,n}}$.
\begin{lema}\label{B12}
	If $\w\in\mathcal{C}^{1,\alpha}_{m, n}$ is such that $\overline{T}[\w]=\frac{1}{\varrho}\w=(\frac{1}{\varrho}\w_{\ell,\iota})_{(\ell,\iota)\in\mathbb{M}\times\mathbb{I}}$ for some $\varrho\in(0,1]$,  then 
	\begin{equation}\label{w1}
		0\leq\w_{\ell,\iota}(x)\leq{ \Lambda\max_{(x',\kappa)\in\overline{\set}\times\mathbb{I}}\bigg\{1,\dfrac{1}{c_{\kappa}(x')}\bigg\}}\ \text{for}\ x\in\overline{\set}\ \text{and}\ (\ell,\iota)\in\mathbb{I}\times\mathbb{I}.  
	\end{equation}
\end{lema}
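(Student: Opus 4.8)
The plan is to prove the two-sided bound \eqref{w1} by a maximum-principle argument applied to the LPDS \eqref{LPDw.1} satisfied by $\w$. First I would record that, since $\overline T[\w]=\frac1\varrho\w$, the matrix function $\w$ is a classical solution (its entries are in $\hol^{2,\alpha}(\overline\set)$) of
\begin{equation*}
	[c_\iota-\widetilde\dif_\iota]\w_{\ell,\iota}+\varrho\,\Xi_{\ell,\iota}\w=\varrho\,h_\iota\ \text{in}\ \set,\qquad \w_{\ell,\iota}=\varrho f_\iota\ \text{on}\ \partial\set,
\end{equation*}
for every $(\ell,\iota)\in\mathbb M\times\mathbb I$. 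The lower bound $\w_{\ell,\iota}\geq0$ comes first: at an interior negative minimum of the whole family, attained at some $(x_0,\ell_0,\iota_0)$ with $\w_{\ell_0,\iota_0}(x_0)=\min_{\ell,\iota,x}\w_{\ell,\iota}(x)<0$, one has $\deri^1\w_{\ell_0,\iota_0}(x_0)=0$ and $\deri^2\w_{\ell_0,\iota_0}(x_0)\geq0$, so $\widetilde\dif_{\iota_0}\w_{\ell_0,\iota_0}(x_0)\geq0$; the penalization terms $\psi_\delta$ and $\psi_\varepsilon$ are $\geq0$ by \eqref{p12.1}; each coupling term $q_{\ell_0}(\iota_0,\kappa)[\w_{\ell_0,\iota_0}(x_0)-\w_{\ell_0,\kappa}(x_0)]\leq0$ since $x_0$ is a global minimizer and $q_{\ell_0}(\iota_0,\kappa)\geq0$; hence the left-hand side is $\leq c_{\iota_0}(x_0)\w_{\ell_0,\iota_0}(x_0)<0$ (using $c_{\iota_0}>0$ on $\set$), contradicting that the right-hand side $\varrho h_{\iota_0}(x_0)\geq0$. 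The boundary values $\varrho f_\iota\geq0$ close this case, giving $\w\geq0$ everywhere.

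For the upper bound I would set $M\eqdef\Lambda\max_{(x',\kappa)\in\overline\set\times\mathbb I}\{1,1/c_\kappa(x')\}$ and argue again at an interior maximum. Suppose $\w_{\ell_0,\iota_0}(x_0)=\max_{\ell,\iota,x}\w_{\ell,\iota}(x)>M$; then $x_0$ is interior (since $\varrho f_\iota\leq\Lambda\leq M$ on $\partial\set$ by \eqref{a4}), so $\deri^1\w_{\ell_0,\iota_0}(x_0)=0$ and $\deri^2\w_{\ell_0,\iota_0}(x_0)\leq0$, whence $\widetilde\dif_{\iota_0}\w_{\ell_0,\iota_0}(x_0)\leq0$ and the diffusion–drift term in $[c_{\iota_0}-\widetilde\dif_{\iota_0}]\w_{\ell_0,\iota_0}(x_0)$ contributes $\geq c_{\iota_0}(x_0)\w_{\ell_0,\iota_0}(x_0)$. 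The coupling terms $q_{\ell_0}(\iota_0,\kappa)[\w_{\ell_0,\iota_0}(x_0)-\w_{\ell_0,\kappa}(x_0)]\geq0$ at a maximizer; the switching penalty terms $\psi_\delta(\w_{\ell_0,\iota_0}(x_0)-\w_{\ell',\iota_0}(x_0)-\vartheta_{\ell_0,\ell'})\geq0$; and, because $\deri^1\w_{\ell_0,\iota_0}(x_0)=0$ and $g_{\iota_0}\geq0$, the gradient penalty $\psi_\varepsilon(|\deri^1\w_{\ell_0,\iota_0}(x_0)|^2-g_{\iota_0}^2)=\psi_\varepsilon(-g_{\iota_0}(x_0)^2)=0$. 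Collecting these, the left-hand side of the PDE at $(x_0,\ell_0,\iota_0)$ is $\geq c_{\iota_0}(x_0)\w_{\ell_0,\iota_0}(x_0)>c_{\iota_0}(x_0)M\geq c_{\iota_0}(x_0)\cdot\Lambda/c_{\iota_0}(x_0)=\Lambda\geq\varrho h_{\iota_0}(x_0)$, contradicting the equation. Hence $\w_{\ell,\iota}(x)\leq M$ for all $x$ and $(\ell,\iota)$, which is \eqref{w1}; note $M$ is independent of $\varrho$, $\varepsilon$, $\delta$.

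The main obstacle, and the point requiring the most care, is the simultaneous handling of \emph{all} the coupled entries $(u_{\ell,\iota})_{(\ell,\iota)\in\mathbb M\times\mathbb I}$: one must take the extremum over the whole finite family $\mathbb M\times\mathbb I\times\overline\set$ at once so that both the Markov-chain coupling terms $q_\ell(\iota,\kappa)[\,\cdot\,-\,\cdot\,]$ and the switching-penalty terms $\psi_\delta(\,\cdot\,-\,\cdot\,-\vartheta_{\ell,\ell'})$ have the correct sign at the extremizing index. A subtlety is that the penalization operators $\psi_\delta,\psi_\varepsilon$ are only known to be nonnegative and monotone via \eqref{p12.1}; one should not need more than that. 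I would also double-check that $c_\iota>0$ is used only on $\set$ (where the interior extremum lives), consistently with \eqref{h2}, and that the bound on $f_\iota$ on $\partial\set$ used to rule out a boundary maximum is exactly $\|f_\iota\|_{\hol^{0}(\overline\set)}\leq\|f_\iota\|_{\hol^{2,\alpha}(\overline\set)}\leq\Lambda\leq M$ from \eqref{a4}. Everything else is a routine second-derivative test at an extremum.
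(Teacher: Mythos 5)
Your overall strategy --- evaluating the linear system \eqref{LPDw.1} at a global extremum over the full finite family $\overline{\set}\times\mathbb{M}\times\mathbb{I}$ and invoking the second-derivative test together with the sign structure of $\Xi_{\ell,\iota}$ --- is exactly the paper's. The upper-bound half is correct as written: at the maximizer, $\psi_\varepsilon$ vanishes because $\deri^1\w=0$, the $\psi_\delta$ terms and the $q$-coupling terms are nonnegative, and the comparison $c_{\iota_0}(x_0)\w_{\ell_0,\iota_0}(x_0)>\Lambda\geq\varrho h_{\iota_0}(x_0)$ delivers the contradiction.

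The lower-bound half has a genuine slip. You write that the penalization terms $\psi_\delta$, $\psi_\varepsilon$ are $\geq0$ by \eqref{p12.1} and then conclude that the left-hand side of the equation is $\leq c_{\iota_0}(x_0)\w_{\ell_0,\iota_0}(x_0)$. That inference does not follow: merely knowing $\psi_\delta,\psi_\varepsilon\geq0$ would push the left-hand side \emph{up}, not down. What you actually need --- and what makes the paper's chain $0\leq\tr[a_{\iota_0}\deri^2\w_{\ell_0,\iota_0}]\leq c_{\iota_0}\w_{\ell_0,\iota_0}$ work --- is that at the global minimum the arguments of $\psi_\delta$ and $\psi_\varepsilon$ are nonpositive: $\w_{\ell_0,\iota_0}(x_0)-\w_{\ell',\iota_0}(x_0)-\vartheta_{\ell_0,\ell'}\leq0$ because $x_0$ minimizes over the whole family and $\vartheta_{\ell_0,\ell'}\geq0$, and $|\deri^1\w_{\ell_0,\iota_0}(x_0)|^2-g_{\iota_0}(x_0)^2=-g_{\iota_0}(x_0)^2\leq0$ because the gradient vanishes at an interior extremum; by the first two lines of \eqref{p12.1} both penalizations are therefore exactly zero there, not merely nonnegative. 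Combined with the $q$-coupling terms being $\leq0$, this gives $\Xi_{\ell_0,\iota_0}\w(x_0)\leq0$, and only then does $[c_{\iota_0}-\widetilde{\dif}_{\iota_0}]\w_{\ell_0,\iota_0}+\varrho\Xi_{\ell_0,\iota_0}\w\leq c_{\iota_0}\w_{\ell_0,\iota_0}$ hold at $x_0$. Once that step is repaired the argument is sound and coincides with the paper's.
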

\begin{proof}
	Let $(x_{\circ},\ell_{\circ},\iota_{\circ})\in\overline{\set}\times\mathbb{M}\times\mathbb{I}$ be such that 
	$$\w_{\ell_{\circ},\iota_{\circ}}(x_{\circ})=\min_{(x,\ell,\iota)\in\overline{\set}\times\mathbb{M}\times\mathbb{I}}\w_{\ell,\iota}(x).$$
	If $x_{\circ}\in\partial\set$, it follows easily that $\w_{\ell,\iota}(x)\geq \w_{\ell_{\circ},\iota_{\circ}}(x_{\circ})=\varrho f_{\iota_{\circ}}(x_{\circ})\geq0$ for  $(x,\ell,\iota)\in\overline{\set}\times\mathbb{M}\times\mathbb{I}$. Suppose that $x_{\circ}\in\set$. Then,
	\begin{equation*}
		\begin{split}
			&\deri^{1}\w_{\ell_{\circ},\iota_{\circ}}(x_{\circ})=0,\  0\leq\tr[a_{\iota_{\circ}}(x_{\circ})\deri^{2}\w_{\ell_{\circ},\iota_{\circ}}(x_{\circ})],\\
			&\w_{\ell_{\circ},\iota_{\circ}}(x_{\circ})-\w_{\ell_{\circ},\kappa}(x_{\circ} )\leq 0\ \text{for}\ \kappa\in\mathbb{I}\setminus\{\iota_{\circ}\},\\ &\w_{\ell_{\circ},\iota_{\circ}}(x_{\circ})-\w_{\ell',\iota_{\circ}}(x_{\circ} )\leq 0\ \text{for}\ \ell'\in\mathbb{M}\setminus\{\ell_{\circ}\}.
		\end{split} 
	\end{equation*}
	From here and using \eqref{a4}  and \eqref{LPDw.1}, it gives 
	\begin{align*}
		0&\leq\tr[a_{\iota_{\circ}}(x_{\circ})\deri^{2}\w_{\ell_{\circ},\iota_{\circ}}(x_{\circ})]\notag\\
		&= c_{\iota_{\circ}}(x_{\circ})\w_{\ell_{\circ},\iota_{\circ}}(x_{\circ})-\varrho h_{\iota_{\circ}}(x_{\circ})+\varrho\Xi_{\ell_{\circ},\iota_{\circ}}\w(x_{\circ})\leq c_{\iota_{\circ}}(x_{\circ})\w_{\ell_{\circ},\iota_{\circ}}(x_{\circ}).
	\end{align*}
	Since $c_{\iota_{\circ}}>0$ on $\overline{\set}$, it follows that $\w_{\ell_{\circ},\iota_{\circ}}(x_{\circ})\geq0$. Therefore $\w_{\ell,\iota}(x)\geq \w_{\ell_{\circ},\iota_{\circ}}(x_{\circ})\geq0$, for all $x\in\overline{\set}$ and $(\ell,\iota)\in\mathbb{M}\times\mathbb{I}$. Supposing now  $(x^{\circ},\ell^{\circ},\iota^{\circ})\in\overline{\set}\times\mathbb{M}\times\mathbb{I}$   such that 
	$$\w_{\ell^{\circ},\iota^{\circ}}(x^{\circ})=\max_{(x,\ell,\iota)\in\overline{\set}\times\mathbb{M}\times\mathbb{I}}\w_{\ell,\iota}(x),$$
	taking into account that 
	\begin{equation*}
		\begin{split}
			&\deri^{1}\w_{\ell^{\circ},\iota^{\circ}}(x^{\circ})=0,\ \tr[a_{\iota^{\circ}}(x^{\circ})\deri^{2}\w_{\ell^{\circ},\iota^{\circ}}(x^{\circ})]\leq 0,\\
			&\w_{\ell^{\circ},\iota^{\circ}}(x^{\circ})-\w_{\ell^{\circ},\kappa}(x^{\circ} )\geq 0\ \text{for}\ \kappa\in\mathbb{I}\setminus\{\iota^{\circ}\},\\
			&\w_{\ell^{\circ},\iota^{\circ}}(x^{\circ})-\w_{\ell',\iota^{\circ}}(x^{\circ} )\geq 0\ \text{for}\ \ell'\in\mathbb{M}\setminus\{\ell^{\circ}\}.
		\end{split} 
	\end{equation*}
	and arguing in a similar way as before, the reader can easily see  that \eqref{w1} is true. With this remark, we conclude the proof.
\end{proof}
From now on, for simplicity of notation, we shall replace $u^{\varepsilon,\delta}$ by $u$ in the proofs of the results that we will share below.
\begin{proof}[Proof of Proposition \ref{princ1.0}. Existence.] By \eqref{h0}, \eqref{a4}, \eqref{h2}, \eqref{e0}  and \eqref{w1}, it follows that $\mathcal{A}_{1}$, is bounded uniformly {with respect} to the norm $\|\cdot\|_{\mathcal{C}^{1,\alpha}_{m, n}}$. From here and since the mapping $\overline{T}$ is continuous and compact, by Schaefer's fixed point theorem, it yields that there exists a fixed point $u=(u_{\ell,\iota})_{(\ell,\iota)\in\mathbb{M}\times\mathbb{I}}\in\mathcal{C}^{1,\alpha}_{m, n}$ to the problem $T[u]=u$.   {In addition}, we have $u=T[u]\in\mathcal{C}^{2,\alpha}_{m, n}$. By Theorem 9.19 of \cite{gilb}, we conclude  that $u\in\mathcal{C}^{3,\alpha}_{m,n}$, since \eqref{h0}, \eqref{a4} and \eqref{h2} hold and ${\Xi}_{\ell,\iota} u\in\hol^{1,\alpha}(\overline{\set})$. Again, repeating the same argument as before, we obtain  that $u\in\mathcal{C}^{4,\alpha}_{m,n}$. Finally, The non-negativeness of $u_{\ell,\iota}$ can be verified using similar arguments seen in the proof of Lemma \ref{B12}.
\end{proof}

 {Let us first  introduce  the concepts of sub-solution and super-solution for the  NPDS \eqref{NPD.1}. Then, we shall give a principle of comparison; see Lemma \eqref{supsuper}. Finally, from  here, we shall see that the  uniqueness of the solution to the NPDS \eqref{NPD.1} is an immediately consequence of this lemma.
	\begin{defin}\label{princmax1}
		\begin{enumerate}[label=(\arabic*),ref=H\arabic*]
			\item A matrix function $\varphi=(\varphi_{\ell,\iota})$ in $\mathcal{C}^{2}_{m,n}({\set})\cap\mathcal{C}^{0}_{m,n}(\overline{\set})$ is a sub-solution of \eqref{NPD.1} if
			\begin{equation*}
				\begin{split}
					[c_{\iota}-\dif_{\ell,\iota}]\varphi_{\ell,\iota}+\psi_{\varepsilon}(
					|\deri^{1}\varphi_{\ell,\iota}|^2- g^{2}_{\iota})
					+\sum_{\ell'\in \mathbb{M}\backslash \{\ell\}}\psi_{\delta}(\varphi_{\ell,\iota}-\varphi_{\ell',\iota}-\vartheta_{\ell,\ell'})&\leq   h_{\iota} \ \text{on}\ \set,\\
					\text{s.t.}\ \varphi_{\ell,\iota}&=f_{\iota},\ \text{in}\ \partial{\set},
				\end{split}
			\end{equation*}
			\item  A matrix function $\eta=(\eta_{\ell,\iota})$ in $\mathcal{C}^{2}_{m,n}({\set})\cap\mathcal{C}^{0}_{m,n}(\overline{\set})$ is is a super-solution of \eqref{NPD.1} if
			\begin{equation*}
				\begin{split}
					[c_{\iota}-\dif_{\ell,\iota}]\eta_{\ell,\iota}+\psi_{\varepsilon}(
					|\deri^{1}\eta_{\ell,\iota}|^2- g^{2}_{\iota})
					+\sum_{\ell'\in \mathbb{M}\backslash \{\ell\}}\psi_{\delta}(\eta_{\ell,\iota}-\eta_{\ell',\iota}-\vartheta_{\ell,\ell'})&\geq   h_{\iota} \ \text{on}\ \set,\\
					\text{s.t.}\ \eta_{\ell,\iota}&=f_{\iota},\ \text{in}\ \partial{\set},
				\end{split}
			\end{equation*}
		\end{enumerate}
	\end{defin}
	\begin{lema}\label{supsuper}
		Let $\varphi,\eta\in\mathcal{C}^{2}_{m,n}({\set})\cap\mathcal{C}^{0}_{m,n}(\overline{\set})$. If  $\varphi$ and $\eta$ are a sub-solution and a super-solution of \eqref{NPD.1}, respectively, then $\varphi\leq\eta$  on $\overline{\set}$.
\end{lema}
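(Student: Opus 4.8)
The plan is to argue by contradiction using a maximum-principle argument adapted to the cooperative system structure. Suppose the conclusion fails, so that $\max_{(\ell,\iota)\in\mathbb{M}\times\mathbb{I}}\sup_{x\in\overline{\set}}(\varphi_{\ell,\iota}(x)-\eta_{\ell,\iota}(x))>0$. Since $\varphi_{\ell,\iota}=\eta_{\ell,\iota}=f_{\iota}$ on $\partial\set$, the positive maximum is attained at some interior point $x_{\circ}\in\set$ and some index $(\ell_{\circ},\iota_{\circ})$; that is, $(\varphi_{\ell_{\circ},\iota_{\circ}}-\eta_{\ell_{\circ},\iota_{\circ}})(x_{\circ})=M>0$ and this value dominates $(\varphi_{\ell,\iota}-\eta_{\ell,\iota})(x)$ for all $(x,\ell,\iota)$. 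At this point the usual first- and second-order conditions give $\deri^{1}\varphi_{\ell_{\circ},\iota_{\circ}}(x_{\circ})=\deri^{1}\eta_{\ell_{\circ},\iota_{\circ}}(x_{\circ})$ and $\deri^{2}(\varphi_{\ell_{\circ},\iota_{\circ}}-\eta_{\ell_{\circ},\iota_{\circ}})(x_{\circ})\leq 0$, whence, by ellipticity \eqref{H2}, $\tr[a_{\iota_{\circ}}(x_{\circ})\deri^{2}(\varphi_{\ell_{\circ},\iota_{\circ}}-\eta_{\ell_{\circ},\iota_{\circ}})(x_{\circ})]\leq 0$.

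Next I would subtract the super-solution inequality from the sub-solution inequality at $x_{\circ}$ for the index $(\ell_{\circ},\iota_{\circ})$ and exploit the monotonicity built into each term. The second-order and drift terms are controlled by the conditions above (the gradients coincide, so the $\langle b_{\iota_{\circ}},\deri^{1}\cdot\rangle$ terms cancel and the trace term has the right sign). The $\psi_{\varepsilon}$ term vanishes because the gradients of $\varphi_{\ell_{\circ},\iota_{\circ}}$ and $\eta_{\ell_{\circ},\iota_{\circ}}$ agree at $x_{\circ}$. For the coupling in $\iota$: since $M$ is the global maximum, $(\varphi_{\ell_{\circ},\kappa}-\eta_{\ell_{\circ},\kappa})(x_{\circ})\leq M=(\varphi_{\ell_{\circ},\iota_{\circ}}-\eta_{\ell_{\circ},\iota_{\circ}})(x_{\circ})$, so each difference $[\varphi_{\ell_{\circ},\kappa}(x_{\circ})-\varphi_{\ell_{\circ},\iota_{\circ}}(x_{\circ})]-[\eta_{\ell_{\circ},\kappa}(x_{\circ})-\eta_{\ell_{\circ},\iota_{\circ}}(x_{\circ})]\leq 0$, and with $q_{\ell_{\circ}}(\iota_{\circ},\kappa)\geq 0$ the whole Markov-coupling term has a favourable sign. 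Likewise, for the switching term one uses that $\psi_{\delta}$ is non-decreasing together with $(\varphi_{\ell_{\circ},\iota_{\circ}}-\varphi_{\ell',\iota_{\circ}})(x_{\circ})\leq(\eta_{\ell_{\circ},\iota_{\circ}}-\eta_{\ell',\iota_{\circ}})(x_{\circ})$, which again follows from maximality of $M$ at $(\ell_{\circ},\iota_{\circ})$; hence $\psi_{\delta}(\varphi_{\ell_{\circ},\iota_{\circ}}-\varphi_{\ell',\iota_{\circ}}-\vartheta_{\ell_{\circ},\ell'})(x_{\circ})\leq\psi_{\delta}(\eta_{\ell_{\circ},\iota_{\circ}}-\eta_{\ell',\iota_{\circ}}-\vartheta_{\ell_{\circ},\ell'})(x_{\circ})$. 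Combining everything, the inequality obtained by subtraction reduces to $c_{\iota_{\circ}}(x_{\circ})\,(\varphi_{\ell_{\circ},\iota_{\circ}}-\eta_{\ell_{\circ},\iota_{\circ}})(x_{\circ})\leq 0$, i.e. $c_{\iota_{\circ}}(x_{\circ})M\leq 0$; since $c_{\iota_{\circ}}>0$ on $\set$ by \eqref{h2}, this forces $M\leq 0$, contradicting $M>0$.

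The one genuinely delicate point — and the step I expect to be the main obstacle — is the regularity at the maximizing point: $\varphi$ and $\eta$ are only assumed $\hol^{2}$ in the interior and $\hol^{0}$ up to the boundary, so one must be careful that the interior maximum is attained (which is fine, since $\overline{\set}$ is compact and the difference is continuous, and the boundary values coincide) and that the second-order test applies there (fine, since $\varphi_{\ell_{\circ},\iota_{\circ}}-\eta_{\ell_{\circ},\iota_{\circ}}\in\hol^{2}(\set)$). A cosmetic issue is that the maximum over the finite index set $\mathbb{M}\times\mathbb{I}$ may be attained at several indices; one simply fixes any maximizing triple. No strict-inequality regularization is needed here because the positivity of $c_{\iota_{\circ}}$ already produces the contradiction; this is exactly where the sub-/super-solution argument is cleaner than in the purely degenerate case. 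Once Lemma \ref{supsuper} is in hand, uniqueness of the $\hol^{4,\alpha}$ solution to \eqref{NPD.1} follows immediately: any two solutions are simultaneously sub- and super-solutions, so each dominates the other on $\overline{\set}$.
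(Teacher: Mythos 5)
Your proposal is correct and follows essentially the same maximum-principle argument as the paper's own proof: locate the maximizing triple $(x_{\circ},\ell_{\circ},\iota_{\circ})$, dispose of the boundary case via the shared Dirichlet data, and at an interior maximum use the vanishing gradient of the difference (killing the $\psi_{\varepsilon}$ and drift terms), the sign of the Hessian trace, the nonnegativity of the off-diagonal $q_{\ell_{\circ}}(\iota_{\circ},\kappa)$, and the monotonicity of $\psi_{\delta}$ together with the maximality of the difference across indices, reducing everything to $c_{\iota_{\circ}}(x_{\circ})\,[\varphi-\eta]_{\ell_{\circ},\iota_{\circ}}(x_{\circ})\leq 0$. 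The only cosmetic difference is that you phrase it as a contradiction (assuming $M>0$) whereas the paper argues directly that the maximal value is $\leq 0$; the substance is identical.
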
}

 {\begin{proof}
		From Definition \ref{princmax1}, we get
		\begin{align}\label{sup1}
			[c_{\iota}-\dif_{\ell,\iota}][\varphi-\eta]_{\ell,\iota}+\psi_{\varepsilon}(
			|\deri^{1}\varphi_{\ell,\iota}|^2- g^{2}_{\iota})-\psi_{\varepsilon}(
			|\deri^{1}\eta_{\ell,\iota}|^2- g^{2}_{\iota})
			&\notag\\
			+\sum_{\ell'\in \mathbb{M}\backslash \{\ell\}}[\psi_{\delta}(\varphi_{\ell,\iota}-\varphi_{\ell',\iota}-\vartheta_{\ell,\ell'})-\psi_{\delta}(\eta_{\ell,\iota}-\eta_{\ell',\iota}-\vartheta_{\ell,\ell'})]&\leq 0\ \text{on}\ \set,
		\end{align}
		with $[\varphi-\eta]_{\ell,\iota}=0,\ \text{in}\ \partial{\set}$. Let  $(x_{\circ},\ell_{\circ},\iota_{\circ})\in\overline{\set}\times\mathbb{M}\times\mathbb{I}$ satisfying
		\begin{align*}
			&[\varphi-\eta]_{\ell_{\circ},\iota_{\circ}}(x_{\circ})=\max_{(x,\ell,\iota)\in\overline{\set}\times\mathbb{M}\times\mathbb{I}}\{ [\varphi-\eta]_{\ell,\iota}(x)\}.
		\end{align*} 
		If $x_{\circ}\in\partial{\set}$, trivially, we have $\varphi\leq\eta$ on $\overline{\set}$. Suppose that $x_{\circ}\in \set$. This means that at $x_{\circ}$
		\begin{equation}\label{sup1.1}
			\begin{split}
				&\deri^{1}[\varphi-\eta]_{\ell_{\circ},\iota_{\circ}}=0,\ \tr[a_{\iota_{\circ}}\deri^{2}[\varphi-\eta]_{\ell_{\circ},\iota_{\circ}}]\leq 0,\\
				&[\varphi-\eta]_{\ell_{\circ},\kappa}-[\varphi-\eta]_{{\ell}_{\circ},\iota_{\circ}}\leq 0 ,\ \text{for}\ \kappa\in\mathbb{I}\setminus\{\iota_{\circ}\},\\
				&\eta_{\ell_{\circ},\iota_{\circ}}-\eta_{\ell',\iota_{\circ}}\leq \varphi_{{\ell}_{\circ},\iota_{\circ}}-\varphi_{{\ell}',\iota_{\circ}} ,\ \text{for}\ \ell'\in\mathbb{M}\setminus\{\ell_{\circ}\}. 
			\end{split}
		\end{equation}
		Since $\psi_{\delta}$ is increasing, we get that $\psi_{\delta}(\eta_{\ell_{\circ},\iota_{\circ}}-\eta_{\ell',\iota_{\circ}}-\vartheta_{\ell_{\circ},\ell'})\leq \psi_{\delta}(\varphi_{{\ell}_{\circ},\iota_{\circ}}-\varphi_{{\ell}',\iota_{\circ}}-\vartheta_{\ell_{\circ},\ell'}) $ at $x_{\circ}$ for each $\ell'\in\mathbb{M}\setminus\{\ell_{\circ}\}$. From here and by \eqref{sup1} and \eqref{sup1.1}, it implies
		\begin{align}
			0&\geq	\tr[a_{\iota_{\circ}}\deri^{2}[\varphi-\eta]_{\ell_{\circ},\iota_{\circ}}]\notag\\
			&\geq c_{\iota_{\circ}}[\varphi-\eta]_{\ell_{\circ},\iota_{\circ}}-\sum_{\kappa\in\mathbb{I}\setminus\{\iota_{0}\}}q_{\ell_{\circ}}(\iota_{\circ},\kappa)[[\varphi-\eta]_{\ell_{\circ},\kappa}-[\varphi-\eta]_{{\ell}_{\circ},\iota_{\circ}}]\notag\\
			&\quad+\sum_{\ell'\in \mathbb{M}\backslash \{\ell_{\circ}\}}[\psi_{\delta}(\varphi_{\ell_{\circ},\iota_{\circ}}-\varphi_{\ell',\iota_{\circ}}-\vartheta_{\ell_{\circ},\ell'})-\psi_{\delta}(\eta_{\ell_{\circ},\iota_{\circ}}-\eta_{\ell',\iota_{\circ}}-\vartheta_{\ell_{\circ},\ell'})]\notag\\
			&\geq c_{\iota_{\circ}}[\varphi-\eta]_{\ell_{\circ},\iota_{\circ}}\quad\text{at}\ x_{\circ}.
		\end{align}
		Then, $[\varphi-\eta]_{\ell,\iota}(x)\leq [\varphi-\eta]_{\ell_{\circ},\iota_{\circ}}(x_{\circ})\leq0$ for $(x,\ell,\iota)\in\overline{\set}\times\mathbb{M}\times\mathbb{I}$, because of $c_{\iota_{\circ}}>0$ on $\overline\set$. Therefore, $\varphi\leq\eta$ for all $x\in\overline{\set}$.
	\end{proof}
	\begin{proof}[Proof of Proposition \ref{princ1.0}. Uniqueness]  The uniqueness of the solution $u$ to the NPDS \eqref{NPD.1} is obtained by contradiction. Assume that there are two solutions $u,v\in\mathcal{C}^{4,\alpha}_{m,n}$ to the NPDS \eqref{NPD.1}.  Taking $u$ and $v$ as a sub-solution and super-solution, respectively, from Lemma \ref{supsuper}, it follows immediately  that $u_{\ell,\iota}(x)-v_{\ell,\iota}(x)\leq u_{\ell_{\circ},\iota_{\circ}}(x_{\circ})-v_{\ell_{\circ},\iota_{\circ}}(x_{\circ})\leq0$ for $(x,\ell,\iota)\in\overline{\set}\times\mathbb{M}\times\mathbb{I}$. Taking now $ v$ and  $u$  as a sub-solution and super-solution, respectively, we obtain that $v_{\ell,\iota} -u_{\ell,\iota}\leq0$ on $\overline{\set}$ for $(\ell,\iota)\in\mathbb{M}\times\mathbb{I}$.  Therefore $u =v $ and from here we conclude that the NPDS \eqref{NPD.1} has a unique solution $u$, whose components belong to $\hol^{4,\alpha}(\overline{\set})$.
\end{proof}}

\section{Existence and uniqueness of the solutions to the HJB equations  \eqref{esd5} and \eqref{pc1}}\label{prop1}
To study the existence and uniqueness of the solutions $u$ and $u^{\varepsilon}$ to the variational inequalities \eqref{esd5} and \eqref{pc1}, respectively, we will proceed in the same way as in \cite{KM2022}, i.e.,  we will first  verify  that the sequence $\{u^{\varepsilon,\delta}\}_{(\varepsilon,\delta)\in(0,1)^{2}}$ is bounded,   uniformly in $(\varepsilon,\delta)$, with respect to the norms $\|\cdot\|_{\hol^{0}(\overline{\set})}$, $\|\cdot\|_{\hol^{1}_{\loc}({\set})}$ and $\|\cdot\|_{\hol^{2}_{\loc}({\set})}$; see Lemmas \ref{Lb10}--\ref{Lb1}; and then, for each $\varepsilon\in(0,1)$ fixed, $u^{\varepsilon}$ will be taken as limit of $u^{\varepsilon,\delta}$ when $\delta$ goes to zero. Finally, providing that $\{u^{\varepsilon}\}_{\varepsilon\in(0,1)}$ is well defined and is bounded uniformly with respect to the norms   $\|\cdot\|_{\hol^{0}(\overline{\set})}$, $\|\cdot\|_{\hol^{1}_{\loc}({\set})}$ and $\|\cdot\|_{\sob^{2}_{\loc}({\set})}$, we will see   $u$   as a limit of $u^{\varepsilon}$, when $\varepsilon$ goes to zero.

\begin{lema}\label{Lb10}
	There exists $\overline{u}\in\mathcal{C}^{4,\alpha}_{m,n}$ independent of $\varepsilon$ and $\delta$ such that for each $(\ell,\iota)\in\mathbb{M}\times\mathbb{I}$,
	\begin{align}
		&0\leq u_{\ell,\iota}^{\varepsilon,\delta}\leq \overline{u}_{\ell,\iota}\quad \text{on}\ \overline{\set}.\label{ap1}
	\end{align}
	 {Additionally, if \eqref{h6} holds, then
		\begin{align}
			&\underline{u}_{\ell,\iota}\leq u_{\ell,\iota}^{\varepsilon,\delta}\leq \overline{u}_{\ell,\iota}\quad \text{on}\ \overline{\set},\label{ap1.0}\\
			&|\deri^{1}u^{\varepsilon,\delta}_{\ell,\iota}(x)|\leq C_{2}\quad \text{for}\ x\in\partial\set, \label{ap1.1}
		\end{align}
		for some positive constant $C_{2}$ indenpendent of $\varepsilon$ and $\delta$.}
\end{lema}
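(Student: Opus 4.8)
The plan is to construct the universal super-solution $\overline{u}$ explicitly and then invoke the comparison principle (Lemma \ref{supsuper}) to sandwich $u^{\varepsilon,\delta}$. For the upper bound \eqref{ap1}, I would look for $\overline{u}$ of the form $\overline{u}_{\ell,\iota}\equiv K$ for a sufficiently large constant $K$, or — if a constant fails because of the boundary condition $u_{\ell,\iota}^{\varepsilon,\delta}=f_\iota$ on $\partial\set$ — I would instead solve the uncoupled family of linear elliptic problems $[c_\iota-\widetilde{\dif}_\iota]\overline{u}_{\ell,\iota}=\|h_\iota\|_{\hol^0(\overline\set)}+C$ on $\set$ with $\overline{u}_{\ell,\iota}=\|f_\iota\|_{\hol^0(\overline\set)}$ on $\partial\set$, whose solution lies in $\mathcal{C}^{4,\alpha}_{m,n}$ by \eqref{h0}, \eqref{a4}, \eqref{h2} and Schauder theory (Theorem 6.14 of \cite{gilb}). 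One checks that $\overline{u}$ so defined is a super-solution of \eqref{NPD.1}: since $\psi_\varepsilon,\psi_\delta\ge 0$ and, on the constant-dominated profile, the extra coupling terms $\sum_{\kappa}q_\ell(\iota,\kappa)[\overline{u}_{\ell,\kappa}-\overline{u}_{\ell,\iota}]$ and $\psi_\delta(\overline{u}_{\ell,\iota}-\overline{u}_{\ell',\iota}-\vartheta_{\ell,\ell'})$ can be absorbed into the slack $C$ (choosing $\overline{u}$ slightly increasing in a nested order of the regimes, exactly as in the estimate \eqref{e2}), we obtain the super-solution inequality. Then Lemma \ref{supsuper}, applied with $u^{\varepsilon,\delta}$ as sub-solution and $\overline{u}$ as super-solution, yields $u^{\varepsilon,\delta}_{\ell,\iota}\le \overline{u}_{\ell,\iota}$; the lower bound $0\le u^{\varepsilon,\delta}_{\ell,\iota}$ is already contained in Proposition \ref{princ1.0} (and reproduces the minimum-principle argument of Lemma \ref{B12}). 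Crucially $\overline{u}$ depends only on $d,\Lambda,\alpha,\theta$ and the data, not on $\varepsilon,\delta$, because $\psi_\varepsilon,\psi_\delta\ge0$ are discarded, not estimated.

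For \eqref{ap1.0}, assume \eqref{h6} and take $\underline{u}$ as the given strict sub-solution. Here $\underline{u}$ satisfies the system inequality with slack $-\bar r<0$, so in particular $[c_\iota-\dif_{\ell,\iota}]\underline{u}_{\ell,\iota}+\psi_\varepsilon(|\deri^1\underline{u}_{\ell,\iota}|^2-g_\iota^2)+\sum_{\ell'}\psi_\delta(\underline{u}_{\ell,\iota}-\underline{u}_{\ell',\iota}-\vartheta_{\ell,\ell'})\le h_\iota-\bar r\le h_\iota$ on $\set$, i.e. $\underline{u}$ is a sub-solution of \eqref{NPD.1} in the sense of Definition \ref{princmax1}; with $u^{\varepsilon,\delta}$ as super-solution, Lemma \ref{supsuper} gives $\underline{u}_{\ell,\iota}\le u^{\varepsilon,\delta}_{\ell,\iota}$ on $\overline\set$. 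Combined with the first part this gives the two-sided bound \eqref{ap1.0}, and since $\underline{u}_{\ell,\iota}=u^{\varepsilon,\delta}_{\ell,\iota}=\overline{u}_{\ell,\iota}=f_\iota$ on $\partial\set$, the three functions agree on the boundary.

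The gradient bound on the boundary \eqref{ap1.1} is the delicate point. The standard device is a barrier argument at each boundary point: fix $x_0\in\partial\set$ and use the $\hol^{4,\alpha}$-regularity of $\partial\set$ from \eqref{h0}/\eqref{h5} to flatten the boundary locally, then compare $u^{\varepsilon,\delta}$ with $\underline{u}$ and with $\overline{u}$. Because $\underline{u}_{\ell,\iota}\le u^{\varepsilon,\delta}_{\ell,\iota}\le\overline{u}_{\ell,\iota}$ everywhere and all three equal $f_\iota$ on $\partial\set$, the inner normal derivative is squeezed: $\partial_\nu\overline{u}_{\ell,\iota}(x_0)\le \partial_\nu u^{\varepsilon,\delta}_{\ell,\iota}(x_0)\le \partial_\nu\underline{u}_{\ell,\iota}(x_0)$, and since $\underline{u}\in\hol^1(\overline\set)$ and $\overline{u}\in\mathcal{C}^{4,\alpha}_{m,n}$ are $\varepsilon,\delta$-independent, the normal derivative of $u^{\varepsilon,\delta}$ at $x_0$ is bounded by a constant depending only on $\|\underline{u}\|_{\hol^1}$, $\|\overline{u}\|_{\hol^1}$ and $\|f_\iota\|_{\hol^1}$. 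The tangential derivatives at $x_0$ equal the tangential derivatives of $f_\iota$, hence are bounded by $\Lambda$. Thus $|\deri^1 u^{\varepsilon,\delta}_{\ell,\iota}(x_0)|\le C_2$ with $C_2$ uniform; covering $\partial\set$ by finitely many such charts (compactness of $\partial\set$) completes the bound. I expect the main obstacle to be making the barrier/normal-derivative comparison fully rigorous at the level of $\hol^1$-regularity of $\underline{u}$ — one must ensure the one-sided comparison of normal derivatives is legitimate despite $\underline{u}$ being only $\hol^1$ up to the boundary — but this is handled by a standard approximation or by invoking the boundary Lipschitz estimate for the linear operator $c_\iota-\widetilde{\dif}_\iota$ together with the already-established $\hol^0$-bounds \eqref{ap1.0} as in \cite[Lemma 4]{AC1978}.
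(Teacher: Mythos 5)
Your high-level plan matches the paper's: build an $\varepsilon,\delta$-independent super-solution, invoke the comparison principle of Lemma~\ref{supsuper} for the two-sided $C^0$ bound, and then squeeze the normal derivative at the boundary. However, the construction of $\overline{u}$ you propose has a genuine gap, and fixing it leads right back to the paper's choice.

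You suggest either $\overline{u}_{\ell,\iota}\equiv K$ or the uncoupled problems $[c_\iota-\widetilde{\dif}_\iota]\overline{u}_{\ell,\iota}=\|h_\iota\|_{\hol^0}+C$ with boundary data $\|f_\iota\|_{\hol^0}$. Two things go wrong. First, Definition~\ref{princmax1} requires a super-solution to satisfy $\eta_{\ell,\iota}=f_\iota$ exactly on $\partial\set$, and — more importantly — your own boundary-gradient argument hinges on $u^{\varepsilon,\delta}_{\ell,\iota}$, $\underline{u}_{\ell,\iota}$, $\overline{u}_{\ell,\iota}$ all equalling $f_\iota$ on $\partial\set$; otherwise the one-sided difference quotient $\frac{u(x+\varrho\mathbb{v})-\overline{u}(x)}{\varrho}$ does not reduce to $\frac{u(x+\varrho\mathbb{v})-f_\iota(x)}{\varrho}$, and the squeeze fails. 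Your later sentence ``all three equal $f_\iota$ on $\partial\set$'' is inconsistent with the construction you just wrote. Second, even if you change the boundary data to $f_\iota$, the coupling term $\sum_{\kappa}q_\ell(\iota,\kappa)[\overline{u}_{\ell,\kappa}-\overline{u}_{\ell,\iota}]$ is \emph{not} automatically dominated by the slack $C$: its size depends on the $\|\cdot\|_{\hol^0}$-norms of the $\overline{u}_{\ell,\kappa}$, which themselves grow with $C$, so absorbing it requires a smallness hypothesis on the generators $Q_\ell$ that is not assumed. The paper sidesteps this entirely by solving the \emph{coupled} cooperative linear system \eqref{D1}, $[c_\iota-\dif_{\ell,\iota}]\overline{u}_{\ell,\iota}=h_\iota$ with $\overline{u}_{\ell,\iota}=f_\iota$ on $\partial\set$ (whose solvability in $\mathcal{C}^{4,\alpha}_{m,n}$ is precisely Lemma~\ref{dm1}); then the coupling term is not an error to be absorbed but part of the equation, and $\psi_\varepsilon,\psi_\delta\geq0$ immediately give the super-solution inequality with no padding at all.

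Once $\overline{u}$ is replaced by the solution of \eqref{D1}, your sub-solution step for \eqref{ap1.0} (using \eqref{h6} and Lemma~\ref{supsuper} again) is exactly the paper's argument. For \eqref{ap1.1}, the paper's proof is the bare one-sided derivative comparison you outline in the middle of your last paragraph: along any non-tangent direction $\mathbb{v}=-\mathbb{n}_x$ pointing into $\set$, the ordering $\underline{u}\le u^{\varepsilon,\delta}\le\overline{u}$ together with equality at $x\in\partial\set$ gives $\langle\mathbb{n}_x,\deri^1\overline{u}(x)\rangle\leq\langle\mathbb{n}_x,\deri^1 u^{\varepsilon,\delta}(x)\rangle\leq\langle\mathbb{n}_x,\deri^1\underline{u}(x)\rangle$, and reversing orientation yields the reverse, so $|\deri^1 u^{\varepsilon,\delta}_{\ell,\iota}(x)|\leq\max\{|\deri^1\underline{u}_{\ell,\iota}(x)|,|\deri^1\overline{u}_{\ell,\iota}(x)|\}$. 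No boundary flattening, no barrier construction, and no extra boundary-Lipschitz estimate are needed — $\hol^1$ regularity of $\underline{u}$ up to $\overline{\set}$ is exactly enough for this limit to make sense — so that portion of your worry is superfluous.
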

To prove Lemma \ref{Lb10}, we require first to see  the existence and uniqueness of the classical solution $\overline{u}=(\overline{u}_{\ell,\iota})_{(\ell,\iota)\in\mathbb{M}\times\mathbb{I}}$ to the problem
\begin{equation}\label{D1}
	\begin{split}
		[c_{\iota}-\dif_{\ell,\iota}]\overline{u}_{\ell,\iota}=   {h}_{\iota},\ \text{on}\ \set,\quad\text{s.t.}\ \overline{u}_{\ell,\iota}=f_{\iota},\ \text{in}\ \partial{\set}.
	\end{split}
\end{equation}

\begin{lema}\label{dm1}
	If \eqref{h0}, \eqref{a4} and \eqref{h2} hold, there exists a unique non-negative solution $\overline{u}$ to the Dirichlet problem \eqref{D1} such that $\overline{u}_{\ell,\iota}  \in\hol^{4,\alpha}(\overline{\set})$ for each $(\ell,\iota)\in\mathbb{M}\times\mathbb{I}$.
\end{lema}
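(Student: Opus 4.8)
The plan is to decouple the cooperative system \eqref{D1} into scalar Dirichlet problems and apply a fixed-point argument, in exact parallel with the treatment of $\overline{T}$ in Section \ref{NPDSs}. First I would rewrite the operator $\dif_{\ell,\iota}$ in \eqref{eq2} by isolating the purely local second-order part: since $q_{\ell}(\iota,\iota)=-\sum_{\kappa\neq\iota}q_{\ell}(\iota,\kappa)\leq 0$, the coupling term $\sum_{\kappa\neq\iota}q_{\ell}(\iota,\kappa)[\overline{u}_{\ell,\kappa}-\overline{u}_{\ell,\iota}]$ can be split into a zeroth-order "reaction" contribution $q_{\ell}(\iota,\iota)\overline{u}_{\ell,\iota}$ acting on the unknown and a source term $\sum_{\kappa\neq\iota}q_{\ell}(\iota,\kappa)\overline{u}_{\ell,\kappa}$ involving only the other components. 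Thus \eqref{D1} reads $[c_{\iota}-q_{\ell}(\iota,\iota)-\widetilde{\dif}_{\iota}]\overline{u}_{\ell,\iota}=h_{\iota}+\sum_{\kappa\neq\iota}q_{\ell}(\iota,\kappa)\overline{u}_{\ell,\kappa}$ with boundary data $f_{\iota}$, where $c_{\iota}-q_{\ell}(\iota,\iota)\geq c_{\iota}>0$ on $\set$. Note the rows $\ell$ are uncoupled from each other, so one may fix $\ell\in\mathbb{M}$ throughout.

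For each fixed $\ell$, define a map $T_{\ell}$ on $(\hol^{0,\alpha}(\overline{\set}))^{n}$ sending $\mathpzc{v}=(\mathpzc{v}_{\iota})_{\iota\in\mathbb{I}}$ to $\sol=(\sol_{\iota})_{\iota\in\mathbb{I}}$, where $\sol_{\iota}$ is the unique classical solution in $\hol^{2,\alpha}(\overline{\set})$ of $[c_{\iota}-q_{\ell}(\iota,\iota)-\widetilde{\dif}_{\iota}]\sol_{\iota}=h_{\iota}+\sum_{\kappa\neq\iota}q_{\ell}(\iota,\kappa)\mathpzc{v}_{\kappa}$ with $\sol_{\iota}=f_{\iota}$ on $\partial\set$; existence, uniqueness and the Schauder estimate follow from \cite[Thm. 6.14]{gilb} together with \eqref{h0}, \eqref{a4}, \eqref{h2}, exactly as for \eqref{LPD.1u}. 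As in the discussion following \eqref{in3}, $T_{\ell}$ maps bounded sets of $(\hol^{0,\alpha})^{n}$ into bounded sets of $(\hol^{2,\alpha})^{n}$, which are precompact in $(\hol^{0,\alpha})^{n}$ by Arzel\`a--Ascoli; continuity follows from uniqueness of the linear problem. For the Schaefer a priori bound, I would apply the scalar maximum principle to $\w_{\iota}=\varrho T_{\ell}[\w]_{\iota}$ at an interior extremum: at a point where $\min_{(x,\iota)}\w_{\iota}(x)$ is attained interiorly, $\deri^{1}\w_{\iota_{\circ}}=0$, $\tr[a_{\iota_{\circ}}\deri^{2}\w_{\iota_{\circ}}]\geq 0$ and $\w_{\iota_{\circ}}-\w_{\kappa}\leq 0$, so $q_{\ell}(\iota_{\circ},\iota_{\circ})\w_{\iota_{\circ}}-\sum_{\kappa\neq\iota_{\circ}}q_{\ell}(\iota_{\circ},\kappa)\w_{\kappa}=\sum_{\kappa\neq\iota_{\circ}}q_{\ell}(\iota_{\circ},\kappa)(\w_{\iota_{\circ}}-\w_{\kappa})\leq 0$, giving $0\leq c_{\iota_{\circ}}\w_{\iota_{\circ}}-\varrho h_{\iota_{\circ}}+(\text{nonpositive})$, hence $\w_{\iota_{\circ}}\geq 0$; the corresponding estimate at a maximum point yields $\w_{\iota}\leq\Lambda\max\{1,1/c_{\iota}\}$ uniformly in $\varrho$, mimicking Lemma \ref{B12}. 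Schaefer's theorem then produces a fixed point $\overline{u}=(\overline{u}_{\ell,\iota})_{\iota}\in(\hol^{2,\alpha}(\overline{\set}))^{n}$, non-negative, solving \eqref{D1} for that $\ell$; collecting over $\ell\in\mathbb{M}$ gives $\overline{u}\in\mathcal{C}^{2,\alpha}_{m,n}$.

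The bootstrap to $\hol^{4,\alpha}$ is then routine: once $\overline{u}\in\mathcal{C}^{2,\alpha}_{m,n}$, the source $h_{\iota}+\sum_{\kappa\neq\iota}q_{\ell}(\iota,\kappa)\overline{u}_{\ell,\kappa}$ lies in $\hol^{2,\alpha}(\overline{\set})$ by \eqref{a4}, so \cite[Thm. 6.19]{gilb} (or the argument used in the proof of Proposition \ref{princ1.0}, invoking \cite[Thm. 9.19]{gilb} and then Schauder regularity up to the $\hol^{4,\alpha}$ boundary) upgrades each $\overline{u}_{\ell,\iota}$ to $\hol^{4,\alpha}(\overline{\set})$; iterating once more is unnecessary since the coefficients are only $\hol^{2,\alpha}$. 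Uniqueness follows from linearity and the maximum principle: the difference $w=\overline{u}-\overline{v}$ of two solutions satisfies the homogeneous system with zero boundary data, and the interior-extremum argument above (with $\varrho=1$, $h_{\iota}=0$, $f_{\iota}=0$) forces $w\equiv 0$ on $\overline{\set}$. I expect the only mildly delicate point to be the careful handling of the sign of $q_{\ell}(\iota,\iota)$ so that the decoupled scalar operators retain a strictly positive zeroth-order coefficient and the cooperative coupling $\sum_{\kappa\neq\iota}q_{\ell}(\iota,\kappa)(\w_{\iota}-\w_{\kappa})$ has the right sign at an extremum; everything else is a direct transcription of the machinery already developed for $\overline{T}$ and for Lemmas \ref{B1}, \ref{B12}.
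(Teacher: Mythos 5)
Your proof follows essentially the same route as the paper's: a Schaefer fixed-point argument for the linearized scalar Dirichlet problems, an a priori $\|\cdot\|_{\hol^{0}}$-bound obtained by evaluating the fixed-point equation at an interior extremum in the style of Lemma \ref{B12} (here Lemma \ref{A1}), Schauder bootstrapping to $\hol^{4,\alpha}$, and a maximum-principle argument for uniqueness. Two minor remarks: the paper's map $T$ puts the \emph{entire} coupling $-\sum_{\kappa\neq\iota}q_{\ell}(\iota,\kappa)[\w_{\ell,\iota}-\w_{\ell,\kappa}]$ into the source $\overline{\Xi}_{\ell,\iota}$ (so the operator on the left stays $c_{\iota}-\widetilde{\dif}_{\iota}$), which makes the $\varrho$-scaling in the Schaefer set cleaner than with your split $c_{\iota}-q_{\ell}(\iota,\iota)-\widetilde{\dif}_{\iota}$ (for $\varrho<1$ you must also observe that $\varrho\w_{\kappa}-\w_{\iota_{\circ}}\geq 0$ at an interior minimum with $\w_{\iota_{\circ}}<0$); and your extremum identity has a sign slip --- the correct version is $-q_{\ell}(\iota_{\circ},\iota_{\circ})\w_{\iota_{\circ}}-\sum_{\kappa\neq\iota_{\circ}}q_{\ell}(\iota_{\circ},\kappa)\w_{\kappa}=\sum_{\kappa\neq\iota_{\circ}}q_{\ell}(\iota_{\circ},\kappa)(\w_{\iota_{\circ}}-\w_{\kappa})$ --- though once corrected neither point changes the conclusion.
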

{Though due to the results of Sweers \cite{S1992}, under weak assumptions in $h_{\iota}$ and the parameters of $\mathcal{L}_{\ell,\iota}$,  that guaranteed  the existence of the non-negative solution $v$ to \eqref{D1} in  a strong sense, it is possible to verify that, by fixed point arguments, \eqref{D1} has a classical solution under the assumptions imposed above; see \eqref{h0}, \eqref{a4} and \eqref{h2}. The reader can find the proof of Lemma \ref{dm1} in the appendix; see Subsection \ref{proof1}, since this is similar to the proof of  Proposition \ref{princ1.1}.}

\begin{proof}[Proof of Lemma \ref{Lb10}]
	 {By Lemma \ref{dm1}, let us  consider $\overline{u}\in\mathcal{C}^{4,\alpha}_{m,n}$  as the unique non-negative solution to the  Dirichlet problem \eqref{D1}, which is a super-solution to the NPDS \eqref{NPD.1}. Then, from Lemma \ref{supsuper}, we get that $u_{\ell,\iota}\leq\overline{u}_{\ell,\iota}$ for $(\ell,\iota)\in\mathbb{M}\times\mathbb{I}$.  Let us assume \eqref{h6} holds. It implies that $\underline{u}$ is a sub-solution to the NPDS \eqref{NPD.1}. Therefore $\underline{u}_{\ell,\iota}\leq {u}_{\ell,\iota}$ for $(\ell,\iota)\in\mathbb{M}\times\mathbb{I}$.  Now, we take a point $x$ in $\partial \set$ and a unit vector $\mathbb{n}_{x}$ outside of $\set$ such that it is not tangent to $\set$. Defining $\mathbb{v}=-\mathbb{n}_{x}$, we have that 
		\begin{align*}	
			\partial_{\mathbb{v}}u_{\ell,\iota}(x)=\lim_{\varrho\rightarrow0}\frac{u_{\ell,\iota}(x+\varrho\mathbb{v})-f_{\iota}(x)}{\varrho}\geq\lim_{\varrho\rightarrow0}\frac{\underline{u}_{\ell,\iota}(x+\varrho\mathbb{v})-f_{\iota}(x)}{\varrho}=\partial_{\mathbb{v}}\underline{u}_{\ell,\iota}(x).
		\end{align*}
		In the same way it gives $\partial_{\mathbb{v}}u_{\ell,\iota}(x)\leq \partial_{\mathbb{v}}\overline{u}_{\ell,\iota}(x)$. Since, $\mathbb{v}=-\mathbb{n}_{x}$ and $u_{\ell,\iota},\underline{u}_{\ell,\iota},\overline{u}_{\ell,\iota}\in\hol^{1}(\overline{\set})$, it yields 
		\begin{equation}\label{n1}
			\langle\mathbb{n}_{x},\deri^{1}\overline{u}(x)\rangle\leq\langle\mathbb{n}_{x},\deri^{1}u(x)\rangle\leq \langle\mathbb{n}_{x},\deri^{1}\underline{u}(x)\rangle.
		\end{equation}
		If the vector $\mathbb{n}_{x}$ is inside of $\set$, proceeding as before, we have
		\begin{equation}\label{n2}
			\langle\mathbb{n}_{x},\deri^{1}\underline{u}(x)\rangle\leq\langle\mathbb{n}_{x},\deri^{1}u(x)\rangle\leq \langle\mathbb{n}_{x},\deri^{1}\overline{u}(x)\rangle
		\end{equation}
		Therefore, from \eqref{n1} and \eqref{n2}, it gives  $$|\deri^{1}u_{\ell,\iota}(x)|\leq\max\{|\deri^{1}\underline{u}_{\ell,\iota}(x)|,|\deri^{1}\overline{u}_{\ell,\iota}(x)|\}\defeq C_{2}.$$ }
\end{proof}

\begin{rem}\label{R1}
	From now on, we  consider cut-off functions $\varpi\in\hol^{\infty}_{\comp}( \set)$    satisfying $0\leq\varpi\leq1$, $\varpi=1$ on the open ball $B_{\beta r}\subset B_{\beta'r}\subset\set$ and $\varpi=0$ on $\set  \setminus B_{\beta' r}$, with $r>0$, $\beta'= \frac{\beta+1}{2}$ and $\beta\in(0,1]$.  It is  {also} assumed that $\|\varpi\|_{\hol^{2}(\overline{B_{\beta r}})}\leq K_{2}$  where $K_{2}>0$ is a constant independent of $\varepsilon$ and $\delta$.
\end{rem}
\begin{lema}\label{Lb1}
	There exist  positive constants $C_{3},C_{4}$ independent of $\varepsilon,\delta$ such that for each $x\in\overline{\set}$
	\begin{align}
		&\varpi(x)|\deri^{1}u_{\ell,\iota}^{\varepsilon,\delta}(x)|\leq  C_{3},\label{ap2}\\
		&\varpi(x)|\deri^{2}u_{\ell,\iota}^{\varepsilon,\delta}(x)|\leq C_{4}.\label{ap3}
	\end{align}
	 {Additionally, if \eqref{h6} holds, then there exists a constant $C_{5}$ independent of $\varepsilon,\delta$ such that for each $x\in\overline{\set}$
		\begin{align}
			&|\deri^{1}u_{\ell,\iota}^{\varepsilon,\delta}(x)|\leq C_{5}.\label{ap2.2}
	\end{align}}
\end{lema}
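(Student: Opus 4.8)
The plan is to derive interior gradient and second-derivative bounds for $u=u^{\varepsilon,\delta}$ via the classical Bernstein technique, localizing with the cut-off $\varpi$ from Remark~\ref{R1} so that the constants do not blow up as $(\varepsilon,\delta)\to(0,0)$. The starting point is that $u\in\mathcal{C}^{4,\alpha}_{m,n}$ (Proposition~\ref{princ1.0}) solves \eqref{NPD.1} and is already sandwiched $0\le u_{\ell,\iota}\le\overline{u}_{\ell,\iota}$ on $\overline{\set}$ (Lemma~\ref{Lb10}), so $\|u_{\ell,\iota}\|_{\hol^{0}(\overline{\set})}\le\|\overline{u}\|_{\mathcal{C}^{0}_{m,n}}$ with a bound independent of $\varepsilon,\delta$.

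For \eqref{ap2} I would set $v_{\ell,\iota}=\varpi^{2}|\deri^{1}u_{\ell,\iota}|^{2}+\lambda u_{\ell,\iota}^{2}$ for a suitable $\lambda>0$, look at an interior maximum point $x_{\ast}$ of $\max_{(\ell,\iota)}v_{\ell,\iota}$, and apply the operator $\widetilde{\dif}_{\iota}$ at $x_{\ast}$. Differentiating \eqref{NPD.1} once in $x_{k}$ gives an equation for $\partial_{k}u_{\ell,\iota}$; the key structural point is that the monotone penalty terms have a favorable sign: $\psi_{\varepsilon}$ and $\psi_{\delta}$ are convex and nondecreasing (see \eqref{p12.1}), so $\psi_{\varepsilon}'(|\deri^{1}u_{\ell,\iota}|^{2}-g_{\iota}^{2})\ge0$ multiplies $\deri^{1}u_{\ell,\iota}\cdot\deri^{1}\partial_{k}u_{\ell,\iota}$ and, when summed against $\partial_{k}u_{\ell,\iota}$ and at the maximum of $v_{\ell,\iota}$, contributes a term one can absorb or discard. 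Likewise the coupling sum $\sum_{\kappa}q_{\ell}(\iota,\kappa)[u_{\ell,\iota}-u_{\ell,\kappa}]$ and the switching penalties, when evaluated at the $(\ell,\iota)$ achieving the maximum, have the right sign by the same argument used in Lemma~\ref{B12} and Lemma~\ref{supsuper}. The terms involving $\deri^{1}\psi_{\varepsilon}$ with respect to $g_{\iota}$, and the $\deri^{1}g_{\iota}$, $\deri^{2}f_{\iota}$ pieces, are controlled by \eqref{a4}; the gradient of $\psi_{\varepsilon}$ itself is bounded in the relevant direction because $|\psi_{\varepsilon}'(t)t|$ grows only linearly. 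Uniform ellipticity \eqref{H2} then converts $\tr[a_{\iota}\deri^{2}v]\le0$ at $x_{\ast}$ into a quadratic inequality $\theta\varpi^{2}|\deri^{2}u_{\ell,\iota}|^{2}\le C(1+\varpi^{2}|\deri^{1}u_{\ell,\iota}|^{2})+\dots$, and choosing $\lambda$ large and using $\|\varpi\|_{\hol^{2}}\le K_{2}$ closes the estimate to yield \eqref{ap2}.

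For \eqref{ap3} I would repeat the Bernstein scheme one order higher, differentiating \eqref{NPD.1} twice and taking $w_{\ell,\iota}=\varpi^{4}|\deri^{2}u_{\ell,\iota}|^{2}+\mu\varpi^{2}|\deri^{1}u_{\ell,\iota}|^{2}$; the new feature is that the second derivative of $\psi_{\varepsilon}$ appears, but since $\psi_{\varepsilon}''\ge0$ it enters with the good sign (it multiplies $|\deri^{1}u_{\ell,\iota}\cdot\deri^{1}\partial_{kl}u_{\ell,\iota}|^{2}\ge0$-type terms that can be dropped), exactly as in the corresponding step of \cite{KM2022}. Using \eqref{ap2} from the previous step to control all first-order quantities on the support of $\varpi$, uniform ellipticity again produces a quadratic inequality in $\varpi^{4}|\deri^{2}u_{\ell,\iota}|^{2}$ whose solution is bounded independently of $\varepsilon,\delta$, giving \eqref{ap3}. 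Finally, for \eqref{ap2.2} under \eqref{h6}, I would run a global version of the gradient estimate rather than a localized one: the boundary gradient bound \eqref{ap1.1} already gives $|\deri^{1}u_{\ell,\iota}|\le C_{2}$ on $\partial\set$, so taking $v_{\ell,\iota}=|\deri^{1}u_{\ell,\iota}|^{2}+\lambda u_{\ell,\iota}^{2}$ without cut-off and examining its maximum over $\overline{\set}$: either the maximum is on $\partial\set$, where it is bounded by $\eqref{ap1.1}$ and $\eqref{ap1.0}$, or it is interior and the same Bernstein argument as for \eqref{ap2} (now with $\varpi\equiv1$) applies.

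The main obstacle I anticipate is bookkeeping the $\varepsilon$- and $\delta$-dependence so that it genuinely cancels: naive differentiation of the penalties produces factors $\psi_{\varepsilon}'/\varepsilon$ and $\psi_{\delta}'/\delta$ which a priori blow up, and the whole point is that at the maximum point of the Bernstein function these appear multiplied by quantities with the correct sign (monotonicity of $\psi_{\varepsilon},\psi_{\delta}$ together with the ordering of the $u_{\ell,\iota}$ at the maximizing index) so they can be discarded rather than estimated. Getting the signs right for the mixed switching/coupling terms simultaneously — using that the maximum over $(\ell,\iota)$ forces $u_{\ell,\iota}-u_{\ell',\iota}\ge$ (its value at the max) and similarly for the $\iota$-coupling — is the delicate part; everything else is the standard Bernstein computation plus invocations of \eqref{a4}, \eqref{h2}, \eqref{H2}, Lemma~\ref{Lb10} and Remark~\ref{R1}.
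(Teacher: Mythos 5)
Your plan is correct at the coarse level (Bernstein technique with the cut-off of Remark~\ref{R1}, drop the coupling and switching terms by their sign at the maximizing index, split into cases according to whether $\psi'_\varepsilon<1/\varepsilon$ or $\psi'_\varepsilon=1/\varepsilon$), but the specific auxiliary functions you chose do not make the estimate close, and the gap is structural rather than cosmetic.

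For \eqref{ap2} the paper uses $w_{\ell,\iota}=\varpi^{2}|\deri^{1}u^{\varepsilon,\delta}_{\ell,\iota}|^{2}-\lambda A_{\varepsilon,\delta}\,u^{\varepsilon,\delta}_{\ell,\iota}$, where $A_{\varepsilon,\delta}=\max\varpi|\deri^{1}u^{\varepsilon,\delta}|$ is the very quantity being estimated; the correction term is \emph{linear} in $u$ with a \emph{minus} sign and a \emph{dynamic} coefficient. You proposed $v_{\ell,\iota}=\varpi^{2}|\deri^{1}u_{\ell,\iota}|^{2}+\lambda u_{\ell,\iota}^{2}$; both the sign and the fixed coefficient are wrong. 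At an interior maximum of $v$, $\deri^{1}v=0$ gives $\varpi^{2}\deri^{1}|\deri^{1}u|^{2}=-2\lambda u\,\deri^{1}u-|\deri^{1}u|^{2}\deri^{1}\varpi^{2}$, so the critical term produced when you differentiate the penalty, $2\varpi^{2}\psi'_{\varepsilon}\langle\deri^{1}u,\deri^{1}|\deri^{1}u|^{2}\rangle$, equals $-4\lambda u\,\psi'_{\varepsilon}|\deri^{1}u|^{2}$ plus cut-off error. The other $\psi_{\varepsilon}$-dependent contribution, coming from $\tr[a\deri^{2}(\lambda u^{2})]=2\lambda u\tr[a\deri^{2}u]+2\lambda\langle a\deri^{1}u,\deri^{1}u\rangle$ after substituting the PDE, is $+2\lambda u\,\psi_{\varepsilon}(\cdots)$, and even with $\psi_{\varepsilon}(t)\geq t\psi'_{\varepsilon}(t)-1$ the combined $\psi'_{\varepsilon}$-scaled term is $-2\lambda u\,\psi'_{\varepsilon}|\deri^{1}u|^{2}+O(\lambda)$, which is $\leq 0$ because $u\geq 0$. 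Since $\psi'_{\varepsilon}$ can be as large as $1/\varepsilon$, this negative term swamps the elliptic gain $2\lambda\theta|\deri^{1}u|^{2}$, the inequality $\tr[a\deri^{2}v]\leq 0$ at the maximum is satisfied trivially, and you get no bound. With the paper's $-\lambda A_{\varepsilon,\delta}u$ the two contributions reinforce rather than cancel, giving $+\lambda A_{\varepsilon,\delta}\psi'_{\varepsilon}|\deri^{1}u|^{2}$ inside the bracket of \eqref{partu4}, which is the engine of the contradiction argument in \eqref{e2}--\eqref{e4}. Moreover, even after fixing the sign by using a linear $-\lambda u$, the cut-off errors $-2\psi'_{\varepsilon}|\deri^{1}u|^{2}\langle\deri^{1}u,\deri^{1}\varpi^{2}\rangle$ scale like $K_{2}A_{\varepsilon,\delta}\psi'_{\varepsilon}|\deri^{1}u|^{2}$, so a fixed $\lambda$ cannot dominate them: you would need $\lambda\gtrsim A_{\varepsilon,\delta}$, which is circular. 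Making the coefficient proportional to the unknown $A_{\varepsilon,\delta}$ is precisely what removes the circularity, and that is the content of the paper's choice.

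The same issue recurs in \eqref{ap3}. The paper's auxiliary is $\phi_{\ell,\iota}=\varpi^{2}|\deri^{2}u|^{2}+\lambda A^{1}_{\varepsilon,\delta}\varpi\tr[\alpha_{\iota_{0}}\deri^{2}u]+\mu|\deri^{1}u|^{2}$, and the linear middle term is not decoration: as in \eqref{d4}, for $\lambda\theta\geq 2$ the matrix $2\varpi^{2}\deri^{2}u+\lambda A^{1}_{\varepsilon,\delta}\varpi\alpha_{\iota_{0}}$ is positive semidefinite, which is what makes the $\psi''_{\varepsilon}$ and $\psi''_{\delta}$ contributions in \eqref{d5} nonnegative. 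Your $w_{\ell,\iota}=\varpi^{4}|\deri^{2}u|^{2}+\mu\varpi^{2}|\deri^{1}u|^{2}$ lacks that term, so the quadratic form appearing against $\psi''_{\varepsilon}$ involves only $\varpi^{2}\deri^{2}u$, which has no sign, and the convexity of $\psi_{\varepsilon}$ alone does not save you. Your outline for \eqref{ap2.2} (take $\varpi\equiv 1$ under \eqref{h6}, compare with the boundary gradient bound \eqref{ap1.1}) matches the paper's, and your handling of the $q$-coupling and switching penalties by the ordering at the maximizing pair $(\ell,\iota)$ is correct in spirit; but both ride on top of the interior Bernstein lemmas, and those require the paper's dynamic auxiliary functions.
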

The proof of the lemma above is a slight modification of the proof  of the similar conclusions in Lemma 3.1 of \cite{KM2022}. So,  we shall sketch some of those ideas in the appendix and remit to the reader to this paper for more details.

\subsection{ {Sketch of the proofs of Propositions \ref{M1} and \ref{princ1.1} }}
Let  $\varepsilon\in(0,1)$ and $(\ell,\iota)\in\mathbb{M}\times\mathbb{I}$ be  fixed.  {Under  assumptions \eqref{h0}--\eqref{h2}}, by \eqref{ap1}, \eqref{ap2} and \eqref{ap3}, using Arzel\`a-Ascoli compactness criterion (see \cite[p. 718]{evans2}) and that for each $p\in(1,\infty)$, $(\Lp^{p}(B_{\beta r}),\|\cdot\|_{\Lp^{p}(B_{r})})$, with $B_{r}\subset\set$, is a reflexive space (see \cite[Thm. 2.46, p. 49]{adams}), we get that there exist  a sub-sequence $\{u^{\varepsilon,\delta_{\hat{n}}}_{\ell,\iota}\}_{\hat{n}\geq1}$ of $\{u^{\varepsilon,\delta}_{\ell,\iota}\}_{\delta\in(0,1)}$  and a function $w^{\varepsilon}_{\ell,\iota}$ in $\sob^{2,\infty}_{\loc}(\set)$ such that
\begin{equation}
	\begin{split} \label{conv1.0}
		&u^{\varepsilon,\delta_{\hat{n}}}_{\ell,\iota}\underset{\delta_{\hat{n}}\rightarrow0}{\longrightarrow}w^{\varepsilon}_{\ell,\iota}\ \text{in}\hol^{1}_{\loc}(\set),\\  &\partial_{ij}u^{\varepsilon,\delta_{\hat{n}}}_{\ell,\iota}\underset{\delta_{\hat{n}}\rightarrow0}{\longrightarrow}\partial_{ij}w^{\varepsilon}_{\ell,\iota}, \text{weakly}\ \Lp^{p}_{\loc}(\set),\ \text{for each}\ p\in(1,\infty).
	\end{split}
\end{equation}
Taking
\begin{equation}\label{sol1}
	u^{\varepsilon}_{\ell,\iota}(x)\eqdef
	\begin{cases}
		w^{\varepsilon}_{\ell,\iota}(x)&\quad\text{if}\ x\in\set,\\
		f_{\iota}(x)&\quad\text{if}\ x\in\partial\set,
	\end{cases}
\end{equation}
and considering that $0\leq u^{\varepsilon,\delta_{\hat{n}}}_{\ell,\iota}\leq v_{\ell,\iota}$ on $\set$ and $ u^{\varepsilon,\delta_{\hat{n}}}_{\ell,\iota}=v_{\ell,\iota}=f_{\iota}$ in $\partial\set$, it implies that 
\begin{equation}\label{econv1.0}
	0\leq u^{\varepsilon}_{\ell,\iota}\leq v_{\ell,\iota}\  \text{on}\ \set\  \text{and}\   u^{\varepsilon}_{\ell,\iota}=f_{\iota}\  \text{in}\  \partial\set. 
\end{equation}
Therefore $u^{\varepsilon}_{\ell,\iota}\in \hol^{0}(\overline{\set})\cap\sob^{2,\infty}_{\loc}(\set)$.  Now, using \eqref{ap1}, \eqref{ap2}, \eqref{ap3}  and   \eqref{conv1.0},  the following inequalities hold  for each $(\ell,\iota)\in\mathbb{M}\times\mathbb{I}$,
\begin{equation}\label{ineqe1.0}
	\begin{split}
		&\varpi(x)|\deri^{1}u^{\varepsilon}_{\ell,\iota}(x)|\leq C_{3} \ \text{for}\ x\in\overline{\set},\\ &\|\deri^{2}u^{\varepsilon}_{\ell,\iota}\|_{\Lp^{p}(B_{\beta r})}\leq C_{4} \ \text{for each}\ p\in(1,\infty)\ \text{and}\ B_{\beta r}\subset\set  .
	\end{split}
\end{equation}	
Then, from \eqref{econv1.0}, \eqref{ineqe1.0} and by the same criterion in \eqref{conv1.0}, we have that there exist  a sub-sequence $\{u^{\varepsilon_{n}}_{\ell,\iota}\}_{n\geq1}$ of $\{u^{\varepsilon}_{\ell,\iota}\}_{\varepsilon\in(0,1)}$  and a $w_{\ell,\iota}$ in $\sob^{2,\infty}_{\loc}(\set)$ such that
\begin{equation*}
	\begin{split} 
		&u^{\varepsilon_{\hat{n}}}_{\ell,\iota}\underset{\varepsilon_{\hat{n}}\rightarrow0}{\longrightarrow}w_{\ell,\iota}\ \text{in}\hol^{1}_{\loc}(\set),\\ &\text{$\partial_{ij}u^{\varepsilon_{n}}_{\ell,\iota}\underset{\varepsilon_{\hat{n}}\rightarrow0}{\longrightarrow}\partial_{ij}w_{\ell,\iota}$,  weakly $ \Lp^{p}_{\loc}(\set)$, for each $p\in(1,\infty)$.}
	\end{split}
\end{equation*}
Define 
\begin{equation}\label{sol.0}
	u_{\ell,\iota}(x)=
	\begin{cases}
		w_{\ell,\iota}(x)&\quad\text{if}\ x\in\set,\\
		f_{\iota}(x)&\quad\text{if}\ x\in\partial\set.
	\end{cases}
\end{equation}
Since \eqref{econv1.0} holds, we get that
\begin{equation*}
	0\leq u_{\ell,\iota}\leq v_{\ell,\iota}\  \text{on}\ \set\  \text{and}\   u_{\ell,\iota}=f_{\iota}\  \text{in}\  \partial\set. 
\end{equation*}
Therefore $u^{\varepsilon}_{\ell,\iota}\in \hol^{0}(\overline{\set})\cap\sob^{2,\infty}_{\loc}(\set)$.

To finalize this  section, let us remark that $u^{\varepsilon}$ and $u$, taken as in \eqref{sol1} and \eqref{sol.0} are the unique solutions to \eqref{pc1} and  \eqref{esd5}, respectively.  {Since the proofs of the previous asseverations are a slight modification of the proofs of Theorem 2.1 and Proposition 2.1 of \cite{KM2022},   we omit them and   remit  the reader to this paper for more details; See \ref{proof4} and \ref{proof5}.} 

 {Additionally,   assuming  \eqref{h6} holds. by \eqref{ap1.0}, \eqref{ap1.1} and \eqref{ap2.2},   it follows that the entries to the unique solution $u^{\varepsilon}=(u^{\varepsilon}_{\ell,\iota})_{(\ell,\iota)\in\mathbb{M}\times\mathbb{I}}$ to \eqref{pc1} belong to $\hol^{1}(\overline{\set})\cap\sob^{2,\infty}_{\loc}(\set)$ and
	\begin{equation}
		\begin{split} \label{conv1}
			&u^{\varepsilon,\delta_{\hat{n}}}_{\ell,\iota}\underset{\delta_{\hat{n}}\rightarrow0}{\longrightarrow}u^{\varepsilon}_{\ell,\iota}\ \text{in}\  \hol^{1}(\overline{\set})\\ &\partial_{ij}u^{\varepsilon,\delta_{\hat{n}}}_{\ell,\iota}\underset{\delta_{\hat{n}}\rightarrow0}{\longrightarrow}\partial_{ij}u^{\varepsilon}_{\ell,\iota},\   \text{weakly}\  \Lp^{p}_{\loc}(\set),\ \text{for each}\ p\in(1,\infty).
		\end{split}
	\end{equation}
	Meanwhile  by \eqref{ap1.0} \eqref{ap2.2} and \eqref{conv1},  the following inequalities hold  for each $(\ell,\iota)\in\mathbb{M}\times\mathbb{I}$,
	\begin{equation}\label{ineqe1}
		\underline{u}_{\ell,\iota}\leq u^{\varepsilon}_{\ell,\iota}\leq \overline{u}_{\ell,\iota}\quad\text{and}\quad |\deri^{1}u^{\varepsilon}_{\ell,\iota}|\leq C_{5},\quad\text{in}\ \overline{\set}.
	\end{equation}	
	Besides, for each $B_{\beta r}\subset\set$, from \eqref{ap3}, it follows that there exists a positive constant $C_{6}$ independent of $\varepsilon$ such that 
	\begin{equation}\label{ineqe2}
		||\deri^{2}u^{\varepsilon}_{\ell,\iota}||_{\Lp^{p}(B_{\beta r})}\leq C_{6},\quad \text{for each}\ p\in(1,\infty). 
	\end{equation}
	Then, from \eqref{ineqe1} and \eqref{ineqe2}, we get that the entries to the unique solution $u=(u_{\ell,\iota})_{(\ell,\iota)\in\mathbb{M}\times\mathbb{I}}$ to \eqref{esd5}  belong to   $\hol^{1}(\overline{\set})\cap\sob^{2,\infty}_{\loc}(\set)$ such that
	\begin{equation}
		\begin{split} \label{econv1}
			&\text{$u^{\varepsilon_{n}}_{\ell,\iota}\underset{\varepsilon_{n}\rightarrow0}{\longrightarrow}u_{\ell,\iota}$ in $\hol^{1}(\overline{\set})$,}\\ &\text{$\partial_{ij}u^{\varepsilon_{n}}_{\ell,\iota}\underset{\varepsilon_{n}\rightarrow0}{\longrightarrow}\partial_{ij}u_{\ell,\iota}$,  weakly $ \Lp^{p}_{\loc}(\set)$, for each $p\in(1,\infty)$.}
		\end{split}
	\end{equation}
	We summarise the seen above in the following proposition.
	\begin{prop}
		If \eqref{h0}--\eqref{h2} and \eqref{h6} hold, then $u^{\varepsilon}=(u^{\varepsilon}_{\ell,\iota})_{(\ell,\iota)\in\mathbb{M}\times\mathbb{I}}$, $u=(u_{\ell,\iota})_{(\ell,\iota)\in\mathbb{M}\times\mathbb{I}} $, defined in \eqref{sol1} and \eqref{sol.0}, are the unique solutions to the HJB equations \eqref{pc1} and \eqref{esd5}, respectively, whose entries belong to the space $\hol^{1}(\overline{\set})\cap\sob^{2,\infty}_{\loc}(\set)$.
	\end{prop}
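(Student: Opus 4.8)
The plan is to assemble the proposition as a direct synthesis of the ingredients already established in Sections \ref{NPDSs} and \ref{prop1}, adding essentially no new analysis but organizing the limiting arguments carefully. The target has two halves: the existence part (the pair $u^{\varepsilon},u$ built via the two successive limits $\delta\downarrow0$ and $\varepsilon\downarrow0$ solves \eqref{pc1}, resp.\ \eqref{esd5}) and the regularity part (the entries lie in $\hol^{1}(\overline{\set})\cap\sob^{2,\infty}_{\loc}(\set)$), together with uniqueness.

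\smallskip

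First I would fix $(\ell,\iota)\in\mathbb{M}\times\mathbb{I}$ and invoke Proposition \ref{princ1.0} to get the classical solutions $u^{\varepsilon,\delta}\in\mathcal{C}^{4,\alpha}_{m,n}$, then Lemmas \ref{Lb10} and \ref{Lb1} (using \eqref{h6}) to obtain the $(\varepsilon,\delta)$-uniform bounds \eqref{ap1.0}, \eqref{ap1.1}, \eqref{ap2.2} and the local second-order bound \eqref{ap3}. The key point is that \eqref{h6} upgrades the merely local gradient control of \eqref{ap2} to the global bound \eqref{ap2.2}; combined with \eqref{ap1.1} on $\partial\set$ and the equicontinuity this provides, Arzel\`a--Ascoli (\cite[p.~718]{evans2}) yields convergence of a subsequence in $\hol^{1}(\overline{\set})$ — not just $\hol^{1}_{\loc}(\set)$ — which is \eqref{conv1}. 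The weak $\Lp^{p}_{\loc}$ convergence of second derivatives follows from \eqref{ap3} and reflexivity of $\Lp^{p}(B_{\beta r})$ (\cite[Thm.~2.46]{adams}). Passing to the limit in the bounds gives \eqref{ineqe1}--\eqref{ineqe2}, hence $u^{\varepsilon}_{\ell,\iota}\in\hol^{1}(\overline{\set})\cap\sob^{2,\infty}_{\loc}(\set)$; repeating the same compactness step in $\varepsilon$ (the bounds in \eqref{ineqe1}, \eqref{ineqe2} being $\varepsilon$-uniform) produces $u_{\ell,\iota}$ with \eqref{econv1} and the same regularity.

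\smallskip

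That $u^{\varepsilon}$ solves \eqref{pc1} and $u$ solves \eqref{esd5} is exactly Theorem~2.1 and Proposition~2.1 of \cite{KM2022} transcribed to the cooperative-system setting: one tests the NPDS \eqref{NPD.1} against a smooth compactly supported function $\varphi\varpi$, uses $\psi_{\delta}(s)\to(s)_{+}/0$-type behaviour (more precisely, $\psi_{\delta}(u^{\varepsilon,\delta}_{\ell,\iota}-u^{\varepsilon,\delta}_{\ell',\iota}-\vartheta_{\ell,\ell'})$ stays bounded only where $u^{\varepsilon}_{\ell,\iota}\le\mathcal{M}_{\ell,\iota}u^{\varepsilon}$, forcing the obstacle inequality in the limit) and the strong $\hol^{1}$-convergence to pass $\psi_{\varepsilon}(|\deri^{1}u^{\varepsilon,\delta}_{\ell,\iota}|^{2}-g^{2}_{\iota})$ to the limit, obtaining \eqref{pc1} a.e.; the second limit $\varepsilon\downarrow0$ converts the penalized term $\psi_{\varepsilon}$ into the gradient constraint $|\deri^{1}u_{\ell,\iota}|-g_{\iota}\le0$, giving \eqref{esd5}. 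Uniqueness in both cases comes from a comparison principle of the type of Lemma \ref{supsuper}, applied to the variational inequalities: at an interior maximum of $[u-\tilde u]_{\ell_{\circ},\iota_{\circ}}$ the coupling terms and the $\mathcal{M}_{\ell,\iota}$-obstacle have the right sign, and $c_{\iota_{\circ}}>0$ closes the argument — this is the content of \cite{KM2022}'s uniqueness proofs, referenced here as \ref{proof4}--\ref{proof5}.

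\smallskip

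The main obstacle is purely bookkeeping: one must check that every estimate genuinely is uniform in \emph{both} $\varepsilon$ and $\delta$ before taking either limit, since the bound \eqref{in3} used in Section \ref{NPDSs} blows up like $1/\varepsilon+1/\delta$ and is therefore useless here — the uniform control must come exclusively from Lemmas \ref{Lb10}--\ref{Lb1}, which is precisely why assumption \eqref{h6} (the existence of the strict sub-solution $\underline{u}$) is indispensable and why it does not appear in the weaker Propositions \ref{princ1.0}, \ref{princ1.1} where only $\hol^{0}(\overline{\set})$ or $\hol^{1}_{\loc}(\set)$ regularity is claimed. There is no new PDE difficulty beyond correctly threading these uniform bounds through the two-step limit and citing \cite{KM2022} for the passage-to-the-limit and uniqueness arguments.
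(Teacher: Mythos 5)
Your proof is correct and follows the paper's own route exactly: collect the $(\varepsilon,\delta)$-uniform bounds of Lemmas~\ref{Lb10}--\ref{Lb1}, upgraded by \eqref{h6} via \eqref{ap1.1} and \eqref{ap2.2} to global $\hol^{1}(\overline{\set})$ control, run the two successive Arzel\`a--Ascoli/reflexivity limits, and invoke the appendix arguments (modelled on \cite{KM2022}) for the passage to the variational inequalities \eqref{pc1}, \eqref{esd5} and their uniqueness. One small caution on phrasing: the uniqueness arguments in Subsections~\ref{proof4}--\ref{proof5} are not really ``of the type of Lemma~\ref{supsuper}'' --- that lemma is a classical maximum principle for $C^{2}$ sub/super-solutions of the NPDS, whereas the limits $u^{\varepsilon}$ and $u$ are only in $\sob^{2,\infty}_{\loc}$, so uniqueness needs Bony's maximum principle together with the regime-chasing argument that exploits the no-loop-of-zero-cost hypothesis \eqref{l1}; the $\mathcal{M}_{\ell,\iota}$-obstacle need not carry a strict sign at the interior maximum, and it is precisely this degeneracy that forces the chain of regimes. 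Since you cite the correct sources this is imprecise wording, not a gap.
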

}

\section{$\varepsilon$-PACS control problem and proof of Theorem   \ref{verf2}}\label{Pp1}

In this section, we shall  verify that the value functions $V$  given in \eqref{vf1} agrees with the solution $u$ to the HJB equation \eqref{esd5}  {on $\overline{\set}$  under assumptions \eqref{h3} and {\eqref{a4}--\eqref{h5}}.  To prove it, firstly, we study an  {$\varepsilon$-PACS} control problem that is closely related to the value function problem seen previously.   

The penalized control set $\mathcal{U}^{\varepsilon}$ is defined by 
\begin{gather}\label{p1}
	\mathcal{U}^{\varepsilon}
	=\{\xi=(\mathbb{n},\zeta)\in \mathcal{U}:\zeta_t\text{ is absolutely continuous, }0\leq \dot{\zeta}_t\leq 2C/\varepsilon\}
\end{gather}
where $\varepsilon\in(0,1)$ is fixed and $C$ is a positive constant independent of $\varepsilon$. Let $( x_{0}, \ell_{0},\iota_{0})\in \overline{\mathcal{O}}\times \mathbb{M} \times \mathbb{I}$ be fixed. The controlled process $(X^{\xi,\varsigma},J^{\varsigma},I)$ evolves as 
\begin{gather}\label{esd1}
	\begin{split}
		&X^{\xi,\varsigma}_{\tmt}=X^{\xi,\varsigma}_{\tilde{\tau}_{i}} -\displaystyle\int_{\tilde{\tau}_{i}}^{\tmt}
		[b(X^{\xi,\varsigma}_{\tms},I^{(\ell_{i})}_{\tms})+\mathbb{n}_{\tms}\dot{\zeta}_{\tms}]\der \tms+\displaystyle\int_{\tilde{\tau}_{i}}^{\tmt}\sigma(X^{\xi,\varsigma}_{\tms},I^{(\ell_{i})}_{\tms})\der W_{\tms},\\
		&I_{\tmt}=I^{(\ell_{i})}_{\tmt}\ \text{and}\ J^{\varsigma}_{\tmt}=\ell_{i}\quad\text{for}\ \tmt\in[{\tilde\tau}_{i},\tilde{\tau}_{i+1})\ \text{ and }\ i\geq0,
	\end{split}
\end{gather}
where $\tilde{\tau}_i=\tau_i\wedge \tau$ and $\tau$ is the first exit time of $X^{\xi,\varsigma}$ from the set $\mathcal{O}$.  Defining the Legendre transform of $H^{\varepsilon}_{\iota}(\gamma,x)\eqdef H^{\varepsilon}(\gamma,x,\iota)=\psi_\varepsilon\left(|\gamma|^2-g^2_{\iota}(x)\right)$, with $(x,\iota)\in \overline{\mathcal{O}}\times \mathbb{I}$, as 
\begin{gather*}
	l^{\varepsilon}_{\iota}(y,x)\eqdef l^{\varepsilon}(y,x,\iota)
	\eqdef\sup_{\gamma \in \R^d}
	\{\left<\gamma,y\right>-H^{\varepsilon}_{\iota}(\gamma,x)\},\quad \text{for } y\in \R^d
\end{gather*}
the corresponding penalized functional cost for $(\xi,\varsigma)\in\mathcal{U}^{\varepsilon}\times\mathcal{S}$ is defined as
\begin{align}\label{pen1}
	\mathcal{V}_{\xi,\varsigma}( x_{0},\ell_{0},\iota_{0})&\eqdef\E_{ x_{0},\ell_{0},\iota_{0}}\biggr[\int_{[0,\tau]}\expo^{-r(\mathpzc{t})}[ h (X^{\xi,\varsigma}_{\mathpzc{t}},I_{\tmt})
	+l^\varepsilon(\dot{\zeta}_{\tmt}\mathbb{n}_{\tmt},X_{\tmt}^{\varepsilon,\varsigma},I_{\tmt})]
	\der \mathpzc{t}]\biggr]\notag\\
	&\quad+\sum_{i\geq0}\E_{ x_{0},\ell_{0},\iota_{0}}\big[\expo^{-r(\tau_{i+1})}\vartheta_{\ell_{i},\ell_{i+1}}\uno_{\{\tau_{i+1}<\tau\}}\big]\notag\\
	&\quad+\E_{ x_{0},\ell_{0},\iota_{0}}[\expo^{-r(\tau)}f(X^{\xi,\varsigma}_{\tau},I_{\tau})\uno_{\{\tau<\infty\}}],
\end{align}
and the value function is then given by 
\begin{equation}\label{vw1}
	V^{\varepsilon}_{\ell_{0}}( x_{0},\iota_{0})\eqdef \inf_{\xi,\varsigma} \mathcal{V}_{\xi,\varsigma}( x_{0},\ell_{0},\iota_{0}),
\end{equation}
where its HJB equation {takes the form}
\begin{equation}\label{p13.0.1.0}
	\begin{split}
		\max\bigg\{[c_{\iota}-\dif_{\ell,\iota}]u_{\ell,\iota}^{\varepsilon}+\sup_{y\in \R^d}
		\{\langle \deri^{1}u^{\varepsilon}_{\ell,\iota},y\rangle
		-l^{\varepsilon}_{\iota}(y,\cdot)\}
		-  h_{\iota},u_{\ell,\iota}^{\varepsilon}-\mathcal{M}_{\ell,\iota}u^{\varepsilon} \bigg\}&= 0,\ \text{on}\ \set,\\
		\text{s.t.}\ u_{\ell,\iota}^{\varepsilon}&=f_{\iota},\ \text{in}\ \partial{\set},
	\end{split}
\end{equation}
where $\mathcal{M}_{\ell,\iota}$ and  $\dif_{\ell,\iota}$ are as in  \eqref{p6.0}. Observe that \eqref{p13.0.1.0} can be rewritten as \eqref{pc1} because of 
$H^{\varepsilon}_{\iota}(\gamma,x)=\sup_{y\in\R^{d}}\{\langle \gamma,y\rangle-l^{\varepsilon}_{\iota}(y,x)\}$.

To facilitate the notation of this section, let us denote $\dif_{\ell,\iota}\hat{f}_{\ell,\iota}$ and $\mathcal{M}_{\ell,\iota}\hat{f}$ by $\dif_{\ell,\iota}\hat{f}_{\ell} (\cdot,\iota)$ and $\mathcal{M}_{\ell}\hat{f} (\cdot,\iota)$, respectively.

 {Let $(X^{\xi,\varsigma},J^{\varsigma},I)$ evolve as \eqref{es1}, with $(\xi,\varsigma)\in\mathcal{U}\times\mathcal{S}$ and initial state $( x_{0},\ell_{0},\iota_{0})\in {\set}\times\mathbb{M}\times\mathbb{I}$. Recall that $(\xi,\varsigma)$ satisfies \eqref{cont.1} and \eqref{cont.2}.}

Let us start showing a general result which will be helpful for the purposes of the section.
\begin{lema}\label{Ito1}
	Let $(X^{\xi,\varsigma},J^{\varsigma},I)$ evolve as \eqref{es1}, with $(\xi,\varsigma)\in\mathcal{U}\times\mathcal{S}$ and initial state $( x_{0},\ell_{0},\iota_{0})\in {\set}\times\mathbb{M}\times\mathbb{I}$. Let $\hat{f}=(\hat{f}_{1},\dots,\hat{f}_{m})$ be a sequence of real valued function such that $\hat{f}_{\ell}(\cdot,\iota)\in\hol^{2}(\overline{\set})$ for $(\ell,\iota)\in\mathbb{M}\times\mathbb{I}$. Take $\hat{\tau}_{0}^{q}\eqdef0$ and    $\hat{\tau}_{i}^{q}\eqdef\tilde{\tau}_{i}\wedge\inf\{ \tmt>\tau_{i-1}:X_{\tmt}\notin\set_{q}\}$, with $\tilde{\tau}_{i}={\tau}_{i}\wedge\tau$,  $i\geq1$, $\set_{q}\eqdef\{x\in\set:\dist(x,\partial\set )>1/q\}$ and $q$ a positive integer  {large enough} such that $X_{0-}=x_{0}\in\set_{q}$. Then
	\begin{align}\label{m6}
		&\E_{x_{0},\iota_{0},\ell_{0}}[\expo^{-r(\hat{\tau}^{q}_{i})}\hat{f}_{\ell_{i}}(X^{\xi,\varsigma}_{\hat{\tau}^{q}_{i}},I_{\hat{\tau}^{q}_{i}})\uno_{\{\tau_{i}<\tau\}}]\notag\\
		&=\E_{x_{0},\iota_{0},\ell_{0}}\bigg[\bigg\{\expo^{-r(\hat{\tau}^{q}_{i+1})}\hat{f}_{\ell_{i}}(X^{\xi,\varsigma}_{\hat{\tau}^{q}_{i+1}},I_{\hat{\tau}^{q}_{i+1}})-\sum_{\hat{\tau}^{q}_{i}<\tms\leq\hat{\tau}^{q}_{i+1}}\expo^{-r(\tms)}\mathcal{J}[X^{\xi,\varsigma}_{\tms},I_{\tms}, \hat{f}_{\ell_{i}}]\notag\\
		&\quad+\int_{\hat{\tau}^{q}_{i}+}^{\hat{\tau}^{q}_{i+1}}\expo^{-r(\tms)}[[c(X^{\xi,\varsigma}_{\tms},I_{\tms})\hat{f}_{\ell_{i}}(X^{\xi,\varsigma}_{\tms},I_{\tms})\notag\\
		&\quad\quad-\dif_{\ell_{i},I_{\tms}}\hat{f}_{\ell_{i}}(X^{\xi,\varsigma}_{\tms},I_{\tms})]\der\tms+\langle\deri^{1} \hat{f}_{\ell_{i}}(X^{\xi,\varsigma}_{\tms},I_{\tms}),\mathbb{n}_{\tms}\rangle\der\zeta^{\comp}_{\tms}]\bigg\}\uno_{\{\tau_{i}<\tau\}}\bigg],
	\end{align}
where $\displaystyle\int_{a+}^{b}$ defines the integral operator on the interval $[a,b)$, $\xi^{\comp}$ is the continuous part of the process $\xi$, and 
 \begin{align}\label{m3}
	\mathcal{J}[X_{\tms},I_{\tms}, \hat{f}_{\ell_{i}}]\eqdef
	\hf_{\ell_{i}}(X_{\tms},I_{\tms})-\hf_{\ell_{i}}(X_{\tms-},I_{\tms})
	=\hf_{\ell_{i}}(X_{\tms-}-\mathbb{n}_{\tms}\Delta \zeta_{\tms},I_{\tms})-\hf_{\ell_{i}}(X_{\tms-},I_{\tms}).
\end{align}
\end{lema}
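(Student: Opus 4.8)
The statement is an identity obtained by applying an Itô-type formula to the function $(\tmt,x)\mapsto \expo^{-r(\tmt)}\hf_{\ell_i}(x,\iota)$ along the controlled dynamics \eqref{es1} on the stochastic interval $[\hat\tau_i^q,\hat\tau_{i+1}^q)$, in which the regime index $\ell_i$ is frozen and the Markov chain component $I$ evolves according to $Q_{\ell_i}$. The plan is to work on the event $\{\tau_i<\tau\}$ (which is $\mathcal{F}_{\tau_i}$-measurable) and to fix the regime $\ell_i$, so that on this interval the process $X^{\xi,\varsigma}$ solves the SDE with drift $-b(\cdot,I)$, diffusion $\sigma(\cdot,I)$, and the singular perturbation $-\cn\,\der\zeta$; the discount factor $\expo^{-r(\tmt)}$ has absolutely continuous paths with $\der\expo^{-r(\tmt)}=-c(X_{\tmt},I_{\tmt})\expo^{-r(\tmt)}\der\tmt$. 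The truncation at $\hat\tau_{i+1}^q$, which keeps $X$ inside the compact set $\overline{\set_q}\subset\set$, guarantees that $\hf_{\ell_i}(\cdot,\iota)$ and its first two derivatives are bounded along the path, so all the stochastic integrals below are genuine martingales and the expectations are finite; this is exactly why the truncation is introduced.

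The key steps, in order, are as follows. First I would apply the change-of-variables formula for semimartingales (Itô's formula with jumps, e.g. in the form for càdlàg processes of finite variation plus a continuous martingale part) to $\expo^{-r(\tmt)}\hf_{\ell_i}(X^{\xi,\varsigma}_{\tmt},I_{\tmt})$ between $\hat\tau_i^q$ and $\hat\tau_{i+1}^q$. The continuous part of $X^{\xi,\varsigma}$ contributes the second-order term $\tr[a_\iota \deri^2\hf_{\ell_i}]$ together with the drift term $-\langle b_\iota,\deri^1\hf_{\ell_i}\rangle$ and the continuous-singular term $\langle\deri^1\hf_{\ell_i},\cn\rangle\der\zeta^{\comp}$ (with a sign depending on the $-\cn\der\zeta$ convention in \eqref{es1}); the discount factor contributes $+c_\iota\hf_{\ell_i}$; and the finite-state chain $I$ contributes the compensator term $\sum_{\kappa\neq\iota}q_{\ell_i}(\iota,\kappa)[\hf_{\ell_i}(\cdot,\kappa)-\hf_{\ell_i}(\cdot,\iota)]$, which together with the differential part assembles exactly into $\dif_{\ell_i,I_\tms}\hf_{\ell_i}$ as defined in \eqref{eq2}. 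Second, the jumps of $X^{\xi,\varsigma}$ caused by the jumps $\Delta\zeta_\tms$ of the singular control produce the discrete sum $\sum \expo^{-r(\tms)}\mathcal{J}[X_\tms,I_\tms,\hf_{\ell_i}]$ with $\mathcal{J}$ as in \eqref{m3}; one has to be careful to separate the continuous part $\der\zeta^{\comp}$ from the jump part of $\zeta$, and to note that the jumps of the chain $I$ on this interval carry no loss term since they are already compensated in $\dif_{\ell_i,\cdot}$ and $\hf$ is evaluated consistently. Third, I would take expectations $\E_{x_0,\iota_0,\ell_0}[\,\cdot\,\uno_{\{\tau_i<\tau\}}]$: the stochastic integral $\int\langle\deri^1\hf_{\ell_i}(X_\tms,I_\tms),\sigma_\iota\rangle\der W_\tms$ and the martingale part of the chain's jump term both vanish in expectation by the boundedness afforded by the $\set_q$-truncation and optional stopping. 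Rearranging the resulting identity — moving the boundary term at $\hat\tau_{i+1}^q$ to one side and the boundary term at $\hat\tau_i^q$ to the other — yields \eqref{m6} after recognising that $\E[\expo^{-r(\hat\tau_i^q)}\hf_{\ell_i}(X_{\hat\tau_i^q},I_{\hat\tau_i^q})\uno_{\{\tau_i<\tau\}}]$ is the left-hand side.

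The main obstacle is bookkeeping rather than depth: one must handle carefully (a) the decomposition of the càdlàg finite-variation process $\zeta$ into its continuous part $\zeta^{\comp}$ and its jumps, and the representation of the jump of $\hf_{\ell_i}$ across a jump of $\zeta$ via the fundamental-theorem-of-calculus identity already recorded in \eqref{m3}; (b) the fact that two independent sources of jumps are present — the singular control and the Markov chain $I^{(\ell_i)}$ — and that only the former contributes to $\mathcal{J}$ while the latter is absorbed into the operator $\dif_{\ell_i,\cdot}$ via its generator part; and (c) the justification that the local martingale terms are true martingales, which is where the stopping times $\hat\tau_i^q$ (confining $X$ to $\overline{\set_q}$, where $\hf_{\ell_i}\in\hol^2$ has bounded derivatives and $a_\iota,b_\iota,c_\iota,\sigma_\iota$ are bounded) and the admissibility constraints \eqref{cont.1}, together with the integrability of $\zeta$ on bounded intervals, are used. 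Since the regime is frozen on $[\hat\tau_i^q,\hat\tau_{i+1}^q)$, no switching-cost terms appear in this particular identity; the switching structure enters only when this lemma is later summed over $i$. I expect this to be a routine but somewhat lengthy application of Itô's formula, with the sign conventions of \eqref{es1} being the easiest place to slip.
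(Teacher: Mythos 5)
Your plan is correct and takes essentially the same route as the paper's proof: apply Itô's formula to $\expo^{-r(\cdot)}\hf_{\ell_i}(X^{\xi,\varsigma},I)$ on $[\hat\tau^q_i,\hat\tau^q_{i+1}]$, isolate the $\zeta$-jumps into $\mathcal{J}$, absorb the chain's jump compensator into $\dif_{\ell_i,\cdot}$, and take expectations using the $\set_q$-truncation to justify that the stochastic integrals are true martingales. The paper carries this out more explicitly by first applying the change-of-variables formula piecewise between the jump times of $I$ (so the chain state is constant on each subinterval, giving only $\widetilde{\dif}_\iota$) and then invoking a Poisson-random-measure representation of $I^{(\ell_i)}$ to split the raw chain-jump contribution into the generator term $\sum_{\kappa}q_{\ell_i}(\iota,\kappa)[\,\cdot\,]$ plus the compensated martingale $\widetilde{\mathcal{M}}$, but this is precisely the bookkeeping you describe.
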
	
From now on, for simplicity of notation, we replace $X^{\xi,\varsigma}$ by $X$ in the  proofs of the results. 	
\begin{proof}
	   For each $i\geq0$, we assign $\rho^{\ell_{i}}_{0}\eqdef\hat{\tau}^{q}_{i}\leq\rho^{\ell_{i}}_{1}<\rho^{\ell_{i}}_{2}<\dots < \rho^{\ell_{i}}_{j-1}\leq\hat{\tau}^{q}_{i+1}\defeq\rho^{\ell_{i}}_{j} $, for some $j\geq 0$, as all the possible random times where the process $I$ has a jump on the interval time $[\hat{\tau}^{q}_{i},\hat{\tau}^{q}_{i+1}]$, i.e. $I_{\tmt}=\iota^{\ell_{i}}_{j'}$ if $\tmt\in[\rho^{\ell_{i}}_{j'},\rho^{\ell_{i}}_{j'+1})$, for $j'\in\{0,1,\dots,j-1\}$. Using integration by parts and It\^o's formula in $\expo^{-r(\tmt)}\hat{f}_{\ell_{i}}(X_{\tmt},\iota^{\ell_{i}}_{j'})$ on $ [\rho^{\ell_{i}}_{j'},\rho^{\ell_{i}}_{j'+1}]$ for $j'\in\{0,1,\dots,j-1\}$; see \cite[Theorem 33]{pro},  we get that 
	\begin{align}\label{m1}
	&\expo^{-r(\rho^{\ell_{i}}_{j'})}\hat{f}_{\ell_{i}}(X_{\rho^{\ell_{i}}_{j'}},\iota^{\ell_{i}}_{j'})-\expo^{-r(\rho^{\ell_{i}}_{j'+1})}\hat{f}_{\ell_{i}}(X_{\rho^{\ell_{i}}_{j'+1}},\iota^{\ell_{i}}_{j'+1})\notag\\
	&\quad=\int_{\rho^{\ell_{i}}_{j'}+}^{\rho^{\ell_{i}}_{j'+1}}\expo^{-r(\tms)}[[c(X_{\tms},\iota^{\ell_{i}}_{j'})\hat{f}_{\ell_{i}}(X_{\tms},\iota^{\ell_{i}}_{j'})-\widetilde{\dif}_{\iota^{\ell_{i}}_{j'}}\hat{f}_{\ell_{i}}(X_{\tms},\iota^{\ell_{i}}_{j'})]\der\tms+\langle\deri^{1} \hat{f}_{\ell_{i}}(X_{\tms},\iota^{\ell_{i}}_{j'}),\mathbb{n}_{\tms}\rangle\der\zeta^{\comp}_{\tms}]\notag\\
	&\quad\quad-\int_{\rho^{\ell_{i}}_{j'}+}^{\rho^{\ell_{i}}_{j'+1}}\expo^{-r(\tms)}\langle\deri^{1}\hat{f}_{\ell_{i}}(X_{\tms},\iota^{\ell_{i}}_{j'}),\sigma(X_{\tms},\iota^{\ell_{i}}_{j'})\der W_{\tms}\rangle-\sum_{\rho^{\ell_{i}}_{j'}<\tms<\rho^{\ell_{i}}_{j'+1}}\expo^{-r(\tms)}\mathcal{J}[X_{\tms},\iota^{\ell_{i}}_{j'}, \hat{f}_{\ell_{i}}]\notag\\
	&\quad\quad-\expo^{-r(\rho^{\ell_{i}}_{j'+1})}[\hf_{\ell_{i}}(X_{{\rho^{\ell_{i}}_{j'+1}}},\iota^{\ell_{i}}_{j'+1})-\hf_{\ell_{i}}(X_{\rho^{\ell_{i}}_{j'+1}-},\iota^{\ell_{i}}_{j'+1})]\notag\\
	&\quad\quad-\expo^{-r(\rho^{\ell_{i}}_{j'+1})}[\hf_{\ell_{i}}(X_{{\rho^{\ell_{i}}_{j'+1}}-},\iota^{\ell_{i}}_{j'+1})-\hf_{\ell_{i}}(X_{\rho^{\ell_{i}}_{j'+1}-},\iota^{\ell_{i}}_{j'})],
\end{align}
where   $\widetilde{\dif}_{\iota}\hf_{\ell}(\cdot,\iota)\eqdef \tr[a(\cdot,\iota) \deri^{2}\hf_{\ell}(\cdot,\iota)]-\langle b(\cdot,\iota),\deri^{1}\hf_{\ell}(\cdot,\iota)\rangle$.
 Taking into account \eqref{m1} it can be verified that   
 \begin{align}\label{m4}
 &\expo^{-r(\hat{\tau}^{q}_{i})}\hat{f}_{\ell_{i}}(X_{\hat{\tau}^{q}_{i}},I_{\hat{\tau}^{q}_{i}})-\expo^{-r(\hat{\tau}^{q}_{i+1})}\hat{f}_{\ell_{i}}(X_{\hat{\tau}^{q}_{i+1}},I_{\hat{\tau}^{q}_{i+1}})\notag\\
 &\quad=\int_{\hat{\tau}^{q}_{i}+}^{\hat{\tau}^{q}_{i+1}}\expo^{-r(\tms)}[[c(X_{\tms},I_{\tms})\hat{f}_{\ell_{i}}(X_{\tms},I_{\tms})-\widetilde{\dif}_{I_{\tms}}\hat{f}_{\ell_{i}}(X_{\tms},I_{\tms})]\der\tms+\langle\deri^{1} \hat{f}_{\ell_{i}}(X_{\tms},I_{\tms}),\mathbb{n}_{\tms}\rangle\der\zeta^{\comp}_{\tms}]\notag\\
 &\quad\quad-\int_{\hat{\tau}^{q}_{i}+}^{\hat{\tau}^{q}_{i+1}}\expo^{-r(\tms)}\langle\deri^{1}\hat{f}_{\ell_{i}}(X_{\tms},I_{\tms}),\sigma(X_{\tms},I_{\tms})\der W_{\tms}\rangle\notag\\
 &\quad\quad-\sum_{\hat{\tau}^{q}_{i}<\tms\leq\hat{\tau}^{q}_{i+1}}\expo^{-r(\tms)}\{\mathcal{J}[X_{\tms},I_{\tms}, \hat{f}_{\ell_{i}}]+\hf_{\ell_{i}}(X_{\tms-},I_{\tms})-\hf_{\ell_{i}}(X_{\tms-},I_{\tms-})\}.
 \end{align}
Let us consider $\Delta^{\ell_{i}}_{\iota,\kappa}$, with $\iota\neq\kappa$, as the consecutive, with respect to the lexicographic ordering on $\mathbb{I}\times\mathbb{I}$, left-closed, right-open intervals of the real line, which have length $q_{\ell_{i}}(\iota,\kappa)$. Defining $\bar{h}_{\ell_{i}}:\mathbb{I}\times\R\longrightarrow\R$ as
\begin{equation*}
	\bar{h}_{\ell_{i}}(\iota,z)=\sum_{\kappa\in\mathbb{I}\setminus\{\iota\}}(\kappa-\iota)\uno_{\{z\in\Delta^{\ell_{i}}_{\iota,\kappa}\}},
\end{equation*}
we have that \eqref{q1} is equivalent to 
\begin{equation*}
\der I^{(\ell_{i})}_{\tmt}=\int_{\R}\bar{h}_{\ell_{i}}(I^{(\ell_{i})}_{\tmt-},z)N(\der \tmt,\der z),
\end{equation*}
where $N(\der\tmt,\der z)$ is a Poisson random measure with intensity $\der \tmt\times \nu(\der z)$ independent of $W$, and $\nu$ is the Lebesgue measure on $\R$; for more details see, e.g. \cite{YZ2010}. From here and recalling that $I$ is governed by $Q_{\ell_{i}}$  on $ (\hat{\tau}^{q}_{i},\hat{\tau}^{q}_{i+1}]$, we have the next equivalent expression for \eqref{m4},  
\begin{align}\label{m5}
	&\expo^{-r(\hat{\tau}^{q}_{i})}\hat{f}_{\ell_{i}}(X_{\hat{\tau}^{q}_{i}},I_{\hat{\tau}^{q}_{i}})-\expo^{-r(\hat{\tau}^{q}_{i+1})}\hat{f}_{\ell_{i}}(X_{\hat{\tau}^{q}_{i+1}},I_{\hat{\tau}^{q}_{i+1}})\notag\\
	&\quad=\int_{\hat{\tau}^{q}_{i}+}^{\hat{\tau}^{q}_{i+1}}\expo^{-r(\tms)}[[c(X_{\tms},I_{\tms})\hat{f}_{\ell_{i}}(X_{\tms},I_{\tms})-\dif_{\ell_{i},I_{\tms}}\hat{f}_{\ell_{i}}(X_{\tms},I_{\tms})]\der\tms+\langle\deri^{1} \hat{f}_{\ell_{i}}(X_{\tms},I_{\tms}),\mathbb{n}_{\tms}\rangle\der\zeta^{\comp}_{\tms}]\notag\\
	&\quad\quad-\widetilde{\mathcal{M}}[\hat{\tau}^{q}_{i},\hat{\tau}^{q}_{i+1};X,I,\hf_{\ell_{i}}]-\sum_{\hat{\tau}^{q}_{i}<\tms\leq\hat{\tau}^{q}_{i+1}}\expo^{-r(\tms)}\mathcal{J}[X_{\tms},I_{\tms}, \hat{f}_{\ell_{i}}].
\end{align}
where the process 
\begin{multline*}
	\widetilde{\mathcal{M}}[\hat{\tau}^{q}_{i},\tmt\wedge\hat{\tau}^{q}_{i+1};X,I,\hf_{\ell_{i}}]\eqdef\int_{\hat{\tau}^{q}_{i}+}^{\tmt\wedge\hat{\tau}^{q}_{i+1}}\expo^{-r(\tms)}\langle\deri^{1} \hf_{\ell_{i}}(X_{\tms},I_{\tms}),\sigma(X_{\tms},I_{\tms})\rangle\der  W_{\tms}\\
	+\int_{\hat{\tau}^{q}_{i}+}^{\tmt\wedge\hat{\tau}^{q}_{i+1}}\int_{\R}\expo^{-r(\tms)}[\hf_{\ell_{i}}(X_{\tms-},I_{\hat{\tau}^{q}_{i}}+\bar{h}_{\ell_{i}}(I_{\tms-},z))-\hf_{\ell_{i}}(X_{\tms-},I_{\tms-})][N(\der \tms,\der z)-\der\tms\times\nu(\der z))]
\end{multline*}
is a square-integrable martingale. Therefore, multiplying by $\uno_{\{\tau_{i}<\tau\}}$ and taking expected value in both sides of \eqref{m5}, we get \eqref{m6}.
\end{proof}

\subsection{Verification Lemma for  {$\varepsilon$-PACS} control problem}
Let   $ (X^{\xi,\varsigma},J^{\varsigma},I)$ evolve as \eqref{esd1}, with $(\xi,\varsigma)\in\mathcal{U}^{\varepsilon}\times\mathcal{S}$ and initial state $( x_{0},\ell_{0},\iota_{0})\in \overline{\set}\times\mathbb{M}\times\mathbb{I}$. Under assumptions \eqref{h3}--\eqref{h2}  {and \eqref{h6}} , Lemmas \ref{lv1} and \ref{convexu1.0} shall be proven.

\begin{lema}[Verification Lemma for  {$\varepsilon$-PACS} control problem. First part]\label{lv1} Let $\varepsilon\in(0,1)$ be fixed. Then  $u^{\varepsilon}_{\ell_{0}}( x_{0},\iota_{0})\leq {V}^{\varepsilon}_{\ell_{0}}( x_{0},\iota_{0})$ for each $( x_{0},\ell_{0},\iota_{0})\in\overline{\set}\times\mathbb{M}\times\mathbb{I}$.
\end{lema}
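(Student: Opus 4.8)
The plan is to establish the stronger pointwise estimate $u^{\varepsilon}_{\ell_{0}}(x_{0},\iota_{0})\leq\mathcal{V}_{\xi,\varsigma}(x_{0},\ell_{0},\iota_{0})$ for \emph{every} admissible pair $(\xi,\varsigma)\in\mathcal{U}^{\varepsilon}\times\mathcal{S}$, and then to pass to the infimum over $(\xi,\varsigma)$. The case $x_{0}\in\partial\set$ is immediate, since then $\tau=0$ $\Pro_{x_{0},\ell_{0},\iota_{0}}$-a.s. and both sides reduce to $f_{\iota_{0}}(x_{0})$; so I would assume $x_{0}\in\set$ and (splitting a no-move transition, which costs nothing) that $\ell_{i+1}\neq\ell_{i}$ for all $i$. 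Fix a large integer $q$ with $x_{0}\in\set_{q}$ and recall the stopping times $\hat{\tau}^{q}_{i}$ of Lemma \ref{Ito1}. Since in the $\varepsilon$-PACS dynamics \eqref{esd1} the path $X^{\xi,\varsigma}$ is continuous and $\zeta$ is absolutely continuous, this is the instance of \eqref{es1} with $\cn=\mathbb{n}$, $\zeta^{\comp}=\zeta$ and $\Delta\zeta\equiv0$, so the jump sum in \eqref{m6} disappears.

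First I would apply Lemma \ref{Ito1} with $\hat{f}=u^{\varepsilon}$. Because $u^{\varepsilon}_{\ell,\iota}\in\hol^{0}(\overline{\set})\cap\sob^{2,\infty}_{\loc}(\set)$ is not in $\hol^{2}(\overline{\set})$, I would justify this by mollifying $u^{\varepsilon}_{\ell,\iota}$ inside $\set_{q}$, applying \eqref{m6} to the mollification, and letting the mollification parameter tend to $0$, using the non-degeneracy \eqref{H2} (so that the occupation measure of $X^{\xi,\varsigma}$ is absolutely continuous with respect to Lebesgue measure) together with the local second-order bound \eqref{ap3}; each interval $[\hat{\tau}^{q}_{i},\hat{\tau}^{q}_{i+1}]$ is contained in $[0,\tau]$ and on it $X^{\xi,\varsigma}$ stays in $\overline{\set_{q}}$, a compact subset of $\set$. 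Into the resulting identity I would substitute the two inequalities contained in the HJB equation \eqref{pc1}, namely $[c_{\iota}-\dif_{\ell,\iota}]u^{\varepsilon}_{\ell,\iota}\leq h_{\iota}-H^{\varepsilon}_{\iota}(\deri^{1}u^{\varepsilon}_{\ell,\iota},\cdot)$ and $u^{\varepsilon}_{\ell,\iota}\leq u^{\varepsilon}_{\ell',\iota}+\vartheta_{\ell,\ell'}$, and combine the first one with the Fenchel--Young inequality $\langle\gamma,y\rangle-H^{\varepsilon}_{\iota}(\gamma,x)\leq l^{\varepsilon}_{\iota}(y,x)$ for the Legendre pair $(H^{\varepsilon}_{\iota},l^{\varepsilon}_{\iota})$, evaluated at $\gamma=\deri^{1}u^{\varepsilon}_{\ell_{i}}(X^{\xi,\varsigma}_{\tms},I_{\tms})$ and $y=\dot{\zeta}_{\tms}\mathbb{n}_{\tms}$. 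This yields, for each $i\geq0$,
\begin{equation*}
\E_{x_{0},\ell_{0},\iota_{0}}\!\big[\expo^{-r(\hat{\tau}^{q}_{i})}u^{\varepsilon}_{\ell_{i}}(X^{\xi,\varsigma}_{\hat{\tau}^{q}_{i}},I_{\hat{\tau}^{q}_{i}})\uno_{\{\tau_{i}<\tau\}}\big]\leq\E_{x_{0},\ell_{0},\iota_{0}}\!\Big[\Big(\expo^{-r(\hat{\tau}^{q}_{i+1})}u^{\varepsilon}_{\ell_{i}}(X^{\xi,\varsigma}_{\hat{\tau}^{q}_{i+1}},I_{\hat{\tau}^{q}_{i+1}})+\int_{\hat{\tau}^{q}_{i}}^{\hat{\tau}^{q}_{i+1}}\expo^{-r(\tms)}\big[h(X^{\xi,\varsigma}_{\tms},I_{\tms})+l^{\varepsilon}(\dot{\zeta}_{\tms}\mathbb{n}_{\tms},X^{\xi,\varsigma}_{\tms},I_{\tms})\big]\der\tms\Big)\uno_{\{\tau_{i}<\tau\}}\Big].
\end{equation*}

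Next I would sum this over $i=0,\dots,N-1$ and telescope. When $\hat{\tau}^{q}_{i+1}$ is a genuine switching time strictly before $\tau$, the obstacle inequality bounds $\expo^{-r(\hat{\tau}^{q}_{i+1})}u^{\varepsilon}_{\ell_{i}}(X^{\xi,\varsigma}_{\hat{\tau}^{q}_{i+1}},\cdot)$ by $\expo^{-r(\tau_{i+1})}\vartheta_{\ell_{i},\ell_{i+1}}$ plus $\expo^{-r(\tau_{i+1})}u^{\varepsilon}_{\ell_{i+1}}(X^{\xi,\varsigma}_{\tau_{i+1}},\cdot)$, and the latter starts the next step of the telescoping sum; when $\hat{\tau}^{q}_{i+1}=\tau$, the continuity of $u^{\varepsilon}$ up to $\partial\set$ with $u^{\varepsilon}=f$ on $\partial\set$ produces exactly $\expo^{-r(\tau)}f(X^{\xi,\varsigma}_{\tau},I_{\tau})$; and when $X^{\xi,\varsigma}$ leaves $\set_{q}$ before both switching and exiting $\set$, one is left with a nonnegative remainder which, since $u^{\varepsilon},h,l^{\varepsilon},\vartheta,f\geq0$, I would simply keep on the right-hand side. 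Letting $N\to\infty$ (using $\tau_{i}\uparrow\infty$ $\Pro$-a.s. from \eqref{cont.2}, the bound $0\leq u^{\varepsilon}\leq\overline{u}$ from \eqref{ap1}, and the positivity of $c$ so that $r(t)\uparrow\infty$ and hence the leftover term $\expo^{-r(\hat{\tau}^{q}_{N})}u^{\varepsilon}(X^{\xi,\varsigma}_{\hat{\tau}^{q}_{N}},\cdot)$ vanishes, together with monotone convergence for the increasing partial sums) and then $q\to\infty$ (using $\set_{q}\uparrow\set$ and continuity of $X^{\xi,\varsigma}$, so the first exit times from $\set_{q}$ increase to $\tau$; monotone convergence for the running-cost integral; and once more $u^{\varepsilon}\in\hol^{0}(\overline{\set})$ with $u^{\varepsilon}=f$ on $\partial\set$ to identify each $\partial\set_{q}$-remainder with $\expo^{-r(\tau)}f(X^{\xi,\varsigma}_{\tau},I_{\tau})\uno_{\{\tau<\infty\}}$ in the limit), the right-hand side converges to $\mathcal{V}_{\xi,\varsigma}(x_{0},\ell_{0},\iota_{0})$ as written in \eqref{pen1}. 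Taking the infimum over $(\xi,\varsigma)$ then gives $u^{\varepsilon}_{\ell_{0}}(x_{0},\iota_{0})\leq V^{\varepsilon}_{\ell_{0}}(x_{0},\iota_{0})$.

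I expect the main obstacle to be the low regularity of $u^{\varepsilon}$, which is only continuous on $\overline{\set}$ and carries second-order bounds merely on compact subsets of $\set$: this is what forces the mollification in the application of Lemma \ref{Ito1} and the two-stage limit ($N\to\infty$ followed by $q\to\infty$), and the genuinely delicate point is to check that the remainders created by stopping on $\partial\set_{q}$ converge, in the limit, precisely to the terminal-cost term of \eqref{pen1} — which is exactly where $u^{\varepsilon}\in\hol^{0}(\overline{\set})$ with boundary value $f$ enters in an essential way.
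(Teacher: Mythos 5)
Your proposal reaches the right conclusion by a genuinely different technical route from the paper's. You apply Lemma \ref{Ito1} directly to $u^{\varepsilon}$, justifying it by mollifying the $W^{2,\infty}_{\loc}$ function inside compact subsets $\set_q$ and invoking the non-degeneracy \eqref{H2} to control the error terms (Krylov-type estimates), and you feed the HJB inequalities from \eqref{pc1} directly into the resulting identity. The paper instead bypasses all of these low-regularity technicalities by applying Lemma \ref{Ito1} to the \emph{smooth approximants} $u^{\varepsilon,\delta_{\hat{n}}} \in C^{4,\alpha}(\overline{\set})$ coming from the NPDS \eqref{NPD.1} (Proposition \ref{princ1.0}), for which the It\^{o} formula is immediate; the PDE inequality used is then simply the exact NPDS \eqref{NPD.1} together with $\psi_{\delta}\geq 0$, which gives $[c_{\iota}-\dif_{\ell,\iota}]u^{\varepsilon,\delta_{\hat{n}}}_{\ell,\iota}\leq h_{\iota}-\psi_{\varepsilon}(|\deri^1 u^{\varepsilon,\delta_{\hat{n}}}_{\ell,\iota}|^2-g_{\iota}^2)$; and only afterwards does it pass $q\to\infty$ and $\delta_{\hat{n}}\to 0$ \emph{for each fixed $i$} (using the uniform $C^0(\overline{\set})$ convergence \eqref{conv1}, which is available under \eqref{h6}) before telescoping via \eqref{e4.0}--\eqref{e5}. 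Your approach trades the explicit presence of a smooth approximating sequence for a mollification argument whose error bounds you flag but do not develop: in particular, $\tr[a_{\iota}D^2(u^{\varepsilon}_{\ell,\iota}*\rho_{\eta})]$ is not $(\tr[a_{\iota}D^2u^{\varepsilon}_{\ell,\iota}])*\rho_{\eta}$ when $a_{\iota}$ is variable, so the running-cost error along paths must be killed with a Krylov estimate, which is a real (albeit standard) technical commitment. Your order of limits (telescope over $i$ first, then $N\to\infty$, then $q\to\infty$) is also reversed relative to the paper's ($q,\delta\to$ limits for each $i$, then telescope), which forces the extra bookkeeping for the $\partial\set_q$ remainders that you correctly identify as the delicate point; the paper's order avoids that bookkeeping because after passing to $q\to\infty$ the remainder is replaced at each step by the clean term $u^{\varepsilon}_{\ell_i}(X_{\tilde{\tau}_{i+1}},\cdot)$ before the obstacle inequality \eqref{e4.0} is applied. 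Both routes are viable; the paper's is the natural one given that the smooth NPDS solutions and their uniform convergence were already built in Sections \ref{NPDSs}--\ref{prop1} precisely for this purpose.
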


\begin{proof}
	Take  $u^{\varepsilon,\delta_{\hat{n}}}=(u^{\varepsilon,\delta_{\hat{n}}}_{1},\dots,u^{\varepsilon,\delta_{\hat{n}}}_{m})$  satisfying \eqref{conv1} which is the unique solutions to the NPDS \eqref{NPD.1}, when $\delta=\delta_{\hat{n}}$. By Proposition \ref{princ1.0}, it is known that $u^{\varepsilon,\delta_{\hat{n}}}_{\ell}(\cdot,\iota)\in\hol^{4,\alpha}(\overline{\set})$ for $(\ell,\iota)\in\mathbb{M}\times \mathbb{I}$. Then, considering $\{\hat{\tau}^{q}_{i}\}_{i\geq0}$ as in Lemma \ref{Ito1}, we get that \eqref{m4} is true when $\hat{f}=u^{\varepsilon,\delta_{\hat{n}}}$.  Notice that $\zeta^{\comp}=\dot{\zeta}$ and $\Delta\zeta=0$, due to $\xi\in\mathcal{U}^{\varepsilon}$.  Then, $\mathcal{J}[X_{\tms},I_{\tms}, u^{\varepsilon,\delta_{\hat{n}}}_{\ell_{i}}]=0$ for $\tms\in(\hat{\tau}^{q}_{i},\hat{\tau}^{q}_{i+1}]$.  	On the other hand, by \eqref{NPD.1} and since $\psi_{\cdot}\geq0$, it is known that $c(x,\iota)u^{\varepsilon,\delta_{\hat{n}}}_{\ell}(x,\iota)-\dif_{\ell,\iota}u^{\varepsilon,\delta_{\hat{n}}}_{\ell}(x,\iota)\leq h(x,\iota)-\psi_{\varepsilon}(|\deri^{1}u^{\varepsilon,\delta_{\hat{n}}}_{\ell}(x,\iota)|^{2}- g(x,\iota)^{2})$ for $x\in\set$  and $(\ell,\iota)\in\mathbb{M}\times \mathbb{I}$, and $\langle\gamma,y\rangle\leq\psi_{\varepsilon}(|\gamma|^{2}-g(x,\iota)^{2})+l^{\varepsilon}(y,x,\iota)$ for $x,y\in\R^{d}$ and $(\ell,\iota)\in\mathbb{M}\times\mathbb{I}$. Then, 
	\begin{equation*}
		\begin{split}
			&c(X_{\tms},I_{\tms})u^{\varepsilon,\delta_{\hat{n}}} _{\ell_{i}}(X_{\tms},I_{\tms})-\dif_{\ell_{i},I_{\tms}}u^{\varepsilon,\delta_{\hat{n}}}_{\ell_{i}}(X_{\tms},I_{\tms})\\
			&\quad\quad\quad\quad\quad\quad\leq h(X_{\tms},I_{\tms})-\psi_{\varepsilon}(|\deri^{1}u^{\varepsilon,\delta_{\hat{n}}}_{\ell_{i}}(X_{\tms},I_{\tms})|^{2}- g(X_{\tms},I_{\tms})^{2}),\\
			&\langle\deri^{1} u^{\varepsilon,\delta_{\hat{n}}} _{\ell_{i}}(X_{\tms},I_{\tms}),\mathbb{n}_{\tms}\dot{\zeta}_{\tms}\rangle-\psi_{\varepsilon}(|\deri^{1}u^{\varepsilon,\delta_{\hat{n}}}_{\ell_{i}}(X_{\tms},I_{\tms})|^{2}- g(X_{\tms},I_{\tms})^{2})\leq l^{\varepsilon}(\mathbb{n}_{\tms}\dot{\zeta}_{\tms},X_{\tms},I_{\tms}),
		\end{split}
	\end{equation*}
for $\tms\in(\hat{\tau}^{q}_{i},\hat{\tau}^{q}_{i+1}]$. Hence, it implies that 
\begin{multline}\label{s.2_0_3d}
	\E_{ x_{0},\ell_{0},\iota_{0}}[\expo^{-r({\hat{\tau}}^{q}_{i})}u^{\varepsilon,\delta_{\hat{n}}} _{\ell_{i}}(X_{{\hat{\tau}}^{q}_{i}},I_{{\hat{\tau}}^{q}_{i}})\uno_{\{\tau_{i}< \tau\}}]\leq\E_{ x_{0},\ell_{0},\iota_{0}}\bigg[\bigg\{\expo^{-r({\hat{\tau}}^{q}_{i+1})}u^{\varepsilon,\delta_{\hat{n}}} _{\ell_{i}}(X_{{\hat{\tau}}^{q}_{i+1}},I_{{\hat{\tau}}^{q}_{i+1}})\\
	+\int_{\hat{\tau}^{q}_{i}+}^{{\hat{\tau}}^{q}_{i+1}}\expo^{-r(\tms)}[h(X_{\tms},I_{\tms})+l^{\varepsilon}(\mathbb{n}_{\tms}\dot{\zeta}_{\tms},X_{\tms},I_{\tms})]\der\tms\bigg\}\uno_{\{\tau_{i}<\tilde{\tau}_{1}\}}\bigg].
\end{multline}
Noticing that  $\max_{(x,\iota)\in\set\times\mathbb{I}}|u^{\varepsilon,\delta_{\hat{n}}}_{\ell}(x,\iota)-u^{\varepsilon}_{\ell}(x,\iota)|\underset{\delta_{\hat{n}}\rightarrow0}{\longrightarrow}0$ for $\ell\in\mathbb{M}$,  $\hat{\tau}^{q}_{i}\uparrow\tilde{\tau}_{i}$ as $q\rightarrow\infty$, $\Pro_{ x_{0},\ell_{0},\iota_{0}}$-a.s., letting $q\rightarrow\infty$ and $\delta_{\hat{n}}\rightarrow0$ in \eqref{s.2_0_3d},  and using Dominated Convergence Theorem, it follows that
\begin{align*}
	\E_{ x_{0},\ell_{0},\iota_{0}}[\expo^{-r({{\tau}}_{i})}u^{\varepsilon} _{\ell_{i}}(X_{{\tau}_{i}},I_{{\tau}_{i}})\uno_{\{\tau_{i}< \tau\}}]&\leq\E_{ x_{0},\ell_{0},\iota_{0}}\bigg[\bigg\{\expo^{-r(\tilde{\tau}_{i+1})}u^{\varepsilon} _{\ell_{i}}(X_{\tilde{\tau}_{i+1}},I_{{\tilde{\tau}}_{i+1}})\notag\\
	&+\int_{\tau_{i}+}^{{\tilde{\tau}}_{i+1}}\expo^{-r(\tms)}[h(X_{\tms},I_{\tms})+l^{\varepsilon}(\mathbb{n}_{\tms}\dot{\zeta}_{\tms},X_{\tms},I_{\tms})]\der\tms\bigg\}\uno_{\{\tau_{i}<\tau\}}\bigg].
\end{align*}
Since $u^{\varepsilon}=(u^{\varepsilon}_{1},\dots,u^{\varepsilon}_{m})$ is the unique solution to \eqref{pc1}, observe that $u^{\varepsilon} _{\ell}(x,\iota)-[u^{\varepsilon} _{\ell'}(x,\iota)+\vartheta_{\ell,\ell'}]\leq u^{\varepsilon} _{\ell}(x,\iota)-\mathcal{M}_{\ell}u^{\varepsilon}(x,\iota)\leq 0$ for $x\in\set$  and $(\ell,\iota)\in\mathbb{M}\times \mathbb{I}$. Then
\begin{align}\label{e4.0}
	u^{\varepsilon} _{\ell_{i}}(X_{\tilde{\tau}_{i+1}},I_{\tilde{\tau}_{i+1}})&=f(X_{{\tau}},I_{{\tau}})\uno_{\{\tau\leq \tau_{i+1}\}}+[u^{\varepsilon} _{\ell_{i+1}}(X_{{\tau}_{i+1}},I_{{\tau}_{i+1}})+\vartheta_{\ell_{i},\ell_{i+1}}]\uno_{\{\tau> \tau_{i+1}\}}\notag\\
	&\quad+[u^{\varepsilon} _{\ell_{i}}(X_{{\tau}_{i+1}},I_{{\tau}_{i+1}})-[u^{\varepsilon} _{\ell_{i+1}}(X_{{\tau}_{i+1}},I_{{\tau}_{i+1}})+\vartheta_{\ell_{i},\ell_{i+1}}]]\uno_{\{\tau> \tau_{i+1}\}}\notag\\
	&\leq f(X_{{\tau}},I_{{\tau}})\uno_{\{\tau\leq \tau_{i+1}\}}+[u^{\varepsilon} _{\ell_{i+1}}(X_{{\tau}_{i+1}},I_{{\tau}_{i+1}})+\vartheta_{\ell_{i},\ell_{i+1}}]\uno_{\{\tau> \tau_{i+1}\}}. 
\end{align}
Thus, 
\begin{align}\label{e5}
	\E_{ x_{0},\ell_{0},\iota_{0}}[\expo^{-r({{\tau}}_{i})}u^{\varepsilon} _{\ell_{i}}(X_{{\tau}_{i}},I_{{\tau}_{i}})\uno_{\{\tau_{i}< \tau\}}]&\leq\E_{ x_{0},\ell_{0},\iota_{0}}\bigg[\expo^{-r({\tau})}f(X_{{\tau}},I_{{\tau}})\uno_{\{\tau_{i}<\tau\leq \tau_{i+1}\}}\notag\\
	&\quad+\expo^{-r({\tau}_{i+1})}[u^{\varepsilon} _{\ell_{i+1}}(X_{{\tau}_{i+1}},I_{{\tau}_{i+1}})+\vartheta_{\ell_{i},\ell_{i+1}}]\uno_{\{\tau> \tau_{i+1}\}}\notag\\
	&\quad+\uno_{\{\tau_{i}<\tau\}}\int_{\tau_{i}+}^{{\tilde{\tau}}_{i+1}}\expo^{-r(\tms)}[h(X_{\tms},I_{\tms})+l^{\varepsilon}(\mathbb{n}_{\tms}\dot{\zeta}_{\tms},X_{\tms},I_{\tms})]\der\tms\bigg].
\end{align}
On the other hand, since the control  $\xi$ acts continuously on $X$, we know that $X_{0}=X_{0-}=x_{0}$. From here, using \eqref{e4.0} when $i=0$, and considering recurrently \eqref{e5}, we conclude that    
	\begin{align}\label{n01}
	 u^{\varepsilon} _{\ell_{0}}(x_{0},\iota_{0})&=\E_{ x_{0},\ell_{0},\iota_{0}}[u^{\varepsilon} _{\ell_{0}}(X_{{\hat{\tau}}^{q}_{0}},I_{{\hat{\tau}}^{q}_{0}})\uno_{\{\tau_{0}=\tilde{\tau}_{1}\}}]+\E_{ x_{0},\ell_{0},\iota_{0}}[u^{\varepsilon} _{\ell_{0}}(X_{{\hat{\tau}}^{q}_{0}},I_{{\hat{\tau}}^{q}_{0}})\uno_{\{\tau_{0}<\tilde{\tau}_{1}\}}]\notag\\
	 &\leq \E_{ x_{0},\ell_{0},\iota_{0}}\bigg[f(x_{0},\iota_{0})\uno_{\{\tau_{0}=\tau\}}+\expo^{-r({\tau})}f(X_{{\tau}},I_{{\tau}})\uno_{\{\tau_{0}<\tau\leq \tau_{1}\}}+\expo^{-r({\tau}_{1})}\vartheta_{\ell_{0},\ell_{1}}\uno_{\{\tau> \tau_{1}\geq\tau_{0}\}}\notag\\
	 &\quad+\uno_{\{\tau_{0}<\tau\}}\int_{0}^{{\tilde{\tau}}_{1}}\expo^{-r(\tms)}[h(X_{\tms},I_{\tms})+l^{\varepsilon}(\mathbb{n}_{\tms}\dot{\zeta}_{\tms},X_{\tms},I_{\tms})]\der\tms\bigg]\notag\\
	 &\quad+\E_{x_{0},\ell_{0},\iota_{0}}[\expo^{-r({\tau}_{1})}u^{\varepsilon} _{\ell_{1}}(X_{{\tau}_{1}},I_{{\tau}_{1}})\uno_{\{\tau> \tau_{1}\geq\tau_{0}\}}]\notag\\
	 &\quad\vdots\quad\quad\quad\quad\quad\quad\vdots\quad\quad\quad\quad\quad\quad\vdots\notag\\
	 &\leq \E_{x_{0},\ell_{0},\iota_{0}}\bigg[\expo^{-r({\tau})}f(X_{{\tau}},I_{{\tau}})\uno_{\{ \tau<\infty\}}+\sum_{i\geq0}\expo^{-r(\tau_{i+1})}\vartheta_{\ell_{i},\ell_{i+1}}\uno_{\{\tau_{i+1}<\tau\}}\notag\\
	 &\quad+\int_{0}^{\tau}\expo^{-r(\tms)}[h(X_{\tms},I_{\tms})+l^{\varepsilon}(\mathbb{n}_{\tms}\dot{\zeta}_{\tms},X_{\tms},I_{\tms})]\der\tms\bigg]=\mathcal{V}_{\zeta,\varsigma}(x_{0},
	 \ell_{0},\iota_{0}).
	 \end{align}
 Therefore, it yields $u^{\varepsilon} _{\ell_{0}}(x_{0},\iota_{0})\leq V^{\varepsilon}_{\ell_{0}}(x_{0},\iota_{0})$    
\end{proof}

\subsubsection{ {$\varepsilon$-PACS} optimal control problem}
Before presenting the second part of the verification lemma, let us first  construct the control $(\xi^{\varepsilon,*},\varsigma^{\varepsilon,*})$ which turns out to  be the optimal strategy for the  {$\varepsilon$-PACS}  control problem.  Let us first introduce the switching regions. 

For any $\ell\in\mathbb{M}$, let $\mathcal{S}^{\varepsilon}_{\ell}$ be the set defined by 
$$\mathcal{S}^{\varepsilon}_{\ell}=\{(x,\iota)\in\set\times\mathbb{I}:u^{\varepsilon}_{\ell}(x,\iota)-\mathcal{M}_{\ell}u^{\varepsilon}(x,\iota)=0\}.$$ 
The complement $\mathcal{C}^{\varepsilon}_{\ell}$ of $\mathcal{S}^{\varepsilon}_{\ell}$ in $\set\times\mathbb{I}$,  where is optimal to stay in  {the} regime $\ell$, is the so-called continuation region
$$\mathcal{C}^{\varepsilon}_{\ell}=\{(x,\iota)\in\set\times\mathbb{I}:u^{\varepsilon}_{\ell}(x,\iota)-\mathcal{M}_{\ell}u^{\varepsilon}(x,\iota)<0\}.$$ 
The set $\mathcal{S}^{\varepsilon}_{\ell}$ satisfies the following property.
\begin{lema}\label{optim1}
	Let $\ell$ be in $\mathbb{M}$. Then, $\mathcal{S}^{\varepsilon}_{\ell}=\widetilde{\mathcal{S}}^{\varepsilon}_{\ell}\eqdef\bigcup_{\ell'\in\mathbb{M}\setminus\{\ell \}}\mathcal{S}^{\varepsilon}_{\ell,\ell'}$ where
	$$\mathcal{S}^{\varepsilon}_{\ell,\ell'}\eqdef\{(x,\iota)\in\mathcal{C}^{\varepsilon}_{\ell'}:u^{\varepsilon}_{\ell}(x,\iota)=u^{\varepsilon}_{\ell'}(x,\iota)+\vartheta_{\ell,\ell'} \}.$$
\end{lema}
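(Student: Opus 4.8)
The plan is to establish the two inclusions $\widetilde{\mathcal{S}}^{\varepsilon}_{\ell}\subseteq\mathcal{S}^{\varepsilon}_{\ell}$ and $\mathcal{S}^{\varepsilon}_{\ell}\subseteq\widetilde{\mathcal{S}}^{\varepsilon}_{\ell}$ separately, using throughout the elementary but crucial fact that, since $u^{\varepsilon}$ is a strong solution of the HJB equation \eqref{pc1} whose components lie in $\hol^{0}(\overline{\set})$, each of the two arguments of the $\max$ in \eqref{pc1} is a continuous function on $\set$ and hence non-positive everywhere on $\set\times\mathbb{I}$; in particular $u^{\varepsilon}_{\ell}(x,\iota)\leq\mathcal{M}_{\ell}u^{\varepsilon}(x,\iota)$ for all $(x,\iota)\in\set\times\mathbb{I}$. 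The inclusion $\widetilde{\mathcal{S}}^{\varepsilon}_{\ell}\subseteq\mathcal{S}^{\varepsilon}_{\ell}$ is then immediate: if $(x,\iota)\in\mathcal{S}^{\varepsilon}_{\ell,\ell'}$ for some $\ell'\in\mathbb{M}\setminus\{\ell\}$, then $u^{\varepsilon}_{\ell}(x,\iota)=u^{\varepsilon}_{\ell'}(x,\iota)+\vartheta_{\ell,\ell'}\geq\mathcal{M}_{\ell}u^{\varepsilon}(x,\iota)$ by the very definition \eqref{p6.0} of $\mathcal{M}_{\ell,\iota}$, and combining this with the reverse inequality above forces $u^{\varepsilon}_{\ell}(x,\iota)=\mathcal{M}_{\ell}u^{\varepsilon}(x,\iota)$, i.e. $(x,\iota)\in\mathcal{S}^{\varepsilon}_{\ell}$. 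Note that this half does not use the condition $(x,\iota)\in\mathcal{C}^{\varepsilon}_{\ell'}$ built into the definition of $\mathcal{S}^{\varepsilon}_{\ell,\ell'}$.

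For the reverse inclusion $\mathcal{S}^{\varepsilon}_{\ell}\subseteq\widetilde{\mathcal{S}}^{\varepsilon}_{\ell}$ I would argue by contradiction. Fix $(x,\iota)\in\mathcal{S}^{\varepsilon}_{\ell}$, write $v:=u^{\varepsilon}_{\ell}(x,\iota)$, and suppose $(x,\iota)\notin\mathcal{S}^{\varepsilon}_{\ell,\ell'}$ for every $\ell'\in\mathbb{M}\setminus\{\ell\}$. Starting from $\ell^{(0)}:=\ell$, build a chain of regimes by choosing $\ell^{(j+1)}\in\mathbb{M}\setminus\{\ell^{(j)}\}$ that attains the minimum defining $\mathcal{M}_{\ell^{(j)}}u^{\varepsilon}(x,\iota)$, so that $u^{\varepsilon}_{\ell^{(j)}}(x,\iota)=u^{\varepsilon}_{\ell^{(j+1)}}(x,\iota)+\vartheta_{\ell^{(j)},\ell^{(j+1)}}$; this choice is legitimate as long as $(x,\iota)\in\mathcal{S}^{\varepsilon}_{\ell^{(j)}}$, which holds at $j=0$. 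Let $j^{*}\geq1$ be the first index at which either (a) $\ell^{(j^{*})}\in\{\ell^{(0)},\dots,\ell^{(j^{*}-1)}\}$, or (b) $(x,\iota)\in\mathcal{C}^{\varepsilon}_{\ell^{(j^{*})}}$; since $\mathbb{M}$ is finite one of these must occur, and by minimality of $j^{*}$ the chain is well defined up to $\ell^{(j^{*})}$. In case (a), writing $\ell^{(j^{*})}=\ell^{(p)}$ (with necessarily $p\leq j^{*}-2$, because consecutive terms of the chain differ) and telescoping the defining relations around the loop yields $\sum_{i=p}^{j^{*}-1}\vartheta_{\ell^{(i)},\ell^{(i+1)}}=0$; as the switching costs are non-negative, each term vanishes, so $\{\ell^{(p)},\ell^{(p+1)},\dots,\ell^{(j^{*}-1)},\ell^{(p)}\}$ is a loop of zero cost, contradicting \eqref{l1}. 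In case (b), $\ell^{(j^{*})}$ differs from all of $\ell^{(0)},\dots,\ell^{(j^{*}-1)}$, so iterating the subadditivity \eqref{eq1} along the chain gives $\vartheta_{\ell,\ell^{(j^{*})}}\leq\sum_{i=0}^{j^{*}-1}\vartheta_{\ell^{(i)},\ell^{(i+1)}}=v-u^{\varepsilon}_{\ell^{(j^{*})}}(x,\iota)$; since also $v=u^{\varepsilon}_{\ell}(x,\iota)=\mathcal{M}_{\ell}u^{\varepsilon}(x,\iota)\leq u^{\varepsilon}_{\ell^{(j^{*})}}(x,\iota)+\vartheta_{\ell,\ell^{(j^{*})}}$, all these inequalities are equalities, i.e. $u^{\varepsilon}_{\ell}(x,\iota)=u^{\varepsilon}_{\ell^{(j^{*})}}(x,\iota)+\vartheta_{\ell,\ell^{(j^{*})}}$, and together with $(x,\iota)\in\mathcal{C}^{\varepsilon}_{\ell^{(j^{*})}}$ this means $(x,\iota)\in\mathcal{S}^{\varepsilon}_{\ell,\ell^{(j^{*})}}$, contradicting our standing assumption. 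In either case we obtain a contradiction, so $\mathcal{S}^{\varepsilon}_{\ell}\subseteq\widetilde{\mathcal{S}}^{\varepsilon}_{\ell}$.

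The main obstacle is precisely this reverse inclusion: one must rule out the pathological situation in which every optimal switch out of regime $\ell$ leads immediately into another switching region, and the only structural facts available for this are the triangle inequality \eqref{eq1} and the no-loop-of-zero-cost hypothesis \eqref{l1} — which is exactly where that hypothesis becomes essential. A minor technical point to watch is that \eqref{eq1} is stated only for pairwise distinct regimes, so the telescoped estimate in case (b) may be invoked only after checking that the terminal regime $\ell^{(j^{*})}$ of the chain has not previously occurred; organizing the argument around the first index $j^{*}$ at which the chain either repeats or exits a continuation region is what keeps this bookkeeping transparent.
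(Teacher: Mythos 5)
Your argument follows essentially the same route as the paper's: the easy inclusion $\widetilde{\mathcal{S}}^{\varepsilon}_{\ell}\subseteq\mathcal{S}^{\varepsilon}_{\ell}$ from the HJB inequality $u^{\varepsilon}_{\ell}\leq\mathcal{M}_{\ell}u^{\varepsilon}$, and for the converse, a chain of regimes $\ell=\ell^{(0)},\ell^{(1)},\ell^{(2)},\dots$ along which $u^{\varepsilon}_{\ell^{(j)}}=u^{\varepsilon}_{\ell^{(j+1)}}+\vartheta_{\ell^{(j)},\ell^{(j+1)}}$, combined with the triangle inequality \eqref{eq1} telescoped along the chain to keep $u^{\varepsilon}_{\ell}=u^{\varepsilon}_{\ell^{(j)}}+\vartheta_{\ell,\ell^{(j)}}$, until a continuation region $\mathcal{C}^{\varepsilon}_{\ell^{(j)}}$ is reached. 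Where you go beyond the paper is in the termination argument: the paper simply asserts that ``since the number of regimes is finite, it must occur that there is some $\ell_i\neq\ell$ such that $(x,\iota)\in\mathcal{C}^{\varepsilon}_{\ell_i}$,'' without saying why the chain cannot instead cycle forever through switching regions. You supply exactly the missing step, by observing that a repeated regime would telescope to a sum of switching costs equal to zero, producing a zero-cost loop in violation of \eqref{l1}; this is the only place in the lemma where the no-loop hypothesis is actually used, and making it explicit is a genuine improvement in rigor over the paper's sketch. Your bookkeeping around the first index $j^{*}$ also correctly handles the technical requirement that \eqref{eq1} only applies when the target regime is distinct from the two intermediate ones, a point the paper does not address.
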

\begin{proof}
	We obtain trivially that  $\widetilde{\mathcal{S}}^{\varepsilon}_{\ell}\subset\mathcal{S}^{\varepsilon}_{\ell}$ due to $u^{\varepsilon}_{\ell}(x,\iota)-u^{\varepsilon}_{\ell'}(x,\iota)-\vartheta_{\ell,\ell'}\leq u^{\varepsilon}_{\ell}(x,\iota)-\mathcal{M}_{\ell}u^{\varepsilon}(x,\iota)\leq0$ for $(x,\iota)\in\set\times\mathbb{I}$ and $\ell'\in\mathbb{M}\setminus\{\ell\}$. If $(x,\iota)\in\mathcal{S}^{\varepsilon}_{\ell}$, there is an $\ell_{1}\neq\ell$ where $u^{\varepsilon}_{\ell}(x,\iota)=u^{\varepsilon}_{\ell_{1}}(x,\iota)+\vartheta_{\ell,\ell_{1}}$.Notice that $(x,\iota)$ must belong either $\mathcal{C}^{\varepsilon}_{\ell_{1}}$ or $\mathcal{S}^{\varepsilon}_{\ell_{1}}$. If $(x,\iota)\in\mathcal{C}^{\varepsilon}_{\ell_{1}}$, it yields that  $(x,\iota)\in\mathcal{S}^{\varepsilon}_{\ell,\ell_{1}}\subset\widetilde{\mathcal{S}}^{\varepsilon}_{\ell} $. Otherwise, there is an $\ell_{2}\neq\ell_{1}$ such that $u^{\varepsilon}_{\ell_{1}}(x,\iota)=u^{\varepsilon}_{\ell_{2}}(x,\iota)+\vartheta_{\ell_{1},\ell_{2}}$. It implies $u^{\varepsilon}_{\ell}(x,\iota)=u^{\varepsilon}_{\ell_{2}}(x,\iota)+\vartheta_{\ell,\ell_{1}}+\vartheta_{\ell_{1},\ell_{2}}\geq u^{\varepsilon}_{\ell_{2}}(x,\iota)+\vartheta_{\ell,\ell_{2}}$, since \eqref{eq1} holds. Then, $u^{\varepsilon}_{\ell}(x,\iota)= u^{\varepsilon}_{\ell_{2}}(x,\iota)+\vartheta_{\ell,\ell_{2}}$. Again $(x,\iota)$ must belong either $\mathcal{C}^{\varepsilon}_{\ell_{2}}$ or $\mathcal{S}^{\varepsilon}_{\ell_{2}}$. If $(x,\iota)\in\mathcal{C}^{\varepsilon}_{\ell_{2}}$, it yields that  $(x,\iota)\in\mathcal{S}^{\varepsilon}_{\ell,\ell_{2}}\subset\widetilde{\mathcal{S}}^{\varepsilon}_{\ell} $. Otherwise, arguing the same way than before  and since the number of regimes is finite, it must occur that there is some $\ell_{i}\neq\ell$ such that $(x,\iota)\in\mathcal{C}^{\varepsilon}_{\ell_{i}}$ and $u^{\varepsilon}_{\ell}(x,\iota)= u^{\varepsilon}_{\ell_{i}}(x,\iota)+\vartheta_{\ell,\ell_{i}}$. Therefore $(x,\iota)\in\mathcal{S}^{\varepsilon}_{\ell,\ell_{i}}\subset\widetilde{\mathcal{S}}^{\varepsilon}_{\ell} $.
\end{proof}

Now we construct the optimal control $(\xi^{\varepsilon,*},\varsigma^{\varepsilon,*})$  to the problem \eqref{vw1}.   Let $( x_{0},\ell_{0},\iota_{0})\in\overline{\set}\times\mathbb{M}\times\mathbb{I}$. The dynamics of the process $(X^{\varepsilon,*},I^{*})\eqdef\{(X^{\varepsilon,*}_{\tmt},I^{*}_{\tmt}):\tmt\geq0\}$ and $(\xi^{\varepsilon,*},\varsigma^{\varepsilon,*})$  {is} given recursively in the following way:

\begin{enumerate}
	\item[(i)] Define $\tau^{*}_{0}=0$ and $\ell^{*}_{0}=\ell_{0}$. If $( x_{0},\iota_{0})\notin\mathcal{C}^{\varepsilon}_{\ell_{0}}$, take $\tau^{*}_{1}\eqdef0$ and pass to item (ii) due to Lemma \ref{optim1}. Otherwise,  the process  $(X^{\varepsilon,*},I^{*})$ evolves as
	\begin{equation}\label{opt1}
	\begin{split}
	{X}^{\varepsilon,*}_{\tmt\wedge\tilde{\tau}^{*}_{1}}&=\tilde{x}-\int_{0}^{\tmt\wedge\tilde{\tau}^{*}_{1}}[ b ({X}^{\varepsilon,*}_{\tms},I^{*}_{\tms})+\mathbb{n}_{\tms}^{\varepsilon,*}\dot{\zeta}^{\varepsilon,*}_{\tms}]\der \tms+\int_{0}^{\tmt\wedge\tilde{\tau}^{*}_{1}}\sigma({X}^{\varepsilon,*}_{\tms},I^{*}_{\tms})\der  W_{\tms},\\
	I^{*}_{\tmt\wedge\tilde{\tau}^{*}_{1}}&=I^{(\ell^{*}_{0})}_{\tmt\wedge\tilde{\tau}^{*}_{1}},
	\end{split}
	\qquad\text{for}\ t>0,
	\end{equation}
	with $X^{\varepsilon,*}_{0}=x_{0}$, $I^{*}_{0}=\iota_{0}$, $\tau^{*}\eqdef\inf\{\tmt>0:(X^{\varepsilon,*}_{\tmt},I^{*}_{\tmt})\notin\set\}$,
	\begin{equation}\label{opt3}
	\tilde{\tau}^{*}_{1}\eqdef\tau^{*}_{1}\wedge\tau^{*}\quad\text{and}\quad\tau^{*}_{1}\eqdef\inf\big\{\tmt\geq0:(X^{\varepsilon,*}_{\tmt},I^{*}_{\tmt})\in\mathcal{S}^{\varepsilon}_{\ell^{*}_{0}}\big\}.
	\end{equation}
	The control process $\xi^{\varepsilon,*}=(\mathbb{n}^{\varepsilon,*},\zeta^{\varepsilon,*})$ is defined by
	\begin{equation}\label{opt4}
	\mathbb{n}^{\varepsilon,*}_{\tmt}=
	\begin{cases}
	\frac{\deri^{1}u^{\varepsilon}_{\ell^{*}_{0}}(X^{\varepsilon,*}_{\tmt},I^{*}_{\tmt})}{|\deri^{1}u^{\varepsilon}_{\ell^{*}_{0}}( X^{\varepsilon,*}_{\tmt},I^{*}_{\tmt})|}, &\text{if}\ |\deri^{1}u^{\varepsilon}_{\ell^{*}_{0}}( X ^{\varepsilon,*}_{\tmt},I^{*}_{\tmt})|\neq0\ \text{and}\ \tmt\in[0,\tilde{\tau}^{*}_{1}),\\
	\gamma_{0},& \text{if}\ |\deri^{1}u^{\varepsilon}_{\ell^{*}_{0}}(X^{\varepsilon,*}_{\tmt},I^{*}_{\tmt})|=0\  \text{and}\ \tmt\in[0,\tilde{\tau}^{*}_{1})
	\end{cases}
	\end{equation}
	where $\gamma_{0}\in\R^{d}$ is a unit vector fixed, and $\zeta^{\varepsilon,*}_{\tmt}=\int_{0}^{\tmt}\dot{\zeta}^{\varepsilon,*}_{\tms}\der \tms$, with $\tmt\in[0,\tilde{\tau}^{*}_{1})$ and 
	\begin{align}\label{opt5}
	\dot{\zeta}^{\varepsilon,*}_{\tms}= 2\psi'_{\varepsilon}(|\deri^{1} u^{\varepsilon}_{\ell^{*}_{0}}(X^{\varepsilon,*}_{\tms},I^{*}_{\tms})|^{2}- g_{\ell^{*}_{0}}(X^{\varepsilon,*}_{\tms},I^{*}_{\tms})^{2})|\deri^{1} u^{\varepsilon}_{\ell^{*}_{0}}(X^{\varepsilon,*}_{\tms},I^{*}_{\tms})|.
	\end{align} 
	\item[(ii)] Recursively, letting  $i\geq1$ and defining
	\begin{equation}\label{opt6}
	\begin{split}
	&\ell^{*}_{i}\in\displaystyle\argmin_{\ell'\in\mathbb{I}\setminus\{\ell^{*}_{i-1}\}}\big\{u^{\varepsilon}_{\ell'}(X^{\varepsilon,*}_{\tau^{*}_{i}},I^{*}_{\tau^{*}_{i}})+\vartheta_{\ell^{*}_{i-1},\ell'}\big\},\\
	&\tilde{\tau}^{*}_{i+1}=\tau^{*}_{i+1}\wedge\tau^{*},\quad\tau^{*}_{i+1}=\inf\big\{\tmt>\tau^{*}_{i}:(X^{\varepsilon,*}_{\tmt},I^{*}_{\tmt})\in\mathcal{S}^{\varepsilon}_{\ell^{*}_{i}}\big\},
	\end{split}
	\end{equation}  
	if $\tau^{*}_{i}<\tau^{*}$, the process $X^{\varepsilon,*}$  evolves as
	\begin{equation}\label{opt7}
	\begin{split}
	{X}^{\varepsilon,*}_{\tmt\wedge\tilde{\tau}^{*}_{i+1}}&=X^{\varepsilon,*}_{\tau_{i}^{*}}
	-\int_{\tau^{*}_{i}}^{\tmt\wedge\tilde{\tau}^{*}_{i+1}}[ b ({X}^{\varepsilon,*}_{\tms},I^{*}_{\tms})+\mathbb{n}_{\tms}^{\varepsilon,*}\dot{\zeta}^{\varepsilon,*}_{\tms}]\der \tms+\int_{\tau^{*}_{i}}^{\tmt\wedge\tilde{\tau}^{*}_{i+1}}\sigma({X}^{\varepsilon,*}_{\tms},I^{*}_{\tms})\der  W_{\tms},\\ 
	I^{*}_{\tmt\wedge\tilde{\tau}^{*}_{i+1}}&=I^{(\ell^{*}_{i})}_{\tmt\wedge\tilde{\tau}^{*}_{i+1}},
	\end{split}
	\qquad\text{for}\ \tmt\geq\tau^{*}_{i},
	\end{equation}
	where  
	\begin{equation}\label{opt9}
	\mathbb{n}^{\varepsilon,*}_{\tmt}=
	\begin{cases}
	\frac{\deri^{1}u^{\varepsilon}_{\ell^{*}_{i}}(X^{\varepsilon,*}_{\tmt},I^{*}_{\tmt})}{|\deri^{1}u^{\varepsilon}_{\ell^{*}_{i}}( X^{\varepsilon,*}_{\tmt},I^{*}_{\tmt})|}, &\text{if}\ |\deri^{1}u^{\varepsilon}_{\ell^{*}_{i}}( X ^{\varepsilon,*}_{\tmt},I^{*}_{\tmt})|\neq0\ \text{and}\ \tmt\in[\tau^{*}_{i},\tilde{\tau}^{*}_{i+1}),\\
	\gamma_{0},& \text{if}\ |\deri^{1}u^{\varepsilon}_{\ell^{*}_{i}}(X^{\varepsilon,*}_{\tmt},I^{*}_{\tmt})|=0\  \text{and}\ \tmt\in[\tau^{*}_{i},\tilde{\tau}^{*}_{i+1}), 
	\end{cases}
	\end{equation}
	with $\gamma_{0}\in\R^{d}$ is a unit vector fixed, and $\zeta^{\varepsilon,*}_{\tmt}=\int_{\tau^{\varepsilon,*}_{i}}^{\tmt}\dot{\zeta}^{\varepsilon,*}_{\tms}\der \tms$, with $\tmt\in[\tau^{*}_{i},\tilde{\tau}^{*}_{i+1})$ and 
	\begin{align}\label{opt10}
	\dot{\zeta}^{\varepsilon,*}_{\tms}= 2\psi'_{\varepsilon}(|\deri^{1} u^{\varepsilon}_{\ell^{*}_{i}}(X^{\varepsilon,*}_{\tms},I^{*}_{\tms})|^{2}- g_{\ell^{*}_{i}}(X^{\varepsilon,*}_{\tms},I^{*}_{\tms})^{2})|\deri^{1} u^{\varepsilon}_{\ell^{*}_{i}}(X^{\varepsilon,*}_{\tms},I^{*}_{\tms})|.
	\end{align} 
\end{enumerate}

\begin{rem}\label{ins1}
	Suppose that $\tau^{*}_{i}<\tau^{*}$ for some $i>0$. We notice that for $\tmt\in[\tau^{*}_{i},\tau^{*}_{i+1})$,  $\mathbb{n}^{\varepsilon,*}_{\tmt}\dot{\zeta}^{\varepsilon,*}_{\tmt}=2\psi'_{\varepsilon}(|\deri^{1} u^{\varepsilon}_{\ell^{*}_{i}}(X^{\varepsilon,*}_{\tmt},I^{*}_{\tmt})|^{2}- g   (X^{\varepsilon,*}_{\tmt},I^{*}_{\tmt})^{2})\deri^{1}u^{\varepsilon}_{\ell^{*}_{i}}(X^{\varepsilon,*}_{\tmt},I^{*}_{\tmt})$, $\Delta\zeta^{\varepsilon,*}_{\tmt}=0$, $|\mathbb{n}^{\varepsilon,*}_{\tmt}|=1$ and, by \eqref{p12.1} and \eqref{ineqe1}, it yields that  $\dot{\zeta}^{\varepsilon,*}_{\tmt}\leq\frac{2C_{4}}{\varepsilon}$.  Also {we} see that $(X^{\varepsilon,*}_{\tmt},I^{*}_{\tmt})\in\mathcal{C}^{\varepsilon}_{\ell^{*}_{i}}$ if  $\tmt\in[\tau^{*}_{i},\tau^{*}_{i+1})$ due to Lemma \ref{optim1}.
\end{rem}
\begin{rem}
	On the event $\{\tau^{*}=\infty\}$, $\tilde{\tau}^{*}_{i}=\tau_{i}^{*}$ for $i\geq0$.  From here and by \eqref{opt4}--\eqref{opt5} and \eqref{opt9}--\eqref{opt10},  it yields that  the control process $(\xi^{\varepsilon,*},\varsigma^{\varepsilon,*})$ belongs to $\mathcal{U}^{\varepsilon}\times\mathcal{S}$.  On the event $\{\tau^{*}<\infty \}$,   let $\hat{\iota}$ be defined 
	as $\hat{\iota}=\max\{i\in\mathbb{N}:\tau^{*}_{i}\leq\tau^{*}\}$. Then, taking $\tau^{*}_{i}\eqdef\tau^{*}+i$ and $\ell^{*}_{i}=\hat{\ell}$ for $i>\hat{\iota}$, where $\hat{\ell}\in\mathbb{I}$ is fixed,  it follows that $\varsigma^{\varepsilon,*}=(\tau^{*}_{i},\ell^{*}_{i})_{i\geq1}\in\mathcal{S}$. We take $\dot{\zeta}^{\varepsilon,*}_{\tmt}\equiv 0$ and  $\mathbb{n}^{\varepsilon,*}_{\tmt}\eqdef\gamma_{0}$, for $\tmt>\tau^{*}$. In this way, we have that $(\mathbb{n}^{\varepsilon,*},\zeta^{\varepsilon,*})\in\mathcal{U}^{\varepsilon}$.
\end{rem}

\begin{rem}
	Taking $J^{*}_{\tmt}=\ell_{0}\uno_{[0,\tau^{*}_{1})}(\tmt)+\ell^{*}_{1}\uno_{\{\tau^{*}_{1}=\tau^{*}_{0}\}}+\sum_{i\geq1}\ell^{*}_{i}\uno_{[\tau^{*}_{i},\tau^{*}_{i+1})}(\tmt)$, we see that it is a c\`adl\`ag process.
\end{rem}

\begin{lema}[Verification Lemma for  {$\varepsilon$-PACS} control problem. Second part]\label{convexu1.0}
	Let  $\varepsilon\in(0,1)$ be fixed and let  {$(X^{\varepsilon,*},I^{*})$ be the process that is governed by} \eqref{opt1}--\eqref{opt10}. Then, $ {V^{\varepsilon}_{\ell_{0}}(\cdot,\iota_{0})\in\hol^{1}(\overline{\set})\cap\sob^{2,\infty}_{\loc}(
		\set)}$ and  $u^{\varepsilon}_{\ell_{0}}(x_{0},\iota_{0})=\mathcal{V}_{\xi^{\varepsilon,*},\varsigma^{\varepsilon,*}}(x_{0},\ell_{0},\iota_{0})=V^{\varepsilon}_{\ell_{0}}(x_{0},\iota_{0})$ for each $(x_{0},\iota_{0})\in\overline{\set}\times\mathbb{I}$ and $\ell_{0}\in\mathbb{M}$. 
\end{lema}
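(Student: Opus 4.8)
First I would reduce the statement to a single identity. By Lemma~\ref{lv1} we already have $u^{\varepsilon}_{\ell_{0}}(x_{0},\iota_{0})\le V^{\varepsilon}_{\ell_{0}}(x_{0},\iota_{0})$, and since the pair $(\xi^{\varepsilon,*},\varsigma^{\varepsilon,*})$ is admissible (it belongs to $\mathcal{U}^{\varepsilon}\times\mathcal{S}$ by the Remarks following \eqref{opt10}) we also have $V^{\varepsilon}_{\ell_{0}}(x_{0},\iota_{0})\le\mathcal{V}_{\xi^{\varepsilon,*},\varsigma^{\varepsilon,*}}(x_{0},\ell_{0},\iota_{0})$. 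Hence it suffices to prove $\mathcal{V}_{\xi^{\varepsilon,*},\varsigma^{\varepsilon,*}}(x_{0},\ell_{0},\iota_{0})=u^{\varepsilon}_{\ell_{0}}(x_{0},\iota_{0})$: equality then holds throughout the chain, so $V^{\varepsilon}_{\ell_{0}}=u^{\varepsilon}_{\ell_{0}}$ on $\overline{\set}$, and the regularity $V^{\varepsilon}_{\ell_{0}}(\cdot,\iota_{0})\in\hol^{1}(\overline{\set})\cap\sob^{2,\infty}_{\loc}(\set)$ is inherited from the final proposition of Section~\ref{prop1}, which places the entries of $u^{\varepsilon}$ in that space under \eqref{h0}--\eqref{h2} and \eqref{h6}.

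To prove this identity I would rerun the computation of Lemma~\ref{lv1} along the optimal trajectory $(X^{\varepsilon,*},I^{*})$ of \eqref{opt1}--\eqref{opt10}, checking that every inequality used there becomes an equality. Concretely: apply the Dynkin-type identity \eqref{m6} of Lemma~\ref{Ito1} with $\hat{f}=u^{\varepsilon,\delta_{\hat{n}}}$ (which lies in $\mathcal{C}^{4,\alpha}_{m,n}$ by Proposition~\ref{princ1.0}, so Lemma~\ref{Ito1} applies) between the localizing times $\hat{\tau}^{q}_{i}\uparrow\tilde{\tau}^{*}_{i}$; since $\xi^{\varepsilon,*}\in\mathcal{U}^{\varepsilon}$ is absolutely continuous with no jumps, the jump terms $\mathcal{J}[\cdot,\cdot,u^{\varepsilon,\delta_{\hat{n}}}_{\ell^{*}_{i}}]$ drop out and $\der\zeta^{\comp}_{\tms}=\dot{\zeta}^{\varepsilon,*}_{\tms}\der\tms$. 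Replacing $[c_{\iota}-\dif_{\ell,\iota}]u^{\varepsilon,\delta_{\hat{n}}}_{\ell,\iota}$ via the NPDS \eqref{NPD.1} by $h_{\iota}-\psi_{\varepsilon}(|\deri^{1}u^{\varepsilon,\delta_{\hat{n}}}_{\ell,\iota}|^{2}-g_{\iota}^{2})-\sum_{\ell'}\psi_{\delta_{\hat{n}}}(u^{\varepsilon,\delta_{\hat{n}}}_{\ell,\iota}-u^{\varepsilon,\delta_{\hat{n}}}_{\ell',\iota}-\vartheta_{\ell,\ell'})$ turns \eqref{m6} into an identity involving only $u^{\varepsilon,\delta_{\hat{n}}}$ and $\deri^{1}u^{\varepsilon,\delta_{\hat{n}}}$, so I can let $q\to\infty$ and then $\delta_{\hat{n}}\to0$ using the $\hol^{1}(\overline{\set})$-convergence \eqref{conv1} together with the uniform bounds of Lemmas~\ref{Lb10} and \ref{Lb1} (which also dominate the integrands). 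In the limit the three inequalities of Lemma~\ref{lv1} become equalities: (a) on $(\tau^{*}_{i},\tau^{*}_{i+1})$ the process stays in the \emph{open} continuation region $\mathcal{C}^{\varepsilon}_{\ell^{*}_{i}}$ (Remark~\ref{ins1}), where $u^{\varepsilon}_{\ell^{*}_{i}}-u^{\varepsilon}_{\ell'}-\vartheta_{\ell^{*}_{i},\ell'}<0$ strictly for all $\ell'\neq\ell^{*}_{i}$, so after a further localization that keeps the path at positive distance from $\mathcal{S}^{\varepsilon}_{\ell^{*}_{i}}$ the penalization terms $\psi_{\delta_{\hat{n}}}(\cdot)$ are identically zero for $\delta_{\hat{n}}$ small; (b) the Fenchel inequality $\langle\gamma,y\rangle\le\psi_{\varepsilon}(|\gamma|^{2}-g_{\iota}^{2})+l^{\varepsilon}_{\iota}(y,\cdot)$ is saturated at $\gamma=\deri^{1}u^{\varepsilon}_{\ell^{*}_{i}}$ and $y=\mathbb{n}^{\varepsilon,*}_{\tms}\dot{\zeta}^{\varepsilon,*}_{\tms}$, because by \eqref{opt5} and \eqref{opt10} this $y$ equals $\nabla_{\gamma}H^{\varepsilon}_{\iota}(\gamma,\cdot)=2\psi'_{\varepsilon}(|\gamma|^{2}-g_{\iota}^{2})\gamma$, the maximizer defining $l^{\varepsilon}_{\iota}$; and (c) at each switching time $\tau^{*}_{i+1}<\tau^{*}$ we have $(X^{\varepsilon,*}_{\tau^{*}_{i+1}},I^{*}_{\tau^{*}_{i+1}})\in\mathcal{S}^{\varepsilon}_{\ell^{*}_{i}}=\widetilde{\mathcal{S}}^{\varepsilon}_{\ell^{*}_{i}}$ by Lemma~\ref{optim1}, so the choice $\ell^{*}_{i+1}\in\argmin$ together with \eqref{eq1} gives $u^{\varepsilon}_{\ell^{*}_{i}}=u^{\varepsilon}_{\ell^{*}_{i+1}}+\vartheta_{\ell^{*}_{i},\ell^{*}_{i+1}}$ there, making \eqref{e4.0} an equality. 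Iterating over $i=0,\dots,N$ exactly as in \eqref{n01} produces, for every $N$,
\begin{align*}
  u^{\varepsilon}_{\ell_{0}}(x_{0},\iota_{0})
  &=\E_{x_{0},\ell_{0},\iota_{0}}\Big[\int_{0}^{\tilde{\tau}^{*}_{N+1}}\expo^{-r(\tms)}\big(h(X^{\varepsilon,*}_{\tms},I^{*}_{\tms})+l^{\varepsilon}(\mathbb{n}^{\varepsilon,*}_{\tms}\dot{\zeta}^{\varepsilon,*}_{\tms},X^{\varepsilon,*}_{\tms},I^{*}_{\tms})\big)\der\tms\\
  &\quad+\sum_{i=0}^{N}\expo^{-r(\tau^{*}_{i+1})}\vartheta_{\ell^{*}_{i},\ell^{*}_{i+1}}\uno_{\{\tau^{*}_{i+1}<\tau^{*}\}}+\expo^{-r(\tau^{*})}f(X^{\varepsilon,*}_{\tau^{*}},I^{*}_{\tau^{*}})\uno_{\{\tau^{*}\le\tau^{*}_{N+1}\}}\Big]\\
  &\quad+\E_{x_{0},\ell_{0},\iota_{0}}\big[\expo^{-r(\tau^{*}_{N+1})}u^{\varepsilon}_{\ell^{*}_{N+1}}(X^{\varepsilon,*}_{\tau^{*}_{N+1}},I^{*}_{\tau^{*}_{N+1}})\uno_{\{\tau^{*}_{N+1}<\tau^{*}\}}\big].
\end{align*}

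The hard part will be the passage $N\to\infty$: I must show that the last expectation vanishes and that the rest converges to $\mathcal{V}_{\xi^{\varepsilon,*},\varsigma^{\varepsilon,*}}(x_{0},\ell_{0},\iota_{0})$. Since $\set$ is bounded, each $\dif_{\ell,\iota}$ is uniformly elliptic with bounded coefficients, and the extra drift $\mathbb{n}^{\varepsilon,*}\dot{\zeta}^{\varepsilon,*}$ is bounded (Remark~\ref{ins1}), standard estimates give $\tau^{*}<\infty$ $\Pro_{x_{0},\ell_{0},\iota_{0}}$-a.s., and since $c_{\iota}$ is bounded also $r(\tau^{*})<\infty$ a.s.; moreover $0\le u^{\varepsilon}\le\overline{u}$ by Lemma~\ref{Lb10}. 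All the terms above are non-negative, so $\E_{x_{0},\ell_{0},\iota_{0}}[\sum_{i=0}^{N}\expo^{-r(\tau^{*}_{i+1})}\vartheta_{\ell^{*}_{i},\ell^{*}_{i+1}}\uno_{\{\tau^{*}_{i+1}<\tau^{*}\}}]\le u^{\varepsilon}_{\ell_{0}}(x_{0},\iota_{0})<\infty$ uniformly in $N$; because $\mathbb{M}$ is finite and, by \eqref{l1} and \eqref{eq1}, the total switching cost of any loop of regimes is bounded below by a fixed $\vartheta_{0}>0$, this uniform bound (together with $\expo^{-r(\tau^{*}_{i+1})}\ge\expo^{-r(\tau^{*})}>0$ on $\{\tau^{*}_{i+1}<\tau^{*}\}$) forces, on an a.s. event, only finitely many switches to occur before $\tau^{*}$, since otherwise some regime would be revisited infinitely often inside $[0,\tau^{*})$ and the switching-cost series would diverge with positive probability. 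On that event $\uno_{\{\tau^{*}_{N+1}<\tau^{*}\}}=0$ for all large $N$, so dominated convergence (with dominating constant $\|\overline{u}\|_{\hol^{0}(\overline{\set})}$) kills the remainder, while monotone convergence handles the running-cost integral and the switching series; the right-hand side then converges to $\mathcal{V}_{\xi^{\varepsilon,*},\varsigma^{\varepsilon,*}}(x_{0},\ell_{0},\iota_{0})$, which yields the identity and hence the theorem. One subsidiary point to dispatch along the way is the well-posedness of the feedback system \eqref{opt7}: as $\mathbb{n}^{\varepsilon,*}\dot{\zeta}^{\varepsilon,*}$ is built from $\deri^{1}u^{\varepsilon}$, which is only locally Lipschitz, existence of $(X^{\varepsilon,*},I^{*})$ should be understood in the weak (martingale-problem) sense, which suffices because $\mathcal{V}_{\xi,\varsigma}$ depends on the controlled triple only through its law.
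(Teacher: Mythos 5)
Your proof follows the same route as the paper's: apply Lemma~\ref{Ito1} with $\hat{f}=u^{\varepsilon,\delta_{\hat{n}}}$ along the optimal trajectory, let $\delta_{\hat{n}}\to0$ and $q\to\infty$ using \eqref{conv1} and dominated convergence, and observe that the three inequalities of Lemma~\ref{lv1} saturate — the Fenchel inequality because \eqref{opt5}/\eqref{opt10} produce exactly $\nabla_\gamma H^\varepsilon_\iota$, the penalization because the trajectory stays in the open continuation region, and the switching inequality because Lemma~\ref{optim1} plus the argmin choice force $u^\varepsilon_{\ell^*_i}=u^\varepsilon_{\ell^*_{i+1}}+\vartheta_{\ell^*_i,\ell^*_{i+1}}$ at $\tau^*_{i+1}$. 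Your more explicit treatment of the passage $N\to\infty$ (via \eqref{l1} forcing a.s.\ finitely many switches before $\tau^*$) and your note on weak existence for the feedback SDE \eqref{opt7} are correct details that the paper compresses into ``the reader can verify easily,'' but they do not change the structure of the argument.
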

\begin{proof}
 {Notice that if $u_{\ell_{0}}(x_{0},\iota_{0})=V^{\varepsilon}_{\ell_{0}}(x_{0},\iota_{0})$	 for each $(x_{0},\iota_{0})\in\overline{\set}\times\mathbb{I}$ and $\ell_{0}\in\mathbb{M}$, it follows immediately  $V^{\varepsilon}_{\ell_{0}}(\cdot,\iota_{0})\in\hol^{1}(\overline{\set})\cap\sob^{2,\infty}_{\loc}(
	\set)$. So, let us show the agreement between $u_{\ell_{0}}(x_{0},\iota_{0})$ and $V^{\varepsilon}_{\ell_{0}}(x_{0},\iota_{0})$.}		Taking   $\hat{\tau}_{i}^{*,q}$  as $\hat{\tau}_{i}^{q}$ in Lemma  \ref{Ito1}, with $\tilde{\tau}_{i}=\tilde{\tau}^{*}_{i}$, and considering  $u^{\varepsilon,\delta_{\hat{n}}}$ which is the unique solution of \eqref{NPD.1} when $\delta=\delta_{\hat{n}}$, by Lemma   \ref{Ito1}, we get that 
	\begin{align*}
		&\E_{ x_{0},\ell_{0},\iota_{0}}[\expo^{-r(\hat{\tau}^{*,q}_{i})}u^{\varepsilon,\delta_{\hat{n}}} _{\ell^{*}_{i}}(X^{\varepsilon,*}_{{\hat{\tau}}^{*,q}_{i}},I^{*}_{{\hat{\tau}}^{*,q}_{i}})\uno_{\{\tau^{*}_{i}<\tau^{*}\}}]\notag\\
		&=\E_{ x_{0},\ell_{0},\iota_{0}}\bigg[\bigg\{\expo^{-r({\hat{\tau}}^{*,q}_{i+1})}u^{\varepsilon,\delta_{\hat{n}}} _{\ell^{*}_{i}}(X^{\varepsilon,*}_{{\hat{\tau}}^{*,q}_{i+1}},I^{*}_{{\hat{\tau}}^{*,q}_{i+1}})\notag\\
		&\quad+\int_{\hat{\tau}^{*,q}_{i}}^{{\hat{\tau}}^{*,q}_{i+1}}\expo^{-r(\tms)}\bigg[h(X^{\varepsilon,*}_{\tms},I^{*}_{\tms})-\sum_{\ell'\in\mathbb{M}\setminus\{\ell^{*}_{i}\}}\psi_{\delta}(u^{\varepsilon,\delta_{\hat{n}}}_{\ell^{*}_{i}}(X^{\varepsilon,*}_{\tms},I^{*}_{\tms})-u^{\varepsilon,\delta_{\hat{n}}}_{\ell'}(X^{\varepsilon,*}_{\tms},I^{*}_{\tms})-\vartheta_{\ell^{*}_{i},\ell'})\notag\\
		&\quad-\psi_{\varepsilon}(|\deri^{1}u^{\varepsilon,\delta_{\hat{n}}} _{\ell^{*}_{i}}(X^{\varepsilon,*}_{\tms},I^{*}_{\tms})|^{2}-g(X^{\varepsilon,*}_{\tms},I^{*}_{\tms})^{2})+\langle\deri^{1} u^{\varepsilon,\delta_{\hat{n}}} _{\ell^{*}_{i}}(X^{\varepsilon,*}_{\tms},I^{*}_{\tms}),\mathbb{n}^{\varepsilon,*}_{\tms}\dot{\zeta}^{\varepsilon,*}_{\tms}\rangle\bigg]\der\tms\bigg\}\uno_{\{\tau^{*}_{i}<\tau^{*}\}}\bigg].
	\end{align*}
Letting $\delta_{\hat{n}}\rightarrow\infty$, by dominated convergence theorem, we get
	\begin{align}\label{i2}
	&\E_{ x_{0},\ell_{0},\iota_{0}}[\expo^{-r(\hat{\tau}^{*,q}_{i})}u^{\varepsilon} _{\ell^{*}_{i}}(X^{\varepsilon,*}_{\hat{\tau}^{*,q}_{i}},I^{*}_{\hat{\tau}^{*,q}_{i}})\uno_{\{\tau^{*}_{i}<\tau^{*}\}}]\notag\\
	&=\E_{ x_{0},\ell_{0},\iota_{0}}\bigg[\bigg\{\expo^{-r({\hat{\tau}}^{*,q}_{i+1})}u^{\varepsilon} _{\ell^{*}_{i}}(X^{\varepsilon,*}_{{\hat{\tau}}^{*,q}_{i+1}},I^{*}_{{\hat{\tau}}^{*,q}_{i+1}})\notag\\
	&\quad+\int_{\hat{\tau}^{*,q}_{i}}^{{\hat{\tau}}^{*,q}_{i+1}}\expo^{-r(\tms)}\bigg[h(X^{\varepsilon,*}_{\tms},I^{*}_{\tms})-\sum_{\ell'\in\mathbb{M}\setminus\{\ell^{*}_{i}\}}\psi_{\delta}(u^{\varepsilon}_{\ell^{*}_{i}}(X^{\varepsilon,*}_{\tms},I^{*}_{\tms})-u^{\varepsilon}_{\ell'}(X^{\varepsilon,*}_{\tms},I^{*}_{\tms})-\vartheta_{\ell^{*}_{i},\ell'})\notag\\
	&\quad-\psi_{\varepsilon}(|\deri^{1}u^{\varepsilon} _{\ell^{*}_{i}}(X^{\varepsilon,*}_{\tms},I^{*}_{\tms})|^{2}-g(X^{\varepsilon,*}_{\tms},I^{*}_{\tms})^{2})+\langle\deri^{1} u^{\varepsilon} _{\ell^{*}_{i}}(X^{\varepsilon,*}_{\tms},I^{*}_{\tms}),\mathbb{n}^{\varepsilon,*}_{\tms}\dot{\zeta}^{\varepsilon,*}_{\tms}\rangle\bigg]\der\tms\bigg\}\uno_{\{\tau^{*}_{i}<\tau^{*}\}}\bigg],
	\end{align}
	because of  $\max_{(x,\iota)\in\mathcal{C}^{\varepsilon}_{\ell}}\{|(u^{\varepsilon,\delta_{\hat{n}}}_{\ell}-u^{\varepsilon}_{\ell})(x,\iota)|,|\deri^{1}(u^{\varepsilon,\delta_{\hat{n}}}_{\ell}-u^{\varepsilon}_{\ell})(x,\iota)|\}\underset{\delta_{\hat{n}}\rightarrow0}{\longrightarrow}0$  for $\ell\in\mathbb{M}$, and continuity  of $\psi_{\cdot}$. Then, considering that $u^{\varepsilon}_{\ell}-u^{\varepsilon}_{\ell'}-\vartheta_{\ell,\ell'}\leq u^{\varepsilon}_{\ell}-\mathcal{M}_{\ell}u^{\varepsilon}<0$ on $\mathcal{C}^{\varepsilon}_{\ell}$, with $\ell'\in\mathbb{M}\setminus\{\ell\}$, and   $l^{\varepsilon}(2\psi'_{\varepsilon}(|\gamma|^{2}- g(x,\iota)^{2})\gamma,x)=2\psi'_{\varepsilon}(|\gamma|^{2}- g(x,\iota)^{2})|\gamma|^{2}-\psi_{\varepsilon}(|\gamma|^{2}- g(x,\iota)^{2})$, and letting $q\rightarrow0$  in \eqref{i2}, it can be checked 
	\begin{align}\label{i3}
	\E_{ x_{0},\ell_{0},\iota_{0}}&[\expo^{-r(\tau^{*}_{i})}u^{\varepsilon} _{\ell^{*}_{i}}(X^{\varepsilon,*}_{\tau^{*}_{i}},I^{*}_{\tau^{*}_{i}})\uno_{\{\tau^{*}_{i}<\tau^{*}\}}]\notag\\
	&=\E_{ x_{0},\ell_{0},\iota_{0}}\bigg[\bigg\{\expo^{-r(\tilde{\tau}^{*}_{i+1})}u^{\varepsilon} _{\ell^{*}_{i}}(X^{\varepsilon,*}_{\tilde{\tau}^{*}_{i+1}},I^{*}_{\tilde{\tau}^{*}_{i+1}})\notag\\
	&\quad+\int_{\tau^{*}_{i}}^{\tilde{\tau}^{*}_{i+1}}\expo^{-r(\tms)}\bigg[h(X^{\varepsilon,*}_{\tms},I^{*}_{\tms})+l^{\varepsilon}(\mathbb{n}^{\varepsilon,*}_{\tms}\dot{\zeta}^{\varepsilon,*},X^{\varepsilon,*}_{\tms},I^{*}_{\tms})\bigg]\der\tms\bigg\}\uno_{\{\tau^{*}_{i}<\tau^{*}\}}\bigg].
	\end{align}
	By \eqref{i3} and noticing that 
	$$
	u^{\varepsilon} _{\ell^{*}_{i}}(X^{\varepsilon,*}_{\tilde{\tau}^{*}_{i+1}},I^{*}_{\tilde{\tau}^{*}_{i+1}})=f(X^{\varepsilon,*}_{\tau^{*}},I^{*}_{\tau^{*}})\uno_{\{\tau^{*}\leq\tau^{*}_{i+1}\}}+[u^{\varepsilon} _{\ell^{*}_{i+1}}(X^{\varepsilon,*}_{\tilde{\tau}^{*}_{i+1}},I^{*}_{\tilde{\tau}^{*}_{i+1}})+\vartheta_{\ell^{*}_{\ell_{i}},\ell^{*}_{\ell_{i+1}}}]\uno_{\{\tau^{*}>\tau^{*}_{i+1}\}},$$ 
	due to \eqref{p13.0.1.0}, \eqref{opt3} and \eqref{opt6}, the reader can verified easily that 
	\begin{align*}
	u^{\varepsilon}_{\ell_{0}}(x_{0},\iota_{0})=\mathcal{V}_{\xi^{\varepsilon,*},\varsigma^{\varepsilon,*}}( x_{0},\ell_{0},\iota_{0})\geq V^{\varepsilon}_{\ell_{0}}(x_{0},\iota_{0}).
	\end{align*}
	From here and Lemma \ref{lv1}, we conclude   $u^{\varepsilon}_{\ell_{0}}(x_{0},\iota_{0})=\mathcal{V}_{\xi^{\varepsilon,*},\varsigma^{\varepsilon,*}}(x_{0},\ell_{0},\iota_{0})=V^{\varepsilon}_{\ell_{0}}(x_{0},\iota_{0})$ for each $(x_{0},\ell_{0},\iota_{0})\in\overline{\set}\times\mathbb{M}\times\mathbb{I}$.
\end{proof}

\subsection{Proof of Theorem \ref{verf2}}

\begin{proof} 

	Let $\{u^{\varepsilon_{\hat{n}}}\}_{\hat{n}\geq1}$, with $u^{\varepsilon_{\hat{n}}}=(u^{\varepsilon_{\hat{n}}}_{1},\dots,u^{\varepsilon_{\hat{n}}}_{m})$, be the sequence of unique strong solutions to the HJB equation \eqref{pc1}, when $\varepsilon=\varepsilon_{\hat{n}}$,  which  satisfy \eqref{econv1}. From Lemma \ref{convexu1.0}, we know that  
$$u^{\varepsilon_{\hat{n}}}_{\ell_{0}}(x_{0},\iota_{0})=\mathcal{V}_{\xi^{\varepsilon_{\hat{n}},*},\varsigma^{\varepsilon_{\hat{n}},*}}(x_{0},\ell_{0},\iota_{0})= V^{\varepsilon_{\hat{n}}}(x_{0},\ell_{0},\iota_{0})\quad\text{for $(x_{0},\ell_{0},\iota_{0})\in\overline{\set}\times\mathbb{M}\times\mathbb{I}$},$$
with $(\xi^{\varepsilon_{\hat{n}},*},\varsigma^{\varepsilon_{\hat{n}},*})$ as in \eqref{opt3}--\eqref{opt6} and \eqref{opt9}--\eqref{opt10}, when $\varepsilon=\varepsilon_{\hat{n}}$. Notice that $l^{\varepsilon_{\hat{n}}}(\beta\gamma,x,\iota)\geq\langle g(x,\iota)\gamma,\beta\gamma\rangle-\psi_{\varepsilon_{\hat{n}}}(| g(x,\iota)\gamma|^{2}- g(x,\iota)^{2})=\beta g(x,\iota)$, with $\beta\in\R$ and $\gamma\in\R^{d}$ a unit vector. Then, from here and considering  {$(X^{\varepsilon_{\hat{n}},*},J^{*},I^{*})$ governed by} \eqref{opt1}--\eqref{opt10}, it follows that 
\begin{align}\label{NPIDD1.0}
	V_{\ell_{0}}(x_{0},\iota_{0})&\leq V_{\xi^{\varepsilon_{\hat{n}},*},\varsigma^{\varepsilon_{\hat{n}},*}}(x_{0},\ell_{0},\iota_{0})\notag\\
	&=\E_{x_{0},\ell_{0},\iota_{0}}\bigg[\int_{0}^{\tau^{*}}\expo^{-r(\tms)}[h(X^{\varepsilon_{\hat{n}},*}_{\tms},I^{*}_{\tms})\der \tms+{\dot{\zeta}^{\varepsilon_{\hat{n}},*}_{\tms}}g(X^{\varepsilon_{\hat{n}},*}_{\tms},I^{*}_{\tms})]\der\tms\notag\\
	&\quad+\expo^{r(\tau^{*})}f(X^{\varepsilon_{\hat{n}},*}_{\tau^{*}},I^{*}_{\tau^{*}})\uno_{\{\tau^{*}<\infty\}}+\sum_{i\geq0}\expo^{-r(\tau^{*}_{i+1})}\vartheta_{\ell^{*}_{i},\ell^{*}_{i+1}}\uno_{\{\tau^{*}_{i+1}<\tau^{*}\}}\bigg]\notag\\
	&\leq \E_{x_{0},\ell_{0},\iota_{0}}\bigg[\int_{0}^{\tau^{*}}\expo^{-r(\tms)}[h(X^{\varepsilon_{\hat{n}},*}_{\tms},I^{*}_{\tms})+l^{\varepsilon_{\hat{n}}}({\dot{\zeta}^{\varepsilon_{\hat{n}},*}_{\tms}\mathbb{n}^{\varepsilon_{\hat{n}},*}_{\tms}},X^{\varepsilon_{\hat{n}},*}_{\tms},I^{*}_{\tms})]\der \tms\notag\\
	&\quad+\expo^{r(\tau^{*})}f(X^{\varepsilon_{\hat{n}},*}_{\tau^{*}},I^{*}_{\tau^{*}})\uno_{\{\tau^{*}<\infty\}}+\sum_{i\geq0}\expo^{-r(\tau^{*}_{i+1})}\vartheta_{\ell^{*}_{i},\ell^{*}_{i+1}}\uno_{\{\tau^{*}_{i+1}<\tau^{*}\}}\bigg]=u^{\varepsilon_{\hat{n}}}_{\ell_{0}}(x_{0},\iota_{0}).
\end{align}
Letting $\varepsilon_{\hat{n}}\rightarrow0$ in \eqref{NPIDD1.0}, it yields {$u_{\ell_{0}}(x_{0},\iota_{0})\geq V_{\ell_{0}}(x_{0},\iota_{0})$ } for each $(x_{0},\ell_{0},\iota_{0})\in\overline{\set}\times\mathbb{M}\times\mathbb{I}$. 	
	
Let us consider $(X,J,I)$ evolving as in \eqref{es1}  with initial state $(x_{0},\ell_{0},\iota_{0})\in\set\times\mathbb{M}\times\mathbb{I}$, and the control process $(\xi,\varsigma)$ belongs to $\mathcal{U}\times\mathcal{S}$.   Taking $\hat{f}=u^{\varepsilon_{\hat{m}},\delta_{\hat{n}}}$, by Lemma \ref{Ito1}, we get that \eqref{m6} holds. Since $u^{\varepsilon_{\hat{m}},\delta_{\hat{m}}}$ is the unique solution to \eqref{NPD.1} when $\varepsilon=\varepsilon_{\hat{m}}$ and $\delta=\delta_{\hat{n}}$, and $\psi_{\cdot}$ is a positive function,   the reader can verify that 
\begin{align}\label{i1}
	&\E_{ x_{0},\ell_{0},\iota_{0}}[\expo^{-r(\hat{\tau}^{q}_{i})}u^{\varepsilon_{\hat{m}},\delta_{\hat{n}}} _{\ell_{i}}(X_{{\hat{\tau}}^{q}_{i}},I_{{\hat{\tau}}^{q}_{i}})\uno_{\{\tau_{i}<\tau\}}]\notag\\
	&\leq\E_{ x_{0},\ell_{0},\iota_{0}}\bigg[\bigg\{\expo^{-r({\hat{\tau}}^{q}_{i+1})}u^{\varepsilon_{\hat{m}},\delta_{\hat{n}}} _{\ell_{i}}(X_{{\hat{\tau}}^{q}_{i+1}},I_{{\hat{\tau}}^{q}_{i+1}})-\sum_{\hat{\tau}^{q}_{i}<\tms\leq\hat{\tau}^{q}_{i+1}}\expo^{-r(\tms)}\mathcal{J}[X^{\xi,\varsigma}_{\tms},I_{\tms}, u^{\varepsilon_{\hat{m}},\delta_{\hat{n}}} _{\ell_{i}}]\notag\\
	&\quad+\int_{\hat{\tau}^{q}_{i}+}^{{\hat{\tau}}^{q}_{i+1}}\expo^{-r(\tms)}[h(X_{\tms},I_{\tms}) +\langle\deri^{1} u^{\varepsilon_{\hat{m}},\delta_{\hat{n}}} _{\ell_{i}}(X_{\tms},I_{\tms}),\mathbb{n}_{\tms}\zeta^{\comp}_{\tms}\rangle]\der\tms\bigg\}\uno_{\{\tau_{i}<\tau\}}\bigg].
\end{align}
Additionally, considering $\Delta\zeta_{\tms}\neq0$ and $X_{\tms-}-\mathbb{n}_{\tms}\Delta \zeta_{\tms} \in\set$ for $\tms\in(\hat{\tau}^{q}_{i},\hat{\tau}^{q}_{i+1}]$, and using mean value theorem, we have that 
\begin{align}\label{n4}
	-\mathcal{J}[X_{\tms},I_{\tms}, u^{\varepsilon_{\hat{m}},\delta_{\hat{n}}}_{\ell_{i}} ]&\leq|u^{\varepsilon_{\hat{m}},\delta_{\hat{n}}}_{\ell_{i}}(X_{\tms-}-\mathbb{n}_{\tms}\Delta \zeta_{\tms},I_{\tms})-u^{\varepsilon_{\hat{m}},\delta_{\hat{n}}}_{\ell_{i}}(X_{\tms-},I_{\tms})|\notag\\
	&\leq\Delta\zeta_{\tms}\int_{0}^{1}|\deri^{1}u^{\varepsilon_{\hat{m}},\delta_{\hat{n}}}_{\ell_{i}}(X_{\tms-}-\lambda\mathbb{n}_{\tms}\Delta \zeta_{\tms},I_{\tms})|\der\lambda.
\end{align}
Recall that $\max_{(x,\iota)\in\set_{q}\times\mathbb{I}}\{|(u^{\varepsilon_{\hat{m}},\delta_{\hat{n}}}_{\ell}-u_{\ell})(x,\iota)|,|\deri^{1}(u^{\varepsilon_{\hat{m}},\delta_{\hat{n}}}_{\ell}-u_{\ell})(x,\iota)|\}\underset{\varepsilon_{\hat{m}},\delta_{\hat{n}}\rightarrow0}{\longrightarrow}0$ for $\ell\in\mathbb{M}$, due to \eqref{conv1} and \eqref{econv1}.  Then, applying \eqref{n4} in \eqref{i1} and letting $\varepsilon_{\hat{m}},\delta_{\hat{n}}\rightarrow0$, by the dominated convergence theorem, it follows that 
\begin{align}\label{i1.0}
	\E_{ x_{0},\ell_{0},\iota_{0}}[\expo^{-r(\hat{\tau}^{q}_{i})}u _{\ell_{i}}(X_{{\hat{\tau}}^{q}_{i}},I_{{\hat{\tau}}^{q}_{i}})\uno_{\{\tau_{i}<\tau\}}]&\leq\E_{ x_{0},\ell_{0},\iota_{0}}\bigg[\bigg\{\expo^{-r({\hat{\tau}}^{q}_{i+1})}u_{\ell_{i}}(X_{{\hat{\tau}}^{q}_{i+1}},I_{{\hat{\tau}}^{q}_{i+1}})\notag\\
	&\quad+\int_{\hat{\tau}^{q}_{i}+}^{{\hat{\tau}}^{q}_{i+1}}\expo^{-r(\tms)}[h(X_{\tms},I_{\tms})\der\tms +g(X_{\tms-},I_{\tms})\circ\der\zeta_{\tms}]\bigg\}\uno_{\{\tau_{i}<\tau\}}\bigg],
\end{align}
due to $|\deri^{1}u(\cdot,\iota)|-g(\cdot,\iota)\leq0$ on $\set$. Letting $q\rightarrow\infty$ in \eqref{i1.0} and taking into account that \eqref{e4.0} holds for this case, by the dominated convergence theorem, it implies that
\begin{align}\label{i1.1}
	\E_{ x_{0},\ell_{0},\iota_{0}}[\expo^{-r({\tau}_{i})}u _{\ell_{i}}(X_{{{\tau}}_{i}},I_{{{\tau}}_{i}})\uno_{\{\tau_{i}<\tau\}}]&\leq\E_{ x_{0},\ell_{0},\iota_{0}}\bigg[\expo^{-r({\tau})}f(X_{{\tau}},I_{{\tau}})\uno_{\{\tau_{i}<\tau\leq \tau_{i+1}\}}\notag\\
	&\quad+\expo^{-r({\tau}_{i+1})}[u _{\ell_{i+1}}(X_{{\tau}_{i+1}},I_{{\tau}_{i+1}})+\vartheta_{\ell_{i},\ell_{i+1}}]\uno_{\{\tau> \tau_{i+1}\}}\notag\\
	&\quad+\uno_{\{\tau_{i}<\tau\}}\int_{\hat{\tau}_{i}+}^{{\hat{\tau}}_{i+1}}\expo^{-r(\tms)}[h(X_{\tms},I_{\tms}) +g(X_{\tms-},I_{\tms})\circ\der\zeta_{\tms}]\der\tms\bigg].
\end{align}
On the other hand, since the control  $\xi$ can act on $X$ by a jump of $\zeta$ at time zero, we have that $X_{0}=x_{0}-\mathbb{n}_{0}\Delta\zeta_{0}$. From here and considering recurrently \eqref{i1.1}, we conclude that    
\begin{align*}
	u_{\ell_{0}}(x_{0},\iota_{0})&\leq\E_{ x_{0},\ell_{0},\iota_{0}}\bigg[f(x_{0},\iota_{0})\uno_{\{\tau_{0}=\tau\}}+\Delta\xi_{0}\uno_{\{\tau_{0}<\tau\}}\int_{0}^{1}g(x_{0}-\lambda\mathbb{n}_{0}\Delta\xi_{0},I_{0})\der\lambda\notag\\
	&\quad+\vartheta_{\ell_{0},\ell_{1}}\uno_{\{\tau_{0}=\tau_{1}<\tau\}}+u_{\ell_{1}}(X_{0},I_{0})\uno_{\{\tau_{0}=\tau_{1}<\tau\}}+u_{\ell_{0}}(X_{0},I_{0})\uno_{\{\tau_{0}<\tilde{\tau}_{1}\}}\bigg]\notag\\
	&\leq \E_{ x_{0},\ell_{0},\iota_{0}}\bigg[f(x_{0},\iota_{0})\uno_{\{\tau_{0}=\tau\}}+\expo^{-r({\tau})}f(X_{{\tau}},I_{{\tau}})\uno_{\{\tau_{0}<\tau\leq \tau_{1}\}}+\expo^{-r({\tau}_{1})}\vartheta_{\ell_{0},\ell_{1}}\uno_{\{\tau> \tau_{1}\geq\tau_{0}\}}\notag\\
	&\quad+\uno_{\{\tau_{0}<\tau\}}\int_{0}^{{\tilde{\tau}}_{1}}\expo^{-r(\tms)}[h(X_{\tms},I_{\tms})\der\tms+g(X_{\tms-},I_{\tms})\circ\der \zeta_{\tms}]\bigg]\notag\\
	&\quad+\E_{x_{0},\ell_{0},\iota_{0}}[\expo^{-r({\tau}_{1})}u _{\ell_{1}}(X_{{\tau}_{1}},I_{{\tau}_{1}})\uno_{\{\tau> \tau_{1}\geq\tau_{0}\}}]\notag\\
	&\quad\vdots\quad\quad\quad\quad\quad\quad\vdots\quad\quad\quad\quad\quad\quad\vdots\notag\\
	&\leq \E_{x_{0},\ell_{0},\iota_{0}}\bigg[\expo^{-r({\tau})}f(X_{{\tau}},I_{{\tau}})\uno_{\{ \tau<\infty\}}+\sum_{i\geq0}\expo^{-r(\tau_{i+1})}\vartheta_{\ell_{i},\ell_{i+1}}\uno_{\{\tau_{i+1}<\tau\}}\notag\\
	&\quad+\int_{0}^{\tau}\expo^{-r(\tms)}[h(X_{\tms},I_{\tms})\der\tms+g(X_{\tms-},I_{\tms})\circ\der \zeta_{\tms}]\bigg]={V}_{\zeta,\varsigma}(x_{0},
	\ell_{0},\iota_{0}).
\end{align*}
Therefore, by the seen before, it is easily to check that $u_{\ell_{0}}(x_{0},\iota_{0})\leq V_{\ell_{0}}(x_{0},\iota_{0})\leq u_{\ell_{0}}(x_{0},\iota_{0})$ for $(x_{0},\ell_{0},\iota_{0})\in\overline{\set}\times\mathbb{M}\times\mathbb{I}$.
\end{proof}

\appendix

\section{Proofs of some results seen in the article}

\subsection{Proof of Lemma \ref{dm1}}\label{proof1}

The existence of the solution $v$ to \eqref{D1} will be argued using  Schaefer's fixed point theorem (see, i.e., \cite[Thm. 4 p. 539]{evans2}). First, by Theorem 6.14 of \cite{gilb},   notice that for each $\w\in\mathcal{C}^{0,\alpha}_{m, n}$, there exists a unique  $\vs_{\ell,\iota}\in\hol^{2,\alpha}(\overline{\set})$, with $(\ell,\iota)\in\mathbb{M}\times\mathbb{I}$, such that
\begin{equation}\label{D1u}
	\begin{split}
		[c_{\iota}-\widetilde{\dif}_{\iota}]\vs_{\ell,\iota}=   \overline{\Xi}_{\ell,\iota}\w,\ \text{on}\ \set,\quad\text{s.t.}\ \vs_{\ell,\iota}=f_{\iota},\ \text{in}\ \partial{\set},
	\end{split}
\end{equation}
where 
\begin{align}
	\overline{\Xi}_{\ell,\iota}\w&=h_{\iota}-\sum_{\kappa\in\mathbb{I}\setminus\{\iota\}}q_{\ell}(\iota,\kappa)[\w_{\ell,\iota}-\w_{\ell,\kappa}].\notag
\end{align}
Additionally, using \cite[Thm. 4.12, p. 85]{adams} and \cite[Thm. 1.2.19]{garroni}, we get that 
\begin{align}\label{elpd1v}
	\|\vs_{\ell,\iota}\|_{\hol^{1,\alpha}(\overline{\set})}&\leq C_{1}\big[1+\|\w\|_{\mathcal{C}^{0}_{m,n}(\overline{\set})}\big],\quad \text{for}\ (\ell,\iota)\in\mathbb{M}\times\mathbb{I},
\end{align}
for some  $C_{1}=C_{1}(d,\Lambda,\alpha,\theta)$. 	Let us define the mapping
\begin{align*}
	T:(\mathcal{C}^{0,\alpha}_{m, n},\|\cdot\|_{\mathcal{C}^{0,\alpha}_{m, n}})\longrightarrow(\mathcal{C}^{0,\alpha}_{m, n},\|\cdot\|_{\mathcal{C}^{0,\alpha}_{m, n}})
\end{align*}
as $T[\w]=\vs$ for each $\w\in\mathcal{C}^{0,\alpha}_{m, n}$, where  $\vs\in\mathcal{C}^{2,\alpha}_{m, n}\subset\mathcal{C}^{0,\alpha}_{m, n}$ is the unique solution to the Dirichlet problem \eqref{D1u}. Observe $T$ maps bounded sets in $\mathcal{C}^{0,\alpha}_{m, n}$ into bounded sets in itself that are precompact in $\mathcal{C}^{0,\alpha}_{m, n}$, since \eqref{h0}--\eqref{h2} and\eqref{elpd1v} hold. Then,  by the uniqueness of the solution to  \eqref{D1u}, {it can be checked} that  $T$ is a continuous and compact mapping from $\mathcal{C}^{0,\alpha}_{m, n}$ to $\mathcal{C}^{0,\alpha}_{m, n}$. Then, by the seen previously, to use Schaefer's fixed point theorem, we only need to verify that the set
$$\mathcal{A}_{2}\eqdef\{\w\in\mathcal{C}^{0,\alpha}_{m, n}:\w=\varrho T[\w],\ \text{for some }\ \varrho\in[0,1]\}$$ 
is bounded uniformly with respect to the norm $\|\cdot\|_{\mathcal{C}^{0,\alpha}_{m, n}}$.  Let us show first that  $\mathcal{A}_{2}$ is uniformly bounded with respect to the norm $\|\cdot\|_{\mathcal{C}^{0}_{m,n}}$. {By the arguments seen above (Equation \eqref{eq_1}), observe that if $\varrho=0$,} $\w\equiv\overline{0}\in\mathcal{C}^{0,\alpha}_{m, n}$  where $\overline{0}$ is the null matrix function. 

\begin{lema}\label{A1}
	If $\w\in\mathcal{C}^{0,\alpha}_{m, n}$ is such that $T[\w]=\frac{1}{\varrho}\w=(\frac{1}{\varrho}\w_{\ell,\iota})_{(\ell,\iota)\in\mathbb{M}\times\mathbb{I}}$ for some $\varrho\in(0,1]$,  then \eqref{w1} holds.
	
\end{lema}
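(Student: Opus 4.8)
The plan is to unwind the fixed-point identity $T[\w]=\tfrac1\varrho\w$ into a linear system for $\w$ itself and then run a maximum/minimum principle argument identical in structure to that of Lemma~\ref{B12}. Since $T[\w]=\vs$ with $\vs$ the solution of \eqref{D1u} driven by $\overline{\Xi}_{\ell,\iota}\w$, the hypothesis $\vs=\tfrac1\varrho\w$ means $\w=\varrho\vs$, so multiplying \eqref{D1u} by $\varrho$ shows that $\w=(\w_{\ell,\iota})$ solves
\begin{equation*}
c_\iota\w_{\ell,\iota}-\widetilde{\dif}_\iota\w_{\ell,\iota}+\varrho\sum_{\kappa\in\mathbb{I}\setminus\{\iota\}}q_\ell(\iota,\kappa)[\w_{\ell,\iota}-\w_{\ell,\kappa}]=\varrho h_\iota\ \text{in}\ \set,\quad \w_{\ell,\iota}=\varrho f_\iota\ \text{on}\ \partial\set,
\end{equation*}
for each $(\ell,\iota)\in\mathbb{M}\times\mathbb{I}$, where $\widetilde{\dif}_\iota\w_{\ell,\iota}=\tr[a_\iota\deri^2\w_{\ell,\iota}]-\langle b_\iota,\deri^1\w_{\ell,\iota}\rangle$. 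This is a linear elliptic cooperative system with cooperative (sign-definite) coupling $q_\ell(\iota,\kappa)\ge0$, so all the sign information needed for an extremum argument is in place.

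For the lower bound I would pick $(x_\circ,\ell_\circ,\iota_\circ)$ minimizing $\w_{\ell,\iota}(x)$ over $\overline\set\times\mathbb{M}\times\mathbb{I}$. If $x_\circ\in\partial\set$, the boundary condition and the non-negativity of $f$ in \eqref{a4} give $\w_{\ell_\circ,\iota_\circ}(x_\circ)=\varrho f_{\iota_\circ}(x_\circ)\ge0$. If $x_\circ\in\set$, then at $x_\circ$ one has $\deri^1\w_{\ell_\circ,\iota_\circ}=0$, $\tr[a_{\iota_\circ}\deri^2\w_{\ell_\circ,\iota_\circ}]\ge0$ by ellipticity \eqref{H2}, and $\w_{\ell_\circ,\iota_\circ}-\w_{\ell_\circ,\kappa}\le0$ for every $\kappa$; plugging these into the displayed equation and using $q_\ell(\iota,\kappa)\ge0$, $h_{\iota_\circ}\ge0$ and $\varrho>0$ leaves $c_{\iota_\circ}(x_\circ)\w_{\ell_\circ,\iota_\circ}(x_\circ)\ge0$, hence $\w_{\ell_\circ,\iota_\circ}(x_\circ)\ge0$ because $c_{\iota_\circ}>0$ on $\set$. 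Either way $\w\ge0$ on $\overline\set$.

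The upper bound follows symmetrically: let $(x^\circ,\ell^\circ,\iota^\circ)$ maximize $\w_{\ell,\iota}(x)$. On the boundary, $\w_{\ell^\circ,\iota^\circ}(x^\circ)=\varrho f_{\iota^\circ}(x^\circ)\le\Lambda\le\Lambda\max_{(x',\kappa)\in\overline\set\times\mathbb{I}}\{1,1/c_\kappa(x')\}$ by \eqref{a4}. In the interior, the reversed inequalities $\deri^1\w_{\ell^\circ,\iota^\circ}=0$, $\tr[a_{\iota^\circ}\deri^2\w_{\ell^\circ,\iota^\circ}]\le0$ and $\w_{\ell^\circ,\iota^\circ}-\w_{\ell^\circ,\kappa}\ge0$ reduce the equation to $c_{\iota^\circ}(x^\circ)\w_{\ell^\circ,\iota^\circ}(x^\circ)\le\varrho h_{\iota^\circ}(x^\circ)\le\Lambda$, so $\w_{\ell^\circ,\iota^\circ}(x^\circ)\le\Lambda/c_{\iota^\circ}(x^\circ)\le\Lambda\max_{(x',\kappa)}\{1,1/c_\kappa(x')\}$. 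Combining the two bounds gives \eqref{w1}. The only delicate point — and it is the closest thing to an obstacle — is keeping track of the sign of the coupling terms $\varrho q_\ell(\iota,\kappa)[\w_{\ell,\iota}-\w_{\ell,\kappa}]$ at the extremal index, which here appear with the opposite sign to the one in \eqref{LPD.1u}; but the cooperative structure of the system makes these terms work in our favour exactly as in the proof of Lemma~\ref{B12}, so no new idea is required.
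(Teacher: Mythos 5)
Your proposal is correct and follows essentially the same route as the paper: unwind the fixed point identity $T[\w]=\tfrac1\varrho\w$ into the linear cooperative system for $\w$, then apply the minimum/maximum principle at the extremal triple $(x,\ell,\iota)$ using the sign $q_{\ell}(\iota,\kappa)\ge 0$, $h_\iota\ge 0$, $f_\iota\ge 0$, and $c_\iota>0$, exactly as in Lemma~\ref{B12}. The paper's version of the argument is more compressed — it only records the first/second-order extremum conditions and defers the rest to the analogy with Lemma~\ref{B12} — but the reasoning you spell out is the one being invoked.
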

\begin{proof}
	Considering $(x_{\circ},\ell_{\circ},\iota_{\circ}),(x^{\circ},\ell^{\circ},\iota^{\circ})\in\overline{\set}\times\mathbb{M}\times\mathbb{I}$ be such that 
	$$\w_{\ell_{\circ},\iota_{\circ}}(x_{\circ})=\min_{(x,\ell,\iota)\in\overline{\set}\times\mathbb{M}\times\mathbb{I}}\w_{\ell,\iota}(x)\quad\text{and}\quad\w_{\ell^{\circ},\iota^{\circ}}(x^{\circ})=\max_{(x,\ell,\iota)\in\overline{\set}\times\mathbb{M}\times\mathbb{I}}\w_{\ell,\iota}(x),$$
	we get 
	\begin{equation*}
		\begin{split}
			&\deri^{1}\w_{\ell_{\circ},\iota_{\circ}}(x_{\circ})=\deri^{1}\w_{\ell^{\circ},\iota^{\circ}}(x^{\circ})=0,\ \tr[a_{\iota^{\circ}}\deri^{2}\w_{\ell^{\circ},\iota^{\circ}}](x^{\circ})\leq 0\leq\tr[a_{\iota_{\circ}}\deri^{2}\w_{\ell_{\circ},\iota_{\circ}}](x_{\circ}),\\
			&\w_{\ell_{\circ},\iota_{\circ}}(x_{\circ})-\w_{\ell_{\circ},\kappa}(x_{\circ} )\leq 0\ \text{for}\ \kappa\in\mathbb{I}\setminus\{\iota_{\circ}\},\ \w_{\ell^{\circ},\iota^{\circ}}(x^{\circ})-\w_{\ell_{\circ},\kappa}(x^{\circ} )\geq 0\ \text{for}\ \kappa\in\mathbb{I}\setminus\{\iota^{\circ}\}.
		\end{split} 
	\end{equation*}
	Therefore, from here, using \eqref{D1u}, and arguing in a similar way as in the proof of Lemma \eqref{B12}, we see that \eqref{w1} is also true for this case.
\end{proof}

\begin{proof}[Proof of Proposition \ref{princ1.0}. Existence] By the seen before and arguing in a similar way than in the proof of Proposition \ref{princ1.0} (existence), it follows immediately that \eqref{D1} has a solution $v$ in $\mathcal{C}^{4,\alpha}_{m,n}$.
\end{proof}	
\begin{proof}[Proof of Proposition \ref{dm1}. Uniqueness] The proof of uniqueness of the solution $v$ to   \eqref{D1} shall be given by contradiction. Assume that there are two solutions $\hat{v},v\in\mathcal{C}^{4,\alpha}_{m,n}$ to  \eqref{D1}. Let $\bar{v}=(\bar{v}_{\ell,\iota})_{(\ell,\iota)\in\mathbb{M}\times\mathbb{I}}\in\mathcal{C}^{4,\alpha}_{m,n}$ such that $\bar{v}_{\ell,\iota}\eqdef \hat{v}_{\ell,\iota}-v_{\ell,\iota}$ for $(\ell,\iota)\in\mathbb{M}\times\mathbb{I}$. Then,
	\begin{equation}\label{D1.1}
		\begin{split}
			[c_{\iota}-\dif_{\ell,\iota}]\bar{v}_{\ell,\iota}=0,\ \text{on}\ \set,\quad\text{s.t.}\ \bar{v}_{\ell,\iota}=0,\ \text{in}\ \partial{\set}.
		\end{split}
	\end{equation}
	Let $(x^{\circ},\ell^{\circ},\iota^{\circ})$ be in  $\overline{\set}\times\mathbb{M}\times\mathbb{I}$ such that $\bar{v}_{\ell^{\circ},\iota^{\circ}}(x^{\circ})=\max_{(x,\ell,\iota)\in\overline{\set}\times\mathbb{M}\times\mathbb{I}}\bar{v}_{\ell,\iota}(x)$.  If $x_{ \circ}\in\partial{\set}$, $\hat{v}_{\ell,\iota}-v_{\ell,\iota}\leq0$ in $\overline{\set}$  for $(\ell,\iota)\in\mathbb{M}\times\mathbb{I}$. If $x^{\circ}\in \set$,
	\begin{equation}\label{sup2.1v}
		\begin{split}
			&\deri^{1}\bar{v}_{\ell^{\circ},\iota^{\circ}}(x^{\circ})=0,\quad\tr[a_{\iota^{\circ}}(x^{\circ})\deri^{2}\bar{v}_{\ell^{\circ},\iota^{\circ}}(x^{\circ})]\leq 0,\\
			&\bar{v}_{\ell^{\circ},\iota^{\circ}}(x^{\circ})-\bar{v}_{\ell_{\circ},\kappa}(x^{\circ} )\geq 0\quad \text{for}\ \kappa\in\mathbb{I}\setminus\{\iota^{0}\}.
		\end{split} 
	\end{equation}
	Then, from \eqref{D1.1} and \eqref{sup2.1v},
	\begin{align}\label{NPDv.1}
		0&\geq\tr[a_{\iota^{\circ}}\deri^{2}\bar{v}_{\ell^{\circ},\iota^{\circ}}]\notag\\
		&=c_{\iota^{\circ}}\bar{v}_{\ell^{\circ},\iota^{\circ}}+\sum_{\kappa\in\mathbb{I}\setminus\{\ell_{\circ}\}}q_{\ell_{\circ}}(\iota_{\circ},\kappa)[\bar{v}_{\ell^{\circ},\iota^{\circ}}-\bar{v}_{\ell_{\circ},\kappa}]\geq c_{\iota^{\circ}}\bar{v}_{\ell^{\circ},\iota^{\circ}}\quad \text{at}\ x^{\circ}.
	\end{align}
	From \eqref{NPDv.1} and since $c_{\ell_{\circ}}>0$, we have that $\hat{v}_{\ell,\iota}(x)-v_{\ell,\iota}(x)\leq \hat{v}_{\ell^{\circ},\iota^{\circ}}(x^{\circ})-v_{\ell^{\circ},\iota^{\circ}}(x^{\circ})\leq0$ for $x\in\overline{\set}$ and $(\ell,\iota)\in\mathbb{M}\times\mathbb{I}$. Taking now $\bar{v}\eqdef v-\hat{v}$ and proceeding the same way than before, it follows immediately that  {$v_{{\ell,\iota}} -\hat{v}_{{\ell,\iota}}\leq0$ on $\overline{\set}$ for $ (\ell,\iota)\in\mathbb{M}\times\mathbb{I}$}.  Therefore $\hat{v} =v $ and from here we conclude that the system of equation \eqref{D1} has a unique solution $v$, whose components belong to $\hol^{4,\alpha}(\overline{\set})$.
\end{proof}

\subsection{Proof of Lemma \ref{Lb1}. Eq. \eqref{ap2}  {and \eqref{ap2.2}}}\label{proof2}

For each $(\ell,\iota)\in\mathbb{M}\times \mathbb{I}$, let us consider the auxiliary function
\begin{equation}\label{der1}
	w_{\ell,\iota}\eqdef\varpi^{2}|\deri^{1}u^{\varepsilon,\delta}_{\ell,\iota}|^{2}-\lambda A_{\varepsilon,\delta} u^{\varepsilon,\delta}_{\ell,\iota},\ \text{on $\overline{\set}$},
\end{equation}  
where   $\lambda\geq1$ is a constant that shall be selected later on and 
\begin{equation}\label{e3}
	A_{\varepsilon,\delta}\eqdef\max_{(x,\ell,\iota)\in\overline{\set}\times\mathbb{M}\times\mathbb{I}}\varpi(x)|\deri^{1}u^{\varepsilon,\delta}_{\ell,\iota}(x)|.
\end{equation}
 We shall show that $w_{\ell,\iota}$ satisfies \eqref{partu4}. In particular,\eqref{partu4} holds when $w_{\ell,\iota}$ is evaluated at its maximum  $x_{\lambda}\in\set$, which helps to see that \eqref{ap2} is true. 
\begin{lema}\label{aux1}
	Let $w_{\ell,\iota}$ be the auxiliary function given by \eqref{D2.1}. 
	Then, there exists a positive constant $C_{7}=C_{7}(d,\Lambda ,1/\theta,K_{2},C_{4})$ such that for $(\ell,\iota)\in\mathbb{M}\times\mathbb{I}$,
	\begin{align}\label{partu4}
		-\tr[a_{\iota}\deri^{2}w_{\ell,\iota}]&\leq C_{7}|\deri^{1}u^{\varepsilon,\delta}_{\ell,\iota}|^{2}+C_{7} [1 +\lambda A_{\varepsilon, \delta}]|\deri^{1}u^{\varepsilon,\delta}_{\ell,\iota}|+\lambda A_{\varepsilon, \delta}C_{7}   \notag\\ &\quad-\psi'_{\varepsilon,\ell,\iota}(\cdot)[2\langle\deri^{1}u^{\varepsilon,\delta}_{\ell,\iota},\deri^{1}w_{\ell,\iota}\rangle+\lambda A_{\varepsilon, \delta}|\deri^{1}u^{\varepsilon,\delta}_{\ell,\iota}|^{2}-C_{7}|\deri^{1}u^{\varepsilon,\delta}_{\ell,\iota}|^{2}A_{\varepsilon,\delta}-C_{7}A_{\varepsilon,\delta}]\notag\\
		&\quad-\sum_{\kappa\in\mathbb{I}\setminus\{\iota\}}q_{\ell}(\iota,\kappa)[w_{\ell,\iota}-w_{\ell,\kappa}]-\sum_{\ell'\in\mathbb{M}\setminus\{\ell\}}\psi'_{\delta,\ell,\ell',\iota}(\cdot)[w_{\ell,\iota}-w_{\ell',\iota}],\ \text{on}\ B_{\beta' r},
	\end{align}
	where $\psi_{\varepsilon,\ell,\iota}(\cdot)$, $\psi_{\delta,\ell,\ell',\iota}(\cdot)$  denote $\psi_{\varepsilon}(|\deri^{1}u^{\varepsilon,\delta}_{\ell,\iota}|^{2}-g^{2}_{\iota})$,  $\psi_{\delta}(u^{\varepsilon,\delta}_{\ell,\iota}-u^{\varepsilon,\delta}_{\ell',\iota} -\vartheta_{\ell,\ell'})$, respectively.
\end{lema}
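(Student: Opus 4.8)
The goal is to differentiate the NPDS \eqref{NPD.1} once with respect to a coordinate direction, multiply by the cut-off weight and a first-derivative factor to build the auxiliary function $w_{\ell,\iota}$ in \eqref{der1}, and then control all the resulting second-order terms. First I would fix $(\ell,\iota)$ and write $v_p \eqdef \partial_p u^{\varepsilon,\delta}_{\ell,\iota}$. Applying $\partial_p$ to \eqref{NPD.1} gives, schematically,
\begin{equation*}
	[c_\iota - \widetilde{\dif}_\iota] v_p + (\partial_p c_\iota) u^{\varepsilon,\delta}_{\ell,\iota} - (\partial_p a_{\iota,ij})\partial_{ij}u^{\varepsilon,\delta}_{\ell,\iota} + (\partial_p b_{\iota,i})v_i + \sum_{\kappa\neq\iota} q_\ell(\iota,\kappa)[v_p - \partial_p u^{\varepsilon,\delta}_{\ell,\kappa}] + (\text{terms from }\psi_\varepsilon, \psi_\delta) = \partial_p h_\iota,
\end{equation*}
where the $\psi_\varepsilon$-term contributes $2\psi'_{\varepsilon,\ell,\iota}(\cdot)[\langle \deri^1 u^{\varepsilon,\delta}_{\ell,\iota}, \deri^1 v_p\rangle - g_\iota\partial_p g_\iota]$ and the $\psi_\delta$-terms contribute $\sum_{\ell'\neq\ell}\psi'_{\delta,\ell,\ell',\iota}(\cdot)[v_p - \partial_p u^{\varepsilon,\delta}_{\ell',\iota}]$. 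Then I would multiply by $2\varpi^2 v_p$, sum over $p$, and compute $-\tr[a_\iota \deri^2 w_{\ell,\iota}]$ using $w_{\ell,\iota} = \varpi^2|\deri^1 u^{\varepsilon,\delta}_{\ell,\iota}|^2 - \lambda A_{\varepsilon,\delta}u^{\varepsilon,\delta}_{\ell,\iota}$, the product rule, and the Bochner-type identity $-\tr[a_\iota \deri^2(|\deri^1 u|^2)] = -2\sum_p \langle a_\iota \deri^1 v_p, \deri^1 v_p\rangle - 2\sum_p v_p \,\partial_p(\tr[a_\iota \deri^2 u])$, which produces the crucial negative-definite term $-2\sum_p\langle a_\iota \deri^1 v_p,\deri^1 v_p\rangle \le -2\theta|\deri^2 u^{\varepsilon,\delta}_{\ell,\iota}|^2$ via \eqref{H2}.

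\textbf{Handling the remaining terms.} The $-2\theta|\deri^2 u^{\varepsilon,\delta}_{\ell,\iota}|^2$ term is the reservoir used to absorb every term that contains a second derivative with a harmless coefficient: the cross terms $\varpi\,\partial\varpi\cdot v_p\cdot\partial_{ij}u^{\varepsilon,\delta}_{\ell,\iota}$ coming from differentiating $\varpi^2$, the term $(\partial_p a_{\iota,ij})\partial_{ij}u^{\varepsilon,\delta}_{\ell,\iota}$ coming from the non-constant diffusion coefficients, and the $\langle b_\iota, \deri^1 v_p\rangle$ term — all are bounded via Young's inequality $|X||Y|\le \eta|X|^2 + \tfrac1{4\eta}|Y|^2$ by a small multiple of $|\deri^2 u^{\varepsilon,\delta}_{\ell,\iota}|^2$ plus $C|\deri^1 u^{\varepsilon,\delta}_{\ell,\iota}|^2$ times powers of $\varpi$ and $A_{\varepsilon,\delta}$, using $\|\varpi\|_{\hol^2}\le K_2$ (Remark \ref{R1}), \eqref{a4}, \eqref{h2}, and the already-established pointwise bound $\varpi|\deri^2 u^{\varepsilon,\delta}_{\ell,\iota}|\le C_4$ from \eqref{ap3}. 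Choosing $\eta$ small enough so that the total $|\deri^2 u^{\varepsilon,\delta}_{\ell,\iota}|^2$-coefficient is $\le -\theta$ (which is then dropped, being negative), what survives is bounded by $C_7|\deri^1 u^{\varepsilon,\delta}_{\ell,\iota}|^2 + C_7[1+\lambda A_{\varepsilon,\delta}]|\deri^1 u^{\varepsilon,\delta}_{\ell,\iota}| + C_7\lambda A_{\varepsilon,\delta}$, exactly the first line of \eqref{partu4}. The $q_\ell$-interaction terms reassemble as $-\sum_{\kappa\neq\iota}q_\ell(\iota,\kappa)[w_{\ell,\iota}-w_{\ell,\kappa}]$ plus lower-order remainders (from the mismatch between $\varpi^2|\deri^1 u_{\ell,\iota}|^2 - \varpi^2|\deri^1 u_{\ell,\kappa}|^2$ and the genuine differences of the first-derivative products), which are again absorbed into the $C_7$-terms using \eqref{ap1}; similarly for the $\psi_\delta$-terms, giving the last line of \eqref{partu4}.

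\textbf{The $\psi_\varepsilon$-term.} The delicate contribution is the penalization gradient term: after multiplying by $2\varpi^2 v_p$ and summing, $2\psi'_{\varepsilon,\ell,\iota}(\cdot)\sum_p v_p\langle \deri^1 u^{\varepsilon,\delta}_{\ell,\iota},\deri^1 v_p\rangle = \psi'_{\varepsilon,\ell,\iota}(\cdot)\langle \deri^1 u^{\varepsilon,\delta}_{\ell,\iota}, \deri^1(|\deri^1 u^{\varepsilon,\delta}_{\ell,\iota}|^2)\rangle$, and I would rewrite $\varpi^2 \deri^1(|\deri^1 u_{\ell,\iota}|^2)$ in terms of $\deri^1 w_{\ell,\iota} + \lambda A_{\varepsilon,\delta}\deri^1 u_{\ell,\iota} - 2\varpi(\deri^1\varpi)|\deri^1 u_{\ell,\iota}|^2$; this produces the $-\psi'_{\varepsilon,\ell,\iota}(\cdot)[2\langle\deri^1 u^{\varepsilon,\delta}_{\ell,\iota},\deri^1 w_{\ell,\iota}\rangle + \lambda A_{\varepsilon,\delta}|\deri^1 u^{\varepsilon,\delta}_{\ell,\iota}|^2]$ block in \eqref{partu4}, while the $\varpi\deri^1\varpi$ piece and the $g_\iota\partial_p g_\iota$ piece (both multiplied by $\psi'_{\varepsilon,\ell,\iota}(\cdot)$ and each carrying a factor $A_{\varepsilon,\delta}$ after using \eqref{e3} to bound $\varpi|\deri^1 u_{\ell,\iota}|$ by $A_{\varepsilon,\delta}$) give the $+\psi'_{\varepsilon,\ell,\iota}(\cdot)[C_7|\deri^1 u^{\varepsilon,\delta}_{\ell,\iota}|^2 A_{\varepsilon,\delta} + C_7 A_{\varepsilon,\delta}]$ correction. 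The main obstacle is precisely the bookkeeping of \emph{which} $A_{\varepsilon,\delta}$-powers appear where: the inequality \eqref{partu4} is tailored so that, when later evaluated at the interior maximum $x_\lambda$ of $w_{\ell,\iota}$ (where $\deri^1 w_{\ell,\iota}=0$ and the cross and interaction $w$-difference terms have a sign), the choice of $\lambda$ large — depending only on $C_7$ — makes the $\psi'_\varepsilon$-bracket nonnegative, so it can be dropped, and one is left with a quadratic inequality in $A_{\varepsilon,\delta}$ forcing $A_{\varepsilon,\delta}\le C_3$. Since this is a direct adaptation of Lemma 3.1 of \cite{KM2022} with the extra $q_\ell$- and $\psi_\delta$-interaction terms (which are sign-favourable at a maximum), I would carry out the differentiation and Young-inequality estimates in detail only for the genuinely new interaction terms and cite \cite{KM2022} for the rest.
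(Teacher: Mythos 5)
Your proposal is correct and follows essentially the same route as the paper's proof: differentiate the NPDS \eqref{NPD.1}, pair against $2\partial_i u^{\varepsilon,\delta}_{\ell,\iota}$, use ellipticity via the Bochner-type identity to produce the coercive $-2\theta\varpi^2|\deri^2 u^{\varepsilon,\delta}_{\ell,\iota}|^2$ term that absorbs the second-order remainders, rewrite $\varpi^2\deri^1|\deri^1 u^{\varepsilon,\delta}_{\ell,\iota}|^2$ in terms of $\deri^1 w_{\ell,\iota}$ exactly as in \eqref{eq3.0}, and reassemble the $q_{\ell}$- and $\psi_{\delta}$-interactions into $w$-differences via the parallelogram inequality \eqref{eq4.0} together with $\psi_{\cdot}(r)\le\psi'_{\cdot}(r)\,r$ and $\vartheta_{\ell,\ell'}\ge0$. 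You also correctly read the lemma's auxiliary function as the one defined in \eqref{der1} (the reference to \eqref{D2.1} in the lemma statement is a typo).
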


\begin{proof}[Proof of Lemma \ref{Lb1}. Eq. \eqref{ap2}]
	
	{Without loss of generality, let us assume that $  A_{\varepsilon,\delta}>1$ since   if $  A_{\varepsilon,\delta}\leq1$, we obtain a bound for $  A_{\varepsilon,\delta}$ that is independent of  $\varepsilon,\delta$ and hence, we obtain \eqref{ap2}}. Let $x_{\lambda}
	\in\overline{\set}$ and $(\ell_{\lambda},\iota_{\lambda})\in\mathbb{M}\times\mathbb{I}$ {(depending on $\lambda$)} be such that $$w_{\ell_{\lambda},\iota_{\lambda}}(x_{\lambda})=\max_{(x,\ell,\iota)\in\overline{\set}\times\mathbb{M}\times\mathbb{I}}w_{\ell,\iota}(x).$$
	From here, by \eqref{ap1} and definition of $w_{\ell,\iota}$; see \eqref{der1}, it gives 
	\begin{align*}
		\varpi^{2}(x)|\deri^{1}u_{\ell,\iota}(x)|^{2}
		&\leq\varpi^{2}(x_{\lambda})|\deri^{1}u_{\ell_{\lambda},\iota_{\lambda}}(x_{\lambda})|^{2}+\lambda A_{\varepsilon,\delta}  C_{1},
	\end{align*}
	for $x\in \overline{\set}$ and $(\ell,\iota)\in\mathbb{M}\times\mathbb{I}$. Then, from here and  using \eqref{e3}, it yields for each $\varrho>0$ small enough, there is $x_{
		\varrho}\in\overline{\set}$ such that $[A_{\varepsilon,\delta}-\varrho]^{2}\leq \varpi^{2}(x_{\varrho})|\deri^{1}u_{\ell,\iota}(x_{\varrho})|^{2}\leq \varpi^{2}(x_{\lambda})|\deri^{1}u_{\ell_{\lambda},\iota_{\lambda}}(x_{\lambda})|^{2}+\lambda A_{\varepsilon,\delta}  C_{1}$. Thus,  letting $\varrho\rightarrow 0$ and since $A_{\varepsilon,\delta}>1$,
	\begin{equation}\label{partu5.0}
		\varpi|\deri^{1}u_{\ell,\iota}|\leq A_{\varepsilon,\delta}\leq \varpi^{2}(x_{\lambda})|\deri^{1}u_{\ell_{\lambda},\iota_{\lambda}}(x_{\lambda})|^{2}+\lambda  C_{1}\quad \text{on}\ \overline{\set}.
	\end{equation}
	From here, we see that to verify \eqref{ap2}, it is enough to check that $\varpi(x_{\lambda})|\deri^{1}u_{\ell_{\lambda},\iota_{\lambda}}(x_{\lambda})|$ is bounded by a non-negative  constant which is independent of $\varepsilon$ and $\delta$. If $x_{\lambda}\in\overline{\set}\setminus B_{\beta' r}$, $\varpi(x_{\lambda})|\deri^{1}u_{\ell_{\lambda},\iota_{\lambda}}(x_{\lambda})|=0$, and therefore   $C_{5}\eqdef\lambda C_{1}$. Let $x_{\lambda}$ be in $B_{\beta'r}$. It is well known that at $x_{\lambda}$,
	\begin{equation*}
		\begin{split}
			&\deri^{1}w_{\ell_{\lambda},\iota_{\lambda}}=0,\quad \tr[a_{\iota_{\lambda}}\deri^{2}w_{\ell_{\lambda},\iota_{\lambda}}]\leq 0\\
			&[w_{\ell_{\lambda},\iota_{\lambda}}-w_{\ell,\kappa}]\geq 0,\ \text{for}\ \kappa\in\mathbb{I}\setminus\{\iota_{\lambda}\},\\
			&[w_{\ell_{\lambda},\iota_{\lambda}}-w_{\ell',\iota_{\lambda}}]\geq 0,\ \text{for}\ \ell'\in\mathbb{M}\setminus\{\ell_{\lambda}\},
		\end{split}
	\end{equation*}
	Then, from here and  \eqref{partu4},
	\begin{align}\label{e2}
		0&\leq C_{7}|\deri^{1}u_{\ell_{\lambda},\iota_{\lambda}}|^{2}+C_{7} [1 +\lambda A_{\varepsilon,\delta}]|\deri^{1}u_{\ell_{\lambda},\iota_{\lambda}}|+\lambda C_{7}   \notag\\ &\quad-\psi'_{\varepsilon,\ell_{\lambda},\iota_{\lambda}}(\cdot)[\lambda A_{\varepsilon,\delta}|\deri^{1}u_{\ell_{\lambda},\iota_{\lambda}}|^{2}-C_{7}|\deri^{1}u_{\ell_{\lambda},\iota_{\lambda}}|^{2}A_{\varepsilon,\delta}-C_{7}A_{\varepsilon,\delta}]\quad\text{at}\ x_{\lambda}.
	\end{align}
	On the other hand, notice that either $\psi'_{\varepsilon,\ell_{\lambda},\iota_{\lambda}}(\cdot)<\frac{1}{\varepsilon}$ or $\psi'_{\varepsilon,\ell_{\lambda},\iota_{\lambda}}(\cdot)=\frac{1}{\varepsilon}$  at $x_{\lambda}$. If $\psi'_{\varepsilon,\ell_{\lambda},\iota_{\lambda}}(\cdot)<\frac{1}{\varepsilon}$ at $x_{\lambda}$, by definition of $\psi_{\varepsilon}$, given in \eqref{p12.1}, it follows that  $|\deri^{1}u_{\ell_{\lambda},\iota_{\lambda}}|^{2}- g ^{2}_{\iota_{\lambda}}\leq2\varepsilon$ at $x_{\lambda}$. 
	It implies that 
	$\varpi^{2}|\deri^{1}u_{\ell_{\lambda},\iota_{\lambda}}|^{2}\leq \Lambda^{2}+2$ at $x_{\lambda}$.
	Then, from  \eqref{partu5.0} and taking  $C_{5}\eqdef 2+\Lambda^{2}+\lambda C_{1}$, it follows \eqref{ap2}. Now, assume that  $\psi'_{\varepsilon,\ell_{\lambda},\iota_{\lambda}}(\cdot)=\frac{1}{\varepsilon}$ at $x_{\lambda}$.  Then, taking $\lambda>\max\{1,2C_{7}\}$ fixed, and using \eqref{e2},  we get  
	\begin{align}\label{e4}
		0
		&\leq [2C_{7}-\lambda]|\deri^{1}u_{\ell_{\lambda},\iota_{\lambda}}|^{2}+C_{7} [1 +\lambda]|\deri^{1}u_{\ell_{\lambda},\iota_{\lambda}}|+\lambda C_{7}\quad \text{at}\ x_{\lambda}.
	\end{align}
	From here,  it yields that  $|\deri^{1}u_{\ell_{\lambda}}(x_{\lambda})|<K_{3}$, for some $K_{3}=K_{3}(d,\Lambda ,\alpha)$.  Therefore, taking $C_{5}\eqdef K_{3}+\lambda C_{1}$ and using \eqref{partu5.0}, we get \eqref{ap2}.
\end{proof}

 {
	\begin{proof}[Proof of Lemma \ref{Lb1} Eq. \eqref{ap2.2}]
		Let us assume \eqref{h6} holds and take $\varpi\equiv1$ on $\overline{\set}$  in \eqref{der1} and \eqref{e3}. Arguing in the same way as before, it is enough to verify that $|\deri^{1}u_{\ell_{\lambda},\iota_{\lambda}}(x_{\lambda})|^{2}$ is bounded by a positive constant, which is independent of $\varepsilon$ and $\delta$, when $x_{\lambda}
		\in\overline{\set}$ and $(\ell_{\lambda},\iota_{\lambda})\in\mathbb{M}\times\mathbb{I}$ {(depending on $\lambda$)} are such that $$w_{\ell_{\lambda},\iota_{\lambda}}(x_{\lambda})=\max_{(x,\ell,\iota)\in\overline{\set}\times\mathbb{M}\times\mathbb{I}}w_{\ell,\iota}(x)=\max_{(x,\ell,\iota)\in\overline{\set}\times\mathbb{M}\times\mathbb{I}}\{|\deri^{1}u^{\varepsilon,\delta}_{\ell,\iota}(x)|^{2}-\lambda A_{\varepsilon,\delta} u^{\varepsilon,\delta}_{\ell,\iota}(x)\}.$$  
		If $x\in\partial\set$, by  \eqref{ap1.1} and since $u_{\ell,\iota}\geq0$, it follows immediately $|\deri^{1}u_{\ell_{\lambda},\iota_{\lambda}}(x_{\lambda})|^{2}\leq C^{2}_{2}$.
		Proceeding in the same way as before it can be verified that   $|\deri^{1}u_{\ell_{\lambda},\iota_{\lambda}}(x_{\lambda})|^{2}$ is bounded by a positive constant that is independent of $\varepsilon$ and $\delta$, when $x_{\lambda}\in\set$.
	\end{proof}
}

\begin{proof}[Proof of Lemma \ref{aux1}]
	Consider $w_{\ell,\iota}$ as in \eqref{der1} for $(\ell,\iota)\in\mathbb{M}\times\mathbb{I}$. Taking first and second derivatives   in $w_{\ell,\iota}$ on ${B}_{\beta'r}$, it can be checked that
	\begin{align}\label{dercot1.0}
		\partial_{i}w_{\ell,\iota}&=|\deri^{1}u_{\ell,\iota}|^{2}\partial_{i}\varpi^{2}+\varpi^{2}\partial_{i}|\deri^{1}u_{\ell,\iota}|^{2}-\lambda A_{\varepsilon, \delta}\partial_{i}u_{\ell,\iota}\\
		-\tr[a_{\iota}\deri^{2}w_{\ell,\iota}]&=-|\deri^{1}u_{\ell,\iota}|^{2}\tr[a_{\iota}\deri^{2}\varpi^{2}]-2\langle a_{\iota} \deri^{1}\varpi^{2},\deri^{1}|\deri^{1}u_{\ell,\iota}|^{2}\rangle\notag\\
		&\quad-\varpi^{2}\tr[a_{\iota}\deri^{2}|\deri^{1}u_{\ell,\iota}|^{2}]+\lambda A_{\varepsilon, \delta}\tr[a_{\iota}\deri^{2}u_{\ell,\iota}].\notag
	\end{align}	 
	From here and noticing that from \eqref{H2},
	\begin{align*}
		\tr[a_{\iota}\deri^{2}|\deri^{1}u_{\ell,\iota}|^{2}]
		&\geq 2\theta|\deri^{2}u_{\ell,\iota}|^{2}+2\sum_{i}\partial_{i}u_{\ell,\iota}\tr[a_{\iota}\deri^{2}\partial_{i}u_{\ell,\iota}],
	\end{align*}
	it follows that
	\begin{align}\label{dercot1}
		-\tr[a_{\iota}\deri^{2}w_{\ell,\iota}]
		&\leq-|\deri^{1}u_{\ell,\iota}|^{2}\tr[a_{\iota}\deri^{2}\varpi^{2}]-8\varpi\sum_{i}\partial_{i}u_{\ell,\iota}\langle a_{\iota} \deri^{1}\varpi,\deri^{1}\partial_{i}u_{\ell,\iota}\rangle\notag\\
		&\quad-\varpi^{2}\bigg[2\theta|\deri^{2}u_{\ell,\iota}|^{2}+2\sum_{i}\partial_{i}u_{\ell,\iota}\tr[a_{\iota}\deri^{2}\partial_{i}u_{\ell,\iota}]\bigg]+\lambda A_{\varepsilon, \delta}\tr[a_{\iota}\deri^{2}u_{\ell,\iota}].
	\end{align}	
	
	Meanwhile, from \eqref{eq2} and \eqref{NPD.1},
	\begin{align}
		\lambda A_{\varepsilon, \delta} \tr[a_{\iota}\deri^{2} u_{\ell,\iota}]&=  \lambda A_{\varepsilon, \delta}  \bigg[\widetilde{D}_{1}u_{\ell,\iota}+\psi_{\varepsilon,\ell,\iota}(\cdot) \notag\\
		&\quad+\displaystyle\sum_{\ell'\in\mathbb{M}\setminus\{\ell\}}\psi_{\delta,\ell,\ell',\iota}(\cdot)+\sum_{\kappa\in\mathbb{I}\setminus\{\iota\}}q_{\ell}(\iota,\kappa)[u_{\ell,\iota}-u_{\ell,\kappa}]\bigg]
	\end{align}
	where   $\widetilde{D}_{1}u_{\ell,\iota}\eqdef\langle b_{\iota},\deri^{1}u_{\ell,\iota}\rangle+c_{\iota}u_{\ell,\iota}-h_{\iota}$. Now, differentiating \eqref{NPD.1}, multiplying by $2\partial_{i}u_{\ell,\iota}$ and taking summation over all $i$'s, we see that
	\begin{align}\label{dercot2}
		-2\sum_{i}\tr[a_{\iota}\deri^{2}\partial_{i} u_{\ell,\iota}]\partial_{i} u_{\ell,\iota}
		&=\widetilde{D}_{2} u_{\ell,\iota} -2\psi'_{\varepsilon,\ell,\iota}(\cdot)\langle\deri^{1} u_{\ell,\iota},\deri^{1}[|\deri^{1} u_{\ell,\iota}|^{2}- g_{\iota} ^{2}]\rangle\notag\\
		&\quad-2\sum_{\ell'\in\mathbb{M}\setminus\{\ell\}}\psi'_{\delta,\ell,\ell',\iota}(\cdot)[|\deri^{1}u_{\ell,\iota}|^{2}-\langle\deri^{1}u_{\ell,\iota},\deri^{1}u_{\ell',\iota} \rangle]\notag\\
		&\quad-2\sum_{\kappa\in\mathbb{I}\setminus\{\iota\}}q_{\ell}(\iota,\kappa)[|\deri^{1}u_{\ell,\iota}|^2-\langle\deri^{1}u_{\ell,\iota},\deri^{1}u_{\ell,\kappa}\rangle],
	\end{align}
	where  
	\begin{align}\label{dercot2.1}
		\widetilde{D}_{2}u_{\ell,\iota}&\eqdef 2\sum_{i}\partial_{i}u_{\ell,\iota}\tr[[\partial_{k}a_{\iota}]\deri^{2}u_{\ell,\iota}]-2\langle\deri^{1}u_{\ell,\iota},\deri^{1}[\langle b_{\iota},\deri^{1}u_{\ell,\iota}\rangle+c_{\iota}u_{\ell,\iota}-h_{\iota}]\rangle.
	\end{align}
	Then, from \eqref{dercot1}--\eqref{dercot2}, it can be shown that 
	\begin{align}\label{dercot1.2}
		-\tr[a_{\iota}\deri^{2}w_{\ell,\iota}]
		&\leq-2\theta\varpi^{2}|\deri^{2}u_{\ell,\iota}|^{2}-|\deri^{1}u_{\ell,\iota}|^{2}\tr[a_{\iota}\deri^{2}\varpi^{2}]\notag\\
		&\quad-8\varpi\sum_{i}\partial_{i}u_{\ell,\iota}\langle a_{\iota} \deri^{1}\varpi,\deri^{1}\partial_{i}u_{\ell,\iota}\rangle+\varpi^{2}\widetilde{D}_{2} u_{\ell,\iota}+\lambda A_{\varepsilon, \delta} \widetilde{D}_{1}u_{\ell,\iota}\notag\\
		&\quad -2\varpi^{2}\psi'_{\varepsilon,\ell,\iota}(\cdot)\langle\deri^{1} u_{\ell,\iota},\deri^{1}[|\deri^{1} u_{\ell,\iota}|^{2}- g_{\iota} ^{2}]\rangle+\lambda A_{\varepsilon, \delta} \psi_{\varepsilon,\ell,\iota}(\cdot)\notag\\
		&\quad-\sum_{\ell'\in\mathbb{M}\setminus\{\ell\}}\{2\varpi^{2}\psi'_{\delta,\ell,\ell',\iota}(\cdot)[|\deri^{1}u_{\ell,\iota}|^{2}-\langle\deri^{1}u_{\ell,\iota},\deri^{1}u_{\ell',\iota} \rangle]-\lambda A_{\varepsilon, \delta}\psi_{\delta,\ell,\ell',\iota}(\cdot)\}\notag\\
		&\quad-\sum_{\kappa\in\mathbb{I}\setminus\{\iota\}}q_{\ell}(\iota,\kappa)\{2\varpi^{2}[|\deri^{1}u_{\ell,\iota}|^2-\langle\deri^{1}u_{\ell,\iota},\deri^{1}u_{\ell,\kappa}\rangle]-\lambda A_{\varepsilon,\delta}[u_{\ell,\iota}-u_{\ell,\kappa}]\}.\notag\\
	\end{align}	
	By \eqref{a4}, \eqref{h2} and \eqref{ap1}, notice that
	\begin{align}\label{dercot1.1}
		&-2\theta\varpi^{2}|\deri^{2}u_{\ell,\iota}|^{2}-|\deri^{1}u_{\ell,\iota}|^{2}\tr[a_{\iota}\deri^{2}\varpi^{2}]\notag\\
		&-8\varpi\sum_{i}\partial_{i}u_{\ell,\iota}\langle a_{\iota} \deri^{1}\varpi,\deri^{1}\partial_{i}u_{\ell,\iota}\rangle+\varpi\widetilde{D}_{2} u_{\ell,\iota}+\lambda A_{\varepsilon, \delta} \widetilde{D}_{1}u_{\ell,\iota}\notag\\
		&\leq 2\bigg[2\Lambda K_{2}d^{2}+\Lambda d +\dfrac{1}{4\theta}\Lambda^{2}[4 K_{2}d^{3}+1+d^{3}]^{2}\bigg]|\deri^{1}u_{\ell,\iota}|^{2}\notag\\
		&\quad+2\Lambda\bigg[1+C_{1}+\dfrac{\lambda A_{\varepsilon, \delta}}{2}\bigg]|\deri^{1}u_{\ell,\iota}|+\lambda A_{\varepsilon, \delta}\Lambda C_{4}\notag\\
		&\leq K_{4}|\deri^{1}u_{\ell,\iota}|^{2}+K_{4}[1+\lambda A_{\varepsilon, \delta}]|\deri^{1}u_{\ell,\iota}|+\lambda A_{\varepsilon, \delta} K_4,
	\end{align}
	for some $K_{4}=K_{4}(d,\Lambda ,1/\theta,K_{2},C_{4})$. On the other hand  by \eqref{dercot1.0}, it can be checked that 
	\begin{align}\label{eq3.0}
		-\varpi^{2}\langle\deri^{1} u_{\ell,\iota}&,\deri^{1}[|\deri^{1} u_{\ell,\iota}|^{2}-g^{2}_{\iota}]\rangle\notag\\
		&=-\langle\deri^{1}u_{\ell,\iota},\deri^{1}w_{\ell,\iota}\rangle-\lambda A_{\varepsilon, \delta}|\deri^{1}u_{\ell,\iota}|^{2}\notag\\
		&\quad+|\deri^{1}u_{\ell,\iota}|^{2}\langle\deri^{1}u_{\ell,\iota},\deri^{1}\varpi^{2}\rangle+\varpi^{2}\langle\deri^{1} u_{\ell,\iota},\deri^{1}[g^{2}_{\iota}]\rangle\notag\\
		&\leq-\langle\deri^{1}u_{\ell,\iota},\deri^{1}w_{\ell,\iota}\rangle-\lambda A_{\varepsilon, \delta}|\deri^{1}u_{\ell,\iota}|^{2}+2dK_{2}|\deri^{1}u_{\ell,\iota}|^{2}A_{\varepsilon,\delta}+2d\Lambda A_{\varepsilon, \delta}.
	\end{align}
	Using \eqref{der1} and since $|y_{1}|^{2}-|y_{2}|^{2}=2[|y_{1}|^{2}-\langle y_{1},y_{2}\rangle]-|y_{1}-y_{2}|^{2}\leq2[|y_{1}|^{2}-\langle y_{1},y_{2}\rangle]$, for $y_{1},y_{2}\in\R^{d}$, it yields that 
	\begin{multline}\label{eq4.0}
		-2\varpi^{2}[|\deri^{1}u_{\ell,\iota}|^{2}-\langle\deri^{1}u_{\ell,\iota},\deri^{1}u_{\ell',\kappa} \rangle]\\
		\leq-[w_{\ell,\iota}-w_{\ell',\kappa}]-\lambda A_{\varepsilon, \delta}[u_{\ell,\iota}-u_{\ell',\kappa}]\quad\text{for}\ (\ell',\kappa)\in\mathbb{M}\times\mathbb{I}.
	\end{multline}
	Then, from \eqref{eq3.0}--\eqref{eq4.0},
	\begin{align}\label{eq5}
		&-2\varpi^{2}\psi'_{\varepsilon,\ell,\iota}(\cdot)\langle\deri^{1} u_{\ell,\iota},\deri^{1}[|\deri^{1} u_{\ell,\iota}|^{2}- g_{\iota} ^{2}]\rangle+\lambda A_{\varepsilon, \delta} \psi_{\varepsilon,\ell,\iota}(\cdot)\notag\\
		&-\sum_{\ell'\in\mathbb{M}\setminus\{\ell\}}\{2\varpi^{2}\psi'_{\delta,\ell,\ell',\iota}(\cdot)[|\deri^{1}u_{\ell,\iota}|^{2}-\langle\deri^{1}u_{\ell,\iota},\deri^{1}u_{\ell',\iota} \rangle]-\lambda A_{\varepsilon, \delta}\psi_{\delta,\ell,\ell',\iota}(\cdot)\}\notag\\
		&-\sum_{\kappa\in\mathbb{I}\setminus\{\iota\}}q_{\ell}(\iota,\kappa)\{2\varpi^{2}[|\deri^{1}u_{\ell,\iota}|^2-\langle\deri^{1}u_{\ell,\iota},\deri^{1}u_{\ell,\kappa}\rangle]-\lambda A_{\varepsilon,\delta}[u_{\ell,\iota}-u_{\ell,\kappa}]\}\notag\\
		&\leq -2\psi'_{\varepsilon,\ell,\iota}(\cdot)[\langle\deri^{1}u_{\ell,\iota},\deri^{1}w_{\ell,\iota}\rangle+\lambda A_{\varepsilon, \delta}|\deri^{1}u_{\ell,\iota}|^{2}\notag\\
		&\quad-2dK_{2}|\deri^{1}u_{\ell,\iota}|^{2}A_{\varepsilon,\delta}-2d\Lambda^{2}A_{\varepsilon,\delta}]+\lambda A_{\varepsilon, \delta} \psi_{\varepsilon,\ell,\iota}(\cdot)-\sum_{\kappa\in\mathbb{I}\setminus\{\iota\}}q_{\ell}(\iota,\kappa)[w_{\ell,\iota}-w_{\ell,\kappa}]\notag\\
		&\quad-\sum_{\ell'\in\mathbb{M}\setminus\{\ell\}}\{\psi'_{\delta,\ell,\ell',\iota}(\cdot)[[w_{\ell,\iota}-w_{\ell',\iota}]+\lambda A_{\varepsilon, \delta}[u_{\ell,\iota}-u_{\ell',\iota}]]-\lambda A_{\varepsilon, \delta}\psi_{\delta,\ell,\ell',\iota}(\cdot)\}\notag\\
		&\leq -\psi'_{\varepsilon,\ell,\iota}(\cdot)[2\langle\deri^{1}u_{\ell,\iota},\deri^{1}w_{\ell,\iota}\rangle+\lambda A_{\varepsilon, \delta}|\deri^{1}u_{\ell,\iota}|^{2}-4dK_{2}|\deri^{1}u_{\ell,\iota}|^{2}A_{\varepsilon,\delta}-4d\Lambda^{2}A_{\varepsilon,\delta}]\notag\\
		&\quad-\sum_{\kappa\in\mathbb{I}\setminus\{\iota\}}q_{\ell}(\iota,\kappa)[w_{\ell,\iota}-w_{\ell,\kappa}]-\sum_{\ell'\in\mathbb{M}\setminus\{\ell\}}\psi'_{\delta,\ell,\ell',\iota}(\cdot)[w_{\ell,\iota}-w_{\ell',\iota}],
	\end{align}
	due to $\psi_{\cdot}(r)\leq \psi'_{\cdot}(r)r$, for all $r\in\R$, $g^{2}_{\iota}\geq0$ and $\vartheta_{\ell,\ell'}\geq0$. Therefore, applying \eqref{dercot1.1} and \eqref{eq5} in \eqref{dercot1.2},  we get that \eqref{partu4} is true for some $C_{7}=C_{7}(d,\Lambda ,1/\theta,K_{2},C_{4})$.
\end{proof}

\subsection{Proof of Lemma \ref{Lb1}. Eq. \eqref{ap3}}\label{proof3}
Let us define the auxiliary function $\phi_{\ell,\iota}$  as
\begin{equation}\label{D2.1}
	\phi_{\ell,\iota}\eqdef\varpi^{2}|\deri^{2}u^{\varepsilon,\delta}_{\ell,\iota}|^{2}+\lambda A^{1}_{\varepsilon,\delta}\varpi\tr[\alpha_{\iota_{0}}\deri^{2}u^{\varepsilon,\delta}_{\ell,\iota}]+\mu|\deri^{1}u^{\varepsilon,\delta}_{\ell,\iota}|^{2}\ \text{ on}\ \set,
\end{equation}
with $A^{1}_{\varepsilon,\delta}\eqdef\max_{(x,\ell,\iota)\in\overline{\set}\times\mathbb{M}\times\mathbb{I}}\varpi(x)|\deri^{2}u^{\varepsilon,\delta}_{\ell,\iota}(x)|$, $\lambda\geq\max\{1,2/\theta\}$, $\mu\geq1$ fixed, and $\alpha_{\iota_{0}}=(\alpha_{\iota_{0}\,ij})_{d\times d}$ be such that $\alpha_{\iota_{0}\,ij}\eqdef  a_{\iota_{0}\,ij}(x_{0})$, where $x_{0}\in\overline{\set}$,  $(\ell_{0},\iota_{0})\in\mathbb{M}\times\mathbb{I}$ are fixed.  We shall show that $\phi_{\ell,\iota}$ satisfies \eqref{d7}. In particular, \eqref{d7} holds when $\phi_{\ell,\iota}$ is evaluated at its maximum  $x_{\mu}\in\set$, which helps to see that \eqref{ap3} is true. 
\begin{lema}\label{D2.0.0}
	Let $\phi_{\ell,\iota}$ be the auxiliary function given by \eqref{D2.1}. 
	Then, there exists a positive constant $C_{8}=C_{8}(d,\Lambda ,\alpha,K_{1})$ such that on $(x,\ell)\in\overline{B}_{\beta'r} \times\mathbb{I}$,
	\begin{align}\label{d7}
		\varpi^{2}\tr[a_{\iota}\deri^{2}\phi_{\ell,\iota}]
		&\geq 2\theta[\varpi^{4}|\deri^{3}u^{\varepsilon,\delta}_{\ell,\iota}|^{2}+\mu\varpi^{2}|\deri^{2}u^{\varepsilon,\delta}_{\ell,\iota}|^{2}]-2\lambda C_{8}A^{1}_{\varepsilon,\delta}\varpi^{2}|\deri^{3}u^{\varepsilon,\delta}_{\ell,\iota}|\notag\\
		&\quad-\lambda C_{8}[A^{1}_{\varepsilon,\delta}]^{2}-C_{8}(\lambda+\mu)A^{1}_{\varepsilon,\delta}-C_{8}\mu\notag\\
		&\quad+\varpi^{2}\bigg\{\sum_{\ell'\in\mathbb{M}\setminus\{\ell\}}\psi'_{\delta,\ell,\ell',\iota}(\cdot)[\phi_{\ell,\iota}-\phi_{\ell',\iota} ]+\sum_{\kappa\in\mathbb{I}\setminus\{\iota\}}q_{\ell}(\iota,\kappa)[\phi_{\ell,\iota}-\phi_{\ell,\kappa} ]\bigg\}\notag\\
		&\quad+\varpi^{2}\psi'_{\varepsilon,\ell,\iota}(\cdot)\bigg\{2\varpi A^{1}_{\varepsilon,\delta}[\lambda\theta-2]|\deri^{2}u^{\varepsilon,\delta}_{\ell,\iota}|^{2}-2\lambda A^{1}_{\varepsilon,\delta} C_{8}|\deri^{2}u^{\varepsilon,\delta}_{\ell,\iota}|\notag\\
		&\quad-(\lambda+\mu)C_{8}A^{1}_{\varepsilon,\delta}+2A^{1}_{\varepsilon,\delta}\langle\deri^{1}u^{\varepsilon,\delta}_{\ell,\iota},\deri^{1}\phi_{\ell,\iota}\rangle\bigg\}.
	\end{align}
\end{lema}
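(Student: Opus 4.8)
The plan is to establish \eqref{d7} by a Bernstein-type computation on the auxiliary function $\phi_{\ell,\iota}$ of \eqref{D2.1}: one differentiates $\phi_{\ell,\iota}$ twice, applies the operator $\varpi^{2}\tr[a_{\iota}\deri^{2}\,\cdot\,]$, and then uses the system \eqref{NPD.1} together with its first and second spatial derivatives to trade every third- and fourth-order term for quantities already appearing on the right-hand side of \eqref{d7}. First I would compute $\partial_{k}\phi_{\ell,\iota}$ and $\partial_{km}\phi_{\ell,\iota}$ from \eqref{D2.1} by the Leibniz rule and assemble $\varpi^{2}\tr[a_{\iota}\deri^{2}\phi_{\ell,\iota}]$; the three summands of $\phi_{\ell,\iota}$ contribute leading terms $\varpi^{4}\tr[a_{\iota}\deri^{2}|\deri^{2}u^{\varepsilon,\delta}_{\ell,\iota}|^{2}]$, $\lambda A^{1}_{\varepsilon,\delta}\varpi^{3}\tr[a_{\iota}\deri^{2}\tr[\alpha_{\iota_{0}}\deri^{2}u^{\varepsilon,\delta}_{\ell,\iota}]]$ and $\mu\varpi^{2}\tr[a_{\iota}\deri^{2}|\deri^{1}u^{\varepsilon,\delta}_{\ell,\iota}|^{2}]$, plus cross terms each carrying one or two derivatives of $\varpi$ (bounded by $\|\varpi\|_{\hol^{2}}\le K_{2}$ from Remark \ref{R1}) or of $\tr[\alpha_{\iota_{0}}\deri^{2}u^{\varepsilon,\delta}_{\ell,\iota}]$. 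Uniform ellipticity \eqref{H2} turns the first leading term into $2\theta\varpi^{4}|\deri^{3}u^{\varepsilon,\delta}_{\ell,\iota}|^{2}+2\varpi^{4}\sum_{i,j}\partial_{ij}u^{\varepsilon,\delta}_{\ell,\iota}\,\tr[a_{\iota}\deri^{2}\partial_{ij}u^{\varepsilon,\delta}_{\ell,\iota}]$ and the third into $2\theta\mu\varpi^{2}|\deri^{2}u^{\varepsilon,\delta}_{\ell,\iota}|^{2}+2\mu\varpi^{2}\sum_{i}\partial_{i}u^{\varepsilon,\delta}_{\ell,\iota}\,\tr[a_{\iota}\deri^{2}\partial_{i}u^{\varepsilon,\delta}_{\ell,\iota}]$, exactly as for the gradient term in Lemma \ref{aux1}.

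Next I would differentiate \eqref{NPD.1}. Applying $\partial_{i}$ once and contracting with $2\mu\varpi^{2}\partial_{i}u^{\varepsilon,\delta}_{\ell,\iota}$ disposes of the term coming from $\mu|\deri^{1}u^{\varepsilon,\delta}_{\ell,\iota}|^{2}$ as in the proof of Lemma \ref{aux1}; applying $\partial_{ij}$ and contracting either with $2\varpi^{4}\partial_{ij}u^{\varepsilon,\delta}_{\ell,\iota}$ (to absorb the fourth-order term hidden inside $2\varpi^{4}\sum_{i,j}\partial_{ij}u^{\varepsilon,\delta}_{\ell,\iota}\,\tr[a_{\iota}\deri^{2}\partial_{ij}u^{\varepsilon,\delta}_{\ell,\iota}]$) or with $\lambda A^{1}_{\varepsilon,\delta}\varpi^{3}\alpha_{\iota_{0},ij}$ (to absorb $\lambda A^{1}_{\varepsilon,\delta}\varpi^{3}\tr[a_{\iota}\deri^{2}\tr[\alpha_{\iota_{0}}\deri^{2}u^{\varepsilon,\delta}_{\ell,\iota}]]$, using that the frozen matrix $\alpha_{\iota_{0}}$ carries no derivative) and summing, expresses all fourth-order contributions through second derivatives of the data $c_{\iota}u^{\varepsilon,\delta}_{\ell,\iota}+\langle b_{\iota},\deri^{1}u^{\varepsilon,\delta}_{\ell,\iota}\rangle-h_{\iota}$, through the $q_{\ell}(\iota,\kappa)$-weighted differences, and through the nonlinear terms $\psi''_{\varepsilon,\ell,\iota}(\cdot)$, $\psi'_{\varepsilon,\ell,\iota}(\cdot)$, $\psi''_{\delta,\ell,\ell',\iota}(\cdot)$ and $\psi'_{\delta,\ell,\ell',\iota}(\cdot)$ (the double-primed ones produced by the second differentiation and defined analogously to the single-primed notation of Lemma \ref{aux1}).

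\emph{The main obstacle} --- and the reason both the frozen matrix $\alpha_{\iota_{0}}=a_{\iota_{0}}(x_{0})$ and the requirement $\lambda\ge\max\{1,2/\theta\}$ are built into \eqref{D2.1} --- is the orientation of the $\psi''$ terms, which enter the lower bound with the wrong sign. The $\psi''_{\varepsilon}$-contribution from $\varpi^{4}|\deri^{2}u^{\varepsilon,\delta}_{\ell,\iota}|^{2}$ is a cubic form in $\deri^{2}u^{\varepsilon,\delta}_{\ell,\iota}$ which, using $\varpi|\deri^{2}u^{\varepsilon,\delta}_{\ell,\iota}|\le A^{1}_{\varepsilon,\delta}$, is bounded below by $-8\varpi^{3}A^{1}_{\varepsilon,\delta}\psi''_{\varepsilon}(\cdot)\,|(\deri^{2}u^{\varepsilon,\delta}_{\ell,\iota})\deri^{1}u^{\varepsilon,\delta}_{\ell,\iota}|^{2}$ up to bounded corrections, whereas the $\psi''_{\varepsilon}$-contribution from $\lambda A^{1}_{\varepsilon,\delta}\varpi\tr[\alpha_{\iota_{0}}\deri^{2}u^{\varepsilon,\delta}_{\ell,\iota}]$ equals $\lambda A^{1}_{\varepsilon,\delta}\varpi^{3}\psi''_{\varepsilon}(\cdot)\langle\alpha_{\iota_{0}}\zeta_{\ell,\iota},\zeta_{\ell,\iota}\rangle$ with $\zeta_{\ell,\iota}=\deri^{1}(|\deri^{1}u^{\varepsilon,\delta}_{\ell,\iota}|^{2}-g^{2}_{\iota})$, hence by $\langle\alpha_{\iota_{0}}\zeta,\zeta\rangle\ge\theta|\zeta|^{2}$ is bounded below by $4\lambda\theta A^{1}_{\varepsilon,\delta}\varpi^{3}\psi''_{\varepsilon}(\cdot)\,|(\deri^{2}u^{\varepsilon,\delta}_{\ell,\iota})\deri^{1}u^{\varepsilon,\delta}_{\ell,\iota}|^{2}$ up to bounded corrections; since $4\lambda\theta\ge8$, the sum of the two $\psi''_{\varepsilon}$ contributions is non-negative modulo absorbable quantities, and the $\psi''_{\delta}$ terms are handled identically, so every double-primed term can be dropped from the lower bound. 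The surviving single-primed coupling terms are treated as in Lemma \ref{aux1}: the elementary inequality $|y_{1}|^{2}-|y_{2}|^{2}\le2(|y_{1}|^{2}-\langle y_{1},y_{2}\rangle)$ for $y_{1},y_{2}\in\R^{d}$ converts the $\psi'_{\delta,\ell,\ell',\iota}(\cdot)$- and $q_{\ell}(\iota,\kappa)$-weighted differences of $|\deri^{2}u^{\varepsilon,\delta}_{\ell,\iota}|^{2}$, of $\tr[\alpha_{\iota_{0}}\deri^{2}u^{\varepsilon,\delta}_{\ell,\iota}]$ and of $|\deri^{1}u^{\varepsilon,\delta}_{\ell,\iota}|^{2}$ into the differences $\phi_{\ell,\iota}-\phi_{\ell',\iota}$ and $\phi_{\ell,\iota}-\phi_{\ell,\kappa}$ of \eqref{d7}, using $\psi_{\cdot}(r)\le\psi'_{\cdot}(r)r$ for $r\in\R$, $\vartheta_{\ell,\ell'}\ge0$, $g^{2}_{\iota}\ge0$ and $\lambda\ge1$.

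Finally I would collect the remainders. The one genuine fourth-order cross term not yet absorbed, schematically $\lambda A^{1}_{\varepsilon,\delta}\varpi^{3}\,a_{\iota}\!\cdot\!\alpha_{\iota_{0}}\!\cdot\!\deri^{4}u^{\varepsilon,\delta}_{\ell,\iota}$, is absorbed into the reservoir $2\theta\varpi^{4}|\deri^{3}u^{\varepsilon,\delta}_{\ell,\iota}|^{2}$ up to the term $-2\lambda C_{8}A^{1}_{\varepsilon,\delta}\varpi^{2}|\deri^{3}u^{\varepsilon,\delta}_{\ell,\iota}|$ in \eqref{d7}, using the freezing $\alpha_{\iota_{0}}=a_{\iota_{0}}(x_{0})$ and $\|a_{\iota}\|_{\hol^{2,\alpha}(\overline{\set})}\le\Lambda$; every other error --- $\deri^{3}u^{\varepsilon,\delta}_{\ell,\iota}$ against $\deri^{1}\varpi$, $\deri^{2}u^{\varepsilon,\delta}_{\ell,\iota}$ against $\deri^{2}\varpi$, products with $\deri^{1}u^{\varepsilon,\delta}_{\ell,\iota}$ or $\deri^{2}u^{\varepsilon,\delta}_{\ell,\iota}$, and the spatial derivatives of $c_{\iota},b_{\iota},g_{\iota},h_{\iota}$ --- is controlled by the Cauchy--Schwarz trick $2ab\le\eta a^{2}+\eta^{-1}b^{2}$ (absorbing mixed products $\varpi^{p}|\deri^{3}u^{\varepsilon,\delta}_{\ell,\iota}|\,|\deri^{2}u^{\varepsilon,\delta}_{\ell,\iota}|$ into $2\theta\varpi^{4}|\deri^{3}u^{\varepsilon,\delta}_{\ell,\iota}|^{2}$ and $2\theta\mu\varpi^{2}|\deri^{2}u^{\varepsilon,\delta}_{\ell,\iota}|^{2}$), together with $0\le u^{\varepsilon,\delta}_{\ell,\iota}\le\overline{u}_{\ell,\iota}$ from \eqref{ap1}, $\varpi|\deri^{1}u^{\varepsilon,\delta}_{\ell,\iota}|\le C_{3}$ from \eqref{ap2}, $\|\varpi\|_{\hol^{2}}\le K_{2}$, and the $\hol^{2,\alpha}(\overline{\set})$-bounds $\le\Lambda$ on the coefficients. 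Bounding the total error by $\lambda C_{8}[A^{1}_{\varepsilon,\delta}]^{2}+C_{8}(\lambda+\mu)A^{1}_{\varepsilon,\delta}+C_{8}\mu$ and keeping the block $\varpi^{2}\psi'_{\varepsilon,\ell,\iota}(\cdot)\{\cdots\}$ intact --- its $2\varpi A^{1}_{\varepsilon,\delta}[\lambda\theta-2]|\deri^{2}u^{\varepsilon,\delta}_{\ell,\iota}|^{2}$ part being the undominated remnant of the $\psi''_{\varepsilon}$ bookkeeping, and its $2A^{1}_{\varepsilon,\delta}\langle\deri^{1}u^{\varepsilon,\delta}_{\ell,\iota},\deri^{1}\phi_{\ell,\iota}\rangle$ part coming from re-expressing $\langle\deri^{1}u^{\varepsilon,\delta}_{\ell,\iota},\deri^{1}|\deri^{2}u^{\varepsilon,\delta}_{\ell,\iota}|^{2}\rangle$ through $\deri^{1}\phi_{\ell,\iota}$ --- yields \eqref{d7}. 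Beyond the sign issue, the only delicate point is keeping track of which power of $\varpi$ multiplies each term, since that is what decides absorbability; I would perform this bookkeeping exactly as in the proof of Lemma 3.1 of \cite{KM2022}.
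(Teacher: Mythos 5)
Your proposal follows essentially the same route as the paper's proof: differentiate $\phi_{\ell,\iota}$ twice and apply ellipticity to extract the reservoirs $2\theta\varpi^{4}|\deri^{3}u^{\varepsilon,\delta}_{\ell,\iota}|^{2}$ and $2\theta\mu\varpi^{2}|\deri^{2}u^{\varepsilon,\delta}_{\ell,\iota}|^{2}$, differentiate the NPDS twice and contract with $2\varpi^{4}\partial_{ij}u^{\varepsilon,\delta}_{\ell,\iota}+\lambda A^{1}_{\varepsilon,\delta}\varpi^{3}\alpha_{\iota_{0},ij}$ (respectively $2\mu\varpi^{2}\partial_{i}u^{\varepsilon,\delta}_{\ell,\iota}$), observe that the $\psi''$-contributions carry the matrix $2\varpi^{2}\deri^{2}u^{\varepsilon,\delta}_{\ell,\iota}+\lambda A^{1}_{\varepsilon,\delta}\varpi\alpha_{\iota_{0}}\ge\varpi A^{1}_{\varepsilon,\delta}[\lambda\theta-2]I\ge0$ and can be dropped, rebuild $\phi_{\ell,\iota}-\phi_{\ell',\kappa}$ from the coupling terms via the completed-square identities, and re-express $\langle\deri^{1}u,\deri^{1}|\deri^{2}u|^{2}\rangle$ through $\deri^{1}\phi_{\ell,\iota}$. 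The only cosmetic difference is that you expand $\bar\eta_{\ell,\iota}\approx2(\deri^{2}u^{\varepsilon,\delta}_{\ell,\iota})\deri^{1}u^{\varepsilon,\delta}_{\ell,\iota}$ before invoking $\lambda\theta\ge2$, whereas the paper keeps the quadratic form in $\bar\eta$ intact, which spares the ``bounded corrections'' you mention; the rest of the bookkeeping is the same.
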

\begin{proof}[Proof of Lemma \ref{Lb1}. Eq. \eqref{ap3}]
	Let $\phi_{\ell,\iota}$ be as in \eqref{D2.1}, where $\lambda\geq\max\{1,2/\theta\}$ is fixed and  $\mu\geq1$ will be determined later on, and $(x_{0}, \ell_{0},\iota_{0})\in\overline{\set}\times\mathbb{M}\times\mathbb{I}$ satisfies 
	\begin{equation}\label{D2.0}
		\varpi(x_{0})|\deri^{2}u_{\ell_{0},\iota_{0}}(x_{0})|=A^{1}_{\varepsilon,\delta}=\max_{(x,\ell,\iota)\in\overline{\set}\times\mathbb{M}\times\mathbb{I}}\varpi(x)|\deri^{2}u_{\ell,\iota}(x)|.
	\end{equation}
	Notice that 
	if $x_{0}\in\overline{\set}\setminus B_{\beta' r}$, by  Remark \ref{R1} and \eqref{D2.0}, we obtain $\varpi(x)|\deri^{2}u_{\ell,\iota}(x)|\equiv0$, for each $(x,\ell,\iota)\times\overline{\set}\times\mathbb{M}\times\mathbb{I}$. From here, \eqref{ap3} is trivially true. So, assume that $x_{0}$ is in $B_{\beta'r}$. Without loss of generality we also assume that $A^{1}_{\varepsilon,\delta}>1$, since if $A^{1}_{\varepsilon,\delta}\leq1$, we get that  $\varpi(x) |\deri^{2}u_{\ell,\iota}(x)|\leq A^{1}_{\varepsilon,\delta}\leq1$ for $(x,\ell,\iota)\in\overline{\set}\times\mathbb{M}\times\mathbb{I}$.  Taking $C_{6}=1$, we obtain the result in {\eqref{ap3}}.   Let $(x_{\mu},\ell_{\mu},\iota_{\mu})\in\overline{\set}\times\mathbb{M}\times\mathbb{I}$ be such that $\phi_{\ell_{\mu},\iota_{\mu}}(x_{\mu})=\max_{(x,\ell,\iota)\in\overline{\set}\times\mathbb{M}\times\mathbb{I}}\phi_{\ell,\iota}(x)$. If $x_{\mu}\in\overline{\set}\setminus B_{\beta'r}$,  from \eqref{ap2} and \eqref{D2.1}, it follows that
	\begin{equation}\label{D2.2}
		\varpi^{2}|\deri^{2}u_{\ell,\iota}|^{2}\leq-\lambda A^{1}_{\varepsilon,\delta}\varpi\tr[\alpha_{\iota_{0}}\deri^{2}u_{\ell,\iota}]+\mu C_{5}^{2},\ \text{for}\ (x,\ell,\iota)\in\overline{\set}\times\mathbb{M}\times\mathbb{I}.
	\end{equation}
	Evaluating $(x_{0},l_{0},\iota_{0})$ in \eqref{D2.2} and by \eqref{eq2}, \eqref{a4}, \eqref{p12.1}, \eqref{NPD.1} and \eqref{ap2}, it can be verified that  $[A_{\varepsilon,\delta}^{1}]^{2}\leq\lambda\Lambda[1+C_{2}] A_{\varepsilon,\delta}^{1}+\mu C_{2}^{2}$. From here  and due to $A^{1}_{\varepsilon,\delta}>1$, we conclude that $\varpi(x)|\deri^{2}u_{\ell,\iota}(x)|\leq A_{\varepsilon,\delta}^{1}\leq \lambda\Lambda[1+C_{2}]+\mu C_{2}^{2}=:C_{3},\quad \text{for $(x,\ell,\iota)\in\overline{\set}\times\mathbb{M}\times\mathbb{I}$.}$   From now, assume that $x_{\mu}\in B_{\beta'r}$. Then, 
	\begin{equation}\label{D2.3}
		\begin{split}
			&\deri^{1}\phi_{\ell_{\mu},\iota_{\mu}}(x_{\mu})=0, \quad \tr[a_{\iota_{\mu}}(x_{\mu})\deri^{2}\phi_{\ell_{\mu},\iota_{\mu}}(x_{\mu})]\leq0,\\
			&\phi_{\ell_{\mu},\iota_{\mu}}(x_{\mu})-\phi_{\ell',\kappa}(x_{\mu})\geq0 \quad\text{for}\ (\ell',\kappa)\in\mathbb{M}\times\mathbb{I}.
		\end{split}
	\end{equation}
	Noting that $2\theta\varpi^{4}|\deri^{3}u_{\ell,\iota}|^{2}-2\lambda C_{7}A^{1}_{\varepsilon,\delta}\varpi^{2}|\deri^{3}u_{\ell,\iota}|\geq-\frac{\lambda^{2}C^{2}_{7}}{\theta}[A^{1}_{\varepsilon,\delta}]^{2}$, with $C_{7}>0$ {as} in Lemma \ref{D2.0.0},  and using \eqref{d7} and \eqref{D2.3}, it yields that
	\begin{align*}
		0\geq&2\theta\mu\varpi^{2}|\deri^{2}u_{\ell_{\mu},\iota_{\mu}}|^{2}-\lambda^{2} C_{7}\bigg[1+\frac{C_{7}}{\theta}\bigg][A^{1}_{\varepsilon,\delta}]^{2}-C_{7}(\lambda+\mu)A^{1}_{\varepsilon,\delta}-C_{7}\mu\notag\\
		&+A^{1}_{\varepsilon,\delta}\varpi^{2}\psi'_{\varepsilon,\ell}(\cdot)\{2\varpi[\lambda\theta-2]|\deri^{2}u_{\ell_{\mu},\iota_{\mu}}|^{2}-2\lambda C_{7}|\deri^{2}u_{\ell_{\mu},\iota_{\mu}}|-(\lambda+\mu)C_{7}\},\quad \text{at}\ x_{\mu}.
	\end{align*}
	From here, we have that at least one of the next two inequalities is true:
	\begin{align}
		2\theta\mu\varpi^{2}|\deri^{2}u_{\ell_{\mu},\iota_{\mu}}|^{2}-\lambda^{2} C_{7}\bigg[1+\frac{C_{7}}{\theta}\bigg][A^{1}_{\varepsilon,\delta}]^{2}-C_{7}(\lambda+\mu)A^{1}_{\varepsilon,\delta}-C_{7}\mu&\leq0,\quad\text{at}\ x_{\mu},\label{d7.1}\\
		A^{1}_{\varepsilon,\delta}\varpi^{2}\psi'_{\varepsilon,\ell,\iota}(\cdot)\{2\varpi[\lambda\theta-2]|\deri^{2}u_{\ell_{\mu},\iota_{\mu}}|^{2}-2\lambda C_{7}|\deri^{2}u_{\ell_{\mu},\iota_{\mu}}|-(\lambda+\mu)C_{7}\}&\leq0,\quad\text{at}\ x_{\mu}.\label{d7.2}
	\end{align}
	Suppose that \eqref{d7.1} holds. Then, evaluating $(x_{\mu},\ell_{\mu})$ in \eqref{D2.1}, it follows
	\begin{align}\label{d.8}
		\phi_{\ell_{\mu},\iota_{\mu}}
		&\leq \frac{\lambda^{2} C_{7}}{2\theta\mu}\bigg[1+\frac{C_{7}}{\theta}\bigg][A^{1}_{\varepsilon,\delta}]^{2}+\frac{C_{7}(\lambda+\mu)}{2\theta\mu}A^{1}_{\varepsilon,\delta}+\frac{C_{7}}{2\theta}+\mu [C_{2}]^{2}\notag\\
		&\quad+\lambda\Lambda A^{1}_{\varepsilon,\delta}\bigg\{\frac{\lambda^{2} C_{7}}{2\theta\mu}\bigg[1+\frac{C_{7}}{\theta}\bigg][A^{1}_{\varepsilon,\delta}]^{2}+\frac{C_{7}(\lambda+\mu)}{2\theta\mu}A^{1}_{\varepsilon,\delta}+\frac{C_{7}}{2\theta}\bigg\}^{1/2},\quad\text{at}\ x_{\mu}.
	\end{align} 
	Meanwhile, evaluating $(x_{0},\ell_{0},\iota_{0})$ in \eqref{D2.1} and  using \eqref{eq2} and \eqref{NPD.1}, we get 
	\begin{align}\label{d.9}
		\phi_{\ell_{0},\iota_{0}}
		\geq[A^{1}_{\varepsilon,\delta}]^{2}-\lambda \Lambda A^{1}_{\varepsilon,\delta}[C_{2}+1],\quad\text{at}\ x_{0}.
	\end{align}
	Then, taking $\mu$ large enough such that $$\frac{K_{7}^{(\lambda)}}{\mu}\leq\Big[\frac{1}{\lambda\Lambda}\Big[1-\frac{K_{7}^{(\lambda)}}{\mu}\Big]\Big]^{2},$$ with $K_{7}^{(\lambda)}\eqdef\frac{\lambda^{2}C_{7}}{2\theta}\big[1+\frac{C_{7}}{\theta}\big]$, using \eqref{d.8}--\eqref{d.9} and since $\phi_{\ell_{0},\iota_{0}}(x_{0})\leq\phi_{\ell_{\mu},\iota_{\mu}}(x_{\mu})$ and $\lambda, A^{1}_{\varepsilon,\delta}>1$, we have that 
	\begin{align*}
		\frac{1}{\lambda \Lambda}\bigg[1-\frac{K_{7}^{(\lambda)}}{\mu}\bigg]A^{1}_{\varepsilon,\delta}-K_{8}^{(\mu)}
		\leq\bigg\{\frac{ K_{7}^{(\lambda)}}{\mu}[A^{1}_{\varepsilon,\delta}]^{2}+\frac{C_{7}(\lambda+\mu)}{2\theta\mu}A^{1}_{\varepsilon,\delta}+\frac{C_{7}}{2\theta}\bigg\}^{1/2}.
	\end{align*}
	with $K_{8}^{(\mu)}\eqdef\frac{C_{7}}{2\theta\Lambda}\Big[\frac{1}{\mu}+1\Big]+\frac{C_{7}}{2\theta}+\mu C_{2}$. Then,
	\begin{equation*}
		\bigg\{\frac{1}{\lambda^{2} \Lambda^{2}}\bigg[1-\frac{K_{7}^{(\lambda)}}{\mu}\bigg]^{2}-
		\frac{ K_{7}^{(\lambda)}}{\mu}\bigg\}[A^{1}_{\varepsilon,\delta}]^{2} 
		\leq\bigg\{\frac{2K_{8}^{(\mu)}}{\lambda \Lambda}\bigg[1-\frac{K_{7}^{(\lambda)}}{\mu}\bigg]+\frac{C_{7}(\lambda+\mu)}{2\theta\mu}\bigg\}A^{1}_{\varepsilon,\delta}+\frac{C_{7}}{2\theta}.
	\end{equation*}
	From here, we conclude there exists a constant $C_{3}=C_{3}(d,\Lambda,\alpha,K_{3})$ such that
	\begin{equation*}
		\varpi(x)|\deri^{2}u_{\ell,\iota}(x)|\leq A^{1}_{\varepsilon,\delta}\leq C_{3}\quad \text{for}\  (x,\ell)\in \overline{\set}\times\mathbb{I}. 
	\end{equation*}
	Now, assume that \eqref{d7.2} holds. Then,  $2\varpi^{2}[\lambda\theta-2]|\deri^{2}u_{\ell_{\mu},\iota_{\mu}}|^{2}\leq2\lambda C_{7}\varpi|\deri^{2}u_{\ell_{\mu},\iota_{\mu}}|+(\lambda+\mu)C_{7}$ at $x_{\mu}$ due to $\psi'_{\varepsilon}\geq0$ and $\varpi\leq1$. From here, we have that $\varpi|\deri^{2}u_{\ell_{\mu},\iota_{\mu}}|\leq K_{9}^{(\lambda,\mu)}$ at $x_{\mu}$, where $K_{9}^{(\lambda,\mu)}$ is a positive constant independent of $A_{\varepsilon,\delta}^{1}$. Therefore, $[A^{1}_{\varepsilon,\delta}]^{2}-\lambda\Lambda A^{1}_{\varepsilon,\delta}[C_{2}+1]\leq\phi_{\ell_{0},\iota_{0}}(x_{0})\leq\phi_{\ell_{\mu},\iota_{\mu}}(x_{\mu})\leq [K_{9}^{(\lambda,\mu)}]^{2}+\lambda\Lambda A^{1}_{\varepsilon,\delta} K_{9}^{(\lambda,\mu)}+\mu [C_{2}]^{2}$. From here, we conclude there exists a constant $C_{3}=C_{3}(d,\Lambda,\alpha,K_{1})$ such that $\varpi|\deri^{2}u_{\ell,\iota}|\leq A^{1}_{\varepsilon,\delta}\leq C_{3}$ for all $(x,\ell)\in \overline{\set}\times\mathbb{I}$.
\end{proof}

\begin{proof}[Proof of Lemma \ref{D2.0.0}]  
	Taking first and second derivatives of $\phi_{\ell,\iota}$ on $\overline{B}_{\beta'r}$, it can be verified that 
	\begin{align*}
	\tr[a_{\iota}\deri^{2}\phi_{\ell,\iota}] &=|\deri^{2}u_{\ell,\iota}|^{2}\tr[a_{\iota}\deri^{2}\varpi^{2}]+2\langle a_{\iota}\deri^{1}\varpi^{2},\deri^{1}|\deri^{2}u_{\ell,\iota}|^{2}\rangle+\varpi^{2}\tr[a_{\iota}\deri^{2}|\deri^{2}u_{\ell,\iota}|^{2}]\notag\\
	&\quad+\lambda A^{1}_{\varepsilon,\delta}\tr[\alpha_{\iota_{0}}\deri^{2}u_{\ell,\iota}]\tr[a_{\iota}\deri^{2}\varpi]+2\lambda A^{1}_{\varepsilon,\delta}\langle a_{\iota}\deri^{1}\varpi,\deri^{1}\tr[\alpha_{\iota_{0}}\deri^{2}u_{\ell,\iota}]\rangle\notag\\
	&\quad+\lambda A^{1}_{\varepsilon,\delta}\varpi\sum_{ji}\alpha_{\iota_{0}\,ji}\tr[a_{\iota}\deri^{2}\partial_{ji}u_{\ell,\iota}]+\mu\tr[a_{\iota}\deri^{2}|\deri^{1}u_{\ell,\iota}|^{2}].
	\end{align*}	
	From here and noticing that from \eqref{H2},
	\begin{align*}
	\tr[a_{\iota}\deri^{2}|\deri^{1}u_{\ell,\iota}|^{2}]
	&\geq 2\theta|\deri^{2}u_{\ell,\iota}|^{2}+2\sum_{i}\partial_{i}u_{\ell,\iota}\tr[a_{\iota}\deri^{2}\partial_{i}u_{\ell,\iota}],\\
	\tr[a_{\iota}\deri^{2}|\deri^{2}u_{\ell,\iota}|^{2}]
	&\geq 2\theta|\deri^{3}u_{\ell,\iota}|^{2}+2\sum_{ji}\partial_{ji}u_{\ell,\iota}\tr[a_{\iota}\deri^{2}\partial_{ji}u_{\ell,\iota}],
	\end{align*}
	it follows that
	\begin{align}\label{d1.2}
	\tr[a_{\iota}\deri^{2}\phi_{\ell,\iota}]&\geq 2\theta[\varpi^{2}|\deri^{3}u_{\ell,\iota}|^{2}+\mu|\deri^{2}u_{\ell,\iota}|^{2}]+|\deri^{2}u_{\ell,\iota}|^{2}\tr[a_{\iota}\deri^{2}\varpi^{2}]\notag\\
	&\quad+2\langle a_{\iota}\deri^{1}\varpi^{2},\deri^{1}|\deri^{2}u_{\ell,\iota}|^{2}\rangle+\lambda A^{1}_{\varepsilon,\delta}\tr[\alpha_{\iota_{0}}\deri^{2}u_{\ell,\iota}]\tr[a_{\iota}\deri^{2}\varpi]\notag\\
	&\quad+2\lambda A^{1}_{\varepsilon,\delta}\langle a_{\iota}\deri^{1}\varpi,\deri^{1}\tr[\alpha_{\iota_{0}}\deri^{2}u_{\ell,\iota}]\rangle+2\mu\sum_{i}\tr[a_{\iota}\deri^{2}\partial_{i}u_{\ell,\iota}]\partial_{i}u_{\ell,\iota}\notag\\
	&\quad+\sum_{ji}[2\varpi^{2}\partial_{ji}u_{\ell,\iota}+\lambda A^{1}_{\varepsilon,\delta}\varpi\alpha_{\iota_{0}\,ji}]\tr[a_{\iota}\deri^{2}\partial_{ji}u_{\ell,\iota}].
	\end{align}	
	Meanwhile, differentiating twice in \eqref{NPD.1}, we see that 
	\begin{align}\label{d2}
	\tr[&a_{\iota}\deri^{2}\partial_{ji}u_{\ell,\iota}]\notag\\
	&=\psi''_{\varepsilon,\ell,\iota}(\cdot)\bar{\eta}^{(i)}_{\ell,\iota}\bar{\eta}^{(j)}_{\ell,\iota}+\psi'_{\varepsilon,\ell,\iota}(\cdot)\partial_{ji}[|\deri^{1}u_{\ell,\iota}|^{2}-g_{\iota}^{2}]+\sum_{\ell'\in\mathbb{M}\setminus\{\ell\}}\psi''_{\delta,\ell,\ell',\iota}(\cdot)\bar{\eta}^{(i)}_{\ell,\ell',\iota}\bar{\eta}^{(j)}_{\ell,\ell',\iota}\notag\\
	&\quad+\sum_{\ell'\in\mathbb{I}\setminus\{\ell\}}\psi'_{\delta,\ell,\ell',\iota}(\cdot)\partial_{ji}[u_{\ell,\iota}-u_{\ell',\iota} ]{-\sum_{\kappa\in\mathbb{I}\setminus\{\iota\}}q_{\ell}(\iota,\kappa)\partial_{ji}[u_{\ell,\kappa}-u_{\ell,\iota}]}-\tr[[\partial_{j}a_{\iota}]\deri^{2}\partial_{i}u_{\ell,\iota}]\notag\\
	&\quad-\tr[[\partial_{ji}a_{\iota}]\deri^{2}u_{\ell,\iota}]-\tr[[\partial_{i}a_{\iota}]\deri^{2}\partial_{j}u_{\ell,\iota}]-\partial_{ji}[h_{\iota}-\langle b_{\iota},\deri^{1}u_{\ell,\iota}\rangle-c_{\iota}u_{\ell,\iota}] 
	\end{align}  
	where $\bar{\eta}_{\ell,\iota}=(\bar{\eta}_{\ell,\iota}^{(1)},\dots,\bar{\eta}_{\ell,\iota}^{(d)})$ and $\bar{\eta}_{\ell,\ell',\iota}=(\bar{\eta}_{\ell,\ell',\iota}^{(1)},\dots,\bar{\eta}_{\ell,\ell',\iota}^{(d)})$ with  $\bar{\eta}_{\ell,\iota}^{(i)}\eqdef\partial_{i}[|\deri^{1}u_{\ell,\iota}|^{2}-g_{\iota}^{2}]$ {and} $\bar{\eta}_{\ell,\ell',\iota}^{(i)}\eqdef\partial_{i}[u_{\ell,\iota}-u_{\ell',\iota} ].$  From \eqref{dercot2} and \eqref{d1.2}--\eqref{d2}, it follows that 
	\begin{align}\label{d3}
	\varpi^{2}&\tr[a_{\iota}\deri^{2}\phi_{\ell,\iota}]\notag\\
	&\geq 2\theta[\varpi^{4}|\deri^{3}u_{\ell,\iota}|^{2}+\mu\varpi^{2}|\deri^{2}u_{\ell,\iota}|^{2}]+\widetilde{D}_{3}+\widetilde{D}_{4}\notag\\
	&\quad+\varpi^{2}\bigg\{\psi''_{\varepsilon,\ell,\iota}(\cdot)\langle[2\varpi^{2}\deri^{2}u_{\ell,\iota}+\lambda A^{1}_{\varepsilon,\delta}\varpi\alpha_{\iota_{0}}]\bar{\eta}_{\ell,\iota},\bar{\eta}_{\ell,\iota}\rangle\notag\\
	&\quad+\sum_{\ell'\in\mathbb{M}\setminus\{\ell\}}\psi''_{\delta,\ell,\ell',\iota}(\cdot)\langle[2\varpi^{2}\deri^{2}u_{\ell,\iota}+\lambda A^{1}_{\varepsilon,\delta}\varpi\alpha_{\iota_{0}}]\bar{\eta}_{\ell,\ell',\iota},\bar{\eta}_{\ell,\ell',\iota}\rangle\bigg\}\notag\\
	&\quad+\varpi^{2}\psi'_{\varepsilon,\ell,\iota}(\cdot)\widetilde{D}_{\ell,\iota}+\varpi^{2}\sum_{\ell'\in\mathbb{M}\setminus\{\ell\}}\psi'_{\delta,\ell,\ell',\iota}(\cdot)\widetilde{D}_{\ell,\ell'}^{\iota,\iota}{+\varpi^{2}\sum_{\kappa\in\mathbb{I}\setminus\{\iota\}}q_{\ell}(\iota,\kappa)D_{\ell,\ell}^{\iota,\kappa}}
	\end{align}
	where
	
	\begin{align*}
	\widetilde{D}_{3}&\eqdef2\varpi^{2}\langle a_{\iota}\deri^{1}\varpi^{2},\deri^{1}|\deri^{2}u_{\ell,\iota}|^{2}\rangle+2\lambda A^{1}_{\varepsilon,\delta}\varpi^{2}\langle a_{\iota}\deri^{1}\varpi,\deri^{1}\tr[\alpha_{\iota_{0}}\deri^{2}u_{\ell,\iota}]\rangle\notag\\
	&\quad-\sum_{ij}[2\varpi^{4}\partial_{ij}u_{\ell,\iota}+\lambda A^{1}_{\varepsilon,\delta}\varpi^{3}\alpha_{\iota_{0}\,ij}][2\tr[\partial_{j}a_{\iota}\deri^{2}\partial_{i}u_{\ell,\iota}]-\partial_{ij}\langle b_{\iota},\deri^{1}u_{\ell,\iota}\rangle],\\
	\widetilde{D}_{4}&\eqdef\varpi^{2}|\deri^{2}u_{\ell,\iota}|^{2}\tr[a_{\iota}\deri^{2}\varpi^{2}]\notag-\mu\varpi^{2}\widetilde{D}_{2}u_{\ell,\iota}+\lambda A^{1}_{\varepsilon,\delta}\varpi^{2}\tr[\alpha_{\iota_{0}}\deri^{2}u_{\ell,\iota}]\tr[a_{\iota}\deri^{2}\varpi]\notag\\
	&\quad-\sum_{ji}[2\varpi^{4}\partial_{ji}u_{\ell,\iota}+\lambda A^{1}_{\varepsilon,\delta}\varpi^{3}\alpha_{\iota_{0}\,ji}]\left\{\tr[[\partial_{ji}a_{\iota}]\deri^{2}u_{\ell,\iota}]+\partial_{ji}[h_{\iota}-c_{\iota}u_{\ell,\iota}]\right\},\\
	\widetilde{D}_{\ell,\iota}&\eqdef 2\mu\langle\deri^{1}u_{\ell,\iota},\bar{\eta}_{\ell,\iota}\rangle+\tr[[2\varpi^{2}\deri^{2}u_{\ell,\iota}+\lambda A^{1}_{\varepsilon,\delta}\varpi\alpha_{\iota_{0}}]\deri^{2}[|\deri^{1}u_{\ell,\iota}|^{2}-g_{\iota}^{2}]],\\
	\widetilde{D}_{\ell,\ell'}^{\iota,\kappa}&\eqdef 2\mu\langle\deri^{1}u_{\ell,\iota},\deri^{1}[u_{\ell,\iota}-u_{\ell',\kappa}]\rangle+\tr[[2\varpi^{2}\deri^{2}u_{\ell,\iota}+\lambda A^{1}_{\varepsilon,\delta}\varpi\alpha_{\iota_{0}}]\deri^{2}[u_{\ell,\iota}-u_{\ell',\kappa} ]].
	\end{align*}
	Recall that $\widetilde{D}_{2}u_{\ell,\iota}$ is given in \eqref{dercot2.1}. To obtain the next inequalities, we shall recurrently use \eqref{a4}, \eqref{h2}, Remark \ref{R1},  \eqref{ap1}, \eqref{ap2} and $\lambda,\mu\geq1$. Then,
	\begin{align}
	\widetilde{D}_{3}
	&\geq-2\left\{4\Lambda K_{1}d^{4}+\Lambda^{2} d^{4}+d^{3}[2+\Lambda][d+\Lambda]\right\}\lambda A^{1}_{\varepsilon,\delta}\varpi^{2}|\deri^{3}u_{\ell,\iota}|\notag\\
	&\quad-2d^{2}\lambda A^{1}_{\varepsilon,\delta}[2+\Lambda]dC_{2}\Lambda-4d^{3}\Lambda\lambda[A^{1}_{\varepsilon,\delta}]^{2}[2+\Lambda],
	\intertext{and by \eqref{H2},}
	\widetilde{D}_{4}
	&\geq-\{2\Lambda d^{2}K_{1}+ d^{4}\Lambda^{2}K_{1}+d^{2}\Lambda[2+\Lambda][d^{2}+1]\}\lambda[A^{1}_{\varepsilon,\delta}]^{2}\notag\\
	&\quad-\{2\mu[2C_{2}\Lambda d^{3}+2d^{1/2}\Lambda C_{2}]+d^{2}\lambda\Lambda[2+\Lambda][2C_{2}+C_{1}]\}A^{1}_{\varepsilon,\delta}\notag\\
	&\quad-2\mu\{2C_{2}\Lambda d^{2}+2C_{1}C_{2}d^{1/2}\Lambda-2C_{2}\Lambda d^{1/2}\}.
	\end{align}	 
	On the other hand, since $\lambda\geq\frac{2}{\theta}$ and using \eqref{H2}, we have that
	\begin{align}\label{d4}
	\langle[2\varpi^{2}\deri^{2}u_{\ell,\iota}+\lambda A^{1}_{\varepsilon,\delta}\varpi\alpha_{\iota_{0}}]\gamma,\gamma\rangle&\geq\varpi[\lambda A^{1}_{\varepsilon,\delta}\theta-2\varpi|\deri^{2}u_{\ell,\iota}|]\,|\gamma|^{2}\notag\\
	& \geq\varpi A^{1}_{\varepsilon,\delta}[\lambda\theta-2]\,|\gamma|^{2}\geq0,
	\end{align}
	for $\gamma\in\R^{d}$. From here and since $\psi''_{\varepsilon,\ell,\iota}(\cdot)\geq0$ and $\psi''_{\delta,\ell,\ell',\iota}(\cdot)\geq0$, it follows that 
	\begin{multline}\label{d5}
	\psi''_{\varepsilon,\ell,\iota}(\cdot)\langle[2\varpi^{2}\deri^{2}u_{\ell,\iota}+\lambda A^{1}_{\varepsilon,\delta}\varpi\alpha_{\iota_{0}}]\bar{\eta}_{\ell,\iota},\bar{\eta}_{\ell,\iota}\rangle\\
	+\sum_{\ell'\in\mathbb{M}\setminus\{\ell\}}\psi''_{\delta,\ell,\ell',\iota}(\cdot)\langle[2\varpi^{2}\deri^{2}u_{\ell,\iota}+\lambda A^{1}_{\varepsilon,\delta}\varpi\alpha_{\iota_{0}}]\bar{\eta}_{\ell,\ell',\iota},\bar{\eta}_{\ell,\ell',\iota}\rangle\geq0.
	\end{multline}
	It is easy to verify that
	\begin{multline}\label{d8}
	\varpi^{2}\langle\deri^{1}u_{\ell,\iota},\deri^{1}|\deri^{2}u_{\ell,\iota}|^{2}\rangle+\lambda A^{1}_{\varepsilon,\delta}\varpi\langle\deri^{1}u_{\ell,\iota},\deri^{1}\tr[\alpha_{\iota_{0}}\deri^{2}u_{\ell,\iota}]\rangle+\mu\langle\deri^{1}u_{\ell,\iota},\deri^{1}|\deri^{1}u_{\ell,\iota}|^{2}\rangle\\
	=\langle\deri^{1}u_{\ell,\iota},\deri^{1}\phi_{\ell,\iota}\rangle-\langle\deri^{1}u_{\ell,\iota},\deri^{1}\varpi^{2}\rangle|\deri^{2}u_{\ell,\iota}|^{2}-\lambda A^{1}_{\varepsilon,\delta}\tr[\alpha_{\iota_{0}}\deri^{2}u_{\ell,\iota}]\langle\deri^{1}u_{\ell,\iota},\deri^{1}\varpi\rangle
	\end{multline}
	due to 
	\begin{align*}
	\partial_{i}\phi_{\ell,\iota}&=|\deri^{2}u_{\ell,\iota}|^{2}\partial_{i}\varpi^{2}+\varpi^{2}\partial_{i}|\deri^{2}u_{\ell,\iota}|^{2}\\
	&\quad+\lambda A^{1}_{\varepsilon,\delta}\tr[\alpha_{\iota_{0}}\deri^{2}u_{\ell,\iota}]\partial_{i}\varpi+\lambda A^{1}_{\varepsilon,\delta}\varpi\tr[\alpha_{\iota_{0}}\deri^{2}\partial_{i}u_{\ell,\iota}]+\mu\partial_{i}|\deri^{1}u_{\ell,\iota}|^{2}\quad\text{on}\ B_{\beta' r}.
	\end{align*}
	Then,  by \eqref{d4} and \eqref{d8},
	\begin{align}
	\widetilde{D}_{\ell,\iota}
	&\geq2\varpi A^{1}_{\varepsilon,\delta}[\lambda\theta-2]|\deri^{2}u_{\ell,\iota}|^{2}+2\langle\deri^{1}u_{\ell,\iota},\deri^{1}\phi_{\ell,\iota}\rangle\notag\\
	&\qquad-4\lambda d^{1/2}C_{2}K_{1}A^{1}_{\varepsilon,\delta}|\deri^{2}u_{\ell,\iota}|-2\lambda\Lambda d^{5/2}C_{2}K_{1}A^{1}_{\varepsilon,\delta}|\deri^{2}u_{\ell,\iota}|\notag\\
	&\qquad-4\mu  \Lambda^{2}d^{1/2}A^{1}_{\varepsilon,\delta}C_{2}-2\lambda d\Lambda^{2}A^{1}_{\varepsilon,\delta}-\lambda\Lambda^{3}d^{2}A^{1}_{\varepsilon,\delta}\notag\\
	&\qquad-4d^{2}\lambda\Lambda^{2}A^{1}_{\varepsilon,\delta}-2\Lambda^{3}d^{2}\lambda A^{1}_{\varepsilon,\delta}.
	\end{align}
	Using the following properties $|A|^{2}-2\tr[AB]+|B|^{2}=\sum_{ij}(A_{ij}-B_{ij})^{2}\geq0$ and $|y_{1}|^{2}-2\langle y_{1},y_{2}\rangle+|y|^{2}=\sum_{i}(y_{1,i}-y_{2,i})^{2}\geq0$ 
	where $A=(A_{ij})_{d\times d},B=(B_{ij})_{d\times d}$ and $y_{1}=(y_{1,1},\dots,y_{1,d}),y_{2}=(y_{2,1},\dots,y_{2,d})$ belong $\mathcal{S}(d)$ and $\R^{d}$, respectively, and by definition of $\phi_{\ell,\iota}$,  it is easy to corroborate the following identity
	\begin{equation}\label{d6}
	\widetilde{D}_{\ell,\ell'}^{\iota,\kappa}\geq \phi_{\ell,\iota}-\phi_{\ell',\kappa} ,\ \text{for}\ (\ell',\kappa)\in\mathbb{M}\times\mathbb{I}.
	\end{equation}
	Applying \eqref{d5}--\eqref{d6} in \eqref{d3} and considering that all  constants that appear in those inequalities (i.e. \eqref{d5}--\eqref{d6}) are bounded by an universal constant $C_{8}=C_{8}(d,\Lambda,\alpha,K_{2})$, we obtain the desired result in the lemma above. With this remark, the proof is concluded.
\end{proof}

\subsection{Proof of Proposition \ref{princ1.1}}\label{proof4}

\begin{proof}[Proof of Proposition \ref{princ1.1}. Existence]  Taking $\ell'\in\mathbb{M}\setminus\{\ell\}$, and using \eqref{eq2}, \eqref{NPD.1} and Lemma \ref{Lb1}, we have that $\psi_{\delta}(u^{\varepsilon,\delta}_{\ell,\iota}-u^{\varepsilon,\delta}_{\ell',\iota} -\vartheta_{\ell,\ell'})$ is locally bounded, uniformly in $\delta$. From here and \eqref{conv1}, it yields that  $u^{\varepsilon}_{\ell,\iota}-u^{\varepsilon}_{\ell',\iota} -\vartheta_{\ell,\ell'}\leq0$ in $\set$. Then, 
	\begin{equation}\label{ineq2}
		u^{\varepsilon}_{\ell,\iota}-\mathcal{M}_{\ell,\iota}u^{\varepsilon}\leq0,\quad \text{in}\ \set. 
	\end{equation}
	Note that the previous inequality is true on the boundary set $\partial\set$, since $u^{\varepsilon,\delta}_{\ell,\iota}=u^{\varepsilon,\delta}_{\ell',\iota}=f_{\iota}$ on $\partial\set$ and $\vartheta_{\ell,\ell'}\geq0$. Recall that the operator $\mathcal{M}_{\ell,\iota}$ is defined in \eqref{p6.0}. On the other hand, since $u^{\varepsilon,\delta_{\hat{n}}}_{\ell,\iota}$ is the unique solution to \eqref{NPD.1},  when $\delta=\delta_{\hat{n}}$, it follows that 
	\begin{equation}\label{ineq1}
		\int_{B_{r}}\Big\{[c_{\iota}-\dif_{\ell,\iota}]  u_{\ell,\iota}^{\varepsilon,\delta_{\hat{n}}}+ \psi_{\varepsilon}(|\deri^{1} u_{\ell,\iota}^{\varepsilon,\delta_{\hat{n}}}|^{2}- g_{\iota}^{2})\Big\}\varpi\der x\leq  \int_{B_{r}}h_{\iota}\varpi\der x  ,\quad\text{for}\ \varpi\in\mathcal{B}(B_{r}),  
	\end{equation}
	where
	\begin{equation}\label{c1}
		\mathcal{B}(A)\eqdef\{\varpi\in \hol^{\infty}_{\comp}(A): \varpi\geq0\ \text{and}\ \sop[\varpi]\subset A\subset\set \}.
	\end{equation}
	By \eqref{conv1} and  letting $\delta_{\hat{n}}\rightarrow0$ in \eqref{ineq1}, we obtain that 
	\begin{equation}\label{ineq3}
		[c_{\iota}-\dif_{\ell,\iota}]  u_{\ell,\iota}^{\varepsilon}+ \psi_{\varepsilon}(|\deri^{1} u_{\ell,\iota}^{\varepsilon}|^{2}- g_{\iota}^{2})\leq h_{\iota}\quad  \text{a.e. in $\set$}. 
	\end{equation}
	From \eqref{ineq2} and \eqref{ineq3},  $\max\left\{[c_{\iota}-\dif_{\ell,\iota}]  u_{\ell,\iota}^{\varepsilon}+\psi_{\varepsilon}(|\deri^{1} u_{\ell,\iota}^{\varepsilon}|^{2}- g_{\iota}^{2})-h_{\iota},u^{\varepsilon}_{\ell,\iota}-\mathcal{M}_{\ell,\iota}u^{\varepsilon}\right\}\leq 0$  a.e. in $\set$. We shall prove that if 
	\begin{equation}\label{ineq5}
		u^{\varepsilon}_{\ell,\iota}(x^{*})-\mathcal{M}_{\ell,\iota}u^{\varepsilon}(x^{*})<0,\quad \text{for some}\ x^{*}\in\set,
	\end{equation}
	then, there exists a  neighborhood $\mathcal{N}_{x^{*}}\subset\set$  of  $x^{*}$ such that
	\begin{equation}\label{ineq6}
		[c_{\iota}-\dif_{\ell,\iota}]  u_{\ell,\iota}^{\varepsilon}+\psi_{\varepsilon}(|\deri^{1} u_{\ell,\iota}^{\varepsilon}|^{2}- g_{\iota}^{2})= h_{\iota},\quad  \text{a.e. in $\mathcal{N}_{x^{*}}$.}
	\end{equation}
	{Assume} \eqref{ineq5} holds. Then, taking  $\ell'\in\mathbb{M}\setminus\{\ell\}$, we see that $u^{\varepsilon}_{\ell,\iota}-u^{\varepsilon}_{\ell',\iota} -\vartheta_{\ell,\ell'}\leq u^{\varepsilon}_{\ell,\iota}-\mathcal{M}_{\ell,\iota}u^{\varepsilon}<0$ at $x^{*}$. Since  $u^{\varepsilon}_{\ell,\iota}-u^{\varepsilon}_{\ell',\iota} $ is a continuous function, there exists a ball $B_{r_{\ell'}}\subset\set$ such that  $x^{*}\in B_{r_{\ell'}}$ and $u^{\varepsilon}_{\ell,\iota}-u^{\varepsilon}_{\ell',\iota} -\vartheta_{\ell,\ell'}<0$ in $B_{r_{\ell'}}$. From here and defining $\mathcal{N}_{x^{*}}$ as $\bigcap_{\ell'\in\mathbb{M}\setminus\{\ell\}}B_{r_{\ell'}}$, 
	we have that $\mathcal{N}_{x^{*}}\subset\set$ is a neighborhood of $x^{*}$ and 
	\begin{equation}\label{ineq7}
		u^{\varepsilon}_{\ell,\iota}-u^{\varepsilon}_{\ell',\iota} -\vartheta_{\ell,\ell'}<0,\quad \text{ in}\ \mathcal{N}_{x^{*}},\ \text{for}\ \ell'\in\mathbb{M}\setminus\{\ell\}.
	\end{equation}
	Meanwhile, observe that
	\begin{equation}\label{ineq8}
		||u^{\varepsilon,\delta_{\hat{n}}}_{\ell,\iota}-u^{\varepsilon,\delta_{\hat{n}}}_{\ell',\iota} -(u^{\varepsilon}_{\ell,\iota}-u^{\varepsilon}_{\ell',\iota} )||_{\hol(\set)}\underset{\delta_{\hat{n}}\rightarrow0}{\longrightarrow}0,\quad\text{for}\ \ell'\in\mathbb{M}\setminus\{\ell\},
	\end{equation}
	since \eqref{conv1} holds. Then, by \eqref{ineq7}--\eqref{ineq8}, it yields that for each $\ell'\in\mathbb{M}\setminus\{\ell\}$, there exists a $\delta^{(\ell')}\in(0,1)$ such that if $\delta_{\hat{n}}\leq\delta^{(\ell')}$, 
	$u^{\varepsilon,\delta_{\hat{n}}}_{\ell,\iota}-u^{\varepsilon,\delta_{\hat{n}}}_{\ell',\iota} -\vartheta_{\ell,\ell'}<0$ in $\mathcal{N}_{x^{*}}$. Taking $\delta'\eqdef\min_{\ell'\in\mathbb{M}\setminus\{\ell\}}\{\delta^{(\ell')}\}$, it follows that $u^{\varepsilon,\delta_{\hat{n}}}_{\ell,\iota}-u^{\varepsilon,\delta_{\hat{n}}}_{\ell',\iota} -\vartheta_{\ell,\ell'}<0$ in $\mathcal{N}_{x^{*}}$, for all $\delta_{\hat{n}}\leq\delta'$ and $\ell'\in\mathbb{M}\setminus\{\ell\}$. From here and since for each $\delta_{\hat{n}}\leq\delta'$, $u^{\varepsilon,\delta_{\hat{n}}}_{\ell,\iota}$ is the unique solution  to \eqref{NPD.1}, when $\delta=\delta_{\hat{n}}$, it implies that 
	\begin{equation*}
		\int_{\mathcal{N}_{x^{*}}}\Big\{[c_{\iota}-\dif_{\ell,\iota}]  u_{\ell,\iota}^{\varepsilon,\delta_{\hat{n}}}+ \psi_{\varepsilon}(|\deri^{1} u_{\ell,\iota}^{\varepsilon,\delta_{\hat{n}}}|^{2}- g_{\iota}^{2})\Big\}\varpi\der x=  \int_{\mathcal{N}_{x^{*}}}h_{\iota}\varpi\der x,   \quad\text{for}\ \varpi\in\mathcal{B}(\mathcal{N}_{x^{*}}).
	\end{equation*}
	Therefore,  \eqref{ineq6} holds. Hence, we get that for each $\varepsilon\in(0,1)$, $u^{\varepsilon}=(u^{\varepsilon}_{\ell,\iota})_{(\ell,\iota)\in\mathbb{M}\times\mathbb{I}}$ is a solution to the HJB equation \eqref{p13.0.1.0}.
\end{proof}

\begin{proof}[Proof of Proposition \ref{princ1.1}. Uniqueness] Let $\varepsilon\in(0,1)$ be fixed. Suppose that  $u^{\varepsilon}=(u^{\varepsilon}_{\ell,\iota})_{(\ell,\iota)\in\mathbb{M}\times\mathbb{I}}$ and $v^{\varepsilon}=(v^{\varepsilon}_{\ell,\iota})_{(\ell,\iota)\in\mathbb{M}\times\mathbb{I}}$ are two solutions to the HJB equation \eqref{pc1} whose components belong to  $\hol^{0}(\overline{\set})\cap\sob^{2,\infty}_{\loc}(\set)$. Take $(x_{0},\ell_{0},\iota_{0})\in\overline{\set}\times\mathbb{M}\times\mathbb{I}$ such that
	\begin{equation}\label{ineq9}
		u^{\varepsilon}_{\ell_{0},\iota_{0}}(x_{0})-v^{\varepsilon}_{\ell_{0},\iota_{0}}(x_{0})=\max_{(x,\ell,\kappa)\in\overline{\set}\times\mathbb{M}\times\mathbb{I}}\{u^{\varepsilon}_{\ell,\kappa}(x)-v_{\ell,\kappa}^{\varepsilon}(x)\}.
	\end{equation}
	Notice that by \eqref{ineq9}, we only need to verify that 
	\begin{equation}\label{ineq10}
		u^{\varepsilon}_{\ell_{0},\iota_{0}}(x_{0})-v^{\varepsilon}_{\ell_{0},\iota_{0}}(x_{0})\leq0,
	\end{equation}
	which is trivially true,  if $x_{0}\in\partial\set$, since $u^{\varepsilon}_{\ell_{0},\iota_{0}}-v^{\varepsilon}_{\ell_{0},\iota_{0}}=0$ on $\partial\set$. Let us assume $x_{0}\in\set$. We shall verify \eqref{ineq10} by contradiction. Suppose that $u^{\varepsilon}_{\ell_{0},\iota_{0}}-v^{\varepsilon}_{\ell_{0},\iota_{0}}>0$ at $x_{0}$. Then, by continuity of $u^{\varepsilon}_{\ell_{0},\iota_{0}}-v^{\varepsilon}_{\ell_{0},\iota_{0}}$, there exists a  ball $B_{r_{1}}(x_{0})\subset\set$ such that 
	\begin{equation}\label{ineq10.1}
		c_{\iota_{0}}[u^{\varepsilon}_{\ell_{0},\iota_{0}}-v^{\varepsilon}_{\ell_{0},\iota_{0}}]\geq\min_{x\in B_{r_{1}}(x_{0})}\{c_{\iota_{0}}(x)[u^{\varepsilon}_{\ell_{0},\iota_{0}}(x)-v^{\varepsilon}_{\ell_{0},\iota_{0}}(x)]\}>0,\quad  \text{in}\ B_{r_{1}}(x_{0}).
	\end{equation}
	The last inequality is true because of $c_{\iota_{0}}>0$ in $\set$. {Additionally, again by \eqref{ineq9} and by the continuity of $u^{\varepsilon}_{\ell',\kappa}-v^{\varepsilon}_{\ell',\kappa}$ on $\overline{\set}$, we get that there is a ball $B_{r_{2}}(x_{0})\subset\set$ such that  
		\begin{equation}\label{w3}	
			\sum_{\kappa\in\mathbb{I}\setminus\{\iota_{0}\}}q_{\ell_{0}}(\iota_{0},\kappa)\{u^{\varepsilon}_{\ell_{0},\iota_{0}}-v^{\varepsilon}_{\ell_{0},\iota_{0}}-[u^{\varepsilon}_{\ell_{0},\kappa}-v^{\varepsilon}_{\ell_{0},\kappa}]\}\geq0\quad \text{in}\  B_{r_{2}}(x_{0}).
	\end{equation} }
	Meanwhile, taking $\ell_{1}\in\mathbb{I}$ such that 
	\begin{equation}\label{ineq10.0}
		\mathcal{M}_{\ell_{0},\iota_{0}}v^{\varepsilon}(x_{0})=v^{\varepsilon}_{\ell_{1},\iota_{0}}(x_{0} )+\vartheta_{\ell_{0},\ell_{1}},
	\end{equation}
	by \eqref{pc1} and \eqref{ineq9}, we get that
	$v^{\varepsilon}_{\ell_{0},\iota_{0}}-(v^{\varepsilon}_{\ell_{1},\iota_{0}}+\vartheta_{\ell_{0},\ell_{1}})
	=v^{\varepsilon}_{\ell_{0},\iota_{0}}-\mathcal{M}_{\ell_{0},\iota_{0}}v^{\varepsilon}\leq u^{\varepsilon}_{\ell_{0},\iota_{0}}-\mathcal{M}_{\ell_{0},\iota_{0}}u^{\varepsilon}\leq 0$ at $x_{0}$.
	If $v^{\varepsilon}_{\ell_{0},\iota_{0}}(x_{0})-\mathcal{M}_{\ell_{0},\iota_{0}}v^{\varepsilon}(x_{0})<0$, there exists a ball $B_{r_{3}}(x_{0})\subset\set$ such that $v^{\varepsilon}_{\ell_{0},\iota_{0}}-\mathcal{M}_{\ell_{0},\iota_{0}}v^{\varepsilon}<0$ in $B_{r_{3}}(x_{0})$. Moreover, from \eqref{pc1},
	\begin{equation}\label{ineq11}
		\begin{split}
			[c_{\iota_{0}}-\dif_{\ell_{0},\iota_{0}}]  v_{\ell_{0},\iota_{0}}^{\varepsilon}+\psi_{\varepsilon}(|\deri^{1} v_{\ell_{0},\iota_{0}}^{\varepsilon}|^{2}- g_{\iota_{0}}^{2})-h_{\iota_{0}}&=0,\\
			[c_{\iota_{0}}-\dif_{\ell_{0},\iota_{0}}]  u_{\ell_{0},\iota_{0}}^{\varepsilon}+\psi_{\varepsilon}(|\deri^{1} u_{\ell_{0},\iota_{0}}^{\varepsilon}|^{2}- g_{\iota_{0}}^{2})-h_{\iota_{0}}&\leq0,
		\end{split}
		\quad\text{in}\ B_{r_{3}}(x_{0}).
	\end{equation}
	Notice that $\psi_{\varepsilon}(|\deri^{1} u_{\ell_{0},\iota_{0}}^{\varepsilon}|^{2}- g_{\iota_{0}}^{2})-\psi_{\varepsilon}(|\deri^{1} v_{\ell_{0},\iota_{0}}^{\varepsilon}|^{2}- g_{\iota_{0}}^{2})$ is a continuous function in $\set$  due to $\partial_{i}u^{\varepsilon}_{\ell_{0},\iota_{0}},\partial_{i}v^{\varepsilon}_{\ell_{0},\iota_{0}}\in\hol^{0}(\set)$, which satisfies 
	$\psi_{\varepsilon}(|\deri^{1} u_{\ell_{0},\iota_{0}}^{\varepsilon}|^{2}- g_{\iota_{0}}^{2})-\psi_{\varepsilon}(|\deri^{1} v_{\ell_{0},\iota_{0}}^{\varepsilon}|^{2}- g_{\iota_{0}}^{2})=0$ at $x_{0}$, since $x_{0}$ is the point where $u^{\varepsilon}_{\ell_{0},\iota_{0}}-v^{\varepsilon}_{\ell_{0},\iota_{0}}$ attains its maximum. Meanwhile, by Bony's maximum principle (see \cite{lions}), it is known that for every $r\leq r_{4}$, with $r_{4}>0$ small enough,
	\begin{equation}\label{ineq11.0}
		\tr[a_{\iota_{0}}\deri^{2}[u^{\varepsilon}_{\ell_{0},\iota_{0}}-v^{\varepsilon}_{\ell_{0},\iota_{0}}]]\leq0, \quad\text{a.e. in}\ B_{r}(x_{0}). 
	\end{equation}
	So, from \eqref{ineq10.1}, \eqref{w3}, \eqref{ineq11} and \eqref{ineq11.0}, it yields that for every $r\leq \hat{r}\eqdef\min\{r_{1},r_{2},r_{3},r_{4}\}$,
	\begin{align*}
		0&\geq \tr[a_{\iota_{0}}\deri^{2}[u^{\varepsilon}_{\ell_{0},\iota_{0}}-v^{\varepsilon}_{\ell_{0},\iota_{0}}]]\notag\\
		&\geq  c_{\iota_{0}}[u^{\varepsilon}_{\ell_{0},\iota_{0}}-v^{\varepsilon}_{\ell_{0},\iota_{0}}]+\langle b_{\iota_{0}},\deri^{1}[u^{\varepsilon}_{\ell_{0},\iota_{0}}-v^{\varepsilon}_{\ell_{0},\iota_{0}}]\rangle\notag\\
		&\quad+\psi_{\varepsilon}(|\deri^{1} u_{\ell_{0},\iota_{0}}^{\varepsilon}|^{2}- g_{\iota_{0}}^{2})-\psi_{\varepsilon}(|\deri^{1} v_{\ell_{0},\iota_{0}}^{\varepsilon}|^{2}- g_{\iota_{0}}^{2})\\
		&\quad{+\sum_{\kappa\in\mathbb{I}\setminus\{\iota_{0}\}}q_{\ell_{0}}(\iota_{0},\kappa)\{u^{\varepsilon}_{\ell_{0},\iota_{0}}-v^{\varepsilon}_{\ell_{0},\iota_{0}}-[u^{\varepsilon}_{\ell_{0},\kappa}-v^{\varepsilon}_{\ell_{0},\kappa}]\}}\\
		&\geq \min_{x\in B_{r_{1}}(x_{0})}\{c_{\iota_{0}}(x)[u^{\varepsilon}_{\ell_{0},\iota_{0}}(x)-v^{\varepsilon}_{\ell_{0},\iota_{0}}(x)]\} +\langle b_{\iota_{0}},\deri^{1}[u^{\varepsilon}_{\ell_{0},\iota_{0}}-v^{\varepsilon}_{\ell_{0},\iota_{0}}]\rangle\notag\\
		&\quad+\psi_{\varepsilon}(|\deri^{1} u_{\ell_{0},\iota_{0}}^{\varepsilon}|^{2}- g_{\iota_{0}}^{2})-\psi_{\varepsilon}(|\deri^{1} v_{\ell_{0},\iota_{0}}^{\varepsilon}|^{2}- g_{\iota_{0}}^{2}),\quad\text{a.e. in $B_{r}(x_{0})$.}
	\end{align*}
	Then, 
	\begin{multline}
		\lim_{r\rightarrow0}\bigg\{\infess_{B_{r}(x_{0})}[\psi_{\varepsilon}(|\deri^{1} u_{\ell_{0},\iota_{0}}^{\varepsilon}|^{2}- g_{\iota_{0}}^{2})-\psi_{\varepsilon}(|\deri^{1} v_{\ell_{0},\iota_{0}}^{\varepsilon}|^{2}- g_{\iota_{0}}^{2})]\bigg\}\\
		<-\min_{x\in B_{r_{1}}(x_{0})}\{c_{\iota_{0}}(x)[u^{\varepsilon}_{\ell_{0},\iota_{0}}(x)-v^{\varepsilon}_{\ell_{0},\iota_{0}}(x)]\}<0.
	\end{multline}
	That means $\psi_{\varepsilon}(|\deri^{1} u_{\ell_{0},\iota_{0}}^{\varepsilon}|^{2}- g_{\iota_{0}}^{2})-\psi_{\varepsilon}(|\deri^{1} v_{\ell_{0},\iota_{0}}^{\varepsilon}|^{2}- g_{\iota_{0}}^{2})$ is not continuous at $x_{0}$ which is a contradiction. Thus, 
	\begin{equation}\label{ineq12.01}
		0=v^{\varepsilon}_{\ell_{0},\iota_{0}}-(v^{\varepsilon}_{\ell_{1},\iota_{0}}+\vartheta_{\ell_{0},\ell_{1}})
		=v^{\varepsilon}_{\ell_{0},\iota_{0}}-\mathcal{M}_{\ell_{0},\iota_{0}}v^{\varepsilon}\leq u^{\varepsilon}_{\ell_{0},\iota_{0}}-\mathcal{M}_{\ell_{0},\iota_{0}}u^{\varepsilon}\leq 0\quad\text{at}\ x_{0}.
	\end{equation}
	It implies that 
	\begin{align}
		&u^{\varepsilon}_{\ell_{1},\iota_{0}}({x_{0} })-v^{\varepsilon}_{\ell_{1},\iota_{0}}({x_{0} })\geq u^{\varepsilon}_{\ell_{0},\iota_{0}}(x_{0})-v^{\varepsilon}_{\ell_{0},\iota_{0}}(x_{0})>0,\label{ineq13}\\
		&v^{\varepsilon}_{\ell_{0},\iota_{0}}(x_{0})=v^{\varepsilon}_{\ell_{1},\iota_{0}}({x_{0} })+\vartheta_{\ell_{0},\ell_{1}}.\notag
	\end{align}
	By \eqref{ineq9} and \eqref{ineq13}, we have that $u^{\varepsilon}_{\ell_{1},\iota_{0}}-v^{\varepsilon}_{\ell_{1},\iota_{0}}$ attains its maximum  at $ x_{0} \in\set$, whose value agrees with $u^{\varepsilon}_{\ell_{0},\iota_{0}}(x_{0})-v^{\varepsilon}_{\ell_{0},\iota_{0}}(x_{0})$.  Then, {replacing} $u^{\varepsilon}_{\ell_{0},\iota_{0}}-v^{\varepsilon}_{\ell_{0},\iota_{0}}$ by $u^{\varepsilon}_{\ell_{1},\iota_{0}}-v^{\varepsilon}_{\ell_{1},\iota_{0}}$ above and   repeating the same arguments seen in  \eqref{ineq10.0}--\eqref{ineq12.01}, we get that there is a regime $\ell_{2}\in\mathbb{I}$ such that 
	\begin{align*}
		&u^{\varepsilon}_{\ell_{2},\iota_{0}}(x_{0})-v^{\varepsilon}_{\ell_{2},\iota_{0}}(x_{0})=u^{\varepsilon}_{\ell_{1},\iota_{0}}(x_{0})-v^{\varepsilon}_{\ell_{1},\iota_{0}}(x_{0})=u^{\varepsilon}_{\ell_{0},\iota_{0}}(x_{0})-v^{\varepsilon}_{\ell_{0},\iota_{0}}(x_{0})>0,\notag\\
		&v^{\varepsilon}_{\ell_{1},\iota_{0}}(x_{0})=v^{\varepsilon}_{\ell_{2},\iota_{0}}(x_{0})+\vartheta_{\ell_{1},\ell_{2}}.
	\end{align*}
	Recursively, we obtain a sequence of regimes $\{\ell_{i}\}_{i\geq0}$ such that
	\begin{align}
		&u^{\varepsilon}_{\ell_{i},\iota_{0}}(x_{0})-v^{\varepsilon}_{\ell_{i},\iota_{0}}(x_{0})=u^{\varepsilon}_{\ell_{i-1},\iota_{0}}(x_{0})-v^{\varepsilon}_{\ell_{i-1},\iota_{0}}(x_{0})=\cdots=u^{\varepsilon}_{\ell_{0},\iota_{0}}(x_{0})-v^{\varepsilon}_{\ell_{0},\iota_{0}}(x_{0})>0,\notag\\
		&v^{\varepsilon}_{\ell_{i},\iota_{0}}(x_{0})=v^{\varepsilon}_{\ell_{i+1},\iota_{0}}(x_{0})+\vartheta_{\ell_{i},\ell_{i+1}}.\label{ineq16}
	\end{align}
	Since  $\mathbb{M}$ is finite,  {there is} a regime $\ell'$  that will appear infinitely often in  $\{\ell_{i}\}_{i\geq0}$. Let  $\ell_{\tilde{n}}=\ell'$, for some $\tilde{n}>1$. After $\hat{n}$ steps, the regime $\ell'$ reappears, i.e. $\ell_{\tilde{n}+\hat{n}}=\ell'$. Then, by \eqref{ineq16}, we get
	\begin{equation}\label{ineq17}
		v^{\varepsilon}_{\ell',\iota_{0}}(x_{0})=v^{\varepsilon}_{\ell',\iota_{0}}(x_{0})+\vartheta_{\ell',\ell_{\tilde{n}+1}}+\vartheta_{\ell_{\tilde{n}+1},\ell_{\tilde{n}+2}}+\cdots+\vartheta_{\ell_{\tilde{n}+\hat{n}-1},\ell'}.
	\end{equation} 
	Notice that \eqref{ineq17} contradicts the assumption that there is no loop of zero cost (see Eq. \eqref{l1}). From here we conclude that \eqref{ineq10} must occur. Taking $v^{\varepsilon}-u^{\varepsilon}$ and proceeding in the same way as before, it follows that for each $(\ell,\iota)\in\mathbb{M}\times\mathbb{I}$, $v^{\varepsilon}_{\ell,\iota}-u^{\varepsilon}_{\ell,\iota}\leq 0$  in $ \set $, and hence  we conclude that the solution $u^{\varepsilon}$ to the HJB equation \eqref{pc1} is unique.
\end{proof}	  

\subsection{Proof of Proposition \ref{M1}}\label{proof5}

\begin{proof}[Proof of Proposition \ref{M1}. Existence]

	Now, let $(\ell,\iota)\in\mathbb{M}\times\mathbb{I}$ be fixed. Since $u^{\varepsilon_{n}}_{\ell,\iota}$ is the unique  {strong} solution to the HJB equation \eqref{pc1} when $\varepsilon=\varepsilon_{n}$,  {which belongs to $\hol^{0}(\overline{\set})$},  it follows that  for each $\ell'\in\mathbb{M}\setminus\{\ell\}$,  $u^{\varepsilon_{n}}_{\ell,\iota}-(u^{\varepsilon_{n}}_{\ell',\iota} +\vartheta_{\ell,\ell'})\leq u^{\varepsilon_{n}}_{\ell,\iota}-\mathcal{M}_{\ell,\iota}u^{\varepsilon_{n}}\leq0$ in $\set$. From here and \eqref{econv1}, it yields that  $u_{\ell,\iota}-u_{\ell',\iota} -\vartheta_{\ell,\ell'}\leq0$ in $\set$. Then, $u_{\ell,\iota}-\mathcal{M}_{\ell,\iota}u\leq0$, in $\set$. Also, we know that $[c_{\iota}-\dif_{\ell,\iota}]  u_{\ell,\iota}^{\varepsilon_{n}}+ \psi_{\varepsilon_{n}}(|\deri^{1} u_{\ell,\iota}^{\varepsilon_{n}}|^{2}- g_{\iota}^{2})\leq h_{\iota}$ a.e. in $\set$. Then,
	\begin{align}
		&0\leq\psi_{\varepsilon_{n}}(|\deri^{1} u_{\ell,\iota}^{\varepsilon_{n}}|^{2}- g_{\iota}^{2})\leq h_{\iota}-[c_{\iota}-\dif_{\ell,\iota}]  u_{\ell,\iota}^{\varepsilon_{n}},\quad \text{a.e.}\ \text{in}\  \set.\label{eneq2.1}
	\end{align}
	Consequently, by \eqref{a4}, \eqref{ineqe1} and \eqref{eneq2.1}, there exists a positive constant $C_{6}=C_{6}(d,\Lambda,\alpha)$ such that $0\leq\int_{B_{r}}\psi_{\varepsilon_{n}}(|\deri^{1} u_{\ell,\iota}^{\varepsilon_{n}}|^{2}- g_{\iota}^{2})\varpi\der x\leq\int_{B_{r}} \{h_{\iota}-[c_{\iota}-\dif_{\ell,\iota}]  u_{\ell,\iota}^{\varepsilon_{n}}\}\varpi\der x\leq C_{6}$ for each  $\varpi\in\mathcal{B}(B_{r})$, with $\mathcal{B}(\cdot)$ as in \eqref{c1}. Thus, using definition of $\psi_{\varepsilon}$ (see  \eqref{p12.1})  {and since $|\deri^{1}u_{\ell,\iota}^{\varepsilon_{n}}|^{2}-g^{2}_{\iota}$ is continuous in $\set$}, we have that for each $B_{r}\subset \set$, there exists $\varepsilon'\in(0,1)$ small enough, such that for all $\varepsilon_{n}\leq \varepsilon'$,  $|\deri^{1}u^{\varepsilon_{n}}_{\ell,\iota}|-g_{\iota}\leq0$ in $B_{r}$. Then, since \eqref{econv1} holds, it follows that $|\deri^{1} u_{\ell,\iota}|\leq g_{\iota}$ in $\set$. From \eqref{eneq2.1}, we get $\int_{B_{r}}\big\{[c_{\iota}-\dif_{\ell,\iota}]  u_{\ell,\iota}^{\varepsilon_{n}}-h_{\iota}\big\}\varpi\der x\leq 0$, for each $\varpi\in \mathcal{B}(B_{r})$. From here and \eqref{econv1}, we obtain that $[c_{\iota}-\dif_{\ell,\iota}]  u_{\ell,\iota}-h_{\iota}\leq 0$ a.e. in $\set$. Therefore, by the seen previously, 
	\begin{equation}\label{eineq4}
		\max\left\{[c_{\iota}-\dif_{\ell,\iota}]  u_{\ell,\iota}-h_{\iota},|\deri^{1} u_{\ell,\iota}|- g_{\iota},u_{\ell,\iota}-\mathcal{M}_{\ell,\iota}u\right\}\leq 0 ,\quad \text{a.e. in}\ \set.
	\end{equation}
	Without loss of generality we assume that $
	u_{\ell,\iota}(x^{*})-\mathcal{M}_{\ell,\iota}u(x^{*})<0$, for some $x^{*}\in\set$. Otherwise, the equality is satisfied in \eqref{eineq4}. Then, for each $\ell'\in\mathbb{M}\setminus\{\ell\}$,  $u_{\ell,\iota}-(u_{\ell',\iota} +\vartheta_{\ell,\ell'})\leq u_{\ell,\iota}-\mathcal{M}_{\ell,\iota}u<0$ at $x^{*}$.  There exists a ball $B_{r_{1}}(x^{*})\subset\set$ such that 
	\begin{equation}\label{eineq5}
		u_{\ell,\iota}-(u_{\ell',\iota} +\vartheta_{\ell,\ell'})\leq u_{\ell,\iota}-\mathcal{M}_{\ell,\iota}u<0,\quad\text{in}\ B_{r_{1}}(x^{*})
	\end{equation}
	due to the continuity of $u_{\ell,\iota}-u_{\ell',\iota}$ in $\overline{\set}$. Now, consider that $|\deri^{1}u_{\ell,\iota}|-g_{\iota}<0$ for some $x^{*}_{1}\in B_{r_{1}}(x^{*})$. Otherwise, the equality is also satisfied in \eqref{eineq4}. By continuity of $|\deri^{1}u_{\ell,\iota}|-g_{\iota}$, it yields that for some $B_{r_{2}}(x^{*}_{1})\subset\set$, $|\deri^{1}u_{\ell,\iota}|-g_{\iota}<0$ in $B_{r_{2}}(x^{*}_{1})$. From here, using \eqref{econv1}, \eqref{eineq5} and  taking $\mathcal{N}\eqdef B_{r_{1}}(x^{*})\cap B_{r_{2}}(x^{*}_{1})$,  it can be verified that there exists an $\varepsilon'\in(0,1)$ small enough, such that for each $\varepsilon_{n}\leq\varepsilon'$, $|\deri^{1}u^{\varepsilon_{n}}_{\ell,\iota}|-g_{\iota}<0$ and $ u^{\varepsilon_{n}}_{\ell,\iota}-\mathcal{M}_{\ell,\iota}u^{\varepsilon_{n}}<0$ in $\mathcal{N}$. Thus, $[c_{\iota}-\dif_{\ell,\iota}]  u_{\ell,\iota}^{\varepsilon_{n}}= h_{\iota}$ a.e. in $\mathcal{N}$, since $u^{\varepsilon_{n}}$ is the unique solution to the HJB equation \eqref{pc1}, when $\varepsilon=\varepsilon_{n}$. Then, $\int_{\mathcal{N}}\big\{[c_{\iota}-\dif_{\ell,\iota}]  u_{\ell,\iota}^{\varepsilon_{n}}-h_{\iota}\big\}\varpi\der x= 0$, for each $\varpi\in\mathcal{B}(\mathcal{N})$.  Hence, letting $\varepsilon_{n}\rightarrow0$ and using again \eqref{econv1}, we get that  $u=(u_{1},\dots,u_{m})$ is a solution to the HJB equation  \eqref{esd5}.
\end{proof}

\begin{proof}[Proof of Theorem \ref{M1}. Uniqueness] Suppose that  
	$$u=(u_{\ell,\iota})_{(\ell,\iota)\in\mathbb{M}\times\mathbb{I}}\quad \text{and}\quad v=(v_{\ell,\iota})_{(\ell,\iota)\in\mathbb{M}\times\mathbb{I}}$$ 
	are two solutions to the HJB equation  \eqref{esd5} whose components belong to  $\hol^{0,1}(\overline{\set})\cap\sob^{2,\infty}_{\loc}(\set)$. Take $(x_{0},\ell_{0})\in\overline{\set}\times\mathbb{I}$ such that
	\begin{equation}\label{eineq9}
		u_{\ell_{0},\iota_{0}}(x_{0})-v_{\ell_{0},\iota_{0}}(x_{0})=\sup_{(x,\ell,\iota)\in\overline{\set}\times\mathbb{M}\times\mathbb{I}}\{u_{\ell,\iota}(x)-v_{\ell,\iota}(x)\}.
	\end{equation}
	As before (see Subsection \ref{prop1}), we only need to verify that 
	\begin{equation}\label{eineq10}
		u_{\ell_{0},\iota_{0}}-v_{\ell_{0},\iota_{0}}\leq0,\quad \text{at}\  x_{0}\in\set.
	\end{equation}
	Assume that $u_{\ell_{0},\iota_{0}}-v_{\ell_{0},\iota_{0}}>0$ at $x_{0}$. Then, there exists a  ball $B_{r_{1}}(x_{0})\subset\set$ such that $c_{\iota_{0}}[u_{\ell_{0},\iota_{0}}-v_{\ell_{0},\iota_{0}}]\geq\min_{x\in B_{r_{1}}(x_{0})}\{c_{\iota_{0}}(x)[u_{\ell_{0},\iota_{0}}(x)-v_{\ell_{0},\iota_{0}}(x)]\}>0$ in $B_{r_{1}}(x_{0})$ due to the continuity of  $u_{\ell_{0},\iota_{0}}-v_{\ell_{0},\iota_{0}}$ in $\overline{\set}$ and that $c_{\iota_{0}}>0$ in $\set$. Meanwhile, from \eqref{eineq9}, $v_{\ell_{0},\iota_{0}}-\mathcal{M}_{\ell_{0},\iota_{0}}v\leq u_{\ell_{0},\iota_{0}}-\mathcal{M}_{\ell_{0},\iota_{0}}u\leq 0$ at $x_{0}$. If $v_{\ell_{0},\iota_{0}}-\mathcal{M}_{\ell_{0},\iota_{0}}v<0$ at $x_{0}$, there exists a ball $B_{r_{2}}(x_{0})\subset\set$ such that $v_{\ell_{0},\iota_{0}}-\mathcal{M}_{\ell_{0},\iota_{0}}v<0$ in $B_{r_{2}}(x_{0})$. Now, consider the auxiliary function $f_{\varrho}\eqdef u_{\ell_{0},\iota_{0}}-v_{\ell_{0},\iota_{0}}-\varrho u_{\ell_{0},\iota_{0}}$, with $\varrho\in(0,1)$. Notice that $f_{\varrho}={-\varrho f_{\iota}<0}$ on $\partial\set$, for $\varrho\in(0,1)$, and  
	\begin{equation}\label{eineq11}
		f_{\varrho}\uparrow  u_{\ell_{0},\iota_{0}}-v_{\ell_{0},\iota_{0}}\ \text{uniformly in}\ \set,\ \text{when}\  \varrho\downarrow 0. 
	\end{equation}
	Besides, there is a $\varrho'\in(0,1)$ small enough such that $\sup_{x\in  B_{r_{2}}(x_{0})}\{f_{\varrho}(x)\}>0$ for all $\varrho\in(0,\varrho')$ because of $u_{\ell_{0},\iota_{0}}-v_{\ell_{0},\iota_{0}}>0$ at $x_{0}$. By \eqref{eineq9} and \eqref{eineq11}, there exists $\hat{\varrho}\in(0,\varrho')$ small enough such that $f_{\hat{\varrho}}$ has a local maximum  at  $x_{\hat{\varrho}}\in B_{r_{1}}(x_{0})\cap B_{r_{2}}(x_{0})$. It follows that $|\deri^{1}v_{\ell_{0},\iota_{0}}(x_{\hat{\varrho}})|=[1-{\hat{\varrho}}]|\deri^{1}u_{\ell_{0},\iota_{0}}(x_{\hat{\varrho}})|<|\deri^{1}u_{\ell_{0},\iota_{0}}(x_{\hat{\varrho}})|\leq g_{\iota}(x_{\hat{\varrho}}).$ Thus, there exists a ball $B_{r_{3}}(x_{\hat{\varrho}})\subset B_{r_{1}}(x_{0})\cap B_{r_{2}}(x_{0})$ such that $[c_{\iota_{0}}-\dif_{\ell_{0},\iota_{0}}]  v_{\ell_{0},\iota_{0}}-h_{\iota_{0}}=0$ and  $[c_{\iota_{0}}-\dif_{\ell_{0},\iota_{0}}]  u_{\ell_{0},\iota_{0}}-h_{\iota_{0}}\leq0$ in $B_{r_{3}}(x_{\hat{{\varrho}}})$.  Then, by Bony's maximum principle, we have that $0\geq\lim_{r\rightarrow0}\big\{\infess_{B_{r}(x_{\hat{\varrho}})} \tr[a_{\iota_{0}}\deri^{2}f_{\hat{\varrho}}]\big\}\geq c_{\iota_{0}}f_{\hat{\varrho}}+\hat{\varrho} h_{\iota_{0}}$ at $x_{\hat{\varrho}}$,  which is a contradiction because of $\hat{\varrho} h_{\iota_{0}}\geq0$, $f_{\hat{\varrho}}>0$ and $c_{\iota_{0}}>0$ at $x_{\hat{\varrho}}$. We conclude that, $0=v_{\ell_{0},\iota_{0}}-\mathcal{M}_{\ell_{0},\iota_{0}}v\leq u_{\ell_{0},\iota_{0}}-\mathcal{M}_{\ell_{0},\iota_{0}}u\leq 0$ at $x_{0}$. Using the same arguments seen in  the proof of uniqueness of the solution to the  HJB equation \eqref{esd5}  (see Subsection \ref{prop1}), it can be verified that there is a contradiction with the assumption that there is no loop of  zero cost  (see Eq. \eqref{l1}). From here we conclude that \eqref{eineq10} must occur. Taking $v-u$ and proceeding in the same way as  before, we see $u$ is the unique solution to the HJB equation \eqref{esd5}.
\end{proof}

\end{document}